\newtheorem{theorem}{Theorem}[section]
\newtheorem{proposition}[theorem]{Proposition}
\newtheorem{lemma}[theorem]{Lemma}
\newtheorem{corollary}[theorem]{Corollary}
\theoremstyle{definition}
\newtheorem{example}[theorem]{Example}
\newtheorem{question}[theorem]{Question}
\newtheorem{remark}[theorem]{Remark}
\DeclareMathOperator{\con}{con}
\DeclareMathOperator{\ini}{ini}
\DeclareMathOperator{\mul}{mul}
\DeclareMathOperator{\occ}{occ}
\DeclareMathOperator{\simple}{sim}
\DeclareMathOperator{\var}{var}
\numberwithin{equation}{section}
\numberwithin{figure}{section}
\numberwithin{table}{section}
\renewcommand*\subjclass[2][2010]{\def\@subjclass{#2}\@ifundefined{subjclassname@#1}{\ClassWarning{\@classname}{Unknown edition (#1) of Mathematics Subject Classification; using '2010'.}}{\@xp\let\@xp\subjclassname\csname subjclassname@#1\endcsname}}
\renewcommand{\subjclassname}{\textup{2010} Mathematics Subject Classification}
\newcommand{\contsection}[2]{\ref{#1}. #2\dotfill\pageref{#1}}
\newcommand{\contsectionstar}[2]{#2\dotfill\pageref{#1}}
\begin{document}

\title{Chain varieties of monoids}

\thanks{The work is partially supported by Russian Foundation for Basic Research (grant 17-01-00551) and by the Ministry of Education and Science of the Russian Federation (project 1.6018.2017/8.9).}

\author{S.\,V.\,Gusev}

\address{Ural Federal University, Institute of Natural Sciences and Mathematics, Lenina 51, 620000 Ekaterinburg, Russia}

\email{sergey.gusb@gmail.com,\,bvernikov@gmail.com}

\author{B.\,M.\,Vernikov}

\begin{abstract}
A variety of universal algebras is called a chain variety if its subvariety lattice is a chain. Non-group chain varieties of semigroups were completely classified by Sukhanov in 1982. Here we completely determine non-group chain varieties of monoids as algebras of type $(2,0)$.
\end{abstract}

\keywords{Monoid, variety, lattice of varieties, chain}

\subjclass{Primary 20M07, secondary 08B15}

\maketitle

\section*{Contents}

{\small
\begin{itemize}
\item[]\contsection{introduction}{Introduction and summary}
\item[]\contsection{preliminaries}{Preliminaries}
\item[]\contsection{decomposition}{$k$-decomposition of a word and related notions}
\item[]\contsection{necessity}{The proof of the ``only if part''}
\begin{itemize}
\item[]\contsection{necessity: red to over D_2}{Reduction to the case when $\mathbf D_2\subseteq\mathbf V$}
\item[]\contsection{necessity: red to over L}{Reduction to the case when $\mathbf L\subseteq\mathbf V$}
\item[]\contsection{necessity: over L}{The case when $\mathbf L\subseteq\mathbf V$}
\end{itemize}
\item[]\contsection{sufficiency: not K}{The proof of the ``if part'': all varieties except \textbf K}
\item[]\contsection{sufficiency: K}{The proof of the ``if part'': the variety \textbf K}
\begin{itemize}
\item[]\contsection{sufficiency: K - red to [E,K]}{Reduction to the interval $[\mathbf E,\mathbf K]$}
\item[]\contsection{sufficiency: K - aux}{Several auxiliary results}
\begin{itemize}
\item[]\contsection{sufficiency: K - aux-FHIJK}{Some properties of the varieties $\mathbf F_k$, $\mathbf H_k$, $\mathbf I_k$, $\mathbf J_k^m$, $\mathbf K$\\
\phantom{\ref{sufficiency: K - aux-FHIJK}.\ \,}and their identities}
\item[]\contsection{sufficiency: K - aux-decompositions}{$k$-decompositions of sides of the identities $\alpha_k$, $\beta_k$, $\gamma_k$\\
\phantom{\ref{sufficiency: K - aux-decompositions}\ \ }and $\delta_k^m$}
\item[]\contsection{sufficiency: K - aux-swapping}{Swapping letters within $k$-blocks}
\end{itemize}
\item[]\contsection{sufficiency: K - red to [F_k,F_{k+1}]}{Reduction to intervals of the form $[\mathbf F_k,\mathbf F_{k+1}]$}
\item[]\contsection{sufficiency: K - structure of [F_k,F_{k+1}]}{Structure of the interval $[\mathbf F_k,\mathbf F_{k+1}]$}
\begin{itemize}
\item[]\contsection{structure of [F_k,F_{k+1}] 1 step}{If $\mathbf F_k\subset\mathbf X\subseteq\mathbf F_{k+1}$ then $\mathbf H_k\subseteq\mathbf X$}
\item[]\contsection{structure of [F_k,F_{k+1}] 2 step}{If $\mathbf H_k\subset\mathbf X\subseteq\mathbf F_{k+1}$ then $\mathbf I_k\subseteq\mathbf X$}
\item[]\contsection{structure of [F_k,F_{k+1}] 3 step}{If $\mathbf I_k\subset\mathbf X\subseteq\mathbf F_{k+1}$ then $\mathbf J_k^1\subseteq\mathbf X$}
\item[]\contsection{structure of [F_k,F_{k+1}] 4 step}{If $\mathbf J_k^m\subset\mathbf X\subseteq\mathbf F_{k+1}$ with $1\le m<k$ then $\mathbf J_k^{m+1}\subseteq\mathbf X$}
\item[]\contsection{structure of [F_k,F_{k+1}] 5 step}{The interval $[\mathbf J_k^k, \mathbf F_{k+1}]$ consists of $\mathbf J_k^k$ and $\mathbf F_{k+1}$ only}
\item[]\contsection{structure of [F_k,F_{k+1}] 6 step}{All inclusions are strict}
\end{itemize}
\end{itemize}
\item[]\contsection{corollaries}{Corollaries}
\item[]\contsectionstar{acknowledg}{Acknowledgments}
\item[]\contsectionstar{bibl}{\refname}
\end{itemize}
}

\section{Introduction and summary}
\label{introduction}
 
There are many articles devoted to the examination of the lattice \textbf{SEM} of all semigroup varieties. An overview of this area is contained in the detailed survey~\cite{Shevrin-Vernikov-Volkov-09}; see also the recent work~\cite{Vernikov-15} devoted to elements of \textbf{SEM} satisfying some special properties. In sharp contrast, the lattice \textbf{MON} of all monoid varieties has received much less attention over the years; when referring to monoid varieties, we consider monoids as algebras with an associative binary operation and the nullary operation that fixes the identity element. As far as we know, there are only three papers containing substantial results on this subject. We have in mind the article~\cite{Head-68} where the lattice of commutative monoid varieties is completely described, the article~\cite{Wismath-86} which contains a complete description of the lattice of band monoid varieties, and the article~\cite{Pollak-81} where an example of a monoid variety without covers in the lattice \textbf{MON} is found.
 
Recently, the situation began to change gradually. The papers~\cite{Jackson-05,Jackson-Lee-17+,Lee-12a,Lee-12b,Lee-13,Lee-14a,Lee-14b} are devoted principally to an examination of identities of monoids but contain also some intermediate results about lattices of varieties. Moreover, the article~\cite{Jackson-Lee-17+} contains some results about the lattice \textbf{MON} that are of undoubted independent interest.
 
Thus nowadays, interest in the lattice \textbf{MON} has grown. Nevertheless, many questions in this area still remain open. For example, it is known that the lattice \textbf{MON} is not modular (see, e.g.,~\cite[Proposition~4.1]{Lee-12a} or Fig.~\ref{two lattices}b) below), but it was unknown up to the recent time whether this lattice satisfies some non-trivial identity. Only recently the first author gave a negative answer to this question~\cite{Gusev-18}. In contrast, the fact that the lattice \textbf{SEM} does not satisfy any non-trivial lattice identity is known since the beginning of 1970's~\cite{Burris-Nelson-71a,Burris-Nelson-71b}.
 
The problem of describing monoid varieties with modular or even distributive subvariety lattice seems to be quite difficult. As a first step on this direction, it seems natural to consider the extreme strengthening of the distributive law, namely the property of being a chain. Varieties whose subvariety lattice is a chain are called \emph{chain} varieties. Non-group chain varieties of semigroups were listed by Sukhanov in~\cite{Sukhanov-82} (see Fig.~\ref{all chain sem} in Section~\ref{corollaries} below), while locally finite chain group varieties were completely determined by Artamonov in~\cite{Artamonov-78}. Note that the problem of completely describing arbitrary chain varieties of groups seems to be extremely difficult. This is confirmed by the fact that there are uncountably many periodic non-locally finite varieties of groups with the 3-element subvariety lattice~\cite{Kozhevnikov-12}.
 
Some non-trivial examples of chain varieties of monoids appeared in~\cite{Jackson-05,Lee-12a,Lee-14a}. We introduce here one of these examples. To do this, we need some notation. We denote by $F$ the free semigroup over a countably infinite alphabet $A$. Elements of both $F$ and $A$ are denoted by small Latin letters. However, elements of $F$ unlike elements of $A$ are written in bold. As usual, elements of $F$ and $A$ are called \emph{words} and \emph{letters} respectively. The symbol $F^1$ stands for the semigroup $F$ with a new identity element adjoined. We treat this identity element as the empty word and denote it by $\lambda$. The following notion was introduced by Perkins~\cite{Perkins-69} and often appeared in the literature (see~\cite{Jackson-05,Jackson-Lee-17+,Jackson-Sapir-00,Lee-12a,Lee-14a}, for instance; in~\cite[Remark~2.4]{Jackson-Lee-17+} there is a number of other references). Let $W$ be a set of possibly empty words. We denote by $\overline W$ the set of all subwords of words from $W$ and by $I\bigl(\,\overline W\,\bigr)$ the set $F^1 \setminus \overline W$. It is clear that $I\bigl(\,\overline W\,\bigr)$ is an ideal of $F^1$. Then $S(W)$ denotes the Rees quotient monoid $F^1/I\bigl(\,\overline W\,\bigr)$. If $W=\{\bf w\}$ then we will write $S(\bf w)$ rather than $S\bigl(\{\bf w\}\bigr)$. It is verified in~\cite[Lemmas~4.4 and~5.10]{Jackson-05} that the variety generated by the monoid $S(xzxyty)$  is a chain variety. Besides that, this variety turns out to be non-finitely based~\cite[Lemma~5.5]{Jackson-05}.

However, chain monoid varieties were not studied systematically so far. In this paper we obtain a complete description of non-group chain varieties of monoids. In order to formulate the main result of the article, we need some new notation. Two sides of identities we connect by the symbol~$\approx$, while the symbol~$=$ denotes the equality relation on $F^1$. One can introduce notation for the following three identities:
\begin{align*}
\sigma_1:&\enskip xyzxty\approx yxzxty,\\
\sigma_2:&\enskip xtyzxy\approx xtyzyx,\\
\gamma_1:&\enskip y_1y_0x_1y_1x_0x_1 \approx y_1y_0y_1x_1x_0x_1.
\end{align*}
Note that the identities $\sigma_1$ and $\sigma_2$ are dual to each other. The identity $\gamma_1$ belongs to a countably infinite series of identities $\gamma_k$ that will be defined in Subsection~\ref{sufficiency: K - red to [E,K]}. For an identity system $\Sigma$, we denote by $\var\,\Sigma$ the variety of monoids given by $\Sigma$. Let us fix notation for the following varieties:
\begin{align*}
&{\bf C}_n=\var\{x^n\approx x^{n+1},\,xy\approx yx\}\text{ where }n\ge2,\\
&{\bf D}=\var\{x^2\approx x^3,\,x^2y\approx yx^2,\,\sigma_1,\,\sigma_2,\,\gamma_1\},\\
&{\bf K}=\var\{xyx\approx xyx^2,\,x^2y^2\approx y^2x^2,\,x^2y\approx x^2yx\},\\
&{\bf LRB}=\var\{xy\approx xyx\},\\
&{\bf N}=\var\{x^2y\approx yx^2,\,x^2yz\approx xyxzx,\,\sigma_2,\,\gamma_1\},\\
&{\bf RRB}=\var\{yx\approx xyx\}.
\end{align*}
To define one more variety, we need some additional notation. For an arbitrary natural number $n$, we denote by $S_n$ the full symmetric group on the set $\{1,2,\dots,n\}$. For arbitrary permutations $\pi,\tau\in S_n$, we put
\begin{align*}
\mathbf w_n(\pi,\tau)&=\biggl(\prod_{i=1}^n z_it_i\biggr) x \biggl(\prod_{i=1}^n z_{\pi(i)}z_{n+\tau(i)}\biggr) x \biggl(\prod_{i=n+1}^{2n} t_iz_i\biggr),\\[-3pt]
\mathbf w_n'(\pi,\tau)&=\biggl(\prod_{i=1}^n z_it_i\biggr) x^2 \biggl(\prod_{i=1}^{n} z_{\pi(i)}z_{n+\tau(i)}\biggr)\biggl(\prod_{i=n+1}^{2n} t_iz_i\biggr).
\end{align*}
Note that the words $\mathbf w_n(\pi,\tau)$ and $\mathbf w_n'(\pi,\tau)$ with the trivial permutations $\pi$ and $\tau$ appeared earlier in~\cite[proof of Proposition~5.5]{Jackson-05}. Put
$$
{\bf L}\!=\!\var\{x^2y\!\approx\!yx^2,xyxzx\!\approx\!x^2yz,\sigma_1,\sigma_2,{\bf w}_n(\pi,\tau)\!\approx\!{\bf w}'_n(\pi,\tau)\!\mid\!n\!\in\!\mathbb N,\pi,\tau\!\in\!S_n\}.
$$
If ${\bf X}$ is a monoid variety then we denote by $\overleftarrow{{\bf X}}$ the variety \emph{dual to} \textbf X, i.e. the variety consisting of monoids antiisomorphic to monoids from ${\bf X}$.
 
The main result of the paper is the following
 
\begin{theorem}
\label{main result}
A non-group monoid variety is a chain variety if and only if it is contained in one of the varieties ${\bf C}_n$ for some $n\ge2$, ${\bf D}$, ${\bf K}$, $\overleftarrow{{\bf K}}$, ${\bf L}$, ${\bf LRB}$, ${\bf N}$, $\overleftarrow{{\bf N}}$ and ${\bf RRB}$.
\end{theorem}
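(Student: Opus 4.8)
The statement is a biconditional, so I would split the argument into the necessity (``only if'') and sufficiency (``if'') directions and attack them by rather different methods. For necessity I would argue by contraposition: assuming that a non-group variety $\mathbf V$ is \emph{not} contained in any of the listed varieties, I would exhibit two incomparable subvarieties, so that the subvariety lattice fails to be a chain. The natural engine here is the family of Rees quotient monoids $S(\mathbf w)$ introduced in the excerpt: by selecting words $\mathbf w$ whose subword structure witnesses the failure of a defining identity of each listed variety, one manufactures monoids generating pairwise incomparable subvarieties. To keep the case analysis finite I would follow the reduction strategy visible in the structure of the paper, peeling off the easy regimes first. One reduces to the case $\mathbf D_2\subseteq\mathbf V$ (a chain variety missing this small non-commutative piece should collapse into the commutative $\mathbf C_n$ or the one-sided band varieties $\mathbf{LRB}$, $\mathbf{RRB}$, each checkable directly), then to the case $\mathbf L\subseteq\mathbf V$, disposing of $\mathbf D$, $\mathbf N$, $\overleftarrow{\mathbf N}$ en route, and finally one analyses $\mathbf L\subseteq\mathbf V$ head-on, using the explicit word sequences $\mathbf w_n(\pi,\tau)$ and $\mathbf w_n'(\pi,\tau)$ to test whether $\mathbf V$ retains enough identities to stay a chain.

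For sufficiency I would verify that each listed variety---and hence, since the property of being a chain is inherited by subvarieties, every one of its subvarieties---has a chain subvariety lattice. For all of the listed varieties except $\mathbf K$ this should be comparatively tractable: one describes the subvarieties explicitly as a short, simply parametrised list and checks that they are linearly ordered by comparing their defining identities. By the duality functor $\mathbf X\mapsto\overleftarrow{\mathbf X}$, the cases $\overleftarrow{\mathbf K}$ and $\overleftarrow{\mathbf N}$ come for free once $\mathbf K$ and $\mathbf N$ have been settled, so effectively only $\mathbf K$, $\mathbf N$, and the handful of self-dual varieties need independent treatment.

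The variety $\mathbf K$ is the main obstacle, and I would devote the bulk of the work to it. Its subvariety lattice is an infinite ascending chain whose rungs are the varieties $\mathbf F_k$, and the heart of the argument is to prove that \emph{every} subvariety of $\mathbf K$ sits on this chain. Here I would deploy the $k$-decomposition machinery to control how words transform modulo the identities of $\mathbf K$, reduce the global problem to understanding each interval $[\mathbf F_k,\mathbf F_{k+1}]$ separately, and then resolve each such interval as a strictly increasing finite chain
$$
\mathbf F_k\subset\mathbf H_k\subset\mathbf I_k\subset\mathbf J_k^1\subset\mathbf J_k^2\subset\cdots\subset\mathbf J_k^k\subset\mathbf F_{k+1}.
$$
The successive inclusions would be forced one step at a time: any subvariety lying strictly above a given member must contain the next, which is exactly where the lemmas on swapping letters within $k$-blocks and the $k$-decompositions of the identities $\alpha_k$, $\beta_k$, $\gamma_k$, $\delta_k^m$ do the real work, while the final step shows $[\mathbf J_k^k,\mathbf F_{k+1}]$ has no intermediate members.

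I expect essentially all of the technical difficulty to concentrate in the analysis of $\mathbf K$: both the claim that no subvariety escapes the $\mathbf F_k$-chain and the fine interval structure of $[\mathbf F_k,\mathbf F_{k+1}]$ demand delicate bookkeeping of identities through $k$-decompositions, and it is this combinatorics on words---rather than any lattice-theoretic subtlety---that constitutes the crux of the whole proof.
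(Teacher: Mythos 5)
Your proposal follows essentially the same route as the paper: the necessity direction via the three-stage reduction (to $\mathbf D_2\subseteq\mathbf V$, then to $\mathbf L\subseteq\mathbf V$, then the head-on analysis using the words $\mathbf w_n(\pi,\tau)$ and incomparable subvarieties generated by monoids $S(\mathbf w)$), and the sufficiency direction by handling the easy varieties and duals separately and concentrating the work on $\mathbf K$, reduced to the intervals $[\mathbf F_k,\mathbf F_{k+1}]$ and resolved as the chain $\mathbf F_k\subset\mathbf H_k\subset\mathbf I_k\subset\mathbf J_k^1\subset\cdots\subset\mathbf J_k^k\subset\mathbf F_{k+1}$ via the $k$-decomposition and block-swapping machinery. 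Your contrapositive framing of necessity is logically the same argument the paper runs directly, so there is no substantive difference in method.
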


As we will see below, the variety \textbf L is generated by the monoid $S(xzxyty)$ (see Lemma~\ref{L = var S(xzxyty)}). In view of the results of~\cite{Jackson-05}  mentioned above, the variety \textbf L is non-finitely based. Our Theorem implies that \textbf L is the unique non-finitely based non-group chain variety of monoids (see Corollary~\ref{list} below). For comparison, we note that all non-group chain semigroup varieties and locally finite chain group varieties are finitely based. This follows from the above-mentioned results of~\cite{Artamonov-78,Sukhanov-82}. Note also that following the above-mentioned result of~\cite{Kozhevnikov-12}, there exist non-finitely based non-locally finite chain varieties of groups. But explicit examples of such varieties have not yet been specified anywhere.

The complete list of all non-group chain varieties of monoids will be given in Corollary~\ref{list} below. The unique non-finitely based non-group chain variety of monoids mentioned above is the variety \textbf L (see Corollary~\ref{L is limit} below).
 
A minimal non-chain variety is called a \emph{just non-chain} variety. It is noted in~\cite[Corollary~2]{Sukhanov-82} that, among non-group varieties in \textbf{SEM}, any chain variety is contained in some maximal chain variety and any non-chain variety contains some just non-chain subvariety. However, similar results do not hold for non-group varieties in \textbf{MON}. Specifically, the varieties ${\bf C}_3$, $\mathbf C_4$, \dots are not contained in any maximal chain variety (see Fig.~\ref{all chain mon} in Section~\ref{corollaries}), while it follows from Theorem~\ref{main result} that there is a non-chain variety of monoids that does not contain any just non-chain subvariety (see Corollary~\ref{does not contain just non-chain} below).
 
In~\cite{Sukhanov-82} non-group chain varieties of semigroups were described in two ways. The first one is a description in the identity language. Theorem~\ref{main result} is an analogue of this result in the case of monoids. The second way is by presenting the full list of non-group just non-chain varieties of semigroups; this gives a characterization of chain varieties because, in view of~\cite[Corollary~2]{Sukhanov-82}, a non-group variety of semigroups is a chain variety if and only if it does not contain any just non-chain subvariety. As we have mentioned in the preceding paragraph, an analogous claim is false for monoids. Therefore, the second way of describing chain varieties is not applicable in the case of monoids. Due to this reason, we do not consider just non-chain monoid varieties here.
 
The article consists of seven sections. Section~\ref{preliminaries} contains definitions, notation and auxiliary results. In Section~\ref{decomposition} we introduce a series of new notions and notation and prove a number of results of technical character concerning these notions. These notions and results play a valuable role in the proof of Theorem~\ref{main result}. Section~\ref{necessity} is devoted to the proof of the ``only if'' part of Theorem~\ref{main result}, while the ``if'' part is verified in Sections~\ref{sufficiency: not K} and~\ref{sufficiency: K}. Finally, in Section~\ref{corollaries} some corollaries of Theorem~\ref{main result} and its proof are established.
 
\section{Preliminaries}
\label{preliminaries}
 
A word is called a \emph{semigroup} one if it does not contain the symbol of nullary operation~1. An identity is called a \emph{semigroup} one if both its sides are semigroup words. Note that an identity of the form ${\bf w}\approx 1$ is equivalent to the pair of identities ${\bf w}x\approx x{\bf w} \approx x$ where the letter $x$ does not occur in the word $\bf w$. Further, any monoid satisfies the identities ${\bf u} \cdot 1 \approx 1\cdot\mathbf u\approx\mathbf u$ for any word $\bf u$. These observations allow us to assume that all identities that appear below are semigroup ones.
 
The \emph{content} of a word \textbf w, i.e., the set of all letters occurring in $\bf w$, is denoted by $\con({\bf w})$. We denote by \textbf{SL} the variety of all semilattice monoids. The following statement is well known in fact. But it never appeared anywhere in this form, as far as we know. For the sake of completeness, we give its proof here.
 
\begin{lemma}
\label{group variety}
For a monoid variety $\mathbf V$, the following are equivalent:
\begin{itemize}
\item[\textup{a)}] $\mathbf V$ is a group variety;
\item[\textup{b)}] $\mathbf V$ satisfies an identity ${\bf u}\approx {\bf v}$ with $\con({\bf u})\ne \con({\bf v})$;
\item[\textup{c)}] $\mathbf{SL\nsubseteq V}$.
\end{itemize}
\end{lemma}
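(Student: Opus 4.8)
The plan is to prove the cyclic chain of implications (a) $\Rightarrow$ (b) $\Rightarrow$ (c) $\Rightarrow$ (a). The equivalence hinges on understanding what it means for a monoid variety to be a group variety, i.e., for every monoid in $\mathbf V$ to be a group. A key observation to establish first is that a monoid is a group precisely when it satisfies $xy\approx yx$ only in the trivial sense relevant here—more usefully, that a monoid $M$ is a group if and only if it contains no proper nontrivial idempotent-related structure; concretely, I would use the fact that $\mathbf{SL}$ is generated by the two-element semilattice monoid $\{1,e\}$ with $e^2=e$, and that this monoid is \emph{not} a group.

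First I would prove (a) $\Rightarrow$ (b). Suppose $\mathbf V$ is a group variety. Every group satisfies the identity $x\approx x\cdot(y y^{-1})$, but since we are restricted to semigroup identities (as justified in the preamble), the cleanest route is to observe that in any group variety the identity $xy^2\approx x$—or more safely $y^{-1}y\approx 1$ rephrased—forces a collapse of content. Concretely, every group satisfies $x y y^{-1}\approx x$; expressing $y^{-1}$ via a term is awkward over the pure signature $(2,0)$, so instead I would argue that a group variety satisfies some identity $\mathbf u\approx\mathbf v$ with $\con(\mathbf u)\ne\con(\mathbf v)$ by taking a consequence of the group axioms such as $x\approx x\cdot w$ where $w$ is a word that equals the identity element in every group of the variety (for instance a suitable power $z^{n}\approx 1$ if the variety is periodic, giving $xz^n\approx x$ with differing content). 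The honest and uniform argument: a group variety satisfies $yx\approx x$ is false, but it does satisfy that the generic free object is a group, so the word problem collapses the empty word with nonempty words, yielding the required content-violating identity.

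Next, (b) $\Rightarrow$ (c): this is the pivotal and cleanest implication. Suppose $\mathbf V$ satisfies $\mathbf u\approx\mathbf v$ with $\con(\mathbf u)\ne\con(\mathbf v)$; without loss of generality some letter $a\in\con(\mathbf u)\setminus\con(\mathbf v)$. I would specialize this identity in the two-element semilattice monoid $\{1,e\}$ (the generator of $\mathbf{SL}$) by substituting $e$ for $a$ and $1$ for every other letter. Then $\mathbf u$ evaluates to $e$ (since $a$ occurs in $\mathbf u$) while $\mathbf v$ evaluates to $1$ (since $a$ does not occur in $\mathbf v$), contradicting $e=1$. Hence the identity fails in the semilattice monoid, so $\mathbf{SL}\nsubseteq\mathbf V$, which is (c).

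Finally, (c) $\Rightarrow$ (a), or contrapositively $\neg$(a) $\Rightarrow$ $\neg$(c): if $\mathbf V$ is not a group variety, it contains some monoid $M$ that is not a group, and I would show $\mathbf{SL}\subseteq\mathbf V$ by producing the two-element semilattice inside $M$. A non-group monoid $M$ has a non-invertible element, hence (by finiteness of the monogenic submonoid in the periodic case, or by a direct structural argument) contains a nontrivial idempotent $e\ne 1$, and the submonoid $\{1,e\}$ is a copy of the semilattice monoid generating $\mathbf{SL}$; thus $\mathbf{SL}\subseteq\mathbf V$. \emph{The main obstacle} I anticipate is making (a) $\Rightarrow$ (b) fully rigorous over the pure signature $(2,0)$ without invoking inverses as terms, and dually ensuring in (c) $\Rightarrow$ (a) that a non-group monoid genuinely contains a nontrivial idempotent—this is immediate for finite or periodic monoids but needs a careful general argument (e.g., via the structure of the monogenic submonoid generated by a non-invertible element, or an appeal to the fact that a monoid with no nontrivial idempotent in which every element is invertible must be a group). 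Handling the possibly non-periodic case is where I would need to be most careful.
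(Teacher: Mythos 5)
Your middle implication (b)~$\Rightarrow$~(c) is correct and complete: it is exactly the contrapositive of the paper's step c)\,$\longrightarrow$\,b), resting on the fact that an identity with unequal contents fails in the two-element semilattice monoid $\{1,e\}$. But both of your other implications have genuine gaps. The step (c)~$\Rightarrow$~(a) as you primarily argue it is based on a false claim: a non-group monoid need \emph{not} contain a nontrivial idempotent. The additive monoid $(\mathbb N,+,0)$ is not a group, yet its only idempotent is its identity element $0$. Your fallback fact (``a monoid with no nontrivial idempotent in which every element is invertible must be a group'') is vacuous --- a monoid in which every element is invertible \emph{is} a group by definition --- so it does not repair anything. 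The repair you would need: take a non-invertible $a\in M$ and consider the submonoid $\langle a\rangle=\{a^m\mid m\ge0\}$. If it is finite, it contains an idempotent $a^k$ with $k\ge1$, and $a^k\ne1$ (otherwise $a$ would be invertible), so $\{1,a^k\}$ is a copy of the two-element semilattice inside $M$. If it is infinite, then $\langle a\rangle\cong(\mathbb N,+,0)$, which maps homomorphically \emph{onto} the two-element semilattice ($0\mapsto1$, every positive integer $\mapsto e$), so the semilattice lies in $HS(M)\subseteq\mathbf V$. Without this second case the implication is simply not proved.

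The step (a)~$\Rightarrow$~(b) is likewise left at the level of a gesture (``the word problem collapses the empty word with nonempty words''). To make it rigorous: the free $\mathbf V$-monoid on one generator $x$ lies in $\mathbf V$, hence is a group; a monoid generated by a single element whose powers are all distinct is isomorphic to $(\mathbb N,+,0)$ and is not a group, so $x^n=x^m$ for some $n>m\ge0$ in this free object, and invertibility of $x$ gives $x^{n-m}=1$; thus $\mathbf V$ satisfies $x^{n-m}y\approx y$, which has unequal contents. It is worth noting that the paper avoids both of your hard implications by orienting the cycle the other way: a)\,$\longrightarrow$\,c) is immediate (the two-element semilattice is not a group, so $\mathbf{SL}$ cannot lie in a group variety), c)\,$\longrightarrow$\,b) uses only that $\mathbf{SL}$ satisfies every identity with equal contents, and the single substantive step b)\,$\longrightarrow$\,a) is a short substitution argument: if $x$ occurs in exactly one of $\mathbf u$, $\mathbf v$ and $y$ is fresh, substituting $1$ for all other letters in $\mathbf uy\approx\mathbf vy$ and $y\mathbf u\approx y\mathbf v$ yields $x^ny\approx y$ and $yx^n\approx y$, forcing every element of every member of $\mathbf V$ to satisfy $a^n=1$ and hence be invertible.
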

 
\begin{proof}
The implication a)\,$\longrightarrow$\,c) is obvious.
 
\smallskip
 
The implication c)\,$\longrightarrow$\,b) immediately follows from the evident fact that the variety \textbf{SL} satisfies any identity $\mathbf{u\approx v}$ with $\con({\bf u})=\con({\bf v})$.
 
\smallskip
 
b)\,$\longrightarrow$\,a) By the hypothesis, there is a letter $x$ that occurs in precisely one of the words \textbf u and \textbf v. Let $y$ be a letter with $y\notin \con({\bf uv})$. Clearly, the identities ${\bf u}y\approx {\bf v}y$ and $y\mathbf u\approx y\mathbf v$ hold in ${\bf V}$. One can substitute~1 for all letters occurring in these identities except $x$ and $y$. Then we obtain ${\bf V}$ satisfies $x^ny\approx y$ and $yx^n\approx y$ for some $n$. Hence ${\bf V}$ is a group variety.
\end{proof}
 
A letter is called \emph{simple} [\emph{multiple}] \emph{in a word} $\bf w$ if it occurs in $\bf w$ once [at least twice]. The set of all simple [multiple] letters in a word \textbf w is denoted by $\simple(\mathbf w)$ [respectively $\mul(\mathbf w)$]. The following statement is well known and can be easily verified.
 
\begin{proposition}
\label{word problem C_2}
A non-trivial identity ${\bf u}\approx {\bf v}$ holds in the variety ${\bf C}_2$ if and only if the claim
\begin{equation}
\label{sim(u)=sim(v) & mul(u)=mul(v)}
\simple(\mathbf u)=\simple(\mathbf v)\text{ and }\mul(\mathbf u)=\mul(\mathbf v)
\end{equation}
is true.\qed
\end{proposition}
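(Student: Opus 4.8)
The plan is to reduce both directions to a normal-form argument. Recall that $\mathbf C_2=\var\{x^2\approx x^3,\,xy\approx yx\}$, so within $\mathbf C_2$ commutativity lets one rearrange the letters of any word arbitrarily, while $x^2\approx x^3$ yields $x^k\approx x^2$ for every $k\ge2$. Consequently every semigroup word $\mathbf w$ is equivalent modulo $\mathbf C_2$ to the canonical word obtained by listing each simple letter of $\mathbf w$ once and each multiple letter of $\mathbf w$ squared, say $\prod_{x\in\simple(\mathbf w)}x\cdot\prod_{y\in\mul(\mathbf w)}y^2$ taken with respect to some fixed linear order on the alphabet. This canonical word depends only on the pair $(\simple(\mathbf w),\mul(\mathbf w))$.

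For the ``if'' direction I would assume \eqref{sim(u)=sim(v) & mul(u)=mul(v)}. Then $\mathbf u$ and $\mathbf v$ produce the same canonical word, so $\mathbf C_2$ satisfies $\mathbf u\approx(\text{canonical word})\approx\mathbf v$, whence $\mathbf u\approx\mathbf v$ holds in $\mathbf C_2$.

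For the ``only if'' direction I would exhibit a member of $\mathbf C_2$ that detects, for each letter, whether it is absent, simple, or multiple. Take the three-element monoid $Z=\{0,1,2\}$ whose multiplication is addition capped at $2$ (so that $1\cdot1=2\cdot1=2\cdot2=2$ and $0$ is the identity element). A direct check shows that $Z$ is commutative and satisfies $x^2\approx x^3$, hence $Z\in\mathbf C_2$. Now fix a letter $a$ and consider the substitution sending $a\mapsto1$ and every other letter to the identity $0$. Under it a word $\mathbf w$ evaluates to $0$, $1$, or $2$ according as $a$ is absent from, simple in, or multiple in $\mathbf w$. Since $Z\in\mathbf C_2$ and $\mathbf u\approx\mathbf v$ holds in $\mathbf C_2$, the two evaluations coincide; as $a$ is arbitrary, the type (absent, simple, or multiple) of every letter agrees in $\mathbf u$ and $\mathbf v$, which is precisely \eqref{sim(u)=sim(v) & mul(u)=mul(v)}.

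I expect no serious obstacle here: the only points requiring care are the verification that the capped-addition monoid $Z$ really lies in $\mathbf C_2$ and the bookkeeping in the normalization step, both of which are routine. The hypothesis that the identity be non-trivial plays no essential role, since both implications hold verbatim for $\mathbf u\approx\mathbf u$ as well; and, as noted in the preliminaries, we may assume $\mathbf u$ and $\mathbf v$ are semigroup words, so that $\simple$ and $\mul$ are well defined.
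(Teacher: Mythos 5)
Your proof is correct. There is nothing in the paper to compare it against: the authors state this proposition as well known and omit the proof entirely (it is declared to be ``easily verified''), so your write-up supplies exactly the verification left to the reader. Both halves are sound. For sufficiency, commutativity together with $x^2\approx x^3$ (whence $x^k\approx x^2$ for all $k\ge2$) does reduce every word to the canonical form determined solely by the pair $\bigl(\simple(\mathbf w),\mul(\mathbf w)\bigr)$, and two words satisfying~\eqref{sim(u)=sim(v) & mul(u)=mul(v)} share that form. For necessity, your capped-addition monoid $Z$ is, up to isomorphism, exactly the monoid $S(x)$ of the paper (match the identity of $Z$ with the empty word, $1$ with $x$, and $2$ with the zero of $S(x)$), which by Lemma~\ref{C_{n+1}=var S(x^n)} even generates $\mathbf C_2$; evaluating $a\mapsto1$ and every other letter to the identity element computes $\min\{\occ_a(\mathbf w),2\}$, so it separates absent, simple and multiple letters, as you claim. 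Your closing observations are also accurate: the non-triviality hypothesis plays no role in either direction, and the standing convention that all identities are semigroup identities makes $\simple$ and $\mul$ well defined.
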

 
A word \textbf w is called an \emph{isoterm} for a class of semigroups if no semigroup in the class satisfies any non-trivial identity of the form $\mathbf w\approx\mathbf w'$. The following statement is known in fact and plays an important role below.
 
\begin{lemma}
\label{S(W) in V}
Let ${\bf V}$ be a monoid variety and $W$ a set of possibly empty words. Then $S(W)$ lies in ${\bf V}$ if and only if each word in $W$ is an isoterm for ${\bf V}$.
\end{lemma}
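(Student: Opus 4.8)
The plan is to read off both implications from the concrete description of the Rees quotient $S(W)$. Its nonzero elements are exactly the subwords of words from $W$, i.e.\ the elements of $\overline W$ (among them the empty word $\lambda=1$ whenever $W\ne\varnothing$), together with an absorbing zero $0$; the product of two subwords $\mathbf s,\mathbf t\in\overline W$ is their concatenation $\mathbf{st}$ when $\mathbf{st}\in\overline W$ and is $0$ otherwise. Identifying each letter $x$ with the corresponding generator of $S(W)$, a word $\mathbf p$ evaluates in $S(W)$ to $\mathbf p$ itself when $\mathbf p\in\overline W$ and to $0$ otherwise; distinct elements of $\overline W$ give distinct nonzero elements. (If $W=\varnothing$ then $S(W)$ is trivial and both statements hold vacuously, so I assume $W\ne\varnothing$.)

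For the ``only if'' part, assume $S(W)\in\mathbf V$ and suppose some $\mathbf w\in W$ is not an isoterm, so $\mathbf V$ satisfies a nontrivial identity $\mathbf w\approx\mathbf w'$. I would evaluate it in $S(W)$ under the substitution sending every letter to itself. Since $\mathbf w\in W\subseteq\overline W$, the left side evaluates to the nonzero element $\mathbf w$, while the right side evaluates to $\mathbf w'$ if $\mathbf w'\in\overline W$ and to $0$ otherwise. Equality in $S(W)$ then forces $\mathbf w=\mathbf w'$ in either case, contradicting nontriviality. This direction is short.

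The bulk of the argument is the ``if'' part: assuming every word in $W$ is an isoterm for $\mathbf V$, I must show $S(W)$ satisfies every identity $\mathbf u\approx\mathbf v$ of $\mathbf V$, and hence lies in $\mathbf V$. First I would reduce to the case $\con(\mathbf u)=\con(\mathbf v)$: otherwise $\mathbf V$ is a group variety by Lemma~\ref{group variety}, but in a periodic group variety no word is an isoterm (for instance $1\approx x^n$ and $x_1\cdots x_k\approx x_1^{n+1}x_2\cdots x_k$ are nontrivial), forcing $W=\varnothing$, already excluded. Once $\con(\mathbf u)=\con(\mathbf v)$, any substitution $\phi$ assigning $0$ to some letter annihilates both sides simultaneously, since that letter occurs in each; so it remains to treat substitutions with all values in $\overline W$, that is, ordinary word substitutions $\xi$.

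The key step, and the one I expect to be the main obstacle, is to prove that whenever $\xi(\mathbf u)\in\overline W$ one has $\xi(\mathbf u)=\xi(\mathbf v)$. Here $\xi(\mathbf u)$ is a subword of some $\mathbf w\in W$, say $\mathbf w=\mathbf a\,\xi(\mathbf u)\,\mathbf b$. I would introduce two fresh letters $a,b$, observe that $\mathbf V\models a\mathbf u b\approx a\mathbf v b$, and apply the substitution $\eta$ with $\eta(a)=\mathbf a$, $\eta(b)=\mathbf b$, and $\eta(x)=\xi(x)$ for $x\in\con(\mathbf u)$. This yields $\mathbf V\models\mathbf w\approx\mathbf a\,\xi(\mathbf v)\,\mathbf b$; as $\mathbf w$ is an isoterm, this identity is trivial, so $\mathbf w=\mathbf a\,\xi(\mathbf v)\,\mathbf b=\mathbf a\,\xi(\mathbf u)\,\mathbf b$, whence $\xi(\mathbf u)=\xi(\mathbf v)$ by cancellation in $F^1$; in particular $\xi(\mathbf v)\in\overline W$, so the two sides agree in $S(W)$. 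The symmetric argument covers the case $\xi(\mathbf v)\in\overline W$, and if neither side lies in $\overline W$ both evaluate to $0$. The two points demanding care are the simultaneous vanishing of the sides (secured by the content reduction through Lemma~\ref{group variety}) and the fresh-letter device, which is exactly what converts ``$\xi(\mathbf u)$ is a subword of $\mathbf w$'' into a genuine application of the isoterm hypothesis.
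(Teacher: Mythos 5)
Your proof is correct, but it takes a genuinely different route from the paper's. The paper does not argue from scratch at all: it reduces to the case where $W$ consists of a single word (citing the paragraph after Lemma~3.3 of \cite{Jackson-05}) and then declares necessity obvious and quotes sufficiency from Lemma~5.3 of \cite{Jackson-Sapir-00}. You instead give a self-contained argument for arbitrary $W$: the explicit multiplication rule in the Rees quotient, the reduction to identities with $\con(\mathbf u)=\con(\mathbf v)$ via Lemma~\ref{group variety} (which disposes of all substitutions that hit $0$), and the fresh-letter device --- writing $\mathbf w=\mathbf a\,\xi(\mathbf u)\,\mathbf b$, passing from $\mathbf u\approx\mathbf v$ to $a\mathbf u b\approx a\mathbf v b$, substituting, and cancelling in $F^1$ --- which is exactly what converts the isoterm hypothesis into the equality $\xi(\mathbf u)=\xi(\mathbf v)$. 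In substance you reconstruct the argument hidden inside the cited reference; the paper's version buys brevity, yours buys independence from the literature and handles arbitrary $W$ without the reduction to singletons. One compressed step deserves an explicit justification: you assert that a group variety of monoids is periodic, i.e.\ satisfies $x^n\approx 1$ for some $n\ge 1$, and your parenthetical identities rely on this $n$. The claim is true but should be proved: it follows, for instance, from the proof of Lemma~\ref{group variety}, which produces $x^ny\approx y$, whence $x^n\approx 1$ upon substituting $1$ for $y$. Alternatively you can bypass group varieties altogether: if $\con(\mathbf u)\ne\con(\mathbf v)$, substituting $1$ for every letter except one that occurs in exactly one side already yields $x^k\approx 1$ with $k\ge 1$, after which no word (not even the empty one) is an isoterm for $\mathbf V$, forcing $W=\varnothing$.
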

 
\begin{proof}
It is easy to verify that it suffices to consider the case when $W$ consists of one word (see the paragraph after Lemma~3.3 in~\cite{Jackson-05}). Then necessity is obvious, while sufficiency is proved in~\cite[Lemma~5.3]{Jackson-Sapir-00}.
\end{proof}
 
The variety generated by a monoid $M$ is denoted by $\var M$.
 
\begin{lemma}[\mdseries{\cite[Corollary~6.1.5]{Almeida-94}}]
\label{C_{n+1}=var S(x^n)}
$\mathbf C_{n+1}=\var S(x^n)$ for any natural $n$.\qed
\end{lemma}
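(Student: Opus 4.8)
The plan is to unwind the definition of $S(x^n)$ and then prove the equality by two inclusions. Since the subwords of $x^n$ are exactly $\lambda, x, x^2, \dots, x^n$, every other word---in particular every word containing a letter different from $x$, as well as every power $x^k$ with $k>n$---lies in the ideal $I\bigl(\,\overline{\{x^n\}}\,\bigr)$. Thus $S(x^n)$ is the commutative monoid $\{1, x, x^2, \dots, x^n, 0\}$ in which $x^i\cdot x^j=x^{i+j}$ when $i+j\le n$ and $x^i\cdot x^j=0$ otherwise; in particular $x^{n+1}=x^{n+2}=0$. Throughout I write $\occ_a(\mathbf w)$ for the number of occurrences of a letter $a$ in a word $\mathbf w$.

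The inclusion $\var S(x^n)\subseteq\mathbf C_{n+1}$ I would get at once: the monoid $S(x^n)$ is commutative and satisfies $x^{n+1}\approx x^{n+2}$ (both sides evaluate to $0$), so it lies in $\mathbf C_{n+1}=\var\{x^{n+1}\approx x^{n+2},\,xy\approx yx\}$.

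For the reverse inclusion $\mathbf C_{n+1}\subseteq\var S(x^n)$ it suffices to verify that every identity $\mathbf u\approx\mathbf v$ holding in $S(x^n)$ also holds in $\mathbf C_{n+1}$. I would first record the word problem for $\mathbf C_{n+1}$: as this variety is commutative and satisfies $x^{n+1}\approx x^{n+2}$, its relatively free monoid identifies each monomial with the one obtained by capping every exponent at $n+1$, whence $\mathbf u\approx\mathbf v$ holds in $\mathbf C_{n+1}$ if and only if $\min\bigl(\occ_a(\mathbf u),n+1\bigr)=\min\bigl(\occ_a(\mathbf v),n+1\bigr)$ for every letter $a$ (for $n=1$ this is exactly Proposition~\ref{word problem C_2}). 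Now, assuming $\mathbf u\approx\mathbf v$ holds in $S(x^n)$, I would fix a letter $a$ and substitute $x$ for $a$ and $1$ for every other letter; this sends $\mathbf u$ to $x^{\occ_a(\mathbf u)}$ and $\mathbf v$ to $x^{\occ_a(\mathbf v)}$, where $x^k$ is read as $0$ once $k>n$. Since $1, x, \dots, x^n, 0$ are pairwise distinct in $S(x^n)$, the equality of these values forces $\min\bigl(\occ_a(\mathbf u),n+1\bigr)=\min\bigl(\occ_a(\mathbf v),n+1\bigr)$. As $a$ is arbitrary, the criterion above yields $\mathbf u\approx\mathbf v$ in $\mathbf C_{n+1}$, and combining the two inclusions gives $\mathbf C_{n+1}=\var S(x^n)$.

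The only nonroutine step, and the one I would expect to require the most care, is the word problem for $\mathbf C_{n+1}$---the claim that capping exponents at $n+1$ gives the normal form in its relatively free monoid. This is where the genuine content sits, although it is not hard: the identity $x^{n+1}\approx x^{n+2}$ collapses all the powers $x^{n+1}, x^{n+2}, \dots$ into one while leaving $1, x, \dots, x^n$ distinct, and commutativity reduces every word to a monomial. Once this is in hand, the rest is just the single-letter substitutions above.
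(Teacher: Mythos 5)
Your proof is correct, but there is nothing in the paper to compare it against: the paper does not prove this lemma at all, it simply imports it from the literature (Corollary~6.1.5 of~\cite{Almeida-94}) and states it without proof. So your argument should be judged as a self-contained replacement, and as such it works. The inclusion $\var S(x^n)\subseteq\mathbf C_{n+1}$ is immediate, as you say, since $S(x^n)$ is commutative and every element $w$ of $S(x^n)$ satisfies $w^{n+1}=w^{n+2}$ (both sides equal $w$ if $w=1$ and equal $0$ otherwise). For the reverse inclusion, note that of your word-problem criterion for $\mathbf C_{n+1}$ only the syntactic direction is actually used: if $\min\bigl(\occ_a(\mathbf u),n+1\bigr)=\min\bigl(\occ_a(\mathbf v),n+1\bigr)$ for every letter $a$, then $\mathbf u\approx\mathbf v$ is derivable from $\{x^{n+1}\approx x^{n+2},\,xy\approx yx\}$, because commutativity sorts each word into a product of powers of distinct letters and $x^{n+1}\approx x^{n+2}$ yields $x^{n+1}\approx x^{n+1+m}$ for every $m\ge0$; this is routine, exactly as you anticipate. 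The converse direction of your criterion (that $1,x,\dots,x^{n+1}$ remain pairwise distinct in the relatively free monoid, i.e.\ that no further collapse occurs) is not needed for your inclusion, and in any case you get it for free from the inclusion already proved: $S(x^n)$ lies in $\mathbf C_{n+1}$ and separates these powers. Your one-letter substitution step is likewise sound, since $1,x,\dots,x^n,0$ are pairwise distinct elements of $S(x^n)$. Stylistically, the substitution-into-a-single-letter device you use is the same one the paper employs right after this lemma (in the proof of Lemma~\ref{C_{n+1} nsubseteq V}), so your argument fits the paper's toolkit; what the citation buys instead is merely brevity.
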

 
\begin{lemma}
\label{C_{n+1} nsubseteq V}
Let ${\bf V}$ be a monoid variety and $n$ a natural number. If ${\bf C}_{n+1} \nsubseteq {\bf V}$ then ${\bf V}$ satisfies an identity $x^n\approx x^{n+m}$ for some $m$.
\end{lemma}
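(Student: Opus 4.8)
The plan is to translate the non-containment $\mathbf C_{n+1}\nsubseteq\mathbf V$ into a statement about isoterms and then extract the required identity by substitution. First I would assemble the following chain of equivalences. Since $\mathbf V$ is a variety, $\var S(x^n)\subseteq\mathbf V$ holds if and only if $S(x^n)\in\mathbf V$; by Lemma~\ref{C_{n+1}=var S(x^n)} the variety $\var S(x^n)$ coincides with $\mathbf C_{n+1}$, and by Lemma~\ref{S(W) in V} the membership $S(x^n)\in\mathbf V$ is equivalent to $x^n$ being an isoterm for $\mathbf V$. Consequently, the hypothesis $\mathbf C_{n+1}\nsubseteq\mathbf V$ is equivalent to $x^n$ not being an isoterm for $\mathbf V$, i.e.\ to the assertion that $\mathbf V$ satisfies a non-trivial identity $x^n\approx\mathbf w$ for some word $\mathbf w\ne x^n$.

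Next I would split the analysis according to the content of $\mathbf w$. Suppose first that $\con(\mathbf w)=\{x\}$. Then $\mathbf w=x^k$ for some $k\ge1$, and $k\ne n$ because $\mathbf w\ne x^n$; thus $\mathbf V$ satisfies the non-trivial identity $x^n\approx x^k$. If $k>n$ this is already of the desired form with $m=k-n$, while if $k<n$ one multiplies both sides by $x^{n-k}$ to obtain $x^n\approx x^{n+(n-k)}$, so that $m=n-k$ works.

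The remaining and more delicate case is $\con(\mathbf w)\ne\{x\}$ (which also covers $\mathbf w=\lambda$). Here $\con(x^n)=\{x\}\ne\con(\mathbf w)$, so the identity $x^n\approx\mathbf w$ has sides of different content. By the implication b)$\,\longrightarrow\,$a) of Lemma~\ref{group variety} the variety $\mathbf V$ is then a group variety; more precisely, the argument proving that implication shows that $\mathbf V$ satisfies an identity of the form $x^N y\approx y$ for some $N\ge1$. Substituting $x^n$ for $y$ in this identity gives $x^{N+n}\approx x^n$, i.e.\ $x^n\approx x^{n+N}$, so $m=N$ works. This exhausts all possibilities.

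I expect the only real obstacle to be the correct treatment of the second case. The naive move of substituting $x$ for every letter of $\mathbf w$ turns $x^n\approx\mathbf w$ into $x^n\approx x^{|\mathbf w|}$, which is non-trivial and immediately useful unless $|\mathbf w|=n$; but precisely when $\mathbf w$ has length $n$ yet contains a letter other than $x$, this substitution collapses to the trivial identity and yields nothing. The point is that such a $\mathbf w$ automatically makes the two sides differ in content, whence Lemma~\ref{group variety} applies and furnishes the stronger identity $x^N y\approx y$, from which the conclusion is immediate.
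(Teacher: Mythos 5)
Your proposal is correct and follows essentially the same route as the paper's proof: the same reduction via Lemmas~\ref{S(W) in V} and~\ref{C_{n+1}=var S(x^n)} to a non-trivial identity $x^n\approx\mathbf w$, and the same appeal to Lemma~\ref{group variety} to deal with the content of $\mathbf w$. The only difference is organizational: where the paper dismisses group varieties at the outset as an ``evident'' case and then uses Lemma~\ref{group variety} to force $\con(\mathbf w)=\{x\}$, you split on $\con(\mathbf w)$ and settle the different-content case explicitly through the identity $x^Ny\approx y$, which in fact supplies the detail behind the paper's ``evident'' remark.
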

 
\begin{proof}
We can assume that $\mathbf V$ is not a group variety because the required conclusion is evident otherwise. Lemmas~\ref{S(W) in V} and~\ref{C_{n+1}=var S(x^n)} apply with the conclusion that the variety ${\bf V}$ satisfies a non-trivial identity of the form $x^n\approx {\bf w}$. Then $\con({\bf w})= \{x\}$ by Lemma~\ref{group variety}, whence ${\bf w}= x^k$ for some $k\ne n$. Clearly, the identity $x^n\approx x^k$ implies an identity $x^n\approx x^{n+m}$ for some $m$. Thus, the variety ${\bf V}$ satisfies the identity $x^n\approx x^{n+m}$.
\end{proof}
 
As in the case of semigroups, a variety of monoids is called \emph{completely regular} if it consists of \emph{completely regular monoids}~(i.e., unions of groups). It is well known that a variety is completely regular if and only if it satisfies an identity $x\approx x^{m+1}$ for some $m$. This observation, together with Lemma~\ref{C_{n+1} nsubseteq V} and the evident fact that the variety $\mathbf C_2$ is non-completely regular, implies the following
 
\begin{corollary}
\label{cr or over C_2}
A monoid variety ${\bf V}$ is completely regular if and only if ${\bf C}_2 \nsubseteq {\bf V}$.\qed
\end{corollary}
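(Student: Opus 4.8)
The plan is to verify the two implications separately, invoking in each direction one of the two facts cited just before the statement: the identity-theoretic characterization of complete regularity and Lemma~\ref{C_{n+1} nsubseteq V}.

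For the necessity of the condition ${\bf C}_2\nsubseteq{\bf V}$, I would argue by contraposition. Complete regularity is inherited by subvarieties: it is expressed by an identity $x\approx x^{m+1}$ holding for some fixed $m$, and every subvariety of a given variety satisfies all of its identities. Hence if ${\bf C}_2\subseteq{\bf V}$ and ${\bf V}$ were completely regular, then ${\bf C}_2$ would be completely regular as well. But ${\bf C}_2$ is not completely regular: by Proposition~\ref{word problem C_2} an identity $x\approx x^{m+1}$ with $m\ge1$ fails in ${\bf C}_2$, since $x$ is simple on the left but multiple on the right, so $\simple$ and $\mul$ do not agree. This contradiction shows that a completely regular ${\bf V}$ cannot contain ${\bf C}_2$.

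For the sufficiency, I would simply apply Lemma~\ref{C_{n+1} nsubseteq V} in the special case $n=1$. Writing ${\bf C}_2={\bf C}_{1+1}$, the assumption ${\bf C}_2\nsubseteq{\bf V}$ yields an identity $x\approx x^{1+m}$ holding in ${\bf V}$ for some $m$; this is precisely the identity $x\approx x^{m+1}$ that characterizes complete regularity, and therefore ${\bf V}$ is completely regular. There is no genuine obstacle in either direction once the identity $x\approx x^{m+1}$ is recalled as the criterion for complete regularity; the only point requiring a word of justification is the elementary observation that ${\bf C}_2$ itself fails to be completely regular.
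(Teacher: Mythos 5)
Your proof is correct and follows essentially the same route as the paper: the paper derives this corollary from the characterization of complete regularity by an identity $x\approx x^{m+1}$, Lemma~\ref{C_{n+1} nsubseteq V} with $n=1$, and the fact that $\mathbf C_2$ is not completely regular. The only difference is that you justify this last fact explicitly via Proposition~\ref{word problem C_2}, whereas the paper treats it as evident.
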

 
For any natural number $k$, we denote by $\mathbf D_k$ the subvariety of the variety \textbf D given within \textbf D by the identity $x^2y_1y_2\cdots y_k\approx xy_1xy_2x\cdots xy_kx$. The proof of Proposition~4.1 in~\cite{Lee-14a} implies the following
 
\begin{lemma}
\label{D_n=var S(w)}
$\mathbf D_1=\var S(xy)$ and ${\bf D}_{n+1} = \var S(xy_1xy_2x\cdots xy_nx)$ for any natural $n$.\qed
\end{lemma}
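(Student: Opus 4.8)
The plan is to prove each of the two displayed equalities by establishing the two opposite inclusions. I would treat the two cases separately: the base case $\mathbf D_1=\var S(xy)$ is considerably easier, since $S(xy)$ is a five-element monoid whose identities are read off directly, so the effort concentrates on the general case $\mathbf D_{n+1}=\var S(\mathbf w_n)$, where I abbreviate $\mathbf w_n=xy_1xy_2x\cdots xy_nx$. For the inclusion $\var S(\mathbf w_n)\subseteq\mathbf D_{n+1}$, Lemma~\ref{S(W) in V} reduces the task to checking that $S(\mathbf w_n)$ satisfies every defining identity of $\mathbf D_{n+1}$, namely $x^2\approx x^3$, $x^2y\approx yx^2$, $\sigma_1$, $\sigma_2$, $\gamma_1$ and the identity $x^2y_1\cdots y_{n+1}\approx xy_1x\cdots xy_{n+1}x$ cutting $\mathbf D_{n+1}$ out of $\mathbf D$. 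This is a direct computation inside the Rees quotient: under an arbitrary substitution of the letters by factors of $\mathbf w_n$, each side evaluates either to a common factor of $\mathbf w_n$ or to the zero of $S(\mathbf w_n)$, and one checks case by case that the two sides always agree. By Lemma~\ref{S(W) in V} this is exactly the assertion that $\mathbf w_n$ is an isoterm for $\mathbf D_{n+1}$.

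The reverse inclusion $\mathbf D_{n+1}\subseteq\var S(\mathbf w_n)$ is the substantial part, since it amounts to showing that every identity $\mathbf u\approx\mathbf v$ that holds in $S(\mathbf w_n)$ already follows from the defining identities of $\mathbf D_{n+1}$. The route I would take is through normal forms: using the identities of $\mathbf D_{n+1}$ I would rewrite an arbitrary word to a canonical representative, and then show that whenever $\mathbf u\approx\mathbf v$ is satisfied by $S(\mathbf w_n)$ the words $\mathbf u$ and $\mathbf v$ reduce to the same canonical word, so that $\mathbf u\approx\mathbf v$ is a consequence of the basis of $\mathbf D_{n+1}$; equivalently, one may quote a basis of identities for $\var S(\mathbf w_n)$ and verify that it is equivalent to the defining system of $\mathbf D_{n+1}$. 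To keep the rewritings and the shape of the canonical form under control I would use the constraints already at hand. Since $\mathbf{SL}\subseteq\mathbf C_2\subseteq\mathbf D_{n+1}$, the variety $\mathbf D_{n+1}$ is non-group, so by Lemma~\ref{group variety} all of its identities preserve content, and by Proposition~\ref{word problem C_2} they preserve the partition of letters into simple and multiple ones; in $\mathbf w_n$ every $y_i$ is simple while $x$ is multiple, and these invariants already pin down a great deal of the normal form.

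I expect the reverse inclusion to be the main obstacle, and within it the determination of the word problem of $S(\mathbf w_n)$: one must read off exactly which factors of $\mathbf w_n$ a word produces under substitution and then show that two words carrying the same such data are interconvertible using only $\sigma_1$, $\sigma_2$, $\gamma_1$, the centrality and idempotency of squares, and the identity $x^2y_1\cdots y_{n+1}\approx xy_1x\cdots xy_{n+1}x$. The decisive point is that this last identity lets the block $x^2$ stand in for the repeated occurrences of $x$ in $\mathbf w_n$, which is precisely what forces $S(\mathbf w_n)$ and $\mathbf D_{n+1}$ to have the same identities; checking that no additional collapsing is permitted is the delicate combinatorial core. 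This is, in essence, what the proof of Proposition~4.1 in~\cite{Lee-14a} supplies.
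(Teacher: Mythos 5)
Your proposal is correct and coincides with the paper's treatment, which in fact contains no argument at all: the lemma is stated with the remark that it follows from the proof of Proposition~4.1 in~\cite{Lee-14a}, exactly the result to which you defer the substantial direction $\mathbf D_{n+1}\subseteq\var S(xy_1xy_2x\cdots xy_nx)$. Your two-inclusion outline (the identity/isoterm check one way, normal forms the other) is a faithful expansion of what that citation supplies, so both you and the authors ultimately rest on the same external proposition.
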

 
We denote by \textbf T the trivial variety of monoids. The subvariety lattice of a monoid variety \textbf X is denoted by $L(\mathbf X)$. Proposition~4.1 of~\cite{Lee-14a} and its proof readily imply also the following
 
\begin{lemma}
\label{L(D)}
The lattice $L({\bf D})$ is the chain $\mathbf{T\subset SL\subset C}_2\subset\mathbf D_1\subset\mathbf D_2\subset\cdots\subset\mathbf D_k\subset\cdots\subset\mathbf D$.\qed
\end{lemma}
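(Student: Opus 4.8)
The plan is to prove the two halves of the statement separately: that the listed varieties are pairwise comparable subvarieties of $\mathbf D$ forming the displayed chain, and that they exhaust $L(\mathbf D)$.

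For the inclusions I would work with the monoid generators supplied by Lemmas~\ref{C_{n+1}=var S(x^n)} and~\ref{D_n=var S(w)}, namely $\mathbf C_2=\var S(x)$, $\mathbf D_1=\var S(xy)$ and $\mathbf D_{n+1}=\var S(\mathbf w_n)$ with $\mathbf w_n=xy_1x\cdots xy_nx$, together with the elementary observation that $S(\mathbf u)$ is a homomorphic image of $S(\mathbf w)$ whenever $\mathbf u$ is a subword of $\mathbf w$ (because then $I\bigl(\,\overline{\{\mathbf w\}}\,\bigr)\subseteq I\bigl(\,\overline{\{\mathbf u\}}\,\bigr)$). Since each generating word in the list is a subword of the next, this at once gives $\mathbf T\subseteq\mathbf{SL}\subseteq\mathbf C_2\subseteq\mathbf D_1\subseteq\mathbf D_2\subseteq\cdots$, while $\mathbf D_{n+1}\subseteq\mathbf D$ reduces to checking that $\mathbf w_n$ is an isoterm for $\mathbf D$. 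Strictness is then witnessed uniformly by the monoids $S(\mathbf w_n)$: each lies in $\mathbf D$ and in every later member of the chain, but not in the immediately preceding one, because $\mathbf w_n$ is the right-hand side of the nontrivial identity $x^2y_1\cdots y_n\approx\mathbf w_n$ cutting out $\mathbf D_n$ inside $\mathbf D$, and hence is not an isoterm there by Lemma~\ref{S(W) in V}; the steps $\mathbf T\subset\mathbf{SL}\subset\mathbf C_2\subset\mathbf D_1$ are handled the same way.

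The substantive half is completeness, where I would take an arbitrary $\mathbf V\subseteq\mathbf D$ and split into cases by Corollary~\ref{cr or over C_2}. If $\mathbf C_2\nsubseteq\mathbf V$ then $\mathbf V$ is completely regular, so it satisfies $x\approx x^{m+1}$; combined with $x^2\approx x^3$ this forces $x\approx x^2$, and then $x^2y\approx yx^2$ yields $xy\approx yx$, so $\mathbf V$ consists of commutative idempotent monoids, $\mathbf V\subseteq\mathbf{SL}$, and therefore $\mathbf V\in\{\mathbf T,\mathbf{SL}\}$. Otherwise $\mathbf C_2\subseteq\mathbf V$. Writing $\mathbf w_0=xy$, so that $\mathbf D_1=\var S(\mathbf w_0)$ and $\mathbf D_{n+1}=\var S(\mathbf w_n)$, I would use Lemma~\ref{S(W) in V} to translate each membership $S(\mathbf w_n)\in\mathbf V$ into the condition that $\mathbf w_n$ be an isoterm for $\mathbf V$, these conditions being nested since $\mathbf D_n\subset\mathbf D_{n+1}$. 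Thus exactly one of the following holds: every $\mathbf w_n$ ($n\ge0$) is an isoterm, giving $\mathbf D_n\subseteq\mathbf V$ for all $n$; some $\mathbf w_n$ is an isoterm while $\mathbf w_{n+1}$ is not, giving $\mathbf D_{n+1}\subseteq\mathbf V$; or not even $\mathbf w_0=xy$ is an isoterm, giving $\mathbf C_2\subseteq\mathbf V$ with $\mathbf D_1\nsubseteq\mathbf V$. In each case it remains to prove the matching reverse inclusion.

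This reverse inclusion is the crux and the main obstacle, and it rests on a solution of the word problem for $\mathbf D$: a combinatorial description, obtained as in the proof of Proposition~4.1 of~\cite{Lee-14a}, of precisely which identities follow from the defining identities $x^2\approx x^3$, $x^2y\approx yx^2$, $\sigma_1$, $\sigma_2$, $\gamma_1$. The role of these identities is to let one normalize a word by transporting and sorting the blocks of letters surrounding its multiple occurrences, and the analysis should show that the only nontrivial identities a subvariety of $\mathbf D$ can satisfy are the level-collapsing identities $x^2y_1\cdots y_n\approx\mathbf w_n$ themselves (and, at the bottom, commutativity). Granting this, the largest level at which an isoterm survives pins $\mathbf V$ down: failure of $\mathbf w_{n+1}$ to be an isoterm forces the defining identity $x^2y_1\cdots y_{n+1}\approx\mathbf w_{n+1}$ of $\mathbf D_{n+1}$ and hence $\mathbf V=\mathbf D_{n+1}$; failure of $\mathbf w_0=xy$ to be an isoterm forces $xy\approx yx$ and hence $\mathbf V=\mathbf C_2$; and if all $\mathbf w_n$ are isoterms then $\mathbf V$ collapses no level and the description yields $\mathbf V=\mathbf D$. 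I expect no conceptual difficulty in this last step, only the considerable bookkeeping of the normal forms, which is where essentially all the work lies.
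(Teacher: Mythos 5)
Your proposal is correct in outline and takes essentially the same route as the paper, which proves this lemma by simply citing Proposition~4.1 of~\cite{Lee-14a} and its proof: the crux of your argument (the word-problem/normal-form analysis for $\mathbf D$ showing that any identity failing in $\mathbf D$ forces, modulo the identities of $\mathbf D$, a level-collapsing identity or commutativity) is exactly what you too defer to that source. The surrounding scaffolding you supply — the subword/homomorphic-image argument for the inclusions, the isoterm arguments via Lemmas~\ref{S(W) in V} and~\ref{D_n=var S(w)} for strictness, and the trichotomy on which $\mathbf w_n$ remains an isoterm (for which the paper's Lemma~\ref{D_{n+1} nsubseteq X} is the relevant halfway step) — is correct and consistent with the paper's toolkit.
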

 
The following statement immediately follows from~\cite[Proposition~4.7]{Wismath-86}.
 
\begin{lemma}
\label{L(BM)}
\begin{itemize}
\item[\textup{(i)}] Every variety of band monoids either contains the variety $\mathbf{LRB\vee RRB}$ or is contained in this variety.
\item[\textup{(ii)}] The lattice $L(\bf LRB \vee RRB)$ has the form shown in Fig.~\textup{\ref{two lattices}a)}.\qed
\end{itemize}
\end{lemma}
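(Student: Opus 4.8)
The plan is to deduce both assertions directly from Wismath's complete determination of the subvariety lattice of the variety $\mathbf{BM}$ of all band monoids in~\cite[Proposition~4.7]{Wismath-86}. The first task is to locate the varieties $\mathbf T$, $\mathbf{SL}$, $\mathbf{LRB}$, $\mathbf{RRB}$ and $\mathbf{LRB\vee RRB}$ inside Wismath's Hasse diagram of $L(\mathbf{BM})$ by matching their defining identities against his labelling of band monoid varieties. Once this correspondence is fixed, the lemma reduces to reading off the shape of the diagram, which is exactly why the statement is asserted to follow immediately from the cited result.

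For part~(ii) I would identify the principal ideal of $L(\mathbf{BM})$ generated by $\mathbf{LRB\vee RRB}$ and check that it consists of precisely the five listed varieties, arranged as in Fig.~\ref{two lattices}a). The only genuine computation is the meet relation $\mathbf{LRB}\cap\mathbf{RRB}=\mathbf{SL}$: a band monoid satisfying both $xy\approx xyx$ and $yx\approx xyx$ satisfies $xy\approx yx$ and so lies in $\mathbf{SL}$, while the inclusions $\mathbf{SL}\subseteq\mathbf{LRB}$ and $\mathbf{SL}\subseteq\mathbf{RRB}$ are immediate. Combined with the covering $\mathbf T\subset\mathbf{SL}$, the incomparability of $\mathbf{LRB}$ and $\mathbf{RRB}$ (witnessed by small left and right regular band monoids), and Wismath's assertion that no further varieties occur in this ideal, this yields the required five-element lattice---a four-element diamond on $\mathbf{SL}$, $\mathbf{LRB}$, $\mathbf{RRB}$, $\mathbf{LRB\vee RRB}$ with the tail $\mathbf T$ attached below $\mathbf{SL}$.

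For part~(i) I would use the global shape of Wismath's lattice, in which $\mathbf{LRB\vee RRB}$ occurs as a cut point: the portion of $L(\mathbf{BM})$ lying below it is exactly the five-element lattice of part~(ii), while every band monoid variety not contained in $\mathbf{LRB\vee RRB}$ lies above it. The substantive input here is Wismath's claim that no band monoid variety is incomparable with $\mathbf{LRB\vee RRB}$; granting this, every band monoid variety is either below or above that variety, which is precisely the dichotomy in~(i).

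The main---and essentially only---obstacle is the bookkeeping of translating Wismath's conventions for naming band monoid varieties into the identity-based definitions of $\mathbf{LRB}$ and $\mathbf{RRB}$ used here. Once this translation is in place, both parts follow by direct inspection of the diagram, needing no argument beyond the elementary meet computation recorded above.
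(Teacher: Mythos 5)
Your proposal is correct and follows essentially the same route as the paper: the paper offers no proof at all beyond the remark that the lemma ``immediately follows from~\cite[Proposition~4.7]{Wismath-86},'' which is exactly the reduction to Wismath's description of $L(\mathbf{BM})$ that you carry out. Your extra details (the meet computation $\mathbf{LRB}\wedge\mathbf{RRB}=\mathbf{SL}$ and the cut-point observation) simply make explicit what the paper leaves to the reader.
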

 
Put
$$
\mathbf E=\var\{x^2\approx x^3,\,x^2y\approx xyx,\,x^2y^2\approx y^2x^2\}.
$$
The following lemma is verified in~\cite[Proposition~4.1(i) and Lemma~3.3(iv)]{Lee-12a}.
 
\begin{lemma}
\label{L(LRB+C_2)}
\begin{itemize}
\item[\textup{(i)}] $\mathbf{LRB}\vee\mathbf C_2=\var\{x^2\approx x^3,\,x^2y\approx xyx\}$.
\item[\textup{(ii)}] The lattice $L({\bf LRB}\vee\mathbf C_2)$ has the form shown in Fig.~\textup{\ref{two lattices}b)}.\qed
\end{itemize}
\end{lemma}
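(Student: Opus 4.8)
The plan is to reduce both parts to the word problems for $\mathbf{LRB}$ and $\mathbf C_2$, following the cited results of~\cite{Lee-12a}. Write $\mathbf V=\var\{x^2\approx x^3,\,x^2y\approx xyx\}$ for the variety on the right-hand side of~(i). The inclusion $\mathbf{LRB}\vee\mathbf C_2\subseteq\mathbf V$ is a routine identity check: substituting $1$ for $y$ in the defining identity $xy\approx xyx$ of $\mathbf{LRB}$ gives $x\approx x^2$, so $\mathbf{LRB}$ satisfies $x^2\approx x^3$ and, applying $xy\approx xyx$ once more, $x^2y\approx xy\approx xyx$; while $\mathbf C_2$ satisfies $x^2\approx x^3$ by definition and $x^2y\approx xyx$ by commutativity. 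Hence both generators of the join lie in $\mathbf V$.

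The reverse inclusion $\mathbf V\subseteq\mathbf{LRB}\vee\mathbf C_2$ is the substantive point, and I would establish it through a normal form. First I would record the word problem for the join: an identity $\mathbf u\approx\mathbf v$ holds in $\mathbf{LRB}\vee\mathbf C_2$ if and only if it holds in both factors, which by Proposition~\ref{word problem C_2} and the classical description of the free left regular band amounts to saying that $\mathbf u$ and $\mathbf v$ have the same sequence of first occurrences of letters and that $\simple(\mathbf u)=\simple(\mathbf v)$, $\mul(\mathbf u)=\mul(\mathbf v)$. Next, to each word $\mathbf w$ with first occurrences $a_1,\dots,a_k$ in this order and multiple letters $b_1,\dots,b_m$ listed in order of first occurrence, I would associate the word $\widehat{\mathbf w}=a_1\cdots a_k\,b_1^2\cdots b_m^2$ and show, using only $x^2\approx x^3$ and $x^2y\approx xyx$, that $\mathbf w\approx\widehat{\mathbf w}$ in $\mathbf V$; the elementary rewriting $xyx\approx x^2y\approx x^3y\approx x^2yx\approx xyx^2=\widehat{xyx}$ is the prototype of the manipulation needed. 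Since $\widehat{\mathbf w}$ depends only on the data that govern the join, two words satisfying the join condition have the same associated word, whence $\mathbf V\models\mathbf u\approx\widehat{\mathbf u}=\widehat{\mathbf v}\approx\mathbf v$. This yields $\mathbf V\subseteq\mathbf{LRB}\vee\mathbf C_2$ and completes~(i).

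For~(ii) I would use the word problem obtained above to list the subvarieties explicitly. The expected members are $\mathbf T$, $\mathbf{SL}$, $\mathbf C_2$, $\mathbf E$, $\mathbf{LRB}$ and $\mathbf{LRB}\vee\mathbf C_2$; here $\mathbf E\supseteq\mathbf C_2$ is cut out inside $\mathbf V$ by the extra identity $x^2y^2\approx y^2x^2$, which fails in $\mathbf{LRB}$ (where it degenerates to $xy\approx yx$), so that $\mathbf{LRB}$ is incomparable with both $\mathbf C_2$ and $\mathbf E$, while $\mathbf{LRB}\wedge\mathbf E=\mathbf{LRB}\wedge\mathbf C_2=\mathbf{SL}$ and $\mathbf{LRB}\vee\mathbf C_2=\mathbf{LRB}\vee\mathbf E$. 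These relations produce the pentagon $\{\mathbf{SL},\mathbf C_2,\mathbf E,\mathbf{LRB},\mathbf{LRB}\vee\mathbf C_2\}$ sitting above $\mathbf T$, as drawn in Fig.~\ref{two lattices}b). I expect the main obstacle to be the hard inclusion in~(i) — more precisely, verifying that the rewriting $\mathbf w\approx\widehat{\mathbf w}$ can always be carried out within $\mathbf V$ — together with the exhaustiveness half of~(ii), namely showing that no subvariety other than those listed can arise. Both are finite, bookkeeping-heavy verifications of the kind performed in~\cite{Lee-12a}, which is why invoking the cited propositions is the most economical route.
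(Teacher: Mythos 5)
Your treatment of part (i) is sound, and it is genuinely more self-contained than the paper, which simply cites~\cite{Lee-12a} for both assertions. The easy inclusion is exactly as you say, and the hard one does go through: from $x^2y\approx xyx$ one obtains $x^2\mathbf q\approx x\mathbf q\,x$ for every word $\mathbf q$, which lets one collapse all occurrences of each letter onto its first occurrence; $x^2\approx x^3$ then normalizes exponents, and a further application of the same identities moves the squares to the end, so every word is equal in $\var\{x^2\approx x^3,\,x^2y\approx xyx\}$ to your word $\widehat{\mathbf w}$, which depends only on $\ini(\mathbf w)$ and $\mul(\mathbf w)$. Since an identity holds in $\mathbf{LRB}\vee\mathbf C_2$ exactly when $\ini(\mathbf u)=\ini(\mathbf v)$ and the claim~\eqref{sim(u)=sim(v) & mul(u)=mul(v)} holds, part (i) follows.

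Part (ii), however, contains a genuine error: your list of subvarieties omits $\mathbf D_1=\var S(xy)$ (see Lemma~\ref{D_n=var S(w)}), so the lattice you describe --- a pentagon sitting above $\mathbf T$, six elements in all --- is not the lattice of Fig.~\ref{two lattices}b), which has \emph{seven} elements, with $\mathbf D_1$ lying strictly between $\mathbf C_2$ and $\mathbf E$. That $\mathbf D_1\subseteq\mathbf{LRB}\vee\mathbf C_2$ follows from your own word problem for the join: if $\ini(\mathbf u)=\ini(\mathbf v)$ and the claim~\eqref{sim(u)=sim(v) & mul(u)=mul(v)} holds then, since simple letters occur exactly once, ${\bf u}_{\mul(\mathbf u)}$ is just $\ini(\mathbf u)$ with the multiple letters deleted, whence ${\bf u}_{\mul(\mathbf u)}={\bf v}_{\mul(\mathbf u)}$ and the identity holds in $\mathbf D_1$ by Proposition~\ref{word problem D_1}. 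Moreover $\mathbf D_1$ differs from all six varieties on your list: it is non-commutative, hence distinct from $\mathbf T$, $\mathbf{SL}$, $\mathbf C_2$; it satisfies $x^2y\approx yx^2$ (both sides have simple part $y$ and multiple letter $x$), an identity that fails in $\mathbf E$ by Proposition~\ref{word problem E} because $h_1^0(x^2y,x)=\lambda\ne y=h_1^0(yx^2,x)$, and fails in $\mathbf{LRB}$ and in the join because the initial parts $xy$ and $yx$ differ. So your ``exhaustiveness'' bookkeeping, carried out correctly, must produce this seventh variety; as proposed, the classification is incomplete and the claimed shape of the lattice is wrong. (Your pentagon $\{\mathbf{SL},\mathbf C_2,\mathbf E,\mathbf{LRB},\mathbf{LRB}\vee\mathbf C_2\}$ is indeed a sublattice --- which is all that non-modularity of \textbf{MON} requires --- but it is not the whole of $L(\mathbf{LRB}\vee\mathbf C_2)$ above $\mathbf T$.)
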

 
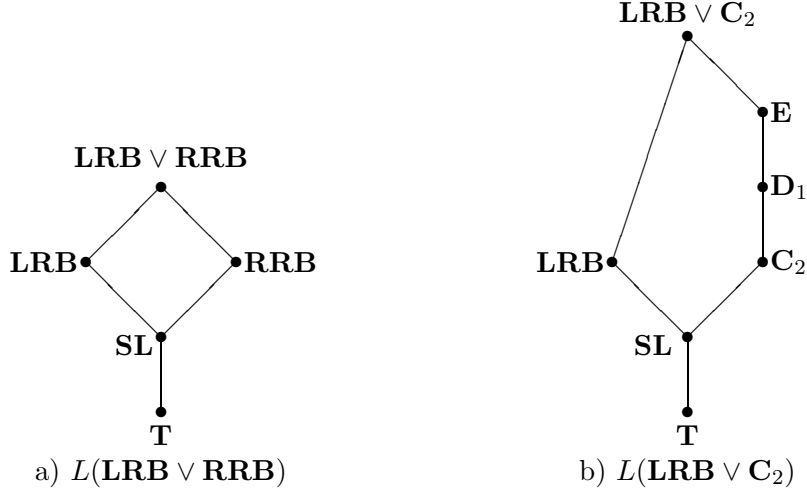
\begin{figure}[htb]
\unitlength=1mm
\linethickness{0.4pt}
\begin{center}
\begin{picture}(98,61)
\put(5,28){\circle*{1.33}}
\put(15,8){\circle*{1.33}}
\put(15,18){\circle*{1.33}}
\put(15,38){\circle*{1.33}}
\put(25,28){\circle*{1.33}}
\put(15,8){\line(0,1){10}}
\put(15,18){\line(1,1){10}}
\put(15,18){\line(-1,1){10}}
\put(5,28){\line(1,1){10}}
\put(25,28){\line(-1,1){10}}
\put(4,28){\makebox(0,0)[rc]{${\bf LRB}$}}
\put(15,42){\makebox(0,0)[cc]{${\bf LRB} \vee {\bf RRB}$}}
\put(26,28){\makebox(0,0)[lc]{${\bf RRB}$}}
\put(14,17){\makebox(0,0)[rc]{${\bf SL}$}}
\put(15,5){\makebox(0,0)[cc]{$\bf T$}}
\put(15,0){\makebox(0,0)[cc]{a) $L(\mathbf{LRB\vee RRB})$}}
\put(85,8){\circle*{1.33}}
\put(85,18){\circle*{1.33}}
\put(75,28){\circle*{1.33}}
\put(95,28){\circle*{1.33}}
\put(95,38){\circle*{1.33}}
\put(95,48){\circle*{1.33}}
\put(85,58){\circle*{1.33}}
\put(85,8){\line(0,1){10}}
\put(95,28){\line(0,1){20}}
\put(85,18){\line(1,1){10}}
\put(85,18){\line(-1,1){10}}
\put(75,28){\line(1,3){10}}
\put(95,48){\line(-1,1){10}}
\put(96,28){\makebox(0,0)[lc]{${\bf C}_2$}}
\put(96,38){\makebox(0,0)[lc]{${\bf D}_1$}}
\put(96,48){\makebox(0,0)[lc]{${\bf E}$}}
\put(85,61){\makebox(0,0)[cc]{${\bf LRB}\vee\mathbf C_2$}}
\put(74,28){\makebox(0,0)[rc]{${\bf LRB}$}}
\put(83,17){\makebox(0,0)[rc]{${\bf SL}$}}
\put(85,5){\makebox(0,0)[cc]{$\bf T$}}
\put(85,0){\makebox(0,0)[cc]{b) $L(\mathbf{LRB}\vee\mathbf C_2)$}}
\end{picture}
\end{center}
\caption{The lattices $L(\mathbf{LRB\vee RRB})$ and $L(\mathbf{LRB}\vee\mathbf C_2)$}
\label{two lattices}
\end{figure}
 
Let \textbf w be a word and $x$ a letter. We denote by $\occ_x({\bf w})$ the number of occurrences of $x$ in ${\bf w}$. If $x\in \con({\bf w})$ and $i\le \occ_x({\bf w})$ then $\ell_i(\mathbf w,x)$ denotes the length of the minimal prefix \textbf p of \textbf w with $\occ_x(\mathbf p)=i$.
 
\begin{example}
\label{example l_i(w,x)}
If $\mathbf w=xyx^2zy$ then, evidently, $\occ_x(\mathbf w)=3$, $\occ_y(\mathbf w)=2$ and $\occ_z(\mathbf w)=1$. Further, the shortest prefixes \textbf p of \textbf w with $\occ_x(\mathbf p)=1$, $\occ_x(\mathbf p)=2$ and $\occ_x(\mathbf p)=3$ are $x$, $xyx$ and $xyx^2$ respectively, whence $\ell_1(\mathbf w,x)=1$, $\ell_2(\mathbf w,x)=3$ and $\ell_3(\mathbf w,x)=4$. Analogously, $\ell_1(\mathbf w,y)=2$, $\ell_2(\mathbf w,y)=6$ and $\ell_1(\mathbf w,z)=5$.
\end{example}
 
Below we often deal with inequalities like $\ell_i(\mathbf w,x)<\ell_j(\mathbf w,y)$. Clearly, this inequality means simply that $i$th occurrence of $x$ in \textbf w precedes $j$th occurrence of $y$ in \textbf w.
 
If \textbf w is a word and $X$ is a set of letters then ${\bf w}_X$ denotes the word obtained from ${\bf w}$ by deleting all letters from $X$. If $X=\{x\}$ then we write ${\bf w}_x$ rather than ${\bf w}_{\{x\}}$.
 
\begin{lemma}
\label{u_mul=v_mul}
If a non-commutative variety of monoids $\mathbf V$ satisfies an identity $\mathbf{u\approx v}$ such that the claim~\eqref{sim(u)=sim(v) & mul(u)=mul(v)} holds then
\begin{equation}
\label{eq u_mul=v_mul}
{\bf u}_{\mul(\mathbf u)} = {\bf v}_{\mul(\mathbf u)}.
\end{equation}
\end{lemma}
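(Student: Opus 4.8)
The plan is to recognize that equation~\eqref{eq u_mul=v_mul} merely asserts that the simple letters occur in the same order in $\mathbf u$ and $\mathbf v$, and then to show that any discrepancy in this order would force $\mathbf V$ to be commutative. First I would note that, by hypothesis~\eqref{sim(u)=sim(v) & mul(u)=mul(v)}, the word $\mathbf u_{\mul(\mathbf u)}$ is obtained from $\mathbf u$ by erasing exactly its multiple letters, so it is the word consisting of the simple letters of $\mathbf u$ listed in their order of occurrence; likewise, since $\mul(\mathbf u)=\mul(\mathbf v)$, the word $\mathbf v_{\mul(\mathbf u)}$ consists of the simple letters of $\mathbf v$ in their order of occurrence. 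Because $\simple(\mathbf u)=\simple(\mathbf v)$, both words contain exactly the same letters, each appearing once. Hence~\eqref{eq u_mul=v_mul} holds if and only if the relative order of any two simple letters is the same in $\mathbf u$ as in $\mathbf v$.

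Next I would argue by contradiction. Suppose~\eqref{eq u_mul=v_mul} fails. Then the two words $\mathbf u_{\mul(\mathbf u)}$ and $\mathbf v_{\mul(\mathbf u)}$, being distinct arrangements of the same set of letters, must disagree on the relative order of some pair of simple letters $x$ and $y$; say $x$ precedes $y$ in $\mathbf u$ while $y$ precedes $x$ in $\mathbf v$.

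The key step, following the substitution technique already used in the proof of Lemma~\ref{group variety}, is to substitute the identity element~$1$ for every letter occurring in $\mathbf u$ and $\mathbf v$ other than $x$ and $y$. Since $x$ and $y$ are simple, they each occur exactly once in $\mathbf u$ and in $\mathbf v$; therefore this substitution collapses $\mathbf u$ to $xy$ and $\mathbf v$ to $yx$. As $\mathbf V$ satisfies $\mathbf u\approx\mathbf v$, it follows that $\mathbf V$ satisfies $xy\approx yx$, i.e., $\mathbf V$ is commutative, contrary to the hypothesis. This contradiction establishes~\eqref{eq u_mul=v_mul}.

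I do not anticipate a serious obstacle here: the argument is short, and the only point requiring a little care is the elementary combinatorial fact that two distinct linear orderings of a common finite set must invert some pair, which supplies the letters $x$ and $y$ on which the commutativity-forcing substitution is performed.
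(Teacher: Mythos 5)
Your proof is correct and follows essentially the same route as the paper's own argument: interpret $\mathbf u_{\mul(\mathbf u)}$ and $\mathbf v_{\mul(\mathbf u)}$ as the sequences of simple letters, locate a pair $x,y\in\simple(\mathbf u)$ whose relative order is inverted, and substitute~1 for all other letters to derive $xy\approx yx$, contradicting non-commutativity. The paper treats the case $|\simple(\mathbf u)|<2$ separately, but your observation that two distinct orderings of a common set must invert some pair covers it implicitly, so nothing is missing.
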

 
\begin{proof}
According to the claim~\eqref{sim(u)=sim(v) & mul(u)=mul(v)}, $\simple(\mathbf u)=\simple(\mathbf v)$ and $\mul(\mathbf u)=\mul(\mathbf v)$. It is evident that the claim \eqref{eq u_mul=v_mul} holds whenever the set $\simple(\mathbf u)$ contains $<2$ letters. Suppose now that $\simple(\mathbf u)$ contains at least two different letters and the claim~\eqref{eq u_mul=v_mul} is false. Then there are letters $x,y\in\simple(\mathbf u)$ such that $\ell_1({\bf u},x)< \ell_1({\bf u},y)$ and $\ell_1({\bf v},x)>\ell_1({\bf v},y)$. One can substitute~1 for all letters occurring in the identity ${\bf u}\approx {\bf v}$ except $x$ and $y$. Then we obtain $xy\approx yx$ contradicting the fact that \textbf V is non-commutative.
\end{proof}
 
\begin{proposition}
\label{word problem D_1}
A non-trivial identity ${\bf u}\approx {\bf v}$ holds in the variety ${\bf D}_1$ if and only if the claims~\eqref{sim(u)=sim(v) & mul(u)=mul(v)} and \eqref{eq u_mul=v_mul} are true.
\end{proposition}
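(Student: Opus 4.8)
The plan is to exploit the identity $\mathbf D_1=\var S(xy)$ furnished by Lemma~\ref{D_n=var S(w)}: a non-trivial identity holds in $\mathbf D_1$ exactly when it holds in the five-element monoid $S(xy)=\{1,x,y,xy,0\}$. The only fact about $S(xy)$ I would use is that a product of its elements is nonzero precisely when the concatenation of the representing words is a subword of $xy$, that is, one of $\lambda,x,y,xy$; in particular every nonzero value is short. I would treat the two implications separately.

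Necessity is immediate from earlier results. By Lemma~\ref{L(D)} we have $\mathbf C_2\subset\mathbf D_1$, so $\mathbf u\approx\mathbf v$ holds in $\mathbf C_2$ and Proposition~\ref{word problem C_2} gives~\eqref{sim(u)=sim(v) & mul(u)=mul(v)}. Since $xy\ne 0=yx$ in $S(xy)$, the variety $\mathbf D_1$ is non-commutative, so Lemma~\ref{u_mul=v_mul} applies and yields~\eqref{eq u_mul=v_mul}.

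For sufficiency I would assume~\eqref{sim(u)=sim(v) & mul(u)=mul(v)} and~\eqref{eq u_mul=v_mul}, write $M=\mul(\mathbf u)=\mul(\mathbf v)$, and verify $\varphi(\mathbf u)=\varphi(\mathbf v)$ for every substitution $\varphi$ of the letters into $S(xy)$ by a short case analysis. If $\varphi$ sends some letter of $\con(\mathbf u)=\con(\mathbf v)$ to $0$, then both sides evaluate to $0$. If $\varphi$ sends some multiple letter $a\in M$ to a nonidentity element, then, because $a$ occurs at least twice in each of $\mathbf u$ and $\mathbf v$, the image of each side contains at least two copies of $x$ or at least two copies of $y$ and hence is not a subword of $xy$; so both sides are $0$ again. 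In the only remaining case every letter of $M$ is sent to $1$ and therefore contributes nothing to the products, whence $\varphi(\mathbf u)=\varphi(\mathbf u_M)$ and $\varphi(\mathbf v)=\varphi(\mathbf v_M)$; but $\mathbf u_M=\mathbf v_M$ by~\eqref{eq u_mul=v_mul}, so the two values agree.

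The heart of the argument is this sufficiency case analysis, and the decisive observation is that a substitution into $S(xy)$ which fails to annihilate one of the sides must send every multiple letter to $1$. This is exactly what renders the order of the multiple letters invisible to $S(xy)$ and reduces the identity to the coincidence of the simple-letter skeletons $\mathbf u_M=\mathbf v_M$; everything else is routine bookkeeping with the short nonzero elements of $S(xy)$.
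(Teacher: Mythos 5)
Your argument is correct, and its necessity half coincides with the paper's: the inclusion $\mathbf C_2\subseteq\mathbf D_1$ together with Proposition~\ref{word problem C_2} gives~\eqref{sim(u)=sim(v) & mul(u)=mul(v)}, and non-commutativity together with Lemma~\ref{u_mul=v_mul} gives~\eqref{eq u_mul=v_mul}. The sufficiency half, however, takes a genuinely different route. You argue semantically: invoking $\mathbf D_1=\var S(xy)$ from Lemma~\ref{D_n=var S(w)}, you reduce the claim to a finite verification in the five-element monoid $S(xy)$, where the decisive point is that any evaluation which does not annihilate both sides must send every multiple letter to $1$, so that only the common simple-letter skeleton $\mathbf u_{\mul(\mathbf u)}=\mathbf v_{\mul(\mathbf u)}$ survives; your case analysis (some letter to $0$; some multiple letter to a non-identity element; all multiple letters to $1$) is exhaustive and each case is sound. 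The paper instead argues syntactically: writing $\mathbf u=\mathbf u_0y_1\mathbf u_1\cdots y_m\mathbf u_m$ and $\mathbf v=\mathbf v_0y_1\mathbf v_1\cdots y_m\mathbf v_m$ (the agreement of the simple-letter orderings being exactly~\eqref{eq u_mul=v_mul}), it derives $\mathbf u\approx\mathbf v$ from the identity system $\{x^2\approx x^3,\,x^2y\approx xyx\approx yx^2\}$, which holds in $\mathbf D_1$ by its definition inside $\mathbf D$. Your route is shorter and more mechanical, but it leans on the imported generation result $\mathbf D_1=\var S(xy)$ (a citation to Lee's work); the paper's route needs only the equational definition of $\mathbf D_1$ and produces an explicit deduction from its identity basis. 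Since Lemma~\ref{D_n=var S(w)} is stated in the paper in any case, your use of it is legitimate.
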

 
\begin{proof}
\emph{Necessity}. The inclusion ${\bf C}_2 \subseteq{\bf D}_1$ and Proposition~\ref{word problem C_2} imply that the identity ${\bf u}\approx {\bf v}$ satisfies the claim~\eqref{sim(u)=sim(v) & mul(u)=mul(v)}. Since the variety $\mathbf D_1$ is non-commutative, Lemma~\ref{u_mul=v_mul} implies that the claim~\eqref{eq u_mul=v_mul} holds too.
 
\smallskip
 
\emph{Sufficiency}. Suppose that the identity ${\bf u}\approx {\bf v}$ satisfies the claims~\eqref{sim(u)=sim(v) & mul(u)=mul(v)} and~\eqref{eq u_mul=v_mul}. Let $\simple({\bf u})=\{y_1,y_2,\dots,y_m\}$. We may assume without loss of generality that
$$
{\bf u}={\bf u}_0y_1{\bf u}_1y_2{\bf u}_2\cdots y_m{\bf u}_m
$$
where $\con({\bf u}_0{\bf u}_1\cdots {\bf u}_m)=\mul({\bf u})$. It follows from the claim~\eqref{sim(u)=sim(v) & mul(u)=mul(v)} that $\simple({\bf v})=\{y_1,y_2$, $\dots,y_m\}$. Moreover, ${\bf v}= {\bf v}_0y_1{\bf v}_1y_2{\bf v}_2\cdots y_m{\bf v}_m$ by the claim~\eqref{eq u_mul=v_mul}. We can apply the claim~\eqref{sim(u)=sim(v) & mul(u)=mul(v)} again and conclude that $\con({\bf u}_0{\bf u}_1\cdots {\bf u}_m)=\con({\bf v}_0{\bf v}_1\cdots {\bf v}_m)$. Now it is easy to see that the identity system $\{x^2\approx x^3,x^2y\approx xyx\approx yx^2\}$ implies the identities
$$
\mathbf u=\mathbf u_0y_1\mathbf u_1y_2\mathbf u_2\cdots y_m\mathbf u_m\approx\mathbf v_0y_1\mathbf v_1y_2\mathbf v_2\cdots y_m\mathbf v_m=\mathbf v,
$$
whence $\mathbf D_1$ satisfies $\mathbf u\approx\mathbf v$.
\end{proof}
 
\begin{lemma}
\label{non-cr and non-commut}
If a variety of monoids ${\bf V}$ is non-completely regular and non-commutative then ${\bf D}_1\subseteq {\bf V}$.
\end{lemma}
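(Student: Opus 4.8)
The plan is to read the inclusion $\mathbf D_1\subseteq\mathbf V$ through the Galois correspondence between varieties and identities: it holds if and only if every identity of $\mathbf V$ is also satisfied by $\mathbf D_1$. So I would fix an arbitrary identity $\mathbf u\approx\mathbf v$ of $\mathbf V$ (trivial ones hold in $\mathbf D_1$ automatically, so assume $\mathbf u\ne\mathbf v$) and aim to show $\mathbf D_1\models\mathbf u\approx\mathbf v$. The point is that the two word-problem descriptions proved just above reduce this to a combinatorial check: by the sufficiency direction of Proposition~\ref{word problem D_1}, it is enough to verify that $\mathbf u\approx\mathbf v$ satisfies the claims~\eqref{sim(u)=sim(v) & mul(u)=mul(v)} and~\eqref{eq u_mul=v_mul}.

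The two hypotheses supply exactly these two claims, in order. First, non-complete regularity gives $\mathbf C_2\subseteq\mathbf V$ by Corollary~\ref{cr or over C_2}; hence $\mathbf u\approx\mathbf v$ holds in $\mathbf C_2$, and since it is non-trivial the necessity direction of Proposition~\ref{word problem C_2} yields the claim~\eqref{sim(u)=sim(v) & mul(u)=mul(v)}. Next, non-commutativity enters: $\mathbf V$ is a non-commutative variety satisfying $\mathbf u\approx\mathbf v$ for which~\eqref{sim(u)=sim(v) & mul(u)=mul(v)} already holds, so Lemma~\ref{u_mul=v_mul} delivers the claim~\eqref{eq u_mul=v_mul}. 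With both claims in hand, Proposition~\ref{word problem D_1} gives $\mathbf D_1\models\mathbf u\approx\mathbf v$, and as $\mathbf u\approx\mathbf v$ was arbitrary this completes the argument.

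I do not expect a genuine obstacle; the substance of the lemma is already packaged inside Lemma~\ref{u_mul=v_mul} and the two word problems, and the only care needed is to invoke them with the correct hypotheses, so that non-complete regularity produces~\eqref{sim(u)=sim(v) & mul(u)=mul(v)} and non-commutativity promotes it to~\eqref{eq u_mul=v_mul}; both hypotheses are indispensable, as the completely regular variety $\mathbf{LRB}$ and the commutative variety $\mathbf C_2$ each fail to contain $\mathbf D_1$. An essentially equivalent route, should one prefer to avoid the word problems, is to note that $\mathbf D_1=\var S(xy)$ by Lemma~\ref{D_n=var S(w)} and that, by Lemma~\ref{S(W) in V}, the inclusion $\mathbf D_1\subseteq\mathbf V$ amounts to $xy$ being an isoterm for $\mathbf V$; the same two hypotheses then rule out any non-trivial identity $xy\approx\mathbf w'$, since $\mathbf C_2\subseteq\mathbf V$ forces $\mathbf w'\in\{xy,yx\}$ and non-commutativity excludes $yx$.
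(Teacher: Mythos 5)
Your proposal is correct and is essentially the paper's own proof: the paper argues by contradiction (taking an identity of $\mathbf V$ that fails in $\mathbf D_1$) but uses exactly your chain of ingredients — Corollary~\ref{cr or over C_2} to get $\mathbf C_2\subseteq\mathbf V$, Proposition~\ref{word problem C_2} for the claim~\eqref{sim(u)=sim(v) & mul(u)=mul(v)}, Lemma~\ref{u_mul=v_mul} via non-commutativity for the claim~\eqref{eq u_mul=v_mul}, and then Proposition~\ref{word problem D_1} — so your direct formulation differs only cosmetically. Your isoterm aside (via Lemmas~\ref{S(W) in V} and~\ref{D_n=var S(w)}) is also a valid shortcut, but it is not the route the paper takes.
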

 
\begin{proof}
Suppose that $\mathbf D_1\nsubseteq\mathbf V$. Then there is an identity $\mathbf{u\approx v}$ that holds in \textbf V but is false in $\mathbf D_1$. Corollary~\ref{cr or over C_2} implies that $\mathbf C_2\subseteq\mathbf V$. Then $\mathbf{u\approx v}$ holds in $\mathbf C_2$, whence the claim~\eqref{sim(u)=sim(v) & mul(u)=mul(v)} holds by Proposition~\ref{word problem C_2}. Now Lemma~\ref{u_mul=v_mul} and the claim that the variety \textbf V is non-commutative imply that the equality~\eqref{eq u_mul=v_mul} is true. Now Proposition~\ref{word problem D_1} applies and we conclude that the identity $\mathbf{u\approx v}$ holds in $\mathbf D_1$, a contradiction.
\end{proof}
 
\begin{lemma}
\label{D_{n+1} nsubseteq X}
If ${\bf X}$ is a non-completely regular variety of monoids and ${\bf D}_{n+1} \nsubseteq {\bf X}$ for some $n$ then ${\bf X}$ satisfies an identity of the form
\begin{equation}
\label{eq D_{n+1} nsubseteq X}
xy_1xy_2x\cdots xy_nx\approx x^{k_1}y_1x^{k_2}y_2x^{k_2}\cdots x^{k_n}y_nx^{k_{n+1}}
\end{equation}
where $k_i > 1$ for some $i$.
\end{lemma}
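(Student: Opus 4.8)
The plan is to exploit the description of $\mathbf D_{n+1}$ as a monoid variety generated by a single Rees quotient. Since $\mathbf X$ is non-completely regular, Corollary~\ref{cr or over C_2} gives $\mathbf C_2\subseteq\mathbf X$. Next, Lemma~\ref{D_n=var S(w)} identifies $\mathbf D_{n+1}=\var S(\mathbf w)$, where $\mathbf w=xy_1xy_2x\cdots xy_nx$. Thus the hypothesis $\mathbf D_{n+1}\nsubseteq\mathbf X$, combined with Lemma~\ref{S(W) in V}, says exactly that $\mathbf w$ is not an isoterm for $\mathbf X$; hence $\mathbf X$ satisfies some non-trivial identity $\mathbf w\approx\mathbf w'$. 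The whole task is then to transform this single identity, or a substitution instance of it, into one of the prescribed shape.

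I would first pin down the shape of $\mathbf w'$. Because $\mathbf C_2\subseteq\mathbf X$, Proposition~\ref{word problem C_2} forces $\simple(\mathbf w')=\{y_1,\dots,y_n\}$ and $\mul(\mathbf w')=\{x\}$; so each $y_i$ occurs exactly once in $\mathbf w'$ and $x$ is its only multiple letter. If $\mathbf X$ is commutative, then $\mathbf X$ already satisfies $\mathbf w\approx x^{n+1}y_1y_2\cdots y_n$, which is of the required form with $k_1=n+1>1$, and we are finished. Otherwise $\mathbf X$ is non-commutative, and Lemma~\ref{u_mul=v_mul} applies to $\mathbf w\approx\mathbf w'$ to give $\mathbf w'_x=\mathbf w_x=y_1y_2\cdots y_n$; this means the simple letters occur in $\mathbf w'$ in the same order as in $\mathbf w$, so that $\mathbf w'=x^{k_1}y_1x^{k_2}y_2\cdots x^{k_n}y_nx^{k_{n+1}}$ with $k_i\ge0$ and $\sum_i k_i\ge2$.

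Now I would split on the exponents. If some $k_i\ge2$, then the identity $\mathbf w\approx\mathbf w'$ is itself already of the required form, so suppose instead that every $k_i\le1$. In that case $\mathbf w'$ is obtained from $\mathbf w$ by deleting the $x$'s lying in the set of slots $S=\{i\mid k_i=0\}$, which is nonempty since $\mathbf w'\ne\mathbf w$. The decisive step is to convert this ``deletion'' identity into a ``squaring'' identity by a single substitution $\varphi$: set $\varphi(x)=x$, put $\varphi(y_i)=x^{[\,i\in S\,]}y_i$ for $i<n$, and put $\varphi(y_n)=x^{[\,n\in S\,]}y_nx^{[\,n+1\in S\,]}$, where $[\,i\in S\,]$ equals $1$ if $i\in S$ and $0$ otherwise. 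A slot-by-slot bookkeeping shows that $\varphi$ raises every deleted slot of $\mathbf w'$ back to exponent $1$, so $\varphi(\mathbf w')=\mathbf w$, while it raises each slot of $\mathbf w$ indexed by $S$ to exponent $2$, so $\varphi(\mathbf w)=x^{k_1'}y_1x^{k_2'}y_2\cdots x^{k_n'}y_nx^{k_{n+1}'}$ with $k_i'=1+[\,i\in S\,]$. Applying $\varphi$ to $\mathbf w\approx\mathbf w'$ therefore yields the identity $\mathbf w\approx\varphi(\mathbf w)$, which has the required form with $k_i'=2>1$ for every $i\in S$. I expect this last construction — designing $\varphi$ so that it simultaneously restores the missing $x$'s on one side of the identity and manufactures squares on the other — to be the main obstacle; everything else is a direct application of the quoted results.
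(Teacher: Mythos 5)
Your proposal is correct and follows essentially the same route as the paper: both reduce to a non-trivial identity $\mathbf w\approx\mathbf w'$ via Lemmas~\ref{S(W) in V} and~\ref{D_n=var S(w)}, pin down $\mathbf w'=x^{k_1}y_1\cdots x^{k_n}y_nx^{k_{n+1}}$ (you via Proposition~\ref{word problem C_2} and Lemma~\ref{u_mul=v_mul}, the paper via Lemma~\ref{non-cr and non-commut} and Proposition~\ref{word problem D_1}, which amounts to the same thing), and then apply the identical substitution trick --- the paper's ``substitute $xy_i$ for $y_i$ whenever $k_i=0$, and multiply by $x$ on the right if $k_{n+1}=0$'' is exactly your $\varphi$ with the final factor folded into $\varphi(y_n)$.
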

 
\begin{proof}
If the variety $\bf X$ is commutative then it satisfies the identity
$$
xy_1xy_2x\cdots xy_nx\approx x^{n+1}y_1y_2\cdots y_n,
$$
and we are done. Suppose now that $\bf X$ is non-commutative. Then ${\bf X}$ satisfies a non-trivial identity of the form $xy_1xy_2x\cdots xy_nx\approx {\bf w}$ by Lemmas~\ref{S(W) in V} and~\ref{D_n=var S(w)}. Now Lemma~\ref{non-cr and non-commut} applies with the conclusion that $\mathbf D_1 \subseteq\mathbf X$. According to Proposition~\ref{word problem D_1},
$$
{\bf w}= x^{k_1}y_1x^{k_2}y_2x^{k_2}\cdots y_nx^{k_{n+1}}.
$$
If $k_i > 1$ for some $i$ then we are done. Suppose that $k_i \le 1$ for all $i$. There is $1\le i\le n+1$ with $k_i=0$ because the identity $xy_1xy_2x\cdots xy_nx\approx\mathbf w$ is trivial otherwise. Substitute $xy_i$ for $y_i$ in this identity for all $i$ such that $k_i=0$. If $k_{n+1}=0$ then we multiply the resulted identity by $x$ on the right. Thus, we obtain an identity of the form~\eqref{eq D_{n+1} nsubseteq X} where $k_i > 1$ for some $i$.
\end{proof}
 
\section{$k$-decomposition of a word and related notions}
\label{decomposition}
 
Here we introduce a series of notions and examine their properties. These notions and results play a key role in the most complicated part of the proof of Theorem~\ref{main result} in Section~\ref{sufficiency: K}.
 
For a word \textbf u and letters $x_1,x_2,\dots,x_k\in \con({\bf u})$, let ${\bf u}(x_1,x_2,\dots,x_k)$ denote the word obtained from \textbf u by retaining the letters $x_1,x_2,\dots,x_k$. Equivalently,
$$
{\bf u}(x_1,x_2,\dots,x_k)={\bf u}_{\con({\bf u})\setminus \{x_1,x_2,\dots,x_k\}}.
$$
Let $\bf w$ be a word and $\simple(\mathbf w)=\{t_1,t_2,\dots, t_m\}$. We can assume without loss of generality that $\mathbf w(t_1,t_2,\dots, t_m)=t_1t_2\cdots t_m$. Then
\begin{equation}
\label{blocks and dividers}
\mathbf w = t_0\mathbf w_0 t_1 \mathbf w_1 \cdots t_m \mathbf w_m
\end{equation}
where $\mathbf w_0,\mathbf w_1,\dots,\mathbf w_m$ are possibly empty words and $t_0=\lambda$. The words $\mathbf w_0$, $\mathbf w_1$, \dots, $\mathbf w_m$ are called 0-\emph{blocks} of a word $\bf w$, while $t_0,t_1,\dots,t_m$ are said to be 0-\emph{dividers} of $\mathbf w$. The representation of the word \textbf w as a product of alternating 0-dividers and 0-blocks, starting with the 0-divider $t_0$ and ending with the 0-block $\mathbf w_m$ is called a 0-\emph{decomposition} of the word \textbf w.
 
Let now $k$ be a natural number. We define a $k$-decomposition of $\bf w$ by induction on $k$. Let~\eqref{blocks and dividers} be a \mbox{$(k-1)$}-decomposition of the word $\bf w$ with \mbox{$(k-1)$}-blocks $\mathbf w_0,\mathbf w_1,\dots,\mathbf w_m$ and \mbox{$(k-1)$}-dividers $t_0,t_1,\dots,t_m$. For any $i=0,1,\dots,m$, let $s_{i1},s_{i2},\dots, s_{ir_i}$ be all simple in the word $\mathbf w_i$ letters that does not occur in the word \textbf w to the left of $\mathbf w_i$. We can assume that $\mathbf w_i(s_{i1},s_{i2},\dots,s_{ir_i})=s_{i1}s_{i2}\cdots s_{ir_i}$. Then
\begin{equation}
\label{blocks and dividers within block}
\mathbf w_i = \mathbf v_{i0} s_{i1} \mathbf v_{i1} s_{i2} \mathbf v_{i2} \cdots s_{ir_i} \mathbf v_{ir_i}
\end{equation}
for possibly empty words $\mathbf v_{i0},\mathbf v_{i1}, \dots,\mathbf v_{ir_i}$. Put $s_{i0}=t_i$. The words $\mathbf v_{i0},\mathbf v_{i1}, \dots$, $\mathbf v_{ir_i}$ are called $k$-\emph{blocks} of the word $\bf w$, while the letters $s_{i0},s_{i1},\dots, s_{ir_i}$ are said to be $k$-\emph{dividers} of $\mathbf w$. 
 
\begin{remark}
\label{only first occurrences}
Note that only first occurrence of a letter in a given word might be a $k$-divider of this word for some $k$. In view of this observation, below we use expressions like``a letter $x$ is (or is not) a $k$-divider of a word \textbf w'' meaning that first occurrence of $x$ in \textbf w has the specified property.
\end{remark}
 
For any $i=0,1,\dots,m$, we represent \mbox{$(k-1)$}-block $\mathbf w_i$ in the form~\eqref{blocks and dividers within block}. As a result, we obtain the representation of the word $\bf w$ as a product of alternating $k$-dividers and $k$-blocks, starting with the $k$-divider $s_{00}=t_0$ and ending with the $k$-block $\mathbf v_{mr_m}$. This representation is called a $k$-\emph{decomposition of the word} $\bf w$. 
 
\begin{remark}
\label{max decomposition}
Since the length of the word $\bf w$ is finite, there is a number $k$ such that the $k$-decomposition of $\bf w$ coincides with its $n$-decompositions for all $n>k$.
\end{remark}
 
For reader convenience, we illustrate the notions of $k$-blocks, $k$-dividers and $k$-decomposition of a word by the following
 
\begin{example}
\label{example decompositions}
Let $\mathbf w=xyxzytszxs$. A unique simple letter in \textbf w is $t$. Therefore, the 0-decomposition of \textbf w has the form
\begin{equation}
\label{example 0-decomposition}
\lambda\cdot\underline{xyxzy}\cdot t\cdot\underline{szxs}
\end{equation}
(here and below throughout this example we underline blocks to distinguish them from dividers). A unique simple letter of the most left 0-block $xyxzy$ is $z$; the 0-block $szxs$ contains two simple letters, namely $z$ and $x$ but both these letters occur in \textbf w to the left of this block. Therefore, the 1-decomposition of \textbf w has the form
$$
\lambda\cdot\underline{xyx}\cdot z\cdot\underline y\cdot t\cdot\underline{szxs}.
$$
Analogous arguments show that the 2-decomposition of \textbf w has the form
$$
\lambda\cdot\underline x\cdot y\cdot\underline x\cdot z\cdot\underline y\cdot t\cdot\underline{szxs}
$$
and if $k\ge 3$ then the $k$-decomposition of \textbf w has the form
$$
\lambda\cdot\underline\lambda\cdot x\cdot\underline\lambda\cdot y\cdot\underline x\cdot z\cdot\underline y\cdot t\cdot\underline{szxs}.
$$
\end{example}
 
For a given word \textbf w, a letter $x\in\con(\mathbf w)$, a natural number $i\le\occ_x(\mathbf w)$ and an integer $k\ge 0$, we denote by $h_i^k(\mathbf w,x)$ the right-most $k$-divider of \textbf w that precedes $i$th occurrence of $x$ in \textbf w. The (possibly empty) letter $h_i^k(\mathbf w,x)$ is called an \mbox{$(i,k)$}-\emph{restrictor} of the letter $x$ in the word \textbf w. This notion is illustrated by the following
 
\begin{example}
\label{example h_i^k(w,x)}
Let \textbf w be the same word as in Example~\ref{example decompositions}. The 0-decomposition of \textbf w has the form~\eqref{example 0-decomposition}. We see that the right-most 0-divider of \textbf w that precedes the first two occurrences of $x$, the two occurrences of $y$, and the first occurrences of $z$ and $t$ is $\lambda$, while the right-most 0-divider of \textbf w that precedes third occurrence of $x$, second occurrence of $z$ and both occurrences of $s$ is $t$. This means that $h_1^0(\mathbf w,x)=h_2^0(\mathbf w,x)=\lambda$, $h_3^0(\mathbf w,x)=t$, $h_1^0(\mathbf w,y)=h_2^0(\mathbf w,y)=\lambda$, $h_1^0(\mathbf w,z)=\lambda$, $h_2^0(\mathbf w,z)=t$, $h_1^0(\mathbf w,s)=h_2^0(\mathbf w,s)=t$ and $h_1^0(\mathbf w,t)=\lambda$. Analogously, based on Example~\ref{example decompositions}, it is easy to find all other restrictors of letters in the word \textbf w. Results are presented in Table~\ref{table h_i^k(w,x)}.
\end{example}
 
\begin{table}[tbh]
\caption{Restrictors of letters in the word $xyxzytszxs$}
\begin{center}
\begin{tabular}{|c|c|c|c||c|c|c|c|}
\hline
\rule{0pt}{9pt}$a$&$k$&$i$&$h_i^k(\mathbf w,a)$&$a$&$k$&$i$&$h_i^k(\mathbf w,a)$\\
\hline
&&1&$\lambda$&&0&1&$\lambda$\\
\cline{3-4}
\cline{7-8}
&0&2&$\lambda$&&&2&$t$\\
\cline{3-4}
\cline{6-8}
&&3&$t$&&1&1&$\lambda$\\
\cline{2-4}
\cline{7-8}
&&1&$\lambda$&$z$&&2&$t$\\
\cline{3-4}
\cline{6-8}
&1&2&$\lambda$&&2&1&$y$\\
\cline{3-4}
\cline{7-8}
$x$&&3&$t$&&&2&$t$\\
\cline{2-4}
\cline{6-8}
&&1&$\lambda$&&$\ge 3$&1&$y$\\
\cline{3-4}
\cline{7-8}
&2&2&$y$&&&2&$t$\\
\cline{3-8}
&&3&$t$&&0&1&$t$\\
\cline{2-4}
\cline{7-8}
&&1&$\lambda$&&&2&$t$\\
\cline{3-4}
\cline{6-8}
&$\ge 3$&2&$y$&&1&1&$t$\\
\cline{3-4}
\cline{7-8}
&&3&$t$&$s$&&2&$t$\\
\cline{1-4}
\cline{6-8}
&0&1&$\lambda$&&2&1&$t$\\
\cline{3-4}
\cline{7-8}
&&2&$\lambda$&&&2&$t$\\
\cline{2-4}
\cline{6-8}
&1&1&$\lambda$&&$\ge 3$&1&$t$\\
\cline{3-4}
\cline{7-8}
$y$&&2&$z$&&&2&$t$\\
\cline{2-8}
&2&1&$\lambda$&&0&1&$\lambda$\\
\cline{3-4}
\cline{6-8}
&&2&$z$&$t$&1&1&$z$\\
\cline{2-4}
\cline{6-8}
&$\ge 3$&1&$x$&&2&1&$z$\\
\cline{3-4}
\cline{6-8}
&&2&$z$&&$\ge 3$&1&$z$\\
\hline
\end{tabular}
\end{center}
\label{table h_i^k(w,x)}
\end{table}
 
\begin{lemma}
\label{simple observations}
Let $\mathbf w$ be a word, $t$ be a letter and $k,r$ be numbers with $r<k$.
\begin{itemize}
\item[\textup{(i)}] If $t$ is an $r$-divider of $\bf w$ then $t$ is a $k$-divider of $\bf w$ too.
\item[\textup{(ii)}] If $h_1^k({\bf w},x)=h_2^k({\bf w},x)$ then $h_1^r({\bf w},x)=h_2^r({\bf w},x)$ as well.
\item[\textup{(iii)}] If $t_0\mathbf w_0t_1\mathbf w_1\cdots t_m\mathbf w_m$ is the $k$-decomposition of $\bf w$ and $m>0$ then $t_m\in\simple(\mathbf w)$.
\end{itemize}
\end{lemma}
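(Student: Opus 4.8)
The plan is to read off all three assertions from the inductive definition of the $k$-decomposition, proving part~(i) directly, deducing part~(ii) from it, and reserving a separate induction for part~(iii). For part~(i) I would argue straight from the construction: in passing from the $(k-1)$-decomposition to the $k$-decomposition, every $(k-1)$-divider $t_i$ is kept, since by definition $s_{i0}=t_i$ is one of the new $k$-dividers. Thus every $(k-1)$-divider is a $k$-divider, and iterating this from level $r$ up to level $k$ shows that every $r$-divider of $\mathbf w$ is a $k$-divider. Equivalently, the set of positions occupied by $r$-dividers of $\mathbf w$ is contained in the set of positions occupied by its $k$-dividers; this containment is the only consequence of~(i) that the remaining parts will use.

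For part~(ii) I would combine this containment with a position count. Let $P_1<P_2$ be the positions of the first and second occurrences of $x$. The equality $h_1^k(\mathbf w,x)=h_2^k(\mathbf w,x)$ forces that $\mathbf w$ has no $k$-divider in the segment $[P_1,P_2)$: if some $k$-divider occupied a position $Q\in[P_1,P_2)$, then the right-most $k$-divider before $P_2$ would sit at a position $\ge P_1$, whereas $h_1^k(\mathbf w,x)$ sits before $P_1$, contradicting the assumed equality. By part~(i) there is then no $r$-divider in $[P_1,P_2)$ either, so the right-most $r$-divider before $P_2$ in fact occurs before $P_1$ and hence coincides with the right-most $r$-divider before $P_1$ (both being the empty divider $t_0=\lambda$ if no such $r$-divider exists). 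This is exactly the claim $h_1^r(\mathbf w,x)=h_2^r(\mathbf w,x)$.

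Part~(iii) is where the genuine work lies, and I would prove it by induction on $k$. The base case is the $0$-decomposition, whose non-empty dividers $t_1,\dots,t_m$ are by definition precisely the simple letters of $\mathbf w$ listed in order, so $t_m\in\simple(\mathbf w)$ whenever $m>0$. For the inductive step I would write the $(k-1)$-decomposition as $\mathbf w=T_0W_0\cdots T_MW_M$ and recall that the $k$-decomposition refines each block $W_i$ by those letters that are simple in $W_i$ and absent from $\mathbf w$ to the left of $W_i$. The last $k$-divider arises within the last block $W_M$: if $W_M$ contributes at least one such fresh simple letter, the last $k$-divider is the last of them, and being simple in $W_M$, absent to the left of $W_M$, and — decisively — having no occurrence to its right because $W_M$ is a suffix of $\mathbf w$, this letter occurs exactly once in $\mathbf w$ and so lies in $\simple(\mathbf w)$; otherwise $W_M$ is not split and the last $k$-divider equals the last $(k-1)$-divider $T_M$, which is non-empty because $m>0$, whence $M>0$ and the induction hypothesis yields $T_M\in\simple(\mathbf w)$. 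The main obstacle here is purely the bookkeeping: one must verify that the right-most divider is correctly located after refinement and, above all, that the last $(k-1)$-block $W_M$ really is a suffix of $\mathbf w$, so that ``simple in $W_M$ and absent to the left'' upgrades to ``simple in $\mathbf w$''. Once this suffix property is secured the conclusion of~(iii) is immediate, and parts~(i) and~(ii) are essentially formal.
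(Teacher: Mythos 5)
Your proof is correct, but it takes a genuinely different route from the paper's, chiefly in part (iii). The paper dismisses (i) and (ii) as obvious, and your arguments -- dividers persist under refinement, hence an interval free of $k$-dividers is free of $r$-dividers, so the two restrictors of $x$ coincide -- are precisely what that remark hides. For (iii), the paper argues by contradiction with restrictors: assuming $t_m\in\mul(\mathbf w)$, it picks the least $p\le k$ for which $t_m$ is a $p$-divider but not a $(p-1)$-divider, notes that this choice forces $h_1^{p-1}(\mathbf w,t_m)\ne h_2^{p-1}(\mathbf w,t_m)$, and then puts $t_{m+1}=h_2^{p-1}(\mathbf w,t_m)$; by part (i) the letter $t_{m+1}$ is a $k$-divider, so, $t_m$ being the last $k$-divider of $\mathbf w$, the first occurrence of $t_{m+1}$ precedes that of $t_m$, which forces $h_1^{p-1}(\mathbf w,t_m)=t_{m+1}=h_2^{p-1}(\mathbf w,t_m)$, a contradiction. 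You instead run an induction on $k$: the base case is the definition of the $0$-decomposition, and the inductive step splits according to whether the last $(k-1)$-block contributes a fresh simple letter (then the suffix property of that block upgrades ``simple in the block and absent to the left'' to ``simple in $\mathbf w$'') or not (then the last $k$-divider equals the last $(k-1)$-divider and the induction hypothesis finishes). Both arguments are sound; the paper's is shorter and deliberately rehearses the restrictor manipulation it announces it will reuse many times later, while yours is more self-contained and makes explicit the structural reason the lemma holds, namely that the last block is a suffix of the word.
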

 
\begin{proof}
The claims~(i) and~(ii) are obvious. One can verify the claim~(iii). Suppose that $t_m\in \mul(\mathbf w)$. Then $t_m$ is not a 0-divider of $\bf w$. Let $p$ be the least natural number such that $t_m$ is a $p$-divider but not a \mbox{$(p-1)$}-divider of $\bf w$. Evidently, $p\le k$.
 
Suppose that $h_1^{p-1}(\mathbf w,t_m)=h_2^{p-1}(\mathbf w,t_m)$. This means that there are no \mbox{$(p-1)$}-dividers in \textbf w between the first and the second occurrences of $t_m$ in \textbf w. In other words, the first and the second occurrences of $t_m$ in \textbf w lie in the same \mbox{$(p-1)$}-block of \textbf w. Therefore, $t_m$ is not simple in this \mbox{$(p-1)$}-block. In particular, $t_m$ is not a $p$-divider of $\bf w$, contradicting the choice of $t_m$. Thus, $h_1^{p-1}(\mathbf w,t_m)\ne h_2^{p-1}(\mathbf w,t_m)$. Note that the arguments of this paragraph is very typical. Below we use arguments like these many times, without repeating them explicitly.
 
Note that $t_m\ne h_2^{p-1}(\mathbf w,t_m)$ because $t_m$ is not a \mbox{$(p-1)$}-divider of $\bf w$. Put $t_{m+1}= h_2^{p-1}(\mathbf w,t_m)$. Since $p-1<k$, the claim~(i) implies that $t_{m+1}$ is a $k$-divider of $\bf w$. The last $k$-divider of $\bf w$ is $t_m$. Therefore, first occurrence of $t_{m+1}$ in \textbf w precedes first occurrence of $t_m$ in \textbf w. Therefore, $h_1^{p-1}(\mathbf w,t_m)=t_{m+1}= h_2^{p-1}(\mathbf w,t_m)$, a contradiction.
\end{proof}
 
For a given word \textbf w and a letter $x\in\con(\mathbf w)$, we define some number that is called a \emph{depth} of $x$ in $\bf w$ and is denoted by $D({\bf w},x)$. If $x\in\simple(\mathbf w)$ then we put $D({\bf w},x)=0$. Suppose now that $x\in\mul(\mathbf w)$. If there is a natural $k$ such that the first and the second occurrences of $x$ in \textbf w lie in different \mbox{$(k-1)$}-blocks of \textbf w then the depth of $x$ in $\bf w$ equals the minimal number $k$ with this property. Finally, if, for any natural $k$, the first and the second occurrences of $x$ in \textbf w lie in the same $k$-block of \textbf w then we put $D({\bf w},x)=\infty$. In other words, $D({\bf w},x)=k$ if and only if $h_1^{k-1}(\mathbf w, x)\ne h_2^{k-1}(\mathbf w, x)$ and $k$ is the least number with this property, while $D({\bf w},x)=\infty$ if and only if $h_1^{k-1}(\mathbf w, x)=h_2^{k-1}(\mathbf w, x)$ for any $k$. The definition of the depth of a letter in a word is illustrated by the following
 
\begin{example}
\label{example D(w,x)}
As in Examples~\ref{example decompositions} and~\ref{example h_i^k(w,x)}, put $\mathbf w=xyxzytszxs$. Here we systematically use information about restrictors of letters in the word \textbf w indicated in Table~\ref{table h_i^k(w,x)}. In particular, in view of this table, $h_1^k(\mathbf w,x)=\lambda$ for all $k$, while $h_2^0(\mathbf w,x)=h_2^1(\mathbf w,x)=\lambda$ and $h_2^2(\mathbf w,x)=y$. Therefore, $D(\mathbf w,x)=3$. Further, $h_1^0(\mathbf w,y)=h_2^0(\mathbf w,y)=\lambda$, $h_1^1(\mathbf w,y)=\lambda$ and $h_2^1(\mathbf w,y)=z$. Hence $D(\mathbf w,y)=2$. The equalities $h_1^0(\mathbf w,z)=\lambda$ and $h_2^0(\mathbf w,z)=t$ imply that $D(\mathbf w,z)=1$. Further, $h_1^k({\bf w},s)=h_2^k({\bf w},s)= t$ for each $k\ge 0$, whence $D(\mathbf w,s)=\infty$. Finally, $D(\mathbf w,t)=0$ because $t\in\simple(\mathbf w)$.
\end{example}
 
The following criterion for a letter of a word to be a $k$-divider is often used in the proof of Theorem~\ref{main result}.
 
\begin{lemma}
\label{k-divider and depth}
A letter $t$ is a $k$-divider of a word $\bf w$ if and only if $D(\mathbf w,t)\le k$.
\end{lemma}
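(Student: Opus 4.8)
The plan is to reduce the biconditional to the single equation $p_t=D(\mathbf w,t)$, where $p_t$ denotes the least integer $k$ for which $t$ is a $k$-divider (with $p_t=\infty$ if $t$ is never a divider). Indeed, Lemma~\ref{simple observations}(i) shows that the collection of $k$-dividers only grows with $k$, so the statement ``$t$ is a $k$-divider'' is equivalent to ``$k\ge p_t$''. Once $p_t=D(\mathbf w,t)$ is established as an equality in $\{0,1,2,\dots\}\cup\{\infty\}$, the desired chain ``$t$ is a $k$-divider'' $\iff$ ``$k\ge p_t$'' $\iff$ ``$D(\mathbf w,t)\le k$'' follows for every (finite) $k$, the infinite value of $p_t$ being matched by $D(\mathbf w,t)=\infty$.

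First I would dispose of the case $t\in\simple(\mathbf w)$: here $D(\mathbf w,t)=0$ by definition, and $t$ is a $0$-divider directly from the definition of the $0$-decomposition, since the $0$-dividers are exactly $\lambda$ together with the simple letters; hence $p_t=0=D(\mathbf w,t)$. Thus I may assume $t\in\mul(\mathbf w)$, whence $t$ is not a $0$-divider and both $p_t\ge 1$ and $D(\mathbf w,t)\ge 1$.

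Then I would prove the two inequalities separately. For $D(\mathbf w,t)\le p_t$ (assuming $p_t$ finite): since $t$ is a $p_t$-divider but not a $(p_t-1)$-divider, inspection of the inductive step of the construction forces $t$ to be one of the new dividers $s_{ij}$ with $j\ge 1$, so that $t$ is simple in some $(p_t-1)$-block $\mathbf w_i$ and does not occur to the left of $\mathbf w_i$; consequently this occurrence is the first occurrence of $t$, while the second occurrence lies in a strictly later $(p_t-1)$-block. This yields $h_1^{p_t-1}(\mathbf w,t)\ne h_2^{p_t-1}(\mathbf w,t)$, and hence $D(\mathbf w,t)\le p_t$ by the definition of depth. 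For the reverse inequality $p_t\le D(\mathbf w,t)$ (assuming $d:=D(\mathbf w,t)$ finite): the defining property $h_1^{d-1}(\mathbf w,t)\ne h_2^{d-1}(\mathbf w,t)$ says that the first and second occurrences of $t$ lie in different $(d-1)$-blocks. If the first occurrence is itself a $(d-1)$-divider then $p_t\le d-1$; otherwise it lies inside some $(d-1)$-block $\mathbf v$, where $t$ is simple (the second and all later occurrences are in strictly later blocks) and does not occur to the left (it is the first occurrence), so the construction makes $t$ a $d$-divider and $p_t\le d$. The two infinite cases follow by contraposition from these same arguments.

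The main obstacle is the careful bookkeeping in the inequality $D(\mathbf w,t)\le p_t$: one must check that the two restrictors are distinct as \emph{letters} and not merely attached to distinct positions. This is exactly where Remark~\ref{only first occurrences} is needed, since only the first occurrence of a letter can serve as a divider; therefore the divider opening the block of the first occurrence of $t$ and the divider opening the (strictly later) block of the second occurrence are different letters, which is what ``$h_1^{p_t-1}(\mathbf w,t)\ne h_2^{p_t-1}(\mathbf w,t)$'' asserts. The translation between ``no $(p-1)$-divider occurs between the first two occurrences'' and ``both occurrences lie in the same $(p-1)$-block'' already appears verbatim in the proof of Lemma~\ref{simple observations}(iii), so I would invoke that template rather than re-derive it.
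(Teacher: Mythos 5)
Your proof is correct and takes essentially the same route as the paper's: both rest on the equivalence between ``$t$ is a $k$-divider'' and ``the first and second occurrences of $t$ are separated by a $(k-1)$-divider,'' i.e.\ $h_1^{k-1}(\mathbf w,t)\ne h_2^{k-1}(\mathbf w,t)$, combined with the monotonicity statements of Lemma~\ref{simple observations}. The paper records this as a terse chain of equivalences for each fixed $k$, while you repackage it as the equality $p_t=D(\mathbf w,t)$ proved by two separate inequalities; this is the same argument written out in more detail (and, if anything, more careful about the case where $t$ is already a lower-level divider, which the paper's phrasing glosses over).
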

 
\begin{proof}
This statement is evident whenever $k=0$ because both the property of $t$ to be a 0-divider of \textbf w and the equality $D(\mathbf w,t)=0$ are equivalent to the claim that $t$ is simple in \textbf w. Further, if $k>0$ then a property of a letter $t$ to be a $k$-divider of \textbf w is equivalent to the claim that the first and the second occurrences of $t$ lie in different \mbox{$(k-1)$}-blocks of \textbf w. In turn, the last claim is equivalent to the non-equality $h_1^{k-1}(\mathbf w,t)\ne h_2^{k-1}(\mathbf w,t)$, i.e., to the required statement that $D(\mathbf w,t)\le k$.
\end{proof}
 
The words $\bf u$ and $\bf v$ are said to be $k$-\emph{equivalent} if these words have the same set of $k$-dividers and these $k$-dividers appear in \textbf u and in \textbf v in the same order.
 
\begin{lemma}
\label{k-equivalent}
Let $k$ be a non-negative integer. Words $\bf u$ and $\bf v$ are $k$-equivalent if and only if the claim~\eqref{sim(u)=sim(v) & mul(u)=mul(v)} is true and, for any $x\in\con(\mathbf{uv})$, $h_1^k({\bf u},x)= h_1^k({\bf v},x)$ whenever either $D({\bf u},x)\le k$ or $D({\bf v},x)\le k$.
\end{lemma}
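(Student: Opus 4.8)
The plan is to translate $k$-equivalence into two pieces of data recorded by the restrictors: the set of $k$-dividers and their left-to-right order. First I would record two structural facts. By Lemma~\ref{k-divider and depth} the $k$-dividers of a word $\mathbf w$ are exactly the letters $x$ with $D(\mathbf w,x)\le k$, so the clause ``$\mathbf u$ and $\mathbf v$ have the same set of $k$-dividers'' is literally the condition $D(\mathbf u,x)\le k\Leftrightarrow D(\mathbf v,x)\le k$ for every letter $x$. Secondly, if $x$ is a $k$-divider of $\mathbf w$ with $k$-decomposition $t_0\mathbf w_0t_1\cdots t_m\mathbf w_m$, then its first occurrence is some $t_j$, and the right-most $k$-divider preceding it is $t_{j-1}$ (with $t_0=\lambda$); thus $h_1^k(\mathbf w,x)$ is exactly the $k$-divider immediately preceding $x$ in the divider sequence. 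Since on a finite chain the immediate-predecessor map recovers the order — the first divider is the unique one sent to $\lambda$, after which $t_1,t_2,\dots$ are read off in turn — knowing $h_1^k(\mathbf w,x)$ for all $k$-dividers $x$ is the same as knowing their order.

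For the ``if'' part I would assume the claim~\eqref{sim(u)=sim(v) & mul(u)=mul(v)} together with the restrictor condition and first prove that $\mathbf u$ and $\mathbf v$ have the same $k$-dividers. The engine is that every value of $h_1^k(\mathbf w,\cdot)$ is, by definition, a $k$-divider of $\mathbf w$ (or $\lambda$). Let $a$ be a proper $k$-divider of $\mathbf u$. If $a$ is not the last one, let $b$ be the next $k$-divider; then $D(\mathbf u,b)\le k$, so the restrictor condition applies to $b$ and gives $h_1^k(\mathbf v,b)=h_1^k(\mathbf u,b)=a$, and being a value of $h_1^k(\mathbf v,\cdot)$ the letter $a$ is a $k$-divider of $\mathbf v$. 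If $a$ is the last $k$-divider of $\mathbf u$, then Lemma~\ref{simple observations}(iii) forces $a$ to be simple in $\mathbf u$, hence simple in $\mathbf v$ by the claim~\eqref{sim(u)=sim(v) & mul(u)=mul(v)}, and so a $k$-divider of $\mathbf v$. By symmetry the two sets of $k$-dividers coincide; then the restrictor condition applies to every common $k$-divider, so the predecessor maps agree, and by the dictionary of the first paragraph so do the orders. Hence $\mathbf u$ and $\mathbf v$ are $k$-equivalent.

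The ``only if'' part is routine. If $\mathbf u$ and $\mathbf v$ are $k$-equivalent their $k$-divider sets agree, i.e. $D(\mathbf u,x)\le k\Leftrightarrow D(\mathbf v,x)\le k$; and whenever $D(\mathbf u,x)\le k$ or $D(\mathbf v,x)\le k$ the letter $x$ is a common $k$-divider, so $h_1^k(\mathbf u,x)$ and $h_1^k(\mathbf v,x)$ both equal the common immediate predecessor of $x$ and hence coincide. The claim~\eqref{sim(u)=sim(v) & mul(u)=mul(v)} itself records the matching of the simple and multiple letters — content-level information invisible to the restrictors — and serves as the common backbone of both directions.

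I expect the one genuine difficulty to be the set-equality step in the ``if'' direction. The restrictor condition constrains only predecessors, so of itself it says nothing about a $k$-divider that has no successor; the clean way around this is Lemma~\ref{simple observations}(iii), that the last divider of any decomposition is simple, which lets the claim~\eqref{sim(u)=sim(v) & mul(u)=mul(v)} handle exactly the single case the restrictors cannot reach. Pleasantly, this makes the whole argument induction-free: everything rests on the predecessor/order dictionary together with the fact that restrictor values are themselves dividers.
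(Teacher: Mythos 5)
Your argument is correct and essentially identical to the paper's: your sufficiency proof rests on the same two pillars as the paper's (every restrictor value is itself a $k$-divider, so each non-final $k$-divider of $\mathbf u$ is forced to be a $k$-divider of $\mathbf v$ via its successor, while the final one is simple by Lemma~\ref{simple observations}(iii) and transfers by claim~\eqref{sim(u)=sim(v) & mul(u)=mul(v)}), and your observation that the predecessor map recovers the order on a finite chain is just a repackaging of the paper's induction showing $t_j=s_j$; your necessity argument likewise reads $h_1^k$ off the shared divider sequence exactly as the paper does. The only caveat — shared with the paper's own proof, so not a gap relative to it — is that in the ``only if'' direction claim~\eqref{sim(u)=sim(v) & mul(u)=mul(v)} is never actually derived from $k$-equivalence (your closing sentence treats it as standing background rather than as part of the conclusion to be proved), and both your text and the paper tacitly rely on that claim being available in every application of the lemma.
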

 
\begin{proof}
\emph{Sufficiency}. Suppose that
\begin{equation}
\label{t_0u_0t_1u_1 ... t_mu_m}
t_0\mathbf u_0t_1 \mathbf u_1 \cdots t_m \mathbf u_m
\end{equation}
and $s_0\mathbf v_0s_1 \mathbf v_1 \cdots s_r \mathbf v_r$ are $k$-decompositions of the words $\bf u$ and $\bf v$, respectively. Evidently, $t_0=s_0=\lambda$. If $m=r=0$ then the required fact is evident. Let now $m>0$. In view of Lemma~\ref{k-divider and depth}, $D(\mathbf u, t_i)\le k$ for any $1\le i\le m$. By the hypothesis, this implies that $t_{i-1}=h_1^k(\mathbf u,t_i)=h_1^k(\mathbf v,t_i)$ for any $1\le i\le m$, whence $t_{i-1}$ is a $k$-divider of the word $\bf v$. According to Lemma~\ref{simple observations}(iii), $t_m\in\simple(\mathbf u)$. Then the claim~\eqref{sim(u)=sim(v) & mul(u)=mul(v)} implies that $t_m\in\simple(\mathbf v)$, whence $t_m$ is a 0-divider of $\bf v$. Now Lemma~\ref{simple observations}(i) applies with the conclusion that $t_m$ is a $k$-divider of $\bf v$. So, the letters $t_1,t_2,\dots, t_m$ are $k$-dividers of the word $\bf v$, whence $m\le r$. By symmetry, $r\le m$. We prove that $m=r$. Further, $t_1$ coincides with $s_p$ for some $p$. If $p\ne 1$ then $h_1^k(\mathbf v,t_1)\ne t_0$. This contradicts the fact that $h_1^k(\mathbf v,t_1)= h_1^k(\mathbf u,t_1)=t_0$. So, $p=1$ and therefore, $t_1=s_1$. By induction, we can verify that $t_j=s_j$ for any $j\le m$.
 
\smallskip
 
\emph{Necessity}. Suppose that~\eqref{t_0u_0t_1u_1 ... t_mu_m} is the $k$-decomposition of the word $\bf u$. Then the $k$-decomposition of $\bf v$ has the form
\begin{equation}
\label{t_0v_0t_1v_1 ... t_mv_m}
t_0\mathbf v_0t_1 \mathbf v_1 \cdots t_m \mathbf v_m.
\end{equation}
Let $x\in \con({\bf u})$ and $D({\bf u},x)\le k$. Lemma~\ref{k-divider and depth} implies that $x=t_i$ for some $1\le i\le m$. Therefore, $h_1^k(\mathbf v, x)=h_1^k(\mathbf u, x)=t_{i-1}$. Analogously, we verify that if $x\in \con({\bf v})$ and $D({\bf v},x)\le k$ then $h_1^k(\mathbf v,x)= h_1^k(\mathbf u,x)$.
\end{proof}
 
\begin{lemma}
\label{h_2^{k-1}}
Let $\bf w$ be a word, $x$ be a letter multiple in $\mathbf w$ with $D(\mathbf w,x)= k$ and $t$ be a \mbox{$(k-1)$}-divider of $\bf w$.
\begin{itemize}
\item[\textup{(i)}] If $t=h_2^{k-1}(\mathbf w,x)$ then $\ell_1(\mathbf w,x)<\ell_1(\mathbf w,t)$.
\item[\textup{(ii)}] If $\ell_1(\mathbf w,x)<\ell_1(\mathbf w,t)<\ell_2(\mathbf w,x)$ then $D(\mathbf w,t)= k-1$; if besides that $k>1$ then $\ell_2(\mathbf w,x)<\ell_2(\mathbf w,t)$.
\end{itemize}
\end{lemma}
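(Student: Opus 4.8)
The plan is to translate both assertions entirely into the language of restrictors $h_i^j(\mathbf w,\,\cdot\,)$ and depths $D(\mathbf w,\,\cdot\,)$ and to argue directly from the definitions, using Lemma~\ref{k-divider and depth} as the bridge between the combinatorial position of the divider $t$ and its depth, and Lemma~\ref{simple observations}(i) in the background to pass between divider levels. Throughout I record that $t\ne x$: indeed $t$ is a $(k-1)$-divider, so if $t=x$ then Lemma~\ref{k-divider and depth} would give $D(\mathbf w,x)\le k-1$, contradicting $D(\mathbf w,x)=k$.

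For part~(i), I would argue by contradiction. Since $D(\mathbf w,x)=k$ we have $h_1^{k-1}(\mathbf w,x)\ne h_2^{k-1}(\mathbf w,x)=t$. Suppose $\ell_1(\mathbf w,t)<\ell_1(\mathbf w,x)$, so the divider $t$ precedes the first occurrence of $x$. Then $t$ is a $(k-1)$-divider lying before the first occurrence of $x$, so the right-most such divider $h_1^{k-1}(\mathbf w,x)$ sits at a position $\ge\ell_1(\mathbf w,t)$; on the other hand $h_1^{k-1}(\mathbf w,x)$ also precedes the second occurrence of $x$, so by the defining property of $h_2^{k-1}(\mathbf w,x)=t$ it sits at a position $\le\ell_1(\mathbf w,t)$. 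These two bounds force $h_1^{k-1}(\mathbf w,x)$ and $t$ to be the same $(k-1)$-divider, i.e.\ $h_1^{k-1}(\mathbf w,x)=h_2^{k-1}(\mathbf w,x)$, contradicting $D(\mathbf w,x)=k$. Since $t\ne x$ rules out equality of positions, this yields $\ell_1(\mathbf w,x)<\ell_1(\mathbf w,t)$.

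For the first assertion of part~(ii), I note that $t$ is a $(k-1)$-divider, so $D(\mathbf w,t)\le k-1$ by Lemma~\ref{k-divider and depth}; it remains to exclude $D(\mathbf w,t)\le k-2$. For $k=1$ this is vacuous, so assume $k\ge2$ and suppose $t$ is a $(k-2)$-divider. By hypothesis $\ell_1(\mathbf w,x)<\ell_1(\mathbf w,t)<\ell_2(\mathbf w,x)$, so this $(k-2)$-divider $t$ separates the two occurrences of $x$: the divider $h_1^{k-2}(\mathbf w,x)$ precedes the first occurrence of $x$, whereas $h_2^{k-2}(\mathbf w,x)$ sits at a position $\ge\ell_1(\mathbf w,t)>\ell_1(\mathbf w,x)$. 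Hence $h_1^{k-2}(\mathbf w,x)\ne h_2^{k-2}(\mathbf w,x)$, which forces $D(\mathbf w,x)\le k-1$ and contradicts $D(\mathbf w,x)=k$. Therefore $D(\mathbf w,t)=k-1$.

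The main obstacle is the final assertion of part~(ii), for which I would reason with $(k-2)$-blocks (now $k\ge2$). Let $B$ be the $(k-2)$-block of $\mathbf w$ containing the first occurrence of $x$. Because $D(\mathbf w,x)=k$ gives $h_1^{k-2}(\mathbf w,x)=h_2^{k-2}(\mathbf w,x)$, no $(k-2)$-divider separates the two occurrences of $x$, so the second occurrence of $x$ also lies in $B$ and $\ell_2(\mathbf w,x)$ does not exceed the right end of $B$. Since $\ell_1(\mathbf w,t)$ lies strictly between the two occurrences of $x$ and $B$ is an interval, the first occurrence of $t$ also lies in $B$. Now the already established equality $D(\mathbf w,t)=k-1$ means $h_1^{k-2}(\mathbf w,t)\ne h_2^{k-2}(\mathbf w,t)$, i.e.\ the first and second occurrences of $t$ lie in different $(k-2)$-blocks; as the first lies in $B$, the second lies in a strictly later block and hence beyond the right end of $B$. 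Combining the two inequalities yields $\ell_2(\mathbf w,x)<\ell_2(\mathbf w,t)$. The delicate point, and where I expect the bookkeeping to be subtlest, is justifying that the first occurrence of $t$ genuinely falls inside $B$ and that $(k-2)$-blocks are honest intervals of $\mathbf w$, so that ``lying in a later block'' converts into a strict inequality of positions; this is exactly where the inductive definition of the $k$-decomposition must be invoked carefully.
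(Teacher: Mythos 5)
Your proof is correct. Parts (i) and the first assertion of (ii) follow the paper's own argument essentially verbatim: a contradiction forcing $h_1^{k-1}(\mathbf w,x)=h_2^{k-1}(\mathbf w,x)$, with $t\ne x$ supplied by Lemma~\ref{k-divider and depth}, and then the exclusion of $D(\mathbf w,t)\le k-2$ because a $(k-2)$-divider strictly between the two occurrences of $x$ would force $D(\mathbf w,x)\le k-1$. Where you genuinely diverge is the final inequality $\ell_2(\mathbf w,x)<\ell_2(\mathbf w,t)$. The paper proceeds by contradiction: assuming $\ell_2(\mathbf w,t)<\ell_2(\mathbf w,x)$, it puts $s=h_2^{k-2}(\mathbf w,t)$, applies part (i) to $t$ (which has depth $k-1$) to get $\ell_1(\mathbf w,t)<\ell_1(\mathbf w,s)$, and observes that the $(k-2)$-divider $s$ then lies strictly between the two occurrences of $x$, contradicting $D(\mathbf w,x)=k$. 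You instead argue directly with blocks: $D(\mathbf w,x)=k$ gives $h_1^{k-2}(\mathbf w,x)=h_2^{k-2}(\mathbf w,x)$, so both occurrences of $x$ sit in one $(k-2)$-block $B$; the first occurrence of $t$ is trapped inside $B$ because blocks are contiguous; and $D(\mathbf w,t)=k-1$ (note $t\in\mul(\mathbf w)$, since its depth is positive) puts the second occurrence of $t$ in a strictly later block, hence past the right end of $B$ and past $\ell_2(\mathbf w,x)$. The two points you flag as delicate are not gaps: the $k$-decomposition is by definition a factorization of $\mathbf w$ into alternating dividers and contiguous blocks, and the paper itself treats ``the two occurrences lie in different $j$-blocks'' and ``$h_1^j\ne h_2^j$'' as synonymous --- this is exactly how depth is defined and how Lemma~\ref{k-divider and depth} is proved. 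Your route buys a shorter, more geometric argument that avoids the auxiliary letter $s$ and the recursive appeal to part (i); the paper's route stays entirely inside the restrictor calculus that it reuses throughout the proof of Theorem~\ref{main result}.
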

 
\begin{proof}
(i) Suppose that $\ell_1(\mathbf w,t)<\ell_1(\mathbf w,x)$. Then the equality $t=h_2^{k-1}(\mathbf w,x)$ implies that $t=h_1^{k-1}(\mathbf w,x)$. Thus, $h_1^{k-1}(\mathbf w,x)=h_2^{k-1}(\mathbf w,x)$. This contradicts the assumption that $D(\mathbf w,x)= k$. So, $\ell_1(\mathbf w,x)\le\ell_1(\mathbf w,t)$. Since $t$ is a \mbox{$(k-1)$}-divider, Lemma~\ref{k-divider and depth} implies that $D(\mathbf w,t)\le k-1$. In particular, $D(\mathbf w,t)\ne D(\mathbf w,x)$, whence $t\ne x$. Therefore, $\ell_1(\mathbf w,x)<\ell_1(\mathbf w,t)$.
 
\smallskip
 
(ii) Suppose now that $\ell_1(\mathbf w,x)<\ell_1(\mathbf w,t)<\ell_2(\mathbf w,x)$. Put $r=D(\mathbf w,t)$. By Lemma~\ref{k-divider and depth}, $r\le k-1$. If $D(\mathbf w,t)=r< k-1$ then $t$ is an $r$-divider by Lemma~\ref{k-divider and depth}. Therefore, $t=h_2^r(\mathbf w,x)$. Further, $t\ne h_1^r(\mathbf w,x)$ because $\ell_1(\mathbf w,x)<\ell_1(\mathbf w,t)$. Thus, $h_1^r(\mathbf w,x)\ne h_2^r(\mathbf w,x)$. This means that $D(\mathbf w,x)\le r+1<k$, a contradiction. So, $D(\mathbf w,t)= k-1$.
 
Let now $k>1$. Then $t\in\mul(\mathbf w)$. Suppose that $\ell_2(\mathbf w,t)<\ell_2(\mathbf w,x)$. Put $s=h_2^{k-2}(\mathbf w,t)$. In view of the claim~(i), $\ell_1(\mathbf w,t)<\ell_1(\mathbf w,s)$. Arguments similar to those from the previous paragraph imply that $D(\mathbf w,s)= k-2$. According to Lemma~\ref{k-divider and depth}, $s$ is a \mbox{$(k-2)$}-divider of $\bf w$. The choice of $s$ guarantees that first occurrence of $s$ in \textbf w precedes second occurrence of $t$. On the other hand, second occurrence of $t$ precedes second occurrence of $x$. Thus, first occurrence of $s$ precedes second occurrence of $x$. At the same time, first occurrence of $x$ precedes first occurrence of $s$ because $\ell_1(\mathbf w,x)<\ell_1(\mathbf w,t)<\ell_1(\mathbf w,s)$. Therefore, first and second occurrences of $x$ in $\bf w$ lie in different \mbox{$(k-2)$}-blocks. Hence, $D(\mathbf w,x)\le k-1$, a contradiction.
\end{proof}
 
\begin{lemma}
\label{the same l-dividers}
Let $\bf u$ and $\bf v$ be words and $\ell$ be a natural number. Suppose that the claims~\eqref{sim(u)=sim(v) & mul(u)=mul(v)} and
\begin{equation}
\label{eq the same l-dividers}
h_i^{\ell-1}({\bf u},x)= h_i^{\ell-1}({\bf v},x)\text{ for }i=1,2\text{ and all }x\in \con({\bf u})
\end{equation}
are true. Then the words $\mathbf u$ and $\mathbf v$ have the same set of $\ell$-dividers.
\end{lemma}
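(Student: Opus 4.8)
The plan is to verify the set equality divider by divider: it suffices to show that, for every letter $t\in\con(\mathbf u)$, the letter $t$ is an $\ell$-divider of $\mathbf u$ if and only if it is an $\ell$-divider of $\mathbf v$. First note that the claim~\eqref{sim(u)=sim(v) & mul(u)=mul(v)} gives $\con(\mathbf u)=\con(\mathbf v)$, so the two words have the same pool of candidate letters. By Lemma~\ref{k-divider and depth}, being an $\ell$-divider is equivalent to having depth at most $\ell$; hence the whole statement reduces to proving the equivalence $D(\mathbf u,t)\le\ell\iff D(\mathbf v,t)\le\ell$ for each $t\in\con(\mathbf u)$.

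The key step is to re-express the condition $D(\mathbf w,t)\le\ell$ purely in terms of the restrictors $h_1^{\ell-1}(\mathbf w,t)$ and $h_2^{\ell-1}(\mathbf w,t)$, since these are exactly the quantities that the hypothesis~\eqref{eq the same l-dividers} keeps synchronized between $\mathbf u$ and $\mathbf v$. Concretely, for a letter $t\in\mul(\mathbf w)$ I will prove that $D(\mathbf w,t)\le\ell$ holds if and only if $h_1^{\ell-1}(\mathbf w,t)\ne h_2^{\ell-1}(\mathbf w,t)$. The direction ``$\Leftarrow$'' is immediate from the definition of depth, which makes $\ell$ an admissible value once the $(\ell-1)$-restrictors differ. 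For ``$\Rightarrow$'', writing $k=D(\mathbf w,t)\le\ell$, the definition of depth gives $h_1^{k-1}(\mathbf w,t)\ne h_2^{k-1}(\mathbf w,t)$; were $h_1^{\ell-1}(\mathbf w,t)=h_2^{\ell-1}(\mathbf w,t)$, then Lemma~\ref{simple observations}(ii) (applied with level $\ell-1$ and lower level $k-1$ in the case $k<\ell$, the case $k=\ell$ being immediate) would force $h_1^{k-1}(\mathbf w,t)=h_2^{k-1}(\mathbf w,t)$, a contradiction.

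With this characterization in hand the conclusion follows by splitting on whether $t$ is simple or multiple. If $t\in\simple(\mathbf u)$, then $t\in\simple(\mathbf v)$ by~\eqref{sim(u)=sim(v) & mul(u)=mul(v)}, so $D(\mathbf u,t)=D(\mathbf v,t)=0\le\ell$ and $t$ is an $\ell$-divider of both words. If $t\in\mul(\mathbf u)=\mul(\mathbf v)$, then the hypothesis~\eqref{eq the same l-dividers} gives $h_i^{\ell-1}(\mathbf u,t)=h_i^{\ell-1}(\mathbf v,t)$ for $i=1,2$; in particular $h_1^{\ell-1}(\mathbf u,t)\ne h_2^{\ell-1}(\mathbf u,t)$ exactly when $h_1^{\ell-1}(\mathbf v,t)\ne h_2^{\ell-1}(\mathbf v,t)$, and by the characterization of the previous paragraph this says $t$ is an $\ell$-divider of $\mathbf u$ precisely when it is an $\ell$-divider of $\mathbf v$. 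I expect the only delicate point to be the bookkeeping with the restrictor levels in the proof of the characterization, namely the correct invocation of Lemma~\ref{simple observations}(ii) while keeping track of the boundary case $k=\ell$, where the lemma does not directly apply but the desired conclusion is trivially true.
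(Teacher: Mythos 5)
Your proof is correct and follows essentially the same route as the paper: both arguments split on whether $t$ is simple or multiple, handle the simple case via the claim~\eqref{sim(u)=sim(v) & mul(u)=mul(v)} (depth $0$, hence an $\ell$-divider of both words), and in the multiple case transfer the non-equality $h_1^{\ell-1}\ne h_2^{\ell-1}$ between $\mathbf u$ and $\mathbf v$ using the hypothesis~\eqref{eq the same l-dividers}. The only difference is presentational: you make explicit, via Lemma~\ref{k-divider and depth} and Lemma~\ref{simple observations}(ii), the characterization of $\ell$-dividers among multiple letters by the inequality of their $(\ell-1)$-restrictors, which the paper invokes directly since it is already contained in the proof of Lemma~\ref{k-divider and depth}.
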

 
\begin{proof}
Let $t$ be an arbitrary $\ell$-divider of $\bf u$. If $t\in\simple(\mathbf u)$ then $t\in\simple(\mathbf v)$ by the claim~\eqref{sim(u)=sim(v) & mul(u)=mul(v)}. Therefore, $t$ is a 0-divider of $\bf v$. According to Lemma~\ref{simple observations}(i), $t$ is an $\ell$-divider of $\bf v$. Suppose now that $t\in\mul(\mathbf u)$. The claim~\eqref{sim(u)=sim(v) & mul(u)=mul(v)} implies that $t\in\mul(\mathbf v)$. Since $t$ is an $\ell$-divider of $\bf u$, $h_1^{\ell-1}({\bf u},t)\ne h_2^{\ell-1}({\bf u},t)$. Then $h_1^{\ell-1}({\bf v},t)\ne h_2^{\ell-1}({\bf v},t)$ by the claim~\eqref{eq the same l-dividers}. This implies that $t$ is an $\ell$-divider of $\bf v$. Similarly we prove that if $s$ is an $\ell$-divider of $\bf v$ then $s$ is an $\ell$-divider of $\bf u$.
\end{proof}
 
\begin{lemma}
\label{h_i^k(u,x)=h_i^k(v,x) to h_i^s(u,x)=h_i^s(v,x)}
Let $\bf u$ and $\bf v$ be words and $k$ be a natural number. Suppose that the claims~\eqref{sim(u)=sim(v) & mul(u)=mul(v)} and~\eqref{eq the same l-dividers} with $\ell=k$ are true. Then the claim~\eqref{eq the same l-dividers} with $\ell=s$ is true for any $1\le s\le k$.
\end{lemma}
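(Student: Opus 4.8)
The plan is to establish the claim~\eqref{eq the same l-dividers} with $\ell=s$ for every $s$ in the range $1\le s\le k$ by induction on $s$, treating the given instance $\ell=k$ not as a base case but as a fixed tool used throughout. Write $P(s)$ for the assertion that \eqref{eq the same l-dividers} holds with $\ell=s$, i.e.\ that $h_i^{s-1}(\mathbf u,x)=h_i^{s-1}(\mathbf v,x)$ for $i=1,2$ and all $x$. The engine of the argument is a \emph{descent formula} that recovers the low-level restrictors from the level-$(k-1)$ ones. Fix a word $\mathbf w\in\{\mathbf u,\mathbf v\}$, a letter $x$, an index $i\in\{1,2\}$, and a level $r$ with $0\le r\le k-1$. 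Since every $r$-divider is a $(k-1)$-divider by Lemma~\ref{simple observations}(i), the right-most $r$-divider preceding the $i$th occurrence of $x$ can be located as follows: start from $t:=h_i^{k-1}(\mathbf w,x)$; while $t$ fails to be an $r$-divider, replace $t$ by $h_1^{k-1}(\mathbf w,t)$; stop at the first $t$ that is an $r$-divider. This process terminates (the first occurrences of the successive letters move strictly to the left, and $\lambda$ is an $r$-divider at every level) and returns exactly $h_i^r(\mathbf w,x)$.

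First I would extract two consequences of this formula. The iterated map $t\mapsto h_1^{k-1}(\mathbf w,t)$ and the starting point $h_i^{k-1}(\mathbf w,x)$ are computed purely from level-$(k-1)$ restrictor data; hence the hypothesis \eqref{eq the same l-dividers} with $\ell=k$ forces the entire sequence of letters produced by the descent to be \emph{identical} for $\mathbf u$ and for $\mathbf v$. Consequently $h_i^r(\mathbf u,x)=h_i^r(\mathbf v,x)$ will follow the instant we know that the \emph{stopping predicate} ``$t$ is an $r$-divider'' selects the same term of this common sequence in both words. This is the heart of the matter, and it is exactly what the induction on $s$ is arranged to supply.

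For the base case $s=1$ the stopping predicate is ``$t$ is a $0$-divider'', that is, ``$t$ is simple in $\mathbf w$'' (or $t=\lambda$); by \eqref{sim(u)=sim(v) & mul(u)=mul(v)} this property of a letter is the same for $\mathbf u$ and $\mathbf v$, so the descent stops at the same place and $P(1)$ holds. For the inductive step, suppose $2\le s\le k$ and that $P(s-1)$ is already known. The stopping predicate is now ``$t$ is an $(s-1)$-divider'', which by Lemma~\ref{k-divider and depth} and the definition of depth amounts to $h_1^{s-2}(\mathbf w,t)\ne h_2^{s-2}(\mathbf w,t)$. Since each such $t$ is a $(k-1)$-divider it lies in $\con(\mathbf u)=\con(\mathbf v)$, so $P(s-1)$ gives $h_1^{s-2}(\mathbf u,t)=h_1^{s-2}(\mathbf v,t)$ and $h_2^{s-2}(\mathbf u,t)=h_2^{s-2}(\mathbf v,t)$; hence the inequality holds in $\mathbf u$ if and only if it holds in $\mathbf v$, and the stopping predicate again agrees for the two words. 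Thus the descent halts at the same term of the common sequence, and $P(s)$ follows.

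The main obstacle is precisely the consistency of this stopping predicate, and it is what dictates the direction of the induction: deciding whether a $(k-1)$-divider $t$ is an $(s-1)$-divider requires restrictor information at level $s-2$, one level \emph{below} $s-1$. This is why the induction must run \emph{upward} in $s$ (equivalently, upward in the restrictor level $s-1$ starting from $0$), with the given top instance $\ell=k$ invoked at every stage only to guarantee that the descended sequences coincide. A naive downward induction from $\ell=k$ would, at each step, call for lower-level data that has not yet been secured, so it cannot be made to close.
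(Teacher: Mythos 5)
Your proof is correct and is essentially the paper's argument: the paper also proceeds by induction on $s$ (phrased there as a least-counterexample contradiction), using exactly your two pivots --- every $(s-1)$-divider is a $(k-1)$-divider (Lemma~\ref{simple observations}(i)), so the $\ell=k$ hypothesis fixes a common chain of candidate restrictors in $\mathbf u$ and $\mathbf v$, and $(s-1)$-divider status is decided by level-$(s-2)$ restrictor data (Lemma~\ref{k-divider and depth}), which agrees between the two words by the claim at $\ell=s-1$. Your ``descent'' formula is a constructive repackaging of what the paper obtains by comparing the two purported restrictors $t_p\ne t_q$ and deriving a contradiction; the only points to tighten are that your ``amounts to'' equivalence (being an $(s-1)$-divider iff $h_1^{s-2}(\mathbf w,t)\ne h_2^{s-2}(\mathbf w,t)$) also needs Lemma~\ref{simple observations}(ii) and applies only to multiple letters, the simple ones being covered by the claim~\eqref{sim(u)=sim(v) & mul(u)=mul(v)}.
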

 
\begin{proof}
If $k=1$ then the assertion is valid by the hypothesis. Suppose now that $k>1$. Let~\eqref{t_0u_0t_1u_1 ... t_mu_m} be the \mbox{$(k-1)$}-decomposition of $\bf u$. In view of Lemma~\ref{k-equivalent}, the \mbox{$(k-1)$}-decomposition of $\bf v$ has the form~\eqref{t_0v_0t_1v_1 ... t_mv_m}. Let $s<k$ be the least number such that~\eqref{eq the same l-dividers} with $\ell=s$ is false. Then there exists a letter $x$ such that $h_i^{s-1}(\mathbf u,x)\ne h_i^{s-1}(\mathbf v,x)$ for some $i\in\{1,2\}$. By the definition of \mbox{$(i,s-1)$}-restrictors, $h_i^{s-1}(\mathbf u,x)$ and $h_i^{s-1}(\mathbf v,x)$ are some \mbox{$(s-1)$}-dividers of \textbf u and \textbf v respectively. Lemma~\ref{simple observations}(i) implies that \mbox{$(s-1)$}-dividers of \textbf u and \textbf v are \mbox{$(k-1)$}-dividers of these words. Therefore, $h_i^{s-1}(\mathbf u,x)=t_p$ and $h_i^{s-1}(\mathbf v,x)=t_q$ for some $p\ne q$. We may assume without loss of generality that $p<q$. By the hypothesis, $h_i^{k-1}(\mathbf u,x)= h_i^{k-1}(\mathbf v,x)$, whence this \mbox{$(i,k-1)$}-restrictor of $x$ coincide with $t_n$ for some $n$. Clearly, $n\ge q$ because $s<k$. Since $t_n$ precedes $i$th occurrence of $x$ in \textbf u, we have $\ell_1(\mathbf u, t_q)<\ell_i(\mathbf u,x)$. Since $t_p$ is an \mbox{$(i,s-1)$}-restrictor of $x$ in \textbf u, there are no \mbox{$(s-1)$}-dividers of \textbf u between first occurrence of $t_p$ and $i$th occurrence of $x$ in \textbf u. In particular, $t_q$ is not an \mbox{$(s-1)$}-divider of \textbf u. Further, Lemma~\ref{k-divider and depth} implies that $D(\mathbf u, t_q)>s-1$. In particular, $D(\mathbf u, t_q)>0$, whence $t_q\in\mul(\mathbf u)$. If $s=1$ then $t_q$ is a 0-divider of $\bf v$, whence $t_q$ is simple in \textbf v. This contradicts the claim~\eqref{sim(u)=sim(v) & mul(u)=mul(v)}. Thus, $s>1$. This means that $h_1^{s-2}(\mathbf u, t_q)= h_2^{s-2}(\mathbf u, t_q)$. Since the claim~\eqref{eq the same l-dividers} with $\ell=s-1$ is true, we obtain $h_1^{s-2}(\mathbf v, t_q)= h_2^{s-2}(\mathbf v, t_q)$. According to Lemma~\ref{simple observations}(ii), $h_1^{r-2}(\mathbf v, t_q)= h_2^{r-2}(\mathbf v, t_q)$ for all $r\le s$. Then $D(\mathbf v, t_q)>s-1$. Lemma~\ref{k-divider and depth} implies that $t_q$ is not an \mbox{$(s-1)$}-divider of $\bf v$, a contradiction with the equality $t_q=h_i^{s-1}(\mathbf v,x)$.
\end{proof}
 
\begin{lemma}
\label{D(u,x)=k iff D(v,x)=k}
Let $\bf u$ and $\bf v$ be words and $k$ be a natural number. Suppose that the claims~\eqref{sim(u)=sim(v) & mul(u)=mul(v)} and~\eqref{eq the same l-dividers} with $\ell=k$ are true. Then, for any letter $x\in\con(\mathbf u)$, $D(\mathbf u,x)=k$ if and only if $D(\mathbf v,x)=k$.
\end{lemma}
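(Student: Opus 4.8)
The plan is to reduce to a single implication by symmetry and then to transport the restrictor (in)equalities that encode the depth from $\mathbf u$ to $\mathbf v$. First I would note that the two hypotheses are symmetric in $\mathbf u$ and $\mathbf v$: the claim~\eqref{sim(u)=sim(v) & mul(u)=mul(v)} is visibly symmetric and yields $\con(\mathbf u)=\con(\mathbf v)$, so the quantifier ``for all $x\in\con(\mathbf u)$'' in~\eqref{eq the same l-dividers} may equally be read over $\con(\mathbf v)$, and the equalities in~\eqref{eq the same l-dividers} are themselves symmetric. Hence it suffices to prove that $D(\mathbf u,x)=k$ implies $D(\mathbf v,x)=k$, the converse following by interchanging the roles of the two words.

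Next I would extract from the hypotheses the agreement of all lower restrictors. Applying Lemma~\ref{h_i^k(u,x)=h_i^k(v,x) to h_i^s(u,x)=h_i^s(v,x)}, the claim~\eqref{eq the same l-dividers} with $\ell=s$ holds for every $1\le s\le k$; equivalently, $h_i^r(\mathbf u,x)=h_i^r(\mathbf v,x)$ for $i=1,2$ and every $r$ with $0\le r\le k-1$. This is the single fact that drives the whole argument.

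Now suppose $D(\mathbf u,x)=k$. Since $k\ge 1$, the letter $x$ is multiple in $\mathbf u$, hence multiple in $\mathbf v$ by~\eqref{sim(u)=sim(v) & mul(u)=mul(v)}, so $D(\mathbf v,x)\ge 1$. By the definition of the depth, $D(\mathbf u,x)=k$ amounts to $h_1^{k-1}(\mathbf u,x)\ne h_2^{k-1}(\mathbf u,x)$ together with $h_1^r(\mathbf u,x)=h_2^r(\mathbf u,x)$ for all $0\le r\le k-2$. Transporting these along the equalities of the previous paragraph, I get $h_1^{k-1}(\mathbf v,x)\ne h_2^{k-1}(\mathbf v,x)$, whence $D(\mathbf v,x)\le k$, and $h_1^r(\mathbf v,x)=h_2^r(\mathbf v,x)$ for all $0\le r\le k-2$, whence $k$ is the least level at which the two restrictors of $x$ in $\mathbf v$ differ, i.e. $D(\mathbf v,x)\ge k$. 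Therefore $D(\mathbf v,x)=k$.

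As for the difficulty: there is no serious obstacle once Lemma~\ref{h_i^k(u,x)=h_i^k(v,x) to h_i^s(u,x)=h_i^s(v,x)} is in hand, since that lemma performs the only non-trivial step, namely propagating the coincidence of restrictors from level $k-1$ down to all intermediate levels; the rest is a direct translation through the definition of depth. The only points requiring care are the index bookkeeping — correctly matching ``$D=k$'' with equality of the restrictors at levels $0,\dots,k-2$ and inequality at level $k-1$ — and the boundary case $k=1$, where the family of equalities at levels $0,\dots,k-2$ is empty and the conclusion $D(\mathbf v,x)\ge k$ reduces to the observation that $x$ is multiple in $\mathbf v$.
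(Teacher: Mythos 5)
Your proof is correct and follows essentially the same route as the paper's: both invoke Lemma~\ref{h_i^k(u,x)=h_i^k(v,x) to h_i^s(u,x)=h_i^s(v,x)} to propagate the restrictor equalities down to all levels $0,\dots,k-1$, then transport the characterization of $D(\mathbf u,x)=k$ (equality of restrictors at levels $0,\dots,k-2$, inequality at level $k-1$) from $\mathbf u$ to $\mathbf v$, and close with symmetry. Your explicit handling of multiplicity of $x$ in $\mathbf v$ and of the boundary case $k=1$ is a minor refinement of details the paper leaves implicit.
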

 
\begin{proof}
In view of Lemma~\ref{h_i^k(u,x)=h_i^k(v,x) to h_i^s(u,x)=h_i^s(v,x)}, the claim~\eqref{eq the same l-dividers} with $\ell=s$ is true for any $1\le s\le k$. Suppose that $D(\mathbf u,x)=k$. This implies that
$$
h_1^{s-1}({\bf v},x)=h_1^{s-1}({\bf u},x)=h_2^{s-1}({\bf u},x)=h_2^{s-1}({\bf v},x)
$$
whenever $1\le s<k$ but
$$
h_1^{k-1}({\bf v},x)=h_1^{k-1}({\bf u},x)\ne h_2^{k-1}({\bf u},x)=h_2^{k-1}({\bf v},x).
$$
This implies that $D(\mathbf v, x)=k$. By symmetry, if $D(\mathbf v, x)=k$ then $D(\mathbf u,x)=k$.
\end{proof}
 
\begin{lemma}
\label{if first then second}
Let $\bf w$ be a word, $r>1$ be a number and $y$ be a letter such that $D(\mathbf w, y)=r-2$. Then if $\ell_1(\mathbf w,z)<\ell_1({\bf w},y)$ for some letter $z$ with $D(\mathbf w,z)\ge r$ then $\ell_2(\mathbf w,z)<\ell_1({\bf w},y)$.
\end{lemma}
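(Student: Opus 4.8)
The plan is to argue by contradiction, reducing everything to the definition of depth and Lemma~\ref{k-divider and depth}. First I would record that $y\ne z$, since their depths $r-2$ and $\ge r$ are different; in particular $z\in\mul(\mathbf w)$ (as $D(\mathbf w,z)\ge r>1$), so that its second occurrence, and hence $\ell_2(\mathbf w,z)$, is defined. Assume now, contrary to the claim, that $\ell_1(\mathbf w,y)<\ell_2(\mathbf w,z)$ (equality is impossible because $y\ne z$). Together with the hypothesis $\ell_1(\mathbf w,z)<\ell_1(\mathbf w,y)$ this gives
$$
\ell_1(\mathbf w,z)<\ell_1(\mathbf w,y)<\ell_2(\mathbf w,z),
$$
so that first occurrence of $y$ lies strictly between first and second occurrences of $z$ in $\mathbf w$.

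Next I would use the equality $D(\mathbf w,y)=r-2$. By Lemma~\ref{k-divider and depth}, the letter $y$ is an $(r-2)$-divider of $\mathbf w$; and, as noted in Remark~\ref{only first occurrences}, it is precisely first occurrence of $y$ that plays this role. Since this $(r-2)$-divider falls strictly between first and second occurrences of $z$, the right-most $(r-2)$-divider preceding second occurrence of $z$ is situated no earlier than first occurrence of $y$, whereas the right-most $(r-2)$-divider preceding first occurrence of $z$ precedes first occurrence of $z$ and therefore precedes first occurrence of $y$. Hence these two restrictors are distinct, i.e.\ $h_1^{r-2}(\mathbf w,z)\ne h_2^{r-2}(\mathbf w,z)$.

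Finally, I would appeal to the definition of depth: the non-equality $h_1^{r-2}(\mathbf w,z)\ne h_2^{r-2}(\mathbf w,z)$ says exactly that $D(\mathbf w,z)\le(r-2)+1=r-1$, contradicting $D(\mathbf w,z)\ge r$. This contradiction yields $\ell_2(\mathbf w,z)<\ell_1(\mathbf w,y)$, as desired. I do not anticipate a genuine obstacle: the argument is a direct unwinding of the definitions, and the only delicate point is the bookkeeping---keeping track that it is first occurrence of $y$ which is the divider, and observing that an $(r-2)$-divider lying in the gap between the two occurrences of $z$ forces the depth of $z$ down to at most $r-1$.
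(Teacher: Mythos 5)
Your proof is correct and is essentially the paper's own argument: both invoke Lemma~\ref{k-divider and depth} to see that first occurrence of $y$ is an $(r-2)$-divider of $\mathbf w$, and both observe that such a divider lying strictly between the two occurrences of $z$ forces $h_1^{r-2}(\mathbf w,z)\ne h_2^{r-2}(\mathbf w,z)$, which is incompatible with $D(\mathbf w,z)\ge r$. Your version merely spells out the bookkeeping (the equality case via $y\ne z$, and the deduction $D(\mathbf w,z)\le r-1$) a bit more explicitly than the paper does.
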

 
\begin{proof}
Let $z$ be a letter with $\ell_1(\mathbf w,z)<\ell_1({\bf w},y)$ and $D(\mathbf u,z)\ge r$. Lemma~\ref{k-divider and depth} implies that $y$ is an \mbox{$(r-2)$}-divider of $\bf w$. Then if $\ell_1({\bf u},y)<\ell_2(\mathbf u,z)$ then the \mbox{$(r-2)$}-divider $y$ is located between the first and the second occurrences of $z$ in \textbf u. This contradicts the equality $h_1^{r-2}(\mathbf u, z)=h_2^{r-2}(\mathbf u, z)$. The case $\ell_1({\bf u},y)=\ell_2({\bf u},z)$ also is impossible. Therefore, $\ell_2(\mathbf w,z)<\ell_1({\bf w},y)$.
\end{proof}
 
Below, in order to facilitate understanding of our considerations, we will sometimes write the number in brackets over a letter to indicate the number of the occurrences of this letter in the given word; for instance, we may write
$$
\mathbf w=\,\stackrel{(1)}{x_1}\,\stackrel{(1)}{x_2}\,\stackrel{(2)}{x_1}\,\stackrel{(1)}{x_3}\,\stackrel{(2)}{x_2}\,\stackrel{(3)}{x_1}.
$$
 
\begin{lemma}
\label{form of the identity}
Let ${\bf u}\approx {\bf v}$ be an identity and $s$ be a natural number. Suppose that the claims~\eqref{sim(u)=sim(v) & mul(u)=mul(v)} and~\eqref{eq the same l-dividers} with $\ell=s$ are true and there is a letter $x_s$ such that $D(\mathbf u, x_s)=s$. Then there exist letters $x_0,x_1,\dots, x_{s-1}$ such that $D(\mathbf u,x_r)=D(\mathbf v,x_r)=r$ for any $0\le r<s$ and the identity $\mathbf u \approx \mathbf v$ has the form
\begin{equation}
\label{form of u=v}
\begin{array}{rl}
&\mathbf u_{2s+1}\stackrel{(1)}{x_s}\mathbf u_{2s}\stackrel{(1)}{x_{s-1}}\mathbf u_{2s-1}\stackrel{(2)}{x_s}\mathbf u_{2s-2}\stackrel{(1)}{x_{s-2}}\mathbf u_{2s-3}\stackrel{(2)}{x_{s-1}}\mathbf u_{2s-4}\stackrel{(1)}{x_{s-3}}\\
&\cdot\,\mathbf u_{2s-5}\stackrel{(2)}{x_{s-2}}\cdots\mathbf u_4\stackrel{(1)}{x_1}\mathbf u_3\stackrel{(2)}{x_2}\mathbf u_2\stackrel{(1)}{x_0}\mathbf u_1\stackrel{(2)}{x_1}\mathbf u_0\\
\approx{}&\mathbf v_{2s+1}\stackrel{(1)}{x_s}\mathbf v_{2s}\stackrel{(1)}{x_{s-1}}\mathbf v_{2s-1}\stackrel{(2)}{x_s}\mathbf v_{2s-2}\stackrel{(1)}{x_{s-2}}\mathbf v_{2s-3}\stackrel{(2)}{x_{s-1}}\mathbf v_{2s-4}\stackrel{(1)}{x_{s-3}}\\
&\cdot\,\mathbf v_{2s-5}\stackrel{(2)}{x_{s-2}}\cdots\mathbf v_4\stackrel{(1)}{x_1}\mathbf v_3\stackrel{(2)}{x_2}\mathbf v_2\stackrel{(1)}{x_0}\mathbf v_1\stackrel{(2)}{x_1}\mathbf v_0
\end{array}
\end{equation}
for some possibly empty words $\mathbf u_0,\mathbf u_1,\dots,\mathbf u_{2s+1}$ and $\mathbf v_0,\mathbf v_1,\dots,\mathbf v_{2s+1}$.
\end{lemma}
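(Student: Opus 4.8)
The plan is to construct the letters $x_{s-1},x_{s-2},\dots,x_0$ by downward recursion, reading each one off as the second restrictor of its predecessor. Starting from the given $x_s$ with $D(\mathbf u,x_s)=s$, I would set $x_{r-1}=h_2^{r-1}(\mathbf u,x_r)$ for $r=s,s-1,\dots,1$. At each step this restrictor is a genuine (non-empty) letter: were it the empty divider $\lambda=t_0$, the left-most one, then $h_1^{r-1}(\mathbf u,x_r)$ would have to equal it as well, contradicting $h_1^{r-1}(\mathbf u,x_r)\ne h_2^{r-1}(\mathbf u,x_r)$, which holds because $D(\mathbf u,x_r)=r$. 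By the definition of the restrictor, $\ell_1(\mathbf u,x_{r-1})<\ell_2(\mathbf u,x_r)$, while Lemma~\ref{h_2^{k-1}}(i) gives $\ell_1(\mathbf u,x_r)<\ell_1(\mathbf u,x_{r-1})$; hence $\ell_1(\mathbf u,x_r)<\ell_1(\mathbf u,x_{r-1})<\ell_2(\mathbf u,x_r)$, and Lemma~\ref{h_2^{k-1}}(ii) yields $D(\mathbf u,x_{r-1})=r-1$ (and, when $r>1$, also $\ell_2(\mathbf u,x_r)<\ell_2(\mathbf u,x_{r-1})$). This produces a letter of each depth $0,1,\dots,s$; since depth is an invariant of a letter these letters are pairwise distinct, with $x_r$ multiple for $r\ge1$ and $x_0$ simple.

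Next I would transfer these depths to $\mathbf v$. Lemma~\ref{h_i^k(u,x)=h_i^k(v,x) to h_i^s(u,x)=h_i^s(v,x)} upgrades the hypothesis~\eqref{eq the same l-dividers} with $\ell=s$ to the same claim for every $\ell$ with $1\le\ell\le s$; in particular $x_{r-1}=h_2^{r-1}(\mathbf u,x_r)=h_2^{r-1}(\mathbf v,x_r)$, so the very same letters serve as the second restrictors inside $\mathbf v$. Applying Lemma~\ref{D(u,x)=k iff D(v,x)=k} with $k=r$ for $1\le r\le s$, together with~\eqref{sim(u)=sim(v) & mul(u)=mul(v)} for the case $r=0$, then gives $D(\mathbf v,x_r)=r$ for all $0\le r\le s$, which is the first assertion of the lemma.

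The heart of the argument is to pin down the left-to-right order of the $2s+1$ tracked occurrences in $\mathbf u$, namely the zigzag
$$\ell_1(\mathbf u,x_s)<\ell_1(\mathbf u,x_{s-1})<\ell_2(\mathbf u,x_s)<\ell_1(\mathbf u,x_{s-2})<\ell_2(\mathbf u,x_{s-1})<\cdots<\ell_1(\mathbf u,x_1)<\ell_2(\mathbf u,x_2)<\ell_1(\mathbf u,x_0)<\ell_2(\mathbf u,x_1).$$
By transitivity it suffices to verify the consecutive pairs, which are of three kinds: the single pure first-occurrence step $\ell_1(\mathbf u,x_s)<\ell_1(\mathbf u,x_{s-1})$ already noted; the ``restrictor'' steps $\ell_1(\mathbf u,x_{r-1})<\ell_2(\mathbf u,x_r)$, immediate from $x_{r-1}=h_2^{r-1}(\mathbf u,x_r)$; and the ``skip'' steps $\ell_2(\mathbf u,x_{r+1})<\ell_1(\mathbf u,x_{r-1})$. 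For the last kind I would invoke Lemma~\ref{if first then second} with $z=x_{r+1}$ of depth $r+1$ and $y=x_{r-1}$ of depth $(r+1)-2$: the already-established chain of first occurrences supplies $\ell_1(\mathbf u,x_{r+1})<\ell_1(\mathbf u,x_{r-1})$, whereupon the lemma forces $\ell_2(\mathbf u,x_{r+1})<\ell_1(\mathbf u,x_{r-1})$. Because the second restrictors coincide in $\mathbf u$ and $\mathbf v$ and the depths agree, the identical reasoning carried out inside $\mathbf v$ produces the same zigzag for $\mathbf v$. Reading off the (possibly empty) factors lying between consecutive tracked occurrences in each word then exhibits both sides in the required form~\eqref{form of u=v}.

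I expect the main obstacle to be the index bookkeeping in the zigzag: correctly matching the roles of $z$ and $y$ and the shift ``depth $=r-2$'' when applying Lemma~\ref{if first then second} to obtain the skip inequalities, and handling the degenerate cases $s=1$ and $r=1$, where some second-occurrence links and the ``$k>1$'' clause of Lemma~\ref{h_2^{k-1}}(ii) simply drop out.
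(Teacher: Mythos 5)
Your proposal is correct and follows essentially the same route as the paper's own proof: you define $x_{r-1}=h_2^{r-1}(\mathbf u,x_r)$ by downward recursion, use Lemma~\ref{h_2^{k-1}} for the depth and ordering facts, Lemma~\ref{h_i^k(u,x)=h_i^k(v,x) to h_i^s(u,x)=h_i^s(v,x)} to transfer the restrictors to $\mathbf v$, and Lemma~\ref{if first then second} for the skip inequalities, exactly as the paper does. The only (harmless) variation is that you transfer all the depths to $\mathbf v$ at once via Lemma~\ref{D(u,x)=k iff D(v,x)=k}, whereas the paper transfers only $D(\mathbf v,x_s)=s$ this way and re-derives the remaining depths inside $\mathbf v$ with Lemma~\ref{h_2^{k-1}}.
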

 
\begin{proof}
In view of Lemma~\ref{h_i^k(u,x)=h_i^k(v,x) to h_i^s(u,x)=h_i^s(v,x)}, the claim~\eqref{eq the same l-dividers} with $\ell=r$ is true for any $1\le r\le s$. We use this fact below without references.
 
Put $x_{s-1}=h_2^{s-1}({\bf u},x_s)$. The claim~\eqref{eq the same l-dividers} with $\ell=s$ implies that $h_2^{s-1}({\bf v},x_s)=h_2^{s-1}({\bf u},x_s)=x_{s-1}$. According to Lemma~\ref{h_2^{k-1}}, $D(\mathbf u, x_{s-1})=s-1$ and $\ell_j(\mathbf u,x_s)<\ell_j(\mathbf u,x_{s-1})$ for any $j=1,2$. Recall that $D(\mathbf u,x_s)=s$. According to Lemma~\ref{D(u,x)=k iff D(v,x)=k}, $D(\mathbf v, x_s)=s$. Now we apply Lemma~\ref{h_2^{k-1}} again and obtain $D(\mathbf v, x_{s-1})=s-1$ and $\ell_j(\mathbf v,x_s)<\ell_j(\mathbf v,x_{s-1})$ for any $j=1,2$.
 
Further, put $x_{s-2}=h_2^{s-2}({\bf u},x_{s-1})$. According to Lemma~\ref{h_2^{k-1}}, $D(\mathbf u, x_{s-2})=s-2$ and $\ell_j(\mathbf u,x_{s-1})<\ell_j(\mathbf u,x_{s-2})$ for any $j=1,2$. The claim~\eqref{eq the same l-dividers} with $\ell=s-1$ implies that $h_2^{s-2}({\bf v},x_{s-1})=h_2^{s-2}({\bf u},x_{s-1})=x_{s-2}$. Now we apply Lemma~\ref{h_2^{k-1}} again and obtain $D(\mathbf v, x_{s-2})=s-2$ and $\ell_j(\mathbf v,x_{s-1})<\ell_j(\mathbf v,x_{s-2})$ for any $j=1,2$. Since $\ell_1(\mathbf u,x_s)<\ell_1(\mathbf u,x_{s-1})<\ell_1(\mathbf u,x_{s-2})$, we have $\ell_2(\mathbf u,x_s)<\ell_1(\mathbf u,x_{s-2})$ by Lemma~\ref{if first then second}. Analogously, $\ell_2(\mathbf v,x_s)<\ell_1(\mathbf v,x_{s-2})$.
 
Continuing these considerations, we define one by one the letters $x_r=h_2^r(\mathbf u,x_{r+1})$ for $r=s-3,s-4,\dots,1$ and prove that $D(\mathbf u,x_r)=D(\mathbf v, x_r)=r$, $\ell_j(\mathbf u,x_{r+1})<\ell_j(\mathbf u,x_r)$, $\ell_j(\mathbf v,x_{r+1})<\ell_j(\mathbf v,x_r)$ for any $j=1,2$, $\ell_2(\mathbf u,x_{r+2})<\ell_1(\mathbf u,x_r)$ and $\ell_2(\mathbf v,x_{r+2})<\ell_1(\mathbf v,x_r)$.
 
Finally, put $x_0=h_2^0({\bf u},x_1)$. According to Lemma~\ref{h_2^{k-1}}, $D(\mathbf u, x_0)=0$ and $\ell_1(\mathbf u,x_1)<\ell_1(\mathbf u,x_0)$. The claim~\eqref{eq the same l-dividers} with $\ell=1$ implies that $h_2^0({\bf v},x_1)=h_2^0({\bf u},x_1)=x_0$. Now we apply Lemma~\ref{h_2^{k-1}} again and obtain $D(\mathbf v, x_0)=0$ and $\ell_1(\mathbf v,x_1)<\ell_1(\mathbf v,x_0)$. Since $\ell_1(\mathbf u,x_2)<\ell_1(\mathbf u,x_1)<\ell_1(\mathbf u,x_0)$, we have $\ell_2(\mathbf u,x_2)<\ell_1(\mathbf u,x_0)$ by Lemma~\ref{if first then second}. Analogously, $\ell_2(\mathbf v,x_2)<\ell_1(\mathbf v,x_0)$.
 
In view of the above, we have the identity $\mathbf{u\approx v}$ has the form~\eqref{form of u=v} for some possibly empty words $\mathbf u_0,\mathbf u_1,\dots,\mathbf u_{2s+1}$ and $\mathbf v_0,\mathbf v_1,\dots,\mathbf v_{2s+1}$.
\end{proof}
 
\begin{lemma}
\label{does not contain dividers}
Let $\mathbf w=y_1y_2\cdots y_n$ where the letters $y_1,y_2,\dots,y_n$ are not necessarily pairwise different. Further, let $\mathbf u=\mathbf u'\xi(\mathbf w)\mathbf u''$ for some possibly empty words $\mathbf u'$ and $\mathbf u''$ and some endomorphism $\xi$ of $F^1$. Put $\xi(y_i)=\mathbf w_i$ for all $i=1,2,\dots,n$. If $D(\mathbf w, y_i)>0$ then the subword $\mathbf w_i$ of $\mathbf u$ does not contain any $r$-divider of $\mathbf u$ for all $r<D(\mathbf w, y_i)$. 
\end{lemma}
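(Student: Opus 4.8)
The plan is to rephrase the assertion through the depth function. By Lemma~\ref{k-divider and depth}, a letter $b$ is an $r$-divider of $\mathbf u$ exactly when $D(\mathbf u,b)\le r$; thus the claim is equivalent to the statement that $D(\mathbf u,b)\ge D(\mathbf w,y_i)$ for every letter $b$ whose first occurrence in $\mathbf u$ lies inside the factor $\mathbf w_i$. Writing $a=y_i$, I first observe that only the \emph{first} copy of $\xi(a)$ in $\xi(\mathbf w)$ can contain a first occurrence of a letter of $\mathbf u$: since $a$ is multiple in $\mathbf w$, the word $\xi(a)$ is repeated, so any letter occurring in a later copy has already occurred in an earlier one, and a later copy therefore contains no $r$-dividers at all. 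Hence one may assume that $i$ is the first occurrence of $a$ in $\mathbf w$, say at position $p_1$, that $C_1:=\mathbf w_i$ is the first copy of $\xi(a)$, and one lets $C_2$ denote the copy of $\xi(a)$ produced by the second occurrence of $a$, at position $p_2$.

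I would then prove the reformulated statement \emph{for all letters $a$ simultaneously} by induction on $r$, establishing: if $D(\mathbf w,a)>r$ then no letter whose first occurrence lies in the first copy of $\xi(a)$ is an $r$-divider of $\mathbf u$. The base case $r=0$ is immediate, since a letter $b$ occurring in $\xi(a)$ occurs in at least two copies of $\xi(a)$, hence is multiple in $\mathbf u$ and is not a $0$-divider.

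For the inductive step fix $a$ with $k:=D(\mathbf w,a)>r$ and a letter $b$ with first occurrence $b^{(1)}$ inside $C_1$; suppose towards a contradiction that some $(r-1)$-divider $d$ of $\mathbf u$ has its first occurrence between $b^{(1)}$ and the second occurrence $b^{(2)}$ of $b$. The argument splits according to the location of the first occurrence of $d$. If it lies inside $C_1=\mathbf w_i$, then $d$ is itself a letter whose first occurrence lies in the first copy of $\xi(a)$, and since $D(\mathbf w,a)>r>r-1$ the induction hypothesis applied at level $r-1$ to the pair $(a,d)$ says that $d$ is not an $(r-1)$-divider, a contradiction. Otherwise the first occurrence of $d$ lies strictly after $C_1$; as $d\notin\con(\xi(a))$ and $b^{(2)}$ occurs no later than the copy of $b$ sitting inside $C_2$ (such a copy exists because $b\in\con(\xi(a))$), the first occurrence of $d$ must precede $C_2$ and therefore falls inside $\xi(y_j)$ for some position $j$ with $p_1<j<p_2$, where $j$ is necessarily the first occurrence of the letter $y_j$. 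If $D(\mathbf w,y_j)>r-1$, the induction hypothesis applied to $(y_j,d)$ again forbids $d$ from being an $(r-1)$-divider. If instead $D(\mathbf w,y_j)\le r-1\le k-2$, then by Lemma~\ref{k-divider and depth} the letter $y_j$ is a $(k-2)$-divider of $\mathbf w$ whose first occurrence lies strictly between the first two occurrences of $a$; this forces $h_1^{k-2}(\mathbf w,a)\ne h_2^{k-2}(\mathbf w,a)$, whence $D(\mathbf w,a)\le k-1$, contradicting $D(\mathbf w,a)=k$.

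The main work is in the inductive step, and specifically in the bookkeeping that locates the first occurrence of the offending divider $d$. The two facts that make the argument close up are that a divider inside $C_1$ is controlled by the induction hypothesis for $a$ itself, while a divider appearing between the two copies of $\xi(a)$ must sit inside the image $\xi(y_j)$ of some letter $y_j$ lying strictly between the two occurrences of $a$ in $\mathbf w$; such a $y_j$, if shallow, is exactly a low-level divider of $\mathbf w$ of the kind that $D(\mathbf w,a)=k$ forbids between consecutive occurrences of $a$. The only delicate point is verifying that $b^{(2)}$ cannot overshoot $C_2$, which is what confines the first occurrence of $d$ to the region $\xi(y_{p_1+1})\cdots\xi(y_{p_2-1})$ and makes the final contradiction with the depth of $a$ available.
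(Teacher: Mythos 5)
Your proof is correct and is essentially the paper's own argument in different clothing: the paper runs a minimal-counterexample argument on $r$ (which is exactly your induction), uses the same witness (the restrictor $h_2^{r-1}(\mathbf u,\cdot)$ of the offending divider), locates its first occurrence inside the image $\xi(y_j)$ of a letter lying strictly between the two occurrences of $a$ in $\mathbf w$, and reaches the same contradiction with the depth of $a$ via Lemma~\ref{k-divider and depth}. The only repair needed is cosmetic: in your final sub-case derive the contradiction at level $r-1$ (the $(r-1)$-divider $y_j$ between the two occurrences of $a$ gives $h_1^{r-1}(\mathbf w,a)\ne h_2^{r-1}(\mathbf w,a)$, hence $D(\mathbf w,a)\le r$, contradicting $D(\mathbf w,a)>r$) rather than at level $k-2$, since $k=D(\mathbf w,a)$ may equal $\infty$, in which case $h^{k-2}$ is meaningless.
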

 
\begin{proof}
Let $1\le i\le n$ and $D(\mathbf w,y_i)>0$. Then $y_i\in\mul(\mathbf w)$, whence $\con(\mathbf w_i)\subseteq\mul\bigl(\xi(\mathbf w)\bigr)\subseteq\mul(\mathbf u)$. This implies that $\mathbf w_i$ does not contain any 0-divider of \textbf u. Let now $r>0$ be the least number such that there exists $i$ with $D(\mathbf w,y_i)>r$ but $\mathbf w_i$ contains some $r$-divider $t$ of \textbf u. The choice of $r$ and Lemma~\ref{k-divider and depth} imply that $D(\mathbf u,t)=r$. Clearly, $t\notin\con(\mathbf w_1\mathbf w_2\cdots\mathbf w_{i-1})$, whence $y_i$ differs from $y_1,y_2,\dots,y_{i-1}$. Since $y_i\in\mul(\mathbf w)$, there is some $j\ge i$ such that $\mathbf w_j$ contains second occurrence of $t$ in \textbf u. Put $x=h_2^{r-1}(\mathbf u,t)$. In view of Lemma~\ref{h_2^{k-1}}(i), $\ell_1(\mathbf u,t)<\ell_1(\mathbf u,x)$. Then there is $i\le\ell\le j$ such that $\mathbf w_\ell$ contains the \mbox{$(r-1)$}-divider $x$ of \textbf u. In view of the choice of $r$, $D(\mathbf w,y_\ell)\le r-1$. This implies that $y_i\ne y_\ell$, whence $\ell_1(\mathbf w,y_i)<\ell_1(\mathbf w,y_\ell)$. Further, since $y_i\in\mul(\mathbf w)$, there is $p\ge j$ such that $y_i= y_p$. We note that $\ell<p$ because $y_p=y_i\ne y_\ell$. So, we obtain $\ell_1(\mathbf w,y_i)<\ell_1(\mathbf w,y_\ell)<\ell_2(\mathbf w,y_i)$. Lemma~\ref{k-divider and depth} implies that $y_\ell$ is an \mbox{$(r-1)$}-divider of $\bf w$, whence $h_1^{r-1}(\mathbf w, y_i)\ne h_2^{r-1}(\mathbf w, y_i)$. We have a contradiction with the fact that $D(\mathbf w,y_i)>r$.
\end{proof}
 
\section{The proof of the ``only if'' part}
\label{necessity}
 
Throughout this section, ${\bf V}$ denotes a fixed non-group chain variety of monoids. We aim to verify that \textbf V is contained in one of the varieties listed in Theorem~\ref{main result}. The section is divided into three subsections.{\sloppy

}
 
\subsection{Reduction to the case when $\mathbf D_2\subseteq\mathbf V$}
\label{necessity: red to over D_2}
 
A variety of monoids is called \emph{aperiodic} if all its groups are singletons. Lemma~\ref{group variety} implies that ${\bf SL} \subseteq {\bf V}$. If ${\bf V}$ contains a non-trivial group then the variety generated by this group is incomparable with ${\bf SL}$. This contradicts the fact that ${\bf V}$ is a chain variety. Therefore, ${\bf V}$ is aperiodic, whence it satisfies the identity $x^n\approx x^{n+1}$ for some $n$. If ${\bf V}$ is commutative then $\mathbf{V\subseteq SL\subseteq C}_2$ whenever $n=1$ and ${\bf V} \subseteq {\bf C}_n$ otherwise.
 
Further, if ${\bf V}$ is a variety of band monoids then Lemma~\ref{L(BM)} and the observation that ${\bf V}$ cannot contain simultaneously the incomparable varieties ${\bf LRB}$ and ${\bf RRB}$ imply that ${\bf V}$ is contained in one of these two varieties.
 
Suppose now that ${\bf V}$ is non-commutative and is not a variety of band monoids. Then ${\bf V}$ is non-completely regular because every aperiodic completely regular variety consists of bands. Then Lemma~\ref{non-cr and non-commut} implies that ${\bf D}_1 \subseteq {\bf V}$. To continue our considerations, we need several assertions. 
 
\begin{lemma}
\label{D_1 subseteq X}
Let $\mathbf X$ be a monoid variety such that $\mathbf D_1\subseteq\mathbf X$. Then either $\mathbf X$ satisfies an identity of the form
\begin{equation}
\label{x^syx^t=yx^r}
x^syx^t\approx yx^r
\end{equation}
where $s\ge 1$, $t\ge 0$, $s+t\ge2$ and $r\ge 2$ or, for any identity $\mathbf{u\approx v}$ that holds in $\mathbf X$, the claim
\begin{equation}
\label{eq D_1 subseteq X}
h_1^0({\bf u},x)= h_1^0({\bf v},x)\text{ for all }x\in \con({\bf u})
\end{equation}
is true.
\end{lemma}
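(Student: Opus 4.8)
The plan is to prove this by a dichotomy: I would assume the second alternative fails and then construct an identity of the form~\eqref{x^syx^t=yx^r} holding in $\mathbf X$. So suppose there is an identity $\mathbf u\approx\mathbf v$ holding in $\mathbf X$ and a letter $x\in\con(\mathbf u)$ with $h_1^0(\mathbf u,x)\ne h_1^0(\mathbf v,x)$; in particular this identity is non-trivial, since $h_1^0$ already distinguishes its two sides.

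First I would extract the two structural facts that hold for every identity of $\mathbf X$. Since $\mathbf D_1\subseteq\mathbf X$, the identity $\mathbf u\approx\mathbf v$ holds in $\mathbf D_1$, so the necessity direction of Proposition~\ref{word problem D_1} yields both the claim~\eqref{sim(u)=sim(v) & mul(u)=mul(v)} and the equality~\eqref{eq u_mul=v_mul}, i.e. $\mathbf u_{\mul(\mathbf u)}=\mathbf v_{\mul(\mathbf u)}$. The latter says precisely that the simple letters of $\mathbf u$ and of $\mathbf v$ occur in the same order; writing this common list as $t_1,t_2,\dots,t_m$ and setting $t_0=\lambda$, the $0$-dividers of each of $\mathbf u$ and $\mathbf v$ are exactly $t_0,t_1,\dots,t_m$, and for a simple letter $t_i$ one has $h_1^0(\mathbf u,t_i)=t_{i-1}=h_1^0(\mathbf v,t_i)$.

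These equalities show that the offending letter $x$ cannot be simple, so $x\in\mul(\mathbf u)$. I would then write $h_1^0(\mathbf u,x)=t_p$ and $h_1^0(\mathbf v,x)=t_q$ with $p\ne q$, and, after possibly interchanging $\mathbf u$ and $\mathbf v$, assume $p<q$. Put $y=t_{p+1}$; this is a genuine simple letter since $0\le p<p+1\le q\le m$, and $y\ne x$ because $x$ is multiple. By the choice of $t_p$ as the right-most $0$-divider preceding the first $x$ in $\mathbf u$, the letter $y$ follows the first occurrence of $x$ in $\mathbf u$; on the other hand $t_1,\dots,t_q$ all precede the first occurrence of $x$ in $\mathbf v$, so $y$ precedes every occurrence of $x$ in $\mathbf v$.

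Finally I would substitute $1$ for every letter other than $x$ and $y$. Since $y$ is simple and occurs, in $\mathbf u$, after the first $x$, the left-hand side collapses to $x^syx^t$ with $s\ge1$ and $t\ge0$; since $y$ precedes all occurrences of $x$ in $\mathbf v$, the right-hand side collapses to $yx^r$. Because $x$ is multiple in both words (by~\eqref{sim(u)=sim(v) & mul(u)=mul(v)}), we get $s+t=\occ_x(\mathbf u)\ge2$ and $r=\occ_x(\mathbf v)\ge2$, so $x^syx^t\approx yx^r$ is an identity of the form~\eqref{x^syx^t=yx^r} holding in $\mathbf X$, as required. The argument is essentially routine; the only points needing care are verifying that the discrepancy cannot sit at a simple letter — which is exactly what~\eqref{eq u_mul=v_mul} rules out — and checking that the resulting exponents meet the stated bounds.
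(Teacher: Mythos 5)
Your proposal is correct and follows essentially the same route as the paper: apply Proposition~\ref{word problem D_1} to align the $0$-decompositions of $\mathbf u$ and $\mathbf v$, locate the multiple letter $x$ whose $(1,0)$-restrictors differ, pick a simple letter lying after the first occurrence of $x$ in $\mathbf u$ but before all occurrences of $x$ in $\mathbf v$, and substitute $1$ for everything else. The only cosmetic difference is that you take this simple letter to be $t_{p+1}$ while the paper takes $t_q$ itself; both choices work equally well.
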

 
\begin{proof}
Let $\mathbf{u\approx v}$ be an identity that holds in \textbf X. The inclusion ${\bf D}_1 \subseteq{\bf X}$ and Proposition~\ref{word problem D_1} imply that the claims~\eqref{sim(u)=sim(v) & mul(u)=mul(v)} and~\eqref{eq u_mul=v_mul} are true. Hence if~\eqref{t_0u_0t_1u_1 ... t_mu_m} is the 0-decomposition of $\bf u$ then the 0-decomposition of $\bf v$ has the form~\eqref{t_0v_0t_1v_1 ... t_mv_m}. Suppose that the claim~\eqref{eq D_1 subseteq X} is false. Then there is a letter $x\in\mul(\mathbf u)$ such that $h_1^0({\bf u},x)\ne h_1^0({\bf v},x)$. The claim~\eqref{sim(u)=sim(v) & mul(u)=mul(v)} implies that $x\in\mul(\mathbf v)$. Further, we may assume without loss of generality that there are $i<j$ such that $t_i=h_1^0({\bf u},x)$ and $t_j=h_1^0({\bf v},x)$. Substituting $y$ for $t_j$ and~1 for all letters occurring in the identity ${\bf u}\approx {\bf v}$ except $x$ and $t_j$, we obtain \textbf X satisfies an identity of the form~\eqref{x^syx^t=yx^r} where $s\ge 1$, $t\ge 0$, $s+t\ge2$ and $r\ge 2$.
\end{proof}
 
When we make simultaneously several substitutions in some identity, say, substitute $\mathbf u_i$ for $x_i$ for $i=1,2,\dots,k$, then we will say for brevity that we perform the substitution
$$
(x_1,x_2,\dots,x_k)\mapsto(\mathbf u_1,\mathbf u_2,\dots,\mathbf u_k)
$$
in this identity.
 
\begin{proposition}
\label{word problem E}
A non-trivial identity ${\bf u}\approx {\bf v}$ holds in the variety ${\bf E}$ if and only if the claims~\eqref{sim(u)=sim(v) & mul(u)=mul(v)} and~\eqref{eq D_1 subseteq X} are true.
\end{proposition}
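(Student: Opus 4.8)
The plan is to treat the two implications separately, leaning on the reduction machinery already assembled.

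For necessity, suppose $\mathbf E$ satisfies a non-trivial identity $\mathbf u\approx\mathbf v$. Since $\mathbf C_2\subseteq\mathbf E$, Proposition~\ref{word problem C_2} immediately gives the claim~\eqref{sim(u)=sim(v) & mul(u)=mul(v)}. To obtain~\eqref{eq D_1 subseteq X} I would invoke Lemma~\ref{D_1 subseteq X}, which applies because $\mathbf D_1\subseteq\mathbf E$ (Fig.~\ref{two lattices}b)): it then suffices to verify that $\mathbf E$ satisfies no identity of the form~\eqref{x^syx^t=yx^r}. The idea is to compute the $\mathbf E$-normal forms of the two sides of $x^sy x^t\approx y x^r$. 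The one non-obvious step, which I expect to be the main obstacle, is the absorption identity $x^2yx\approx x^2y$ in $\mathbf E$: substituting $y\mapsto xy$ in $x^2y\approx xyx$ yields $x^3y\approx x^2yx$, and then $x^3\approx x^2$ gives $x^2yx\approx x^2y$. Iterating this, together with $x^2y\approx xyx$, produces $xyx^n\approx xyx\approx x^2y$ for every $n\ge1$, so that the left-hand side $x^sy x^t$ collapses to $x^2y$ in all admissible cases, while the right-hand side $y x^r$ collapses only to $yx^2$. Thus an identity~\eqref{x^syx^t=yx^r} would force $\mathbf E$ to satisfy $x^2y\approx y x^2$; but this relation makes $x^2$ central, so that $\mathbf E$ would satisfy all defining identities of $\mathbf D_1$, i.e. $\mathbf E\subseteq\mathbf D_1$, contradicting the strict inclusion $\mathbf D_1\subset\mathbf E$ recorded in Lemma~\ref{L(LRB+C_2)}. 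Hence no such identity holds, and Lemma~\ref{D_1 subseteq X} yields~\eqref{eq D_1 subseteq X}.

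For sufficiency, assume $\mathbf u\approx\mathbf v$ satisfies~\eqref{sim(u)=sim(v) & mul(u)=mul(v)} and~\eqref{eq D_1 subseteq X}; I would produce a common canonical form for the two words using only the three defining identities of $\mathbf E$. First, \eqref{eq D_1 subseteq X} applied to the simple letters forces them to occur in the same order in $\mathbf u$ and $\mathbf v$, so I may write $\mathbf u=\mathbf u_0 y_1\mathbf u_1\cdots y_m\mathbf u_m$ and $\mathbf v=\mathbf v_0 y_1\mathbf v_1\cdots y_m\mathbf v_m$ with $\con(\mathbf u_i),\con(\mathbf v_i)\subseteq\mul(\mathbf u)$; applied to a multiple letter $z$, \eqref{eq D_1 subseteq X} says that the first occurrence of $z$ lies in the same $0$-block $b(z)$ in both words. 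The engine of the reduction is the identity $z\,\mathbf w\,z\approx z^2\mathbf w$, obtained from $x^2y\approx xyx$ by the substitution $y\mapsto\mathbf w$. Using it I can pull every non-first occurrence of a fixed multiple letter $z$ leftwards and merge it with the first occurrence, and then apply $x^2\approx x^3$ to cap the resulting power at $2$; this leaves a single factor $z^2$ sitting in the $0$-block $b(z)$, without disturbing any simple letter or any first occurrence.

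Carrying out this gathering for every multiple letter transforms $\mathbf u$ into a word in which each $0$-block is a product of the squares $z^2$ of exactly those multiple letters whose first occurrence it contains, and likewise for $\mathbf v$. Finally $x^2y^2\approx y^2x^2$ lets me sort the squares inside each block into a fixed order. Since $\mathbf u$ and $\mathbf v$ share the same simple letters in the same order, the same multiple letters, and the same block $b(z)$ for each of them, the two canonical forms coincide, whence $\mathbf E\models\mathbf u\approx\mathbf v$. The points that require care are that the gathering moves only non-first occurrences, so that all the $h_1^0$-data, and in particular the block assignments, are preserved throughout, and that the process terminates; both become straightforward once the rewrite $z\,\mathbf w\,z\to z^2\mathbf w$ is set up.
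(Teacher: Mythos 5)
Your proposal is correct; its two halves relate to the paper differently. The sufficiency argument is essentially the paper's own: both reduce $\mathbf u$ and $\mathbf v$ to a common canonical form in which each multiple letter $z$ occurs exactly once, as $z^2$, inside the $0$-block containing its first occurrence, and the squares are then sorted with $x^2y^2\approx y^2x^2$; the paper organizes this as an induction on the number of $0$-blocks, while you gather letter by letter via $z\,\mathbf w\,z\approx z^2\mathbf w$, but the content is the same. The necessity argument is genuinely different. After invoking Lemma~\ref{D_1 subseteq X}, the paper excludes identities of the form~\eqref{x^syx^t=yx^r} semantically: it exhibits the monoid $P^1$ with $P=\langle e,a\mid e^2=e,\,ae=a,\,ea=0\rangle$, observes $P^1\in\mathbf E$, and evaluates~\eqref{x^syx^t=yx^r} under $(x,y)\mapsto(e,a)$ to obtain the contradiction $0=a$. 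You exclude them syntactically: having derived $x^2yx\approx x^2y$ in $\mathbf E$, you collapse the left-hand side of~\eqref{x^syx^t=yx^r} to $x^2y$ and the right-hand side to $yx^2$, so such an identity would make squares central in $\mathbf E$, forcing $\mathbf E\subseteq\mathbf D_1$ and contradicting the strict inclusion $\mathbf D_1\subset\mathbf E$ recorded in Lemma~\ref{L(LRB+C_2)}(ii). Your route avoids any witness monoid but leans on the externally quoted lattice description of Lemma~\ref{L(LRB+C_2)}(ii), whereas the paper's evaluation argument is self-contained once one accepts $P^1\in\mathbf E$. The one step you assert without proof --- that centrality of squares together with the defining identities of $\mathbf E$ yields the remaining defining identities $\sigma_1$, $\sigma_2$, $\gamma_1$ of $\mathbf D_1$ --- is true and routine (merge the two occurrences of each multiple letter by $x\mathbf wx\approx x^2\mathbf w$, then use centrality of squares); the paper itself asserts exactly this implication as evident in Subsection~\ref{sufficiency: K - red to [E,K]}, so this is a gloss rather than a gap.
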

 
\begin{proof}
\emph{Necessity}. Suppose that ${\bf E}$ satisfies an identity ${\bf u}\approx {\bf v}$. The inclusion ${\bf D}_1 \subseteq{\bf E}$ and Proposition~\ref{word problem D_1} imply that this identity satisfies the claim~\eqref{sim(u)=sim(v) & mul(u)=mul(v)}. Suppose that the claim~\eqref{eq D_1 subseteq X} is false. Then Lemma~\ref{D_1 subseteq X} applies with the conclusion that \textbf E satisfies an identity of the form~\eqref{x^syx^t=yx^r} where $s\ge 1$, $t\ge 0$, $s+t\ge2$ and $r\ge 2$. Let us consider the semigroup
$$
P=\langle e,a\mid e^2=e,\,ae=a,\,ea=0\rangle=\{e,a,0\}.
$$
Note that \textbf E contains the monoid $P^1$, i.e., the semigroup $P$ with a new identity element adjoined. Making the substitution $(x,y)\mapsto(e,a)$ in the identity~\eqref{x^syx^t=yx^r} results in the contradiction $0=a$. Thus, $P^1$ and therefore, \textbf E violates~\eqref{x^syx^t=yx^r}, a contradiction.
 
\medskip
 
\emph{Sufficiency}. Suppose that the identity ${\bf u}\approx {\bf v}$ satisfies the claims~\eqref{sim(u)=sim(v) & mul(u)=mul(v)} and~\eqref{eq D_1 subseteq X}. Let~\eqref{t_0u_0t_1u_1 ... t_mu_m} be the 0-decomposition of $\bf u$. In view of Lemma~\ref{k-equivalent}, the 0-decomposition of $\bf v$ has the form~\eqref{t_0v_0t_1v_1 ... t_mv_m}. We are going to verify that ${\bf u}\approx {\bf v}$ holds in ${\bf E}$. Recall that the variety \textbf E is given by the identity system
\begin{equation}
\label{xx=xxx,xxy=xyx,xxyy=yyxx}
\{x^2\approx x^3,\,x^2y\approx xyx,\,x^2y^2\approx y^2x^2\}.
\end{equation}
Put $X=\con({\bf u}_0)=\{x_1,x_2,\dots,x_k\}$. Clearly, any block of an arbitrary word \textbf w does not contain letters simple in \textbf w. Therefore, we may assume without loss of generality that $\mathbf u_0=x_1^2x_2^2\cdots x_k^2$.
 
We will use induction on the parameter $m$ from~\eqref{t_0u_0t_1u_1 ... t_mu_m} and~\eqref{t_0v_0t_1v_1 ... t_mv_m}.
 
\smallskip
 
\emph{Induction base}. Let $m=0$. The claim~\eqref{sim(u)=sim(v) & mul(u)=mul(v)} implies that $\con(\mathbf u_0)=\con(\mathbf v_0)$. Since the variety \textbf E satisfies the identity
\begin{equation}
\label{xxyy=yyxx}
x^2y^2\approx y^2x^2,
\end{equation}
it also satisfies the identity $\mathbf v_0\approx x_1^2x_2^2\cdots x_k^2$. Therefore, the identities
$$
\mathbf u=t_0\mathbf u_0=t_0x_1^2x_2^2\cdots x_k^2\approx t_0\mathbf v_0=\mathbf v
$$
hold in \textbf E.
 
\smallskip
 
\emph{Induction step}. Let now $m>0$. The identity system~\eqref{xx=xxx,xxy=xyx,xxyy=yyxx} implies the identity
$$
{\bf u}\approx t_0x_1^2x_2^2\cdots x_k^2 t_1({\bf u}_1)_X\cdots t_m({\bf u}_m)_X.
$$
By the claim~\eqref{eq D_1 subseteq X}, $\con(\mathbf u_0)=\con(\mathbf v_0)$, whence the identity system~\eqref{xx=xxx,xxy=xyx,xxyy=yyxx} implies the identity
$$
{\bf v}\approx t_0x_1^2x_2^2\cdots x_k^2 t_1({\bf v}_1)_X\cdots t_m({\bf v}_m)_X.
$$
 
Put ${\bf u}'=({\bf u}_1)_X\cdots t_m({\bf u}_m)_X$ and ${\bf v}'=({\bf v}_1)_X\cdots t_m({\bf v}_m)_X$. It is easy to verify that the identity ${\bf u}' \approx {\bf v}'$ satisfies the claims~\eqref{sim(u)=sim(v) & mul(u)=mul(v)} and~\eqref{eq D_1 subseteq X}. By the induction assumption, the identity ${\bf u}' \approx {\bf v}'$ holds in ${\bf E}$, whence this variety satisfies
$$
\mathbf u\approx t_0x_1^2x_2^2\cdots x_k^2t_1\mathbf u'\approx t_0x_1^2x_2^2\cdots x_k^2t_1\mathbf v'\approx \mathbf v.
$$
Thus, ${\bf u}\approx {\bf v}$ holds in \textbf E.
\end{proof}
 
\begin{lemma}
\label{E nsubseteq X}
Let ${\bf X}$ be a non-completely regular variety of monoids. If ${\bf E} \nsubseteq {\bf X}$ and ${\bf X}$ satisfies the identity
\begin{equation}
\label{xx=xxx}
x^2\approx x^3
\end{equation}
then ${\bf X}$ satisfies also the identity
\begin{equation}
\label{yxx=xxyxx}
yx^2\approx x^2yx^2.
\end{equation}
\end{lemma}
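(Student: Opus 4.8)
The plan is to combine the identity characterization of $\mathbf E$ from Proposition~\ref{word problem E} with the dichotomy of Lemma~\ref{D_1 subseteq X}, and then to finish by a short manipulation of identities using \eqref{xx=xxx}.

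First I would record the easy structural reductions. Since $\mathbf X$ is non-completely regular, Corollary~\ref{cr or over C_2} gives $\mathbf C_2\subseteq\mathbf X$. If $\mathbf X$ is commutative, then using \eqref{xx=xxx} one has $x^2yx^2\approx yx^4\approx yx^2$, so \eqref{yxx=xxyxx} already holds and there is nothing to prove. Hence I may assume that $\mathbf X$ is non-commutative, and then Lemma~\ref{non-cr and non-commut} yields $\mathbf D_1\subseteq\mathbf X$, which is exactly the hypothesis needed to invoke Lemma~\ref{D_1 subseteq X}.

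The conceptual step is to rule out the second alternative of Lemma~\ref{D_1 subseteq X}. Since $\mathbf E\nsubseteq\mathbf X$, there is a (necessarily non-trivial) identity $\mathbf u\approx\mathbf v$ that holds in $\mathbf X$ but fails in $\mathbf E$. Because $\mathbf C_2\subseteq\mathbf X$, this identity holds in $\mathbf C_2$, so Proposition~\ref{word problem C_2} shows that the claim~\eqref{sim(u)=sim(v) & mul(u)=mul(v)} is true for it. As $\mathbf u\approx\mathbf v$ fails in $\mathbf E$ while \eqref{sim(u)=sim(v) & mul(u)=mul(v)} holds, Proposition~\ref{word problem E} forces the claim~\eqref{eq D_1 subseteq X} to fail for $\mathbf u\approx\mathbf v$. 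Thus the second alternative of Lemma~\ref{D_1 subseteq X} cannot hold, and therefore $\mathbf X$ satisfies an identity of the form~\eqref{x^syx^t=yx^r}, say $x^syx^t\approx yx^r$ with $s\ge1$, $t\ge0$, $s+t\ge2$ and $r\ge2$.

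It then remains a routine calculation to pass from \eqref{x^syx^t=yx^r} to \eqref{yxx=xxyxx}. Note first that \eqref{xx=xxx} implies $x^n\approx x^2$ in $\mathbf X$ for every $n\ge2$. Multiplying $x^syx^t\approx yx^r$ on the right by $x^2$ and collapsing the powers $x^{t+2}$ and $x^{r+2}$ to $x^2$, I obtain $x^syx^2\approx yx^2$. If $s\ge2$, then $x^s\approx x^2$ gives $x^2yx^2\approx x^syx^2\approx yx^2$ at once. If $s=1$, then from $xyx^2\approx yx^2$ the substitution $y\mapsto xy$ produces $x^2yx^2\approx xyx^2$, which combined with $xyx^2\approx yx^2$ again yields $x^2yx^2\approx yx^2$. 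In either case $\mathbf X$ satisfies \eqref{yxx=xxyxx}, as required. The only non-mechanical point is the elimination of the second alternative of Lemma~\ref{D_1 subseteq X}; once \eqref{x^syx^t=yx^r} is available, the rest is immediate.
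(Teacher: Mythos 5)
Your proposal is correct and follows essentially the same route as the paper: handle the commutative case directly via~\eqref{xx=xxx}, otherwise get $\mathbf D_1\subseteq\mathbf X$ from Lemma~\ref{non-cr and non-commut}, use Propositions~\ref{word problem C_2} and~\ref{word problem E} to show the claim~\eqref{eq D_1 subseteq X} fails for an identity of $\mathbf X$ falsified by $\mathbf E$, and then extract an identity of the form~\eqref{x^syx^t=yx^r} from Lemma~\ref{D_1 subseteq X}. The only difference is cosmetic: the paper finishes by substituting $x^2$ for $x$ in~\eqref{x^syx^t=yx^r}, while you multiply on the right by $x^2$ and treat the case $s=1$ by the substitution $y\mapsto xy$; both are routine and equally valid.
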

 
\begin{proof}
If \textbf X is commutative then we apply the identity~\eqref{xx=xxx} and obtain \textbf X satisfies the identities $yx^2\approx yx^4\approx x^2yx^2$. Suppose now that \textbf X is non-commutative. Then Lemma~\ref{non-cr and non-commut} implies that $\mathbf D_1\subseteq\mathbf X$. Since ${\bf E} \nsubseteq {\bf X}$, there is an identity $\mathbf{u\approx v}$ that holds in \textbf X but fails in \textbf E. Then Proposition~\ref{word problem E} applies with the conclusion that either~\eqref{sim(u)=sim(v) & mul(u)=mul(v)} or~\eqref{eq D_1 subseteq X} is false. Proposition~\ref{word problem C_2} implies that~\eqref{sim(u)=sim(v) & mul(u)=mul(v)} is true because $\mathbf C_2 \subseteq \mathbf D_1 \subseteq \mathbf X$. Therefore, the claim~\eqref{eq D_1 subseteq X} is false. Now Lemma~\ref{D_1 subseteq X} applies and we conclude that \textbf X satisfies an identity of the form~\eqref{x^syx^t=yx^r} where $s\ge 1$, $t\ge 0$, $s+t\ge2$ and $r\ge 2$. Substitute $x^2$ for $x$ in this identity. Since \textbf X satisfies the identity~\eqref{xx=xxx}, we obtain the identity~\eqref{yxx=xxyxx} holds in \textbf X.
\end{proof}
 
Let us return to an examination of a chain variety \textbf V. Recall that we reduce considerations to the case when $\mathbf D_1\subseteq\mathbf V$. Hence ${\bf C}_3 \nsubseteq {\bf V}$ because the varieties ${\bf C}_3$ and ${\bf D}_1$ are incomparable. Then Lemma~\ref{C_{n+1} nsubseteq V} and the fact that ${\bf V}$ is aperiodic imply that the identity~\eqref{xx=xxx} holds in ${\bf V}$. Suppose now that ${\bf D}_2 \nsubseteq {\bf V}$. The variety ${\bf V}$ does not contain at least one of the incomparable varieties ${\bf E}$ and $\overleftarrow{{\bf E}}$. Assume without loss of generality that $\overleftarrow{{\bf E}} \nsubseteq {\bf V}$. The dual of Lemma~\ref{E nsubseteq X} implies then that ${\bf V}$ satisfies the identity
\begin{equation}
\label{xxy=xxyxx}
x^2y\approx x^2yx^2.
\end{equation}
Further, Lemma~\ref{D_{n+1} nsubseteq X} implies that the identity
\begin{equation}
\label{xyx=x^qyx^r}
xyx\approx x^qyx^r
\end{equation}
with $q>1$ or $r>1$ holds in ${\bf V}$.
 
If \textbf u and \textbf v are words and $\varepsilon$ is an identity then we will write $\mathbf u\stackrel{\varepsilon}\approx\mathbf v$ in the case when the identity $\mathbf u\approx\mathbf v$ follows from $\varepsilon$. If $q>1$ then ${\bf V}$ satisfies the identities
$$
xyx\stackrel{\eqref{xyx=x^qyx^r}}\approx x^qyx^r\stackrel{\eqref{xxy=xxyxx}}\approx x^qyx^{r+2}\stackrel{\eqref{xx=xxx}}\approx x^2yx^2\stackrel{\eqref{xxy=xxyxx}}\approx x^2y.
$$
Recall that \textbf V satisfies the identity~\eqref{xx=xxx} too. Then Lemma~\ref{L(LRB+C_2)}(i) applies and we conclude that ${\bf V} \subseteq {\bf LRB}\vee\mathbf C_2$. Since ${\bf V}$ is non-idempotent and chain, ${\bf V\subseteq E}$ by Lemma~\ref{L(LRB+C_2)}(ii). Therefore, $\mathbf{V\subseteq K}$.
 
Suppose now that $q\le1$. Then $r>1$. If $q=0$ then ${\bf V} \subseteq \mathbf{RRB}\vee\mathbf C_2$ by the dual of Lemma~\ref{L(LRB+C_2)}(i) because ${\bf V}$ satisfies the identity~\eqref{xx=xxx}. Since $\overleftarrow{{\bf E}} \nsubseteq {\bf V}$ and \textbf V is not a variety of band monoids, it follows from the dual of Lemma~\ref{L(LRB+C_2)}(ii) that ${\bf V}\subseteq{\bf D}_1\subseteq\mathbf D$.
 
Let now $q=1$. Then ${\bf V}$ satisfies the identity
\begin{equation}
\label{xyx=xyxx}
xyx\approx xyx^2
\end{equation}
because it satisfies~\eqref{xx=xxx}. Therefore, the identities $x^2yx\stackrel{\eqref{xyx=xyxx}}\approx x^2yx^2\stackrel{\eqref{xxy=xxyxx}}\approx x^2y$ hold in \textbf V. Thus, \textbf V satisfies
\begin{equation}
\label{xxy=xxyx}
x^2y\approx x^2yx.
\end{equation}
Recall that $\mathbf D_1\subseteq\mathbf V$. Therefore, $\mathbf{LRB\nsubseteq V}$. Hence there is an identity $\mathbf{u\approx v}$ that holds in \textbf V but fails in \textbf{LRB}. The \emph{initial part} of a word \textbf w, denoted by $\ini(\mathbf w)$, is the word obtained from \textbf w by retaining first occurrence of each letter. It is evident that an identity $\mathbf{a\approx b}$ holds in the variety $\mathbf{LRB}$ if and only if $\ini(\mathbf a)=\ini(\mathbf b)$. Hence $\mathbf{\ini(u)\ne\ini(v)}$. Proposition~\ref{word problem C_2} implies that $\con(\mathbf u)=\con(\mathbf v)$. Therefore, we can assume without any loss of generality that there are letters $x,y\in \con({\bf u})$ such that ${\bf u}(x,y)=x^sy{\bf w}_1$ and ${\bf v}(x,y)=y^tx{\bf w}_2$ where $s,t>0$ and $\con(\mathbf w_1)=\con(\mathbf w_2)=\{x,y\}$. Let us substitute~1 for all letters except $x$ and $y$ in ${\bf u}\approx {\bf v}$. We obtain that \textbf V satisfies the identity $x^sy\mathbf w_1\approx y^tx\mathbf w_2$. If $s=1$ then we substitute $x^2$ for $x$ in this identity and obtain an identity of the form $x^2y\mathbf w_1'\approx y^tx^2\mathbf w_2'$. Thus, we can assume that $s\ge 2$. Analogously, we can assume that $t\ge 2$. Moreover, the identity~\eqref{xx=xxx} allows us to assume that $s=t=2$. Now we can apply the identity~\eqref{xxy=xxyx} and deduce an identity of the form $x^2y^k\approx y^2x^m$ where $k,m>1$. Moreover, the identity~\eqref{xx=xxx} allows us to assume that $k=m=2$. We prove that the identity~\eqref{xxyy=yyxx} holds in ${\bf V}$. This means that ${\bf V} \subseteq {\bf K}$.
 
It remains to consider the case when ${\bf D}_2 \subseteq {\bf V}$. 
 
\subsection{Reduction to the case when $\mathbf{L\subseteq V}$}
\label{necessity: red to over L}
 
Here we need some notation and a series of auxiliary assertions. Let $n$ and $m$ be arbitrary non-negative integers such that $n+m>0$. For any $\theta\in S_{n+m}$, we put
\begin{align*}
\mathbf w_{n,m}(\theta)&=\biggl(\prod_{i=1}^n z_it_i\biggr) x \biggl(\prod_{i=1}^{n+m} z_{\theta(i)}\biggr) x \biggl(\prod_{i=n+1}^{n+m} t_iz_i\biggr),\\[-3pt]
\mathbf w_{n,m}'(\theta)&=\biggl(\prod_{i=1}^n z_it_i\biggr) x^2 \biggl(\prod_{i=1}^{n+m} z_{\theta(i)}\biggr)\biggl(\prod_{i=n+1}^{n+m} t_iz_i\biggr).
\end{align*}
Note that the words $\mathbf w_n(\pi,\tau)$ and $\mathbf w_n'(\pi,\tau)$ introduced in Section~\ref{introduction} are words of the form $\mathbf w_{n,n}(\theta)$ and $\mathbf w_{n,n}'(\theta)$ respectively for an appropriate permutation $\theta\in S_{2n}$.
 
\begin{lemma}
\label{w_{n,m}=w_{n,m}' in L}
The variety $\mathbf L$ satisfies the identities of the form
\begin{equation}
\label{w_{n,m}=w_{n,m}'}
{\bf w}_{n,m}(\theta)\approx {\bf w}'_{n,m}(\theta)
\end{equation}
for all $n$, $m$ and $\theta\in S_{n+m}$.
\end{lemma}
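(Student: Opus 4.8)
The plan is to derive every instance of \eqref{w_{n,m}=w_{n,m}'} \emph{directly} from the defining identities of $\mathbf L$ of the form $\mathbf w_N(\pi_0,\tau_0)\approx\mathbf w_N'(\pi_0,\tau_0)$ by a single endomorphism of $F^1$, without calling on $\sigma_1$, $\sigma_2$ or the other generators of $\mathbf L$. The point is that these defining identities are precisely the identities $\mathbf w_{N,N}(\theta_0)\approx\mathbf w_{N,N}'(\theta_0)$ for the \emph{alternating} permutations $\theta_0$, namely those with $\theta_0(2i-1)=\pi_0(i)$ and $\theta_0(2i)=N+\tau_0(i)$. Since a variety is closed under applying an endomorphism of $F^1$ to any identity it satisfies, it suffices, given $n$, $m$ and $\theta\in S_{n+m}$, to exhibit a number $N$, permutations $\pi_0,\tau_0\in S_N$, and an endomorphism $\xi$ of $F^1$ with $\xi\bigl(\mathbf w_N(\pi_0,\tau_0)\bigr)=\mathbf w_{n,m}(\theta)$ and $\xi\bigl(\mathbf w_N'(\pi_0,\tau_0)\bigr)=\mathbf w'_{n,m}(\theta)$.

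I would take $\xi$ to fix $x$, to send a chosen set of ``surplus'' letters $z_i,t_i$ to $1$, and to relabel the surviving letters. Deleting a surplus pair $z_it_i$ with $i\le N$ erases it from the first factor $\prod z_it_i$ and simultaneously erases the single occurrence of $z_i$ from the middle; deleting a pair $t_iz_i$ with $i>N$ erases it from the last factor and from the middle. The two letters $x$ (respectively the factor $x^2$) and both outer factors survive and merely contract. Hence $\xi$ automatically carries $\mathbf w_N(\pi_0,\tau_0)$ to a word of the shape $\mathbf w_{n,m}(\theta')$ and $\mathbf w_N'(\pi_0,\tau_0)$ to $\mathbf w'_{n,m}(\theta')$ for one and the \emph{same} arrangement $\theta'$; so a single substitution settles both sides at once, and the only genuine task is to arrange $N$, $\pi_0$, $\tau_0$ and the surplus set so that $\theta'=\theta$.

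The combinatorial heart, which I expect to demand the most care, concerns the middle factor. Call a letter of the middle \emph{left} if its first occurrence lies in the first factor and \emph{right} if its second occurrence lies in the last factor; in an alternating word the middle reads $LRLR\cdots$, with the left letters appearing in the order dictated by $\pi_0$ and the right letters in the order dictated by $\tau_0$. A target middle $z_{\theta(1)}\cdots z_{\theta(n+m)}$ has some interleaving pattern of $n$ lefts and $m$ rights. Since an arbitrary word in two symbols with $n$ copies of one and $m$ of the other is a subsequence of $(LR)^N$ once $N$ is large, I can choose the surplus set so that the surviving slots reproduce exactly this pattern, and then choose $\pi_0$ and $\tau_0$ \emph{independently} so that the surviving left letters and the surviving right letters occur in the orders prescribed by $\theta$. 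Normalising the surviving left indices to $1,\dots,n$ and the surviving right indices to $N+1,\dots,N+m$ makes the relabelling explicit and forces $\theta'=\theta$. What remains is the bookkeeping: verifying that the contracted outer factors relabel to $z_1t_1\cdots z_nt_n$ and $t_{n+1}z_{n+1}\cdots t_{n+m}z_{n+m}$ and that the contracted middle relabels to $z_{\theta(1)}\cdots z_{\theta(n+m)}$. The degenerate cases are immediate: for $n=0$ delete all left pairs and take $N=m$, and for $m=0$ delete all right pairs and take $N=n$.
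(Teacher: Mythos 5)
Your proposal is correct and is essentially the paper's own argument run in the opposite direction: the paper fixes the target identity~\eqref{w_{n,m}=w_{n,m}'} and pads each maximal block of same-type letters in its middle with fresh letters (appending or prepending their pairs to the outer factors) until the middle alternates, so that the target follows from an identity of the form~\eqref{w_n=w_n'} by substituting $1$ for the fresh letters, while you start from a large alternating identity and delete surplus pairs --- the same deletion substitution viewed from the other end. The one-shot embedding of the $L/R$-pattern into $(LR)^N$ with independent choices of slots, $\pi_0$ and $\tau_0$ does force $\theta'=\theta$ as you claim, and it even spares you the paper's closing bookkeeping that its padded word contains equally many left and right letters.
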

 
\begin{proof}
It suffices to verify that each identity of the form~\eqref{w_{n,m}=w_{n,m}'} follows from some identity of the form
\begin{equation}
\label{w_n=w_n'}
{\bf w}_n(\pi,\tau)\approx {\bf w}'_n(\pi,\tau).
\end{equation}
To do this, we fix an identity of the form~\eqref{w_{n,m}=w_{n,m}'}. It has the form
$$
\mathbf p_0x\mathbf q_0x\mathbf r_0\approx\mathbf p_0x^2\mathbf q_0\mathbf r_0
$$
where $\mathbf p_0=z_1t_1\cdots z_nt_n$, $\mathbf q_0=z_{\theta(1)}\cdots z_{\theta(n+m)}$ and $\mathbf r_0=t_{n+1}z_{n+1}\cdots t_{n+m}z_{n+m}$. The word $\textbf q_0$ may be uniquely represented in the form
$$
\mathbf q_0=\mathbf u_1\mathbf v_1\cdots\mathbf u_k\mathbf v_k
$$
where $\con(\mathbf u_1\cdots\mathbf u_k)=\{z_1,\dots,z_n\}$ and $\con(\mathbf v_1\cdots\mathbf v_k)=\{z_{n+1},\dots,z_{n+m}\}$ (we mean here that $\mathbf u_1=\lambda$ whenever $\theta(1)>n$, and $\mathbf v_k=\lambda$ whenever $\theta(n+m)\le n$). Each of the words $\mathbf u_1,\dots,\mathbf u_k$ (except $\mathbf u_1$ whenever $\mathbf u_1=\lambda$) has the form $z_{j_1}\cdots z_{j_s}$ where $j_1,\dots,j_s\le n$, while each of the words $\mathbf v_1,\dots,\mathbf v_k$ (except $\mathbf v_k$ whenever $\mathbf v_k=\lambda$) has the form $z_{j_1}\cdots z_{j_s}$ where $j_1,\dots,j_s>n$.
 
Suppose at first that $\mathbf u_1=\lambda$. Let $z$ and $t$ be letters that do not occur in the word $\mathbf p_0\mathbf q_0\mathbf r_0x$. Put $\mathbf p'=zt\mathbf p_0$, $\mathbf q'=z\mathbf q_0$ and $\mathbf r'=\mathbf r_0$. The identity $\mathbf p'x\mathbf q'x\mathbf r'\approx\mathbf p'x^2\mathbf q'\mathbf r'$ evidently implies the identity~\eqref{w_{n,m}=w_{n,m}'}. Up to the evident renaming of letters, the identity $\mathbf p'x\mathbf q'x\mathbf r'\approx\mathbf p'x^2\mathbf q'\mathbf r'$ has the form indicated in the previous paragraph with $\mathbf u_1\ne\lambda$. Thus, we can assume that $\mathbf u_1\ne\lambda$. Analogous arguments allow us to suppose that $\mathbf v_k\ne\lambda$.
 
Let now $\mathbf u_1=z_{j_1}\cdots z_{j_s}$ with $j_1,\dots,j_s\le n$. Let $z_{j_1}'$, $t_{j_1}'$, \dots, $z_{j_{s-1}}'$, $t_{j_{s-1}}'$ be letters that do not occur in the word $\mathbf p_0\mathbf q_0\mathbf r_0x$. Put $\mathbf p_1=\mathbf p_0$. Denote by $\mathbf q_1$ the word that is obtained from $\mathbf q_0$ by replacing of the word $\mathbf u_1$ with $z_{j_1}z_{j_1}'\cdots z_{j_{s-1}}z_{j_{s-1}}'z_{j_s}$. Finally, we put $\mathbf r_1=\mathbf r_0t_{j_1}'z_{j_1}'\cdots t_{j_{s-1}}'z_{j_{s-1}}'$. The identity $\mathbf p_0x\mathbf q_0x\mathbf r_0\approx \mathbf p_0x^2\mathbf q_0\mathbf r_0$ follows from $\mathbf p_1x\mathbf q_1x\mathbf r_1\approx\mathbf p_1x^2\mathbf q_1\mathbf r_1$ because the former identity may be obtained by substitution of~1 for the letters $z_{j_1}',t_{j_1}',\dots,z_{j_{s-1}}'$, $t_{j_{s-1}}'$ in the latter identity.
 
Further, let $\mathbf v_1=z_{j_1}\cdots z_{j_s}$ where $j_1,\dots,j_s>n$. Let $z_{j_1}',t_{j_1}',\dots,z_{j_{s-1}}',t_{j_{s-1}}'$ be letters that do not occur in $\mathbf p_1\mathbf q_1\mathbf r_1x$. Put $\mathbf p_2=z_{j_1}'t_{j_1}'\cdots z_{j_{s-1}}'t_{j_{s-1}}'\mathbf p_1$. Further, we denote by $\mathbf q_2$ the word that is obtained from $\mathbf q_1$ by replacing of the word $\mathbf v_1$ with $z_{j_1} z_{j_1}'\cdots z_{j_{s-1}}z_{j_{s-1}}'z_{j_s}$. Finally, we put $\mathbf r_2=\mathbf r_1$. The identity $\mathbf p_1x\mathbf q_1x\mathbf r_1\approx\mathbf p_1x^2\mathbf q_1\mathbf r_1$ follows from $\mathbf p_2x\mathbf q_2x\mathbf r_2\approx\mathbf p_2x^2\mathbf q_2\mathbf r_2$ because the former identity may be obtained by substitution of~1 for the letters $z_{j_1}'$, $t_{j_1}'$, \dots, $z_{j_{s-1}}'$, $t_{j_{s-1}}'$ in the latter identity.
 
We continue this process and apply the analogous modifications of our identity with the use of the words $\mathbf u_2,\mathbf v_2,\dots,\mathbf u_k,\mathbf v_k$. As a result, we obtain an identity of the form
\begin{equation}
\label{resulting identity}
\mathbf p_{2k}x\mathbf q_{2k}x\mathbf r_{2k}\approx\mathbf p_{2k}x^2\mathbf q_{2k}\mathbf r_{2k}
\end{equation}
that implies the identity of the form~\eqref{w_{n,m}=w_{n,m}'} fixed at the beginning of the proof. We can evidently rename the letters and assume that $\mathbf p_{2k}=z_1t_1\cdots z_pt_p$, $\mathbf q_{2k}=z_{\xi(1)}\cdots z_{\xi(p+q)}$ and $\mathbf r_{2k}=t_{p+1}z_{p+1}\cdots t_{p+q}z_{p+q}$ for some natural numbers $p,q$ and some permutation $\xi\in S_{p+q}$ with $\xi(i)\le p$ for all odd $i$ and $\xi(i)>p$ for all even $i$. It remains to verify that $p=q$. For $i=1,\dots,k$, we denote the length of the word $\mathbf u_i$ by $n_i$ and the length of the word $\mathbf v_i$ by $m_i$. Then $n_1+\cdots+n_k=n$ and $m_1+\cdots+m_k=m$. It is easy to see that
\begin{align*}
p={}&n+(m_1-1)+\cdots+(m_k-1)=n+m-k\\
={}&m+n-k=m+(n_1-1)+\cdots+(n_k-1)=q.
\end{align*}
Therefore, the identity~\eqref{resulting identity} has the form~\eqref{w_n=w_n'}.
\end{proof}
 
\begin{lemma}
\label{u=xxu_x holds in X}
Suppose that a monoid variety ${\bf X}$ satisfies the identities
\begin{align}
\label{xyxzx=xxyz}
&xyxzx\approx x^2yz,\\
\label{xxy=yxx}
&x^2y\approx yx^2
\end{align}
and~\eqref{w_{n,m}=w_{n,m}'} for all $n$, $m$ and $\theta\in S_{n+m}$. Let ${\bf u}$ be a word. If there is a letter $x\in\mul({\bf u})$ such that ${\bf u}(x,y)\ne xyx$ for any letter $y$ then ${\bf X}$ satisfies the identity
\begin{equation}
\label{u=xxu_x}
{\bf u}\approx x^2{\bf u}_x.
\end{equation}
\end{lemma}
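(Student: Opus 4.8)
The plan is to derive the arithmetic of $x$ forced by the hypotheses, settle the case $\occ_x(\mathbf u)\ge 3$ outright, and reduce the remaining case $\occ_x(\mathbf u)=2$ to an application of an identity~\eqref{w_{n,m}=w_{n,m}'} by an induction. First I would extract consequences of~\eqref{xyxzx=xxyz} and~\eqref{xxy=yxx}. Putting $1$ for $y,z$ in~\eqref{xyxzx=xxyz} gives $x^3\approx x^2$, and putting $1$ for $y$ gives $x^2zx\approx x^2z$; together with~\eqref{xxy=yxx}, which makes the square of every letter central, these yield for all words $\mathbf a,\mathbf b$ the rules $x^2\mathbf a\approx\mathbf ax^2$ and $x^2\mathbf ax\mathbf b\approx x^2\mathbf a\mathbf b$. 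Iterating the second gives $x^2\mathbf w\approx x^2\mathbf w_x$ for any $\mathbf w$, so the whole task is reduced to manufacturing one factor $x^2$ at the front. Every such remark applies verbatim to an arbitrary letter in place of $x$. Now if $\occ_x(\mathbf u)=n\ge 3$, write $\mathbf u=\mathbf u_0x\mathbf u_1x\cdots x\mathbf u_n$ with $x\notin\con(\mathbf u_i)$; the factor $x\mathbf u_1x\mathbf u_2x$ equals $x^2\mathbf u_1\mathbf u_2$ by~\eqref{xyxzx=xxyz}, and moving this $x^2$ to the front and deleting the remaining occurrences of $x$ produces $x^2\mathbf u_x$. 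Thus~\eqref{u=xxu_x} holds unconditionally when $\occ_x(\mathbf u)\ge 3$, and likewise for any letter occurring at least three times.

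It remains to treat $\occ_x(\mathbf u)=2$, say $\mathbf u=\mathbf u_0x\mathbf u_1x\mathbf u_2$ with $x\notin\con(\mathbf u_0\mathbf u_1\mathbf u_2)$; here the hypothesis says precisely that $\con(\mathbf u_1)\subseteq\mul(\mathbf u)$. I would induct on the length of $\mathbf u_1$, the base case $\mathbf u_1=\lambda$ being immediate from centrality. Suppose some letter $c$ occurs at least twice inside $\mathbf u_1$. The letter $c$ satisfies the hypothesis of the lemma, since every letter between two of its occurrences lies in $\con(\mathbf u_1)\subseteq\mul(\mathbf u)$; moreover either $\occ_c(\mathbf u)\ge 3$, when the unconditional case applies, or $\occ_c(\mathbf u)=2$, when both occurrences lie in $\mathbf u_1$, the middle of $c$ is a proper factor of $\mathbf u_1$, and the induction hypothesis applies. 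In all cases $\mathbf u\approx c^2\mathbf u_c$, and $c^2\mathbf u_c$ has $x$-middle $(\mathbf u_1)_c$, strictly shorter than $\mathbf u_1$ and still satisfying the hypothesis for $x$. Hence the induction hypothesis for $x$ gives $c^2\mathbf u_c\approx x^2c^2(\mathbf u_x)_c$; since $\mathbf u_x\approx c^2(\mathbf u_x)_c$ is available either unconditionally or by the induction hypothesis, the two sides collapse to $x^2\mathbf u_x$, proving~\eqref{u=xxu_x}.

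Once no letter repeats inside $\mathbf u_1$, its letters are pairwise distinct, and each, being multiple, has an occurrence outside the two $x$'s. This is where the identities~\eqref{w_{n,m}=w_{n,m}'} are used. I would realise $\mathbf u$ as $\mathbf p\cdot\xi\bigl(\mathbf w_{n,m}(\theta)\bigr)\cdot\mathbf q$ for a prefix $\mathbf p$, a suffix $\mathbf q$ and an endomorphism $\xi$ fixing $x$: call a middle letter \emph{left} if it occurs in $\mathbf u_0$ and \emph{right} otherwise, let $z_1,\dots,z_n$ list the left letters in the order of their first occurrences in $\mathbf u_0$, let $z_{n+1},\dots,z_{n+m}$ list the right letters in the order of their first occurrences in $\mathbf u_2$, and let $\theta$ be the permutation recording the order in which these letters occur in $\mathbf u_1$. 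Taking $\xi(z_j)$ to be the corresponding single letter and letting $\mathbf p$, $\mathbf q$ and the words $\xi(t_i)$ absorb every non-designated occurrence, one verifies $\mathbf u_0=\mathbf p\,\xi\bigl(\prod_{i=1}^{n}z_it_i\bigr)$, $\mathbf u_1=\xi\bigl(\prod_{i=1}^{n+m}z_{\theta(i)}\bigr)$ and $\mathbf u_2=\xi\bigl(\prod_{i=n+1}^{n+m}t_iz_i\bigr)\,\mathbf q$. Applying~\eqref{w_{n,m}=w_{n,m}'} inside this factorisation replaces $x\,\mathbf u_1\,x$ by $x^2\mathbf u_1$, and moving $x^2$ to the front by centrality yields~\eqref{u=xxu_x}.

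The hard part is this last step, namely checking that the combinatorial bookkeeping is consistent: that $\theta$ is a genuine permutation, that the designated left and right occurrences really appear in $\mathbf u_0$ and $\mathbf u_2$ in the stipulated order, and that every remaining occurrence — extra copies of middle letters together with all letters missing from $\mathbf u_1$ — can be routed into some $\xi(t_i)$, into $\mathbf p$, or into $\mathbf q$ without disturbing the shape of $\mathbf w_{n,m}(\theta)$. A secondary point requiring care is the well-foundedness of the induction on the length of $\mathbf u_1$: one must confirm that each auxiliary invocation of the lemma (to $c$ in $\mathbf u$, to $x$ in $c^2\mathbf u_c$, and to $c$ in $\mathbf u_x$) does have a strictly shorter middle than $\mathbf u_1$.
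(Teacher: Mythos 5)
Your proof is correct and follows essentially the same route as the paper's: the case $\occ_x(\mathbf u)\ge 3$ is settled directly by \eqref{xyxzx=xxyz} together with centrality of squares, and the case $\occ_x(\mathbf u)=2$ is reduced to a linear middle, which is then realized as a substitution instance of $\mathbf w_{n,m}(\theta)$ so that \eqref{w_{n,m}=w_{n,m}'} applies and \eqref{xxy=yxx} moves $x^2$ to the front. The only differences are organizational: you run an explicit strong induction on the length of the middle, recursing on an arbitrary repeated letter $c$ (with the three auxiliary invocations you list, all of which do have strictly shorter middles), whereas the paper first deletes all middle letters occurring at least three times and then iterates on innermost repeated letters whose own middles are already linear; likewise your factorization absorbs extra occurrences into $\mathbf p$, $\mathbf q$ and the words $\xi(t_i)$ instead of first normalizing every middle letter to occur exactly twice in $\mathbf u$.
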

 
\begin{proof}
Suppose at first that $\occ_x(\mathbf u)>2$. Then $\mathbf u = \mathbf u_1 x \mathbf u_2 x \mathbf u_3 \cdots \mathbf u_n x \mathbf u_{n+1}$ where $n>2$ and $\mathbf u_1,\mathbf u_2, \dots, \mathbf u_{n+1}$ are possibly empty words with $x\notin \con(\mathbf u_1\mathbf u_2\cdots \mathbf u_{n+1})$. Clearly, $\mathbf u_1\mathbf u_2\cdots\mathbf u_{n+1}=\mathbf u_x$. Then \textbf X satisfies the identities{\sloppy

}
$$
\mathbf u\!=\!\mathbf u_1x\mathbf u_2x\mathbf u_3\cdots\mathbf u_nx\mathbf u_{n+1}\!\stackrel{\eqref{xyxzx=xxyz}}\approx\!\mathbf u_1x^2\mathbf u_2\mathbf u_3\cdots\mathbf u_{n+1}\!\stackrel{\eqref{xxy=yxx}}\approx\!x^2\mathbf u_1\mathbf u_2\cdots\mathbf u_{n+1}\!=\!x^2\mathbf u_x,
$$
whence~\eqref{u=xxu_x} holds in \textbf X.
 
It remains to consider the case when $\occ_x({\bf u})=2$. Then ${\bf u}= {\bf u}_1x\mathbf u_2x{\bf u}_3$ and $x\notin \con(\mathbf u_1\mathbf u_2\mathbf u_3)$. If $\mathbf u_2=\lambda$ then $\mathbf u=\mathbf u_1x^2\mathbf u_3\stackrel{\eqref{xxy=yxx}}\approx x^2\mathbf u_1\mathbf u_3=x^2\mathbf u_x$ hold in \textbf X, and we are done. Let now $\mathbf u_2\ne\lambda$.
 
If $y\in \con(\mathbf u_2)$ and $y\in\simple(\mathbf u)$ then $\mathbf u(x,y)=xyx$, a contradiction. Thus, $y\in\mul(\mathbf u)$ for any $y\in \con(\mathbf u_2)$. Suppose that $\occ_y(\mathbf u)>2$ for some $y\in\con(\mathbf u_2)$. Then we can use the same arguments as in the first paragraph of the proof and conclude that the variety \textbf X satisfies the identity $\mathbf u\approx y^2\mathbf u_y$. This identity can be rewritten in the form $\mathbf u\approx\mathbf u_1'x\mathbf u_2'x\mathbf u_3'$ where $\mathbf u_1'=y^2\mathbf u_1$, $\mathbf u_2'=(\mathbf u_2)_y$ and $\mathbf u_3'=(\mathbf u_3)_y$. Thus, we can remove from $\mathbf u_2$ all letters $y$ with $\occ_y(\mathbf u)>2$. In other words, we can assume that either $\mathbf u_2=\lambda$ or $\occ_y(\mathbf u)=2$ for all $y\in \con(\mathbf u_2)$. The former case is already considered in the previous paragraph. Now we examine the latter case.
 
Recall that a word \textbf w is called \emph{linear} if $\occ_x(\mathbf w)\le 1$ for any letter $x$. Suppose that the word $\mathbf u_2$ is linear, say, $\mathbf u_2=y_1y_2\cdots y_k$ for some letters $y_1,y_2$, \dots, $y_k$. Then either $y_i\in \con(\mathbf u_1)\setminus \con(\mathbf u_3)$ or $y_i\in \con(\mathbf u_3)\setminus \con(\mathbf u_1)$ for any $1\le i\le k$. Renaming, if necessary, the letters $y_1,y_2,\dots,y_k$, we may assume that $y_1,y_2,\dots,y_n\in \con(\mathbf u_1)\setminus \con(\mathbf u_3)$ and $y_{n+1},\dots,y_{n+m}\in \con(\mathbf u_3)\setminus \con(\mathbf u_1)$ for some $n$ and $m$ with $n+m=k$. Then
$$
\mathbf u=\mathbf u_1xy_{\theta(1)}y_{\theta(2)}\cdots y_{\theta(n+m)}x\mathbf u_3
$$
for an appropriate permutation $\theta\in S_{n+m}$. We have also
$$
\mathbf u_1=\mathbf w_0y_1\mathbf w_1y_2\mathbf w_2\cdots y_n\mathbf w_n, \mathbf u_3=\mathbf w_{n+1}y_{n+1}\mathbf w_{n+2}y_{n+2}\cdots \mathbf w_{n+m}y_{n+m}\mathbf w_{n+m+1}
$$
for some possibly empty words $\mathbf w_0,\mathbf w_1,\dots,\mathbf w_{n+m+1}$. Then \textbf X satisfies the identities
\begin{align*}
\mathbf u&\stackrel{\phantom{\eqref{w_{n,m}=w_{n,m}'}}}=\mathbf w_0\biggl(\prod_{i=1}^ny_i\mathbf w_i\biggr)x\biggl(\prod_{i=1}^{n+m}y_{\theta(i)}\biggr)x\biggl(\prod_{i=n+1}^{n+m}\mathbf w_iy_i\biggr)\mathbf w_{n+m+1}
\end{align*}
\begin{align*}
&\stackrel{\eqref{w_{n,m}=w_{n,m}'}}\approx\mathbf w_0\biggl(\prod_{i=1}^ny_i\mathbf w_i\biggr)x^2\biggl(\prod_{i=1}^{n+m}y_{\theta(i)}\biggr)\biggl(\prod_{i=n+1}^{n+m}\mathbf w_iy_i\biggr)\mathbf w_{n+m+1}\\[-3pt]
&\stackrel{\eqref{xxy=yxx}}\approx x^2\mathbf w_0\biggl(\prod_{i=1}^ny_i\mathbf w_i\biggr)\biggl(\prod_{i=1}^{n+m}y_{\theta(i)}\biggr)\biggl(\prod_{i=n+1}^{n+m}\mathbf w_iy_i\biggr)\mathbf w_{n+m+1}\\[-3pt]
&\stackrel{\phantom{\eqref{w_{n,m}=w_{n,m}'}}}=x^2\mathbf u_x.
\end{align*}
We have \textbf X satisfies the identity~\eqref{u=xxu_x} again.
 
It remains to consider the case when the word $\mathbf u_2$ is not linear. Then there is a letter $y\in \con(\mathbf u_2)$ such that $\mathbf u_2=\mathbf v_1y\mathbf v_2y\mathbf v_3$ where $\mathbf v_1$, $\mathbf v_2$ and $\mathbf v_3$ are possibly empty words, $y\notin \con(\mathbf v_1\mathbf v_2\mathbf v_3)$ and the word $\mathbf v_2$ is either empty or linear. If $\mathbf v_2$ is linear then the same arguments as in the previous paragraph show that
$$
\mathbf u=\mathbf u_1x\mathbf v_1y\mathbf v_2y\mathbf v_3x\mathbf u_3\approx y^2\mathbf u_y=y^2\mathbf u_1x\mathbf v_1\mathbf v_2\mathbf v_3x\mathbf u_3=\mathbf u_1'x\mathbf u_2'x\mathbf u_3
$$
hold in \textbf X where $\mathbf u_1'=y^2\mathbf u_1$ and $\mathbf u_2'=\mathbf v_1\mathbf v_2\mathbf v_3$. If $\mathbf v_2=\lambda$ then
$$
\mathbf u=\mathbf u_1x\mathbf v_1y^2\mathbf v_3x\mathbf u_3\stackrel{\eqref{xxy=yxx}}\approx y^2\mathbf u_1x\mathbf v_1\mathbf v_3x\mathbf u_3=\mathbf u_1'x\mathbf u_2'x\mathbf u_3
$$
is valid in \textbf X where $\mathbf u_1'=y^2\mathbf u_1$ and $\mathbf u_2'=\mathbf v_1\mathbf v_3$. In both the cases $y\notin \con(\mathbf u_2')$. In other words, we can remove the letter $y$ from $\mathbf u_2$. Further, we will repeat these arguments as long as the word $\mathbf u_2$ will remain non-empty and non-linear. In other words, we may assume that the word $\mathbf u_2$ is either empty or linear. Both these cases have been already considered above. Thus, we prove that \textbf X satisfies the identity~\eqref{u=xxu_x} always.
\end{proof}

\begin{lemma}
\label{L = var S(xzxyty)}
${\bf L}=\var S(xzxyty)$.
\end{lemma}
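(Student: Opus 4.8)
The plan is to prove the two inclusions $\var S(xzxyty)\subseteq\mathbf L$ and $\mathbf L\subseteq\var S(xzxyty)$ separately. The inclusion $\var S(xzxyty)\subseteq\mathbf L$ amounts to $S(xzxyty)\in\mathbf L$, which by Lemma~\ref{S(W) in V} means that $xzxyty$ is an isoterm for $\mathbf L$; equivalently, and more conveniently, it means that $S(xzxyty)$ satisfies every defining identity of $\mathbf L$. I would verify the latter directly from the description of $S(xzxyty)$ as the Rees quotient whose nonzero elements are the (contiguous) subwords of $xzxyty$: an identity $\mathbf a\approx\mathbf b$ holds in $S(\mathbf w)$ precisely when, for every substitution $\varphi$ of the letters into $F^1$, the word $\varphi(\mathbf a)$ is a subword of $\mathbf w$ exactly when $\varphi(\mathbf b)$ is, and in that case $\varphi(\mathbf a)=\varphi(\mathbf b)$.

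With this criterion the finite identities $x^2y\approx yx^2$, $xyxzx\approx x^2yz$, $\sigma_1$ and $\sigma_2$ are checked by a short case analysis, using that no letter occurs more than twice in $xzxyty$, so any variable occurring at least three times on a side must be sent to $\lambda$. For the infinite family $\mathbf w_n(\pi,\tau)\approx\mathbf w_n'(\pi,\tau)$ the same length obstruction does the work: since $xzxyty$ has length six, a substitution sending either side to a nonzero element must collapse all but a bounded number of variables to $\lambda$, reducing the verification to finitely many configurations.

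The reverse inclusion $\mathbf L\subseteq\var S(xzxyty)$ is the substantial part: I must show that every identity holding in $S(xzxyty)$ is a consequence of the defining system of $\mathbf L$. The engine is Lemma~\ref{u=xxu_x holds in X}, whose hypotheses \eqref{xyxzx=xxyz}, \eqref{xxy=yxx} and \eqref{w_{n,m}=w_{n,m}'} all hold in $\mathbf L$ (the last by Lemma~\ref{w_{n,m}=w_{n,m}' in L}). Repeatedly applying the identity \eqref{u=xxu_x} lets me, modulo $\mathbf L$, extract the square of every multiple letter that does \emph{not} sit around a single simple letter and move it to the front; since each application deletes one multiple letter, the process terminates. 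This puts an arbitrary word into an $\mathbf L$-normal form $x_1^2\cdots x_k^2\,\mathbf u^\flat$, where $\{x_1,\dots,x_k\}$ is an unordered set (the squares commute via $x^2y\approx yx^2$) and in the core $\mathbf u^\flat$ every multiple letter occurs exactly twice with a simple letter between its two occurrences --- precisely the local shape $xzx$, $yty$ exhibited by $xzxyty$.

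Given an identity $\mathbf u\approx\mathbf v$ holding in $S(xzxyty)$, I would reduce both sides to this normal form and argue that the two normal forms must coincide up to moves already available in $\mathbf L$. The prefix contents agree because $\mathbf C_2,\mathbf D_1\subseteq\mathbf L$ force the invariants \eqref{sim(u)=sim(v) & mul(u)=mul(v)} and \eqref{eq u_mul=v_mul} (Propositions~\ref{word problem C_2} and~\ref{word problem D_1}), and the residual discrepancies between the cores are exactly the reorderings of simple letters and of the guarded blocks legislated by $\sigma_1$, $\sigma_2$ and the identities $\mathbf w_n(\pi,\tau)\approx\mathbf w_n'(\pi,\tau)$. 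The main obstacle is the completeness claim hidden here: one must prove that these moves generate \emph{all} $S(xzxyty)$-admissible rearrangements, equivalently that two words with genuinely different $\mathbf L$-normal forms are always separated by some substitution into the small monoid $S(xzxyty)$. Constructing such separating substitutions --- reading off, from a mismatch of dividers or of the order of guarded blocks, a concrete assignment of variables to subwords of $xzxyty$ that sends one side to $xzxyty$ and the other to $0$ --- is where the $k$-decomposition bookkeeping of Section~\ref{decomposition} is needed, and it is the delicate core of the argument.
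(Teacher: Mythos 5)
Your first inclusion is essentially fine: verifying directly that $S(xzxyty)$ satisfies every identity of the defining system of $\mathbf L$ is a legitimate alternative to the paper's route (the paper instead analyzes deductions from that system to show $xzxyty$ is an isoterm for $\mathbf L$; by Lemma~\ref{S(W) in V} the two formulations are equivalent). For the infinite family $\mathbf w_n(\pi,\tau)\approx\mathbf w_n'(\pi,\tau)$ the clean observation, which is also the one the paper exploits, is that every letter lying between the two occurrences of $x$ in $\mathbf w_n(\pi,\tau)$ is multiple there, whereas between the two occurrences of each multiple letter of $xzxyty$ there is a \emph{simple} letter; your length-based ``finitely many configurations'' reduction can be made to work but is vaguer. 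Two small repairs in your reduction of the hard inclusion: the fact that the same letters get extracted from $\mathbf u$ and from $\mathbf v$ does not follow from the $\mathbf C_2$/$\mathbf D_1$ invariants alone --- it needs that $xyx$ is an isoterm for $\var S(xzxyty)$, so that $\mathbf u(x,y)\ne xyx$ for all $y$ forces $\mathbf v(x,y)\ne xyx$ for all $y$; and simple letters of the whole word can never be reordered (their order is a $\mathbf D_1$-invariant), so the admissible moves concern only letters occurring once in a block.

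The genuine gap is in the second inclusion, and you name it yourself: the ``completeness claim'' --- that for words in your normal form every identity valid in $S(xzxyty)$ follows from $\sigma_1$, $\sigma_2$ and the remaining identities --- is precisely the substance of the lemma, and your proposal stops where that proof should begin. Moreover, the tool you point to is the wrong one: the paper's proof of this lemma does not use the $k$-decomposition machinery of Section~\ref{decomposition} at all (that apparatus is reserved for the variety $\mathbf K$). What is actually needed is elementary but must be carried out: writing $\mathbf u=t_0\mathbf a_1t_1\cdots t_{m-1}\mathbf a_mt_m$ along its simple letters, one shows every letter of a block $\mathbf a_{i+1}$ occurs exactly once there, its other occurrence lying either before $t_i$ or after $t_{i+1}$, so the block splits into alternating maximal factors $\mathbf u_1\mathbf u_1'\cdots\mathbf u_k\mathbf u_k'$ (second occurrences, then first occurrences). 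The crux is to prove that the corresponding block of $\mathbf v$ has the same number of factors with the same contents, and this is done by substitutions into the isoterm $xt_ixyt_{i+1}y$ (a copy of $xzxyty$): a letter sitting in the wrong factor of $\mathbf v$ yields $\mathbf u(x,y,t_i,t_{i+1})=xt_ixyt_{i+1}y$ while $\mathbf v(x,y,t_i,t_{i+1})=xt_iyxt_{i+1}y$, contradicting the isoterm property. Only after this content-matching can one invoke $\sigma_2$ (to reorder each $\mathbf u_j$, made of second occurrences) and $\sigma_1$ (to reorder each $\mathbf u_j'$, made of first occurrences) to carry $\mathbf u$ onto $\mathbf v$. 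Without this argument, your proof establishes only the easy half of the lemma.
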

 
\begin{proof}
Put ${\bf Z}=\var S(xzxyty)$. First, we are going to verify that $\mathbf{ Z \subseteq L}$. In view of Lemma~\ref{S(W) in V}, to achieve this aim it suffices to check that the word $xzxyty$ is an isoterm for $\bf L$. Put
$$
\Psi\!=\!\{x^2y\!\approx\!yx^2,\,xyxzx\!\approx\!x^2yz,\,\sigma_1,\,\sigma_2,\,{\bf w}_n(\pi,\tau)\!\approx\!{\bf w}'_n(\pi,\tau)\mid n\in\mathbb N,\ \pi,\tau\in S_n\}.
$$
We recall that $\mathbf L=\var\Psi$. We suppose that $\bf L$ satisfies a non-trivial idenity $xzxyty\approx \bf w$ for some word $\bf w$. Therefore, there exists a \emph{deduction} of the identity $xzxyty\approx \mathbf w$  from the identity system $\Psi$, i.e., a sequence of words
\begin{equation}
\label{sequence of words}
\mathbf v_0,\mathbf v_1,\ldots, \mathbf v_m
\end{equation}
such that $\mathbf v_0 = xzxyty$, $\mathbf v_m = \mathbf w$ and, for any $0\le i<m$, there exist words $\mathbf a_i$, $\mathbf b_i$, the identity $\mathbf s_i\approx \mathbf t_i\in\Psi$ and endomorphism $\xi_i$ of $F^1$ such that either $\mathbf v_i=\mathbf a_i\xi_i(\mathbf s_i)\mathbf b_i$ and $\mathbf v_{i+1}=\mathbf a_i\xi_i(\mathbf t_i)\mathbf b_i$ or $\mathbf v_i=\mathbf a_i\xi_i(\mathbf t_i)\mathbf b_i$ and $\mathbf v_{i+1}=\mathbf a_i\xi_i(\mathbf s_i)\mathbf b_i$. We can assume without loss of generality that the sequence~\eqref{sequence of words} is the shortest deduction of the identity $xzxyty\approx \mathbf w$  from the identity system $\Psi$. In particular, this means that $xzxyty\ne\mathbf v_1$. We note that if $\xi_0(x)=\lambda$ then $\xi_0(\mathbf s_0)=\xi_0(\mathbf t_0)$ for any $\mathbf s_0\approx\mathbf t_0\in\Psi$. The latest equality implies that $xzxyty=\mathbf v_1$, but this is impossible. Thus, we can assume that $\xi_0(x)\ne\lambda$.

Suppose that $xzxyty=\mathbf v_0=\mathbf a_0\xi_0(\mathbf s_0)\mathbf b_0$ and $\mathbf v_1=\mathbf a_0\xi_0(\mathbf t_0)\mathbf b_0$. The case when $\mathbf s_0=x^2y$ is impossible because the word $\xi_0(\mathbf s_0)$ contains the square of a non-empty word, while the word $xzxyty$ is square-free. The case when $\mathbf s_0=xyxzx$ is also impossible because there is a  letter that occurs in the word $\xi_0(\mathbf s_0)$ at least three times, while every letter from $\con(xzxyty)$ occurs in the word $xzxyty$ no more than twice. Finally, the case when $\mathbf s_0={\bf w}_n(\pi,\tau)$ for some $n\in\mathbb N,\ \pi,\tau\in S_n$ is impossible because there exists a letter $c\in\xi_0(x)$ such that $c$ is multiple in $\xi_0(\mathbf s_0)$ and every letter located between the first and the second occurrences of $c$ in $\xi_0(\mathbf s_0)$ is multiple, while for every $d\in\mul(xzxyty)$ there is a letter $e\in \simple(xzxyty)$ such that $e$ lies between the first and the second occurrences of $d$ in $xzxyty$. So, the identity $\mathbf s_0\approx \mathbf t_0$ is either $\sigma_1$ or $\sigma_2$. By symmetry, we can consider only the case when $\mathbf s_0\approx \mathbf t_0$ is equal to $\sigma_1$. Then $\mathbf s_0=xyzxty$ and $\mathbf t_0=yxzxty$. Since $\xi_0(x)\ne\lambda$, we have $\con(\xi_0(x))$ contains a letter $a$. Then $a\in\{x,y\}$ because $a\in\mul(\xi_0(\mathbf s_0))$. Suppose that $a=x$. Then $\xi_0(y)=\lambda$ because
$$
xzxyty=\mathbf a_0\xi_0(\mathbf s_0)\mathbf b_0=\mathbf a_0\xi_0(x)\xi_0(y)\xi_0(z)\xi_0(x)\xi_0(t)\xi_0(y)\mathbf b_0.
$$
Therefore, $\xi_0(\mathbf t_0)=\xi_0(x)\xi_0(z)\xi_0(x)\xi_0(t)=\xi_0(\mathbf s_0)$. Then
$$
\mathbf v_1=\mathbf a_0\xi_0(\mathbf t_0)\mathbf b_0=\mathbf a_0\xi_0(\mathbf s_0)\mathbf b_0=xzxyty,
$$
contradicting the choice of the sequence~\eqref{sequence of words}. The case when $a=y$  is considered similarly.

Suppose now that $xzxyty=\mathbf v_0=\mathbf a_0\xi_0(\mathbf t_0)\mathbf b_0$. The case when
$$
\mathbf t_0\in\{yx^2,\,x^2yz,\,{\bf w}'_n(\pi,\tau)\mid n\in\mathbb N,\ \pi,\tau\in S_n\}
$$
is impossible because the word $\xi_0(\mathbf t_0)$ contains the square of a non-empty word in this case, while the word $xzxyty$ is square-free. So, the identity $\mathbf s_0\approx \mathbf t_0$ is either $\sigma_1$ or $\sigma_2$. Arguments similar to those from the previous paragraph allow us to obtain a contradiction with the fact that the words $xzxyty$ and $\mathbf v_1$ are distinct.

Thus, we have verified that $xzxyty$ is an isoterm for $\bf L$ and therefore, $\bf Z \subseteq L$. It remains to verify the opposite inclusion. Suppose that ${\bf Z}$ satisfies an identity ${\bf u}\approx {\bf v}$. We need to prove that ${\bf u}\approx {\bf v}$ holds in ${\bf L}$. Lemma~\ref{w_{n,m}=w_{n,m}' in L} allows us to use Lemma~\ref{u=xxu_x holds in X} below. Let $x$ be a letter multiple in $\bf u$ and ${\bf u}(x,y)\ne xyx$ for any letter $y$. By Lemma~\ref{u=xxu_x holds in X}, the variety ${\bf L}$ satisfies the identity~\eqref{u=xxu_x}. Obviously, ${\bf C}_2 \subseteq {\bf Z}$, whence $x\in \mul({\bf v})$ by Proposition~\ref{word problem C_2}. Since the word $xzxyty$ is an isoterm for ${\bf Z}$, the word $xyx$ is an isoterm for ${\bf Z}$ too. Therefore, ${\bf v}(x,y)\ne xyx$ for any letter $y$. We apply Lemma~\ref{u=xxu_x holds in X} again and conclude that the identity $\mathbf v \approx x^2 \mathbf v_x$ holds in ${\bf L}$. Thus, if the identity ${\bf u}_x\approx {\bf v}_x$ holds in the variety ${\bf L}$ then this variety satisfies the identities $\mathbf u \approx x^2 \mathbf u_x \approx x^2 \mathbf v_x \approx \mathbf v$. So, we can remove from $\mathbf u \approx \mathbf v$ all multiple letters $x$ such that ${\bf u}(x,y)\ne xyx$ for any $y$. In other words, we may assume without loss of generality that for any letter $x\in\mul({\bf u})$ there is a letter $y$ such that ${\bf u}(x,y)= xyx = {\bf v}(x,y)$. In particular, this means that $\occ_x(\mathbf u),\occ_x(\mathbf v)\le 2$ for any letter $x$.
 
Lemma~\ref{group variety} and the evident inclusion $\mathbf C_2\subseteq\mathbf Z$ imply that $\con(\mathbf u)=\con(\mathbf v)$. It is evident that for any letters $a,b\notin\con(\mathbf u)$, the identities $\mathbf{u\approx v}$ and $a\mathbf ub\approx a\mathbf vb$ are equivalent in the class of monoids. Therefore, we can assume without loss of generality that the first and the last letters in each of the words \textbf u and \textbf v are simple in this word. Let $\simple(\mathbf u)=\simple(\mathbf v)=\{t_0,t_1,\dots,t_m\}$. We can assume without any loss that $\mathbf v(t_1,t_2,\dots,t_m)=t_1t_2\cdots t_m$. In view of Lemma~\ref{D_n=var S(w)}, $\mathbf D_1 \subseteq\mathbf Z$. Then Proposition~\ref{word problem D_1} implies that
$$
\mathbf u=t_0\mathbf a_1t_1\mathbf a_2t_2\cdots t_{m-1}\mathbf a_mt_m\text{ and }\mathbf v=t_0\mathbf b_1t_1\mathbf b_2t_2\cdots t_{m-1}\mathbf b_mt_m
$$
for some possibly empty words $\mathbf a_1,\mathbf a_2,\dots,\mathbf a_m$ and $\mathbf b_1,\mathbf b_2,\dots,\mathbf b_m$.
 
Let $0\le i\le m-1$. Then $\mathbf u=\mathbf w_1t_i\mathbf a_{i+1}t_{i+1}\mathbf w_2$ where 
$$
\mathbf w_1=
\begin{cases}
t_0\mathbf a_1t_1\cdots t_{i-1}\mathbf a_i&\text{if }0<i\le m-1,\\ 
\lambda&\text{if }i=0
\end{cases}
$$
and
$$
\mathbf w_2=
\begin{cases}
\mathbf a_{i+2}t_{i+2}\cdots\mathbf a_mt_m&\text{if }0\le i<m-1,\\
\lambda&\text{if }i=m-1.
\end{cases}
$$
We are going to check that
\begin{equation}
\label{decomposition for a_{i+1}}
\mathbf a_{i+1}={\bf u}_1{\bf u}'_1{\bf u}_2{\bf u}'_2\cdots {\bf u}_k{\bf u}'_k
\end{equation}
and therefore, ${\bf u}= {\bf w}_1t_i{\bf u}_1{\bf u}'_1{\bf u}_2{\bf u}'_2\cdots {\bf u}_k{\bf u}'_kt_{i+1}{\bf w}_2$ for some possibly empty words $\mathbf u_1,\mathbf u_k'$ and non-empty words $\mathbf u_1',\mathbf u_2,\mathbf u_2',\dots,\mathbf u_k$ such that $\con({\bf u}_j) \subseteq \con({\bf w}_1)$ and $\con({\bf u}'_j) \subseteq \con({\bf w}_2)$ for all $j=1,\dots,k$. If $\mathbf a_{i+1}=\lambda$ then the equality~\eqref{decomposition for a_{i+1}} holds with $k=1$ and $\mathbf u_1=\mathbf u_1'=\lambda$. Suppose now that $\mathbf a_{i+1}\ne\lambda$. Let $x\in\con(\mathbf a_{i+1})$. Then $x\in\mul(\mathbf u)$. There is a letter $y\in\simple(\mathbf u)$ with $\mathbf u(x,y)=xyx$. Suppose that $x\in\mul(\mathbf a_{i+1})$. Therefore, $xyx$ is a subword of $\mathbf a_{i+1}$. This means that $y$ is simple in $\mathbf a_{i+1}$. But this is not the case because $y\ne t_j$ for any $0\le j\le m$. Thus, $x$ is simple in $\mathbf a_{i+1}$, whence $x\in\con(\mathbf w_1\mathbf w_2)$. We prove that every letter from $\con(\mathbf a_{i+1})$ is simple in $\mathbf a_{i+1}$ and occurs either in $\mathbf w_1$ or in $\mathbf w_2$.

Let $\mathbf u_1$ be the maximal prefix of $\mathbf a_{i+1}$ such that $\con(\mathbf u_1)\subseteq\con(\mathbf w_1)$ (if the first letter of $\mathbf a_{i+1}$ does not occur in $\mathbf w_1$ then $\mathbf u_1=\lambda$). Then $\mathbf a_{i+1}=\mathbf u_1\mathbf b$ for some possibly empty word $\mathbf b$. If $\mathbf b=\lambda$ then~\eqref{decomposition for a_{i+1}} holds with $k=1$ and $\mathbf u_1'=\lambda$. Otherwise, let $\mathbf u_1'$ be the maximal prefix of $\mathbf b$ such that $\con(\mathbf u_1')\subseteq\con(\mathbf w_2)$. Then $\mathbf a_{i+1}=\mathbf u_1\mathbf u_1'\mathbf c$ for some possibly empty word \textbf c. If $\mathbf c=\lambda$ then~\eqref{decomposition for a_{i+1}} holds with $k=1$. Otherwise, let $\mathbf u_2$ be the maximal prefix of \textbf c such that $\con(\mathbf u_2)\subseteq\con(\mathbf w_1)$. Continuing this process, we obtain the equality~\eqref{decomposition for a_{i+1}}.
 
Put
$$
\mathbf w_1'=
\begin{cases}
t_0\mathbf b_1t_1\cdots t_{i-1}\mathbf b_i&\text{if }0<i\le m-1,\\ 
\lambda&\text{if }i=0
\end{cases}
$$
and
$$
\mathbf w_2'=
\begin{cases}
\mathbf b_{i+2}t_{i+2}\cdots\mathbf b_mt_m&\text{if }0\le i<m-1,\\
\lambda&\text{if }i=m-1.
\end{cases}
$$
The same arguments as above show that $\mathbf b_{i+1}={\bf v}_1{\bf v}'_1{\bf v}_2{\bf v}'_2\cdots {\bf v}_r{\bf v}'_r$ for some natural $r$, possibly empty words $\mathbf v_1,\mathbf v_r'$ and non-empty words $\mathbf v_1',\mathbf v_2,\mathbf v_2',\dots,\mathbf v_r$ such that $\con({\bf v}_j) \subseteq \con({\bf w}_1')$ and $\con({\bf v}'_j) \subseteq \con({\bf w}_2')$ for all $j=1,\dots,r$. Therefore,
$$
{\bf v}= {\bf w}_1't_i{\bf v}_1{\bf v}'_1{\bf v}_2{\bf v}'_2\cdots {\bf v}_r{\bf v}'_rt_{i+1}{\bf w}_2'.
$$
Further, we may assume without loss of generality that $k \ge r$. We are going to verify that $k=r$, $\con({\bf u}_j)=\con({\bf v}_j)$ and $\con({\bf u}'_j)=\con({\bf v}'_j)$ for all $j=1,\dots,r$.
 
Let $x\in \con(\mathbf u_1)$. As we have shown above, $\mathbf u(x,t_i)=xt_ix$. Therefore, $\mathbf v(x,t_i)=xt_ix$ too, whence $\occ_x(\mathbf w_1')=1$. Note that $\mathbf v(x,t_{i+1})\ne xt_{i+1}x$ because $\mathbf u(x,t_{i+1})= x^2t_{i+1}$. Therefore, $x\notin\con(\mathbf w_2')$, whence $x\in \con(\mathbf v_1\mathbf v_2\cdots \mathbf v_r)$. If $x\notin \con(\mathbf v_1)$ then $x\in \con(\mathbf v_p)$ for some $p>1$. Then there exists a letter $y\in\con(\mathbf v_{p-1}')$. Note that $\mathbf u(y,t_{i+1})=\mathbf v(y,t_{i+1})=yt_{i+1}y$. Therefore, $y\in\con(\mathbf w_2)$, whence $y\in\con(\mathbf u_j')$ for some $1\le j\le k$. Then $\mathbf u(x,y,t_i,t_{i+1})=xt_ixyt_{i+1}y$, while $\mathbf v(x,y,t_i,t_{i+1})=xt_iyxt_{i+1}y$. This contradicts the fact that the word $xt_ixyt_{i+1}y$ is an isoterm for $\mathbf Z$. Thus, $x\in\con(\mathbf v_1)$, whence $\con({\bf u}_1) \subseteq \con({\bf v}_1)$. Analogously, $\con({\bf v}_1) \subseteq \con({\bf u}_1)$. Therefore, $\con({\bf u}_1) = \con({\bf v}_1)$.
 
Let $x\in \con(\mathbf u_1')$. As we have shown above, $\mathbf u(x,t_{i+1})=xt_{i+1}x$. Therefore, $\mathbf v(x,t_{i+1})=xt_{i+1}x$ too, whence $\occ_x(\mathbf w_2')=1$. Note that $\mathbf v(x,t_i)\ne xt_ix$ because $\mathbf u(x,t_i)= t_ix^2$. Therefore, $x\notin\con(\mathbf w_1')$, whence $x\in \con(\mathbf v_1'\mathbf v_2'\cdots \mathbf v_r')$. If $x\notin \con(\mathbf v_1')$ then $x\in \con(\mathbf v_p')$ for some $p>1$. Then there exists a letter $y\in\con(\mathbf v_p)$. Note that $\mathbf u(y,t_i)=\mathbf v(y,t_i)=yt_iy$. Therefore, $y\in\con(\mathbf w_1)$, whence $y\in\con(\mathbf u_j)$ for some $1\le j\le k$. Note that $y\notin\con(\mathbf u_1)$. Indeed, if $y\in\con(\mathbf u_1)$ then $y\in\con(\mathbf v_1)$ because $\con(\mathbf u_1)=\con(\mathbf v_1)$. Hence $\occ_y(\mathbf v)\ge \occ_y(\mathbf v_1\mathbf v_p\mathbf w_2')\ge 3$, a contradiction. So, $y\in\con(\mathbf u_j)$ for some $2\le j\le k$. Then $\mathbf u(x,y,t_i,t_{i+1})=xt_ixyt_{i+1}y$, while $\mathbf v(x,y,t_i,t_{i+1})=xt_iyxt_{i+1}y$. This contradicts the fact that the word $xt_ixyt_{i+1}y$ is an isoterm for $\mathbf Z$. Thus, $x\in\con(\mathbf v_1')$, whence $\con({\bf u}_1') \subseteq \con({\bf v}_1')$. Analogously, $\con({\bf v}_1') \subseteq \con({\bf u}_1')$. We prove that $\con({\bf u}_1') = \con({\bf v}_1')$.
 
Repeating with evident modifications arguments from the previous two paragraphs, we can check that $\con({\bf u}_i) =\con({\bf v}_i)$ and $\con({\bf u}_i') =\con({\bf v}_i')$ for $i=2,\dots,r$.
 
If $k>r$ then there is a letter $x\in \con(\mathbf u_{r+1})$. As we have shown above, $\mathbf u(x,t_i)=xt_ix$. Therefore, $\mathbf v(x,t_i)=xt_ix$ too, whence $\occ_x(\mathbf w_1')=1$. Note also that $\mathbf v(x,t_{i+1})=\mathbf u(x,t_{i+1})= x^2t_{i+1}$. In particular, $\mathbf v(x,t_{i+1})\ne xt_{i+1}x$. Therefore, $x\notin\con(\mathbf w_2')$, whence $x\in \con(\mathbf v_1\mathbf v_2\cdots \mathbf v_r)$. Then $x\in \con(\mathbf u_1\mathbf u_2\cdots \mathbf u_r)$ because $\con({\bf u}_i) =\con({\bf v}_i)$ for $i=1,2,\dots,r$. Thus, $\occ_x(\mathbf u)\ge \occ_x(\mathbf w_1{\bf u}_1{\bf u}_2\cdots {\bf u}_{r+1})\ge 3$, a contradiction. Therefore, $k=r$.{\sloppy

}
 
We prove that $k=r$, $\con({\bf u}_i)=\con({\bf v}_i)$ and $\con({\bf u}'_i)=\con({\bf v}'_i)$ for all $i=1,\dots,k$. One can fix an index $s\in\{1,2,\dots,k\}$. Then $\mathbf u_s$ and $\mathbf v_s$ are linear words depending on the same letters. The same is true for the words $\mathbf u_s'$ and $\mathbf v_s'$. The identity $\sigma_1$ [respectively $\sigma_2$] allows us to swap the first [the second] occurrences of two multiple letters whenever these occurrences are each adjacent to other. Therefore, the identities $\sigma_1$ and $\sigma_2$ allow us to reorder letters within the words $\mathbf u_s$ and $\mathbf u_s'$ in an arbitrary way. Thus, if we replace $\mathbf u_s$ by $\mathbf v_s$ and $\mathbf u_s'$ by $\mathbf v_s'$ in \textbf u then the word we obtain should be equal to \textbf u in \textbf L. This is true for all $s=1,\dots,k$. Hence \textbf L satisfies the identities
\begin{align*}
\mathbf u={}&\mathbf w_1t_i\mathbf a_{i+1}t_{i+1}\mathbf w_2=\mathbf w_1t_i{\bf u}_1{\bf u}'_1{\bf u}_2{\bf u}'_2\cdots {\bf u}_k{\bf u}'_kt_{i+1}\mathbf w_2\\
\approx{}&\mathbf w_1t_i{\bf v}_1{\bf v}'_1{\bf v}_2{\bf v}'_2\cdots {\bf v}_k{\bf v}'_kt_{i+1}\mathbf w_2=\mathbf w_1t_i\mathbf b_{i+1}t_{i+1}\mathbf w_2.
\end{align*}
Thus, if we replace $\mathbf a_{i+1}$ by $\mathbf b_{i+1}$ in \textbf u then the word we obtain should be equal to \textbf u in \textbf L. This is true for all $i=0,\dots,m-1$. Therefore, \textbf L satisfies the identities
$$
\mathbf u=t_0\mathbf a_1t_1\mathbf a_2t_2\cdots t_{m-1}\mathbf a_mt_m\approx t_0\mathbf b_1t_1\mathbf b_2t_2\cdots t_{m-1}\mathbf b_mt_m=\mathbf v.
$$
The lemma is proved.
\end{proof}
 
Lemma~\ref{L = var S(xzxyty)} and~\cite[Lemma~5.10]{Jackson-05} imply that any proper subvariety of ${\bf L}$ is contained in $\var S(xyx)$. Lemmas~\ref{D_n=var S(w)} and~\ref{L(D)} imply now the following
 
\begin{corollary}
\label{L(L)}
The lattice $L({\bf L})$ is the chain $\mathbf{T\subset SL\subset C}_2\subset\mathbf D_1\subset\mathbf D_2\subset\mathbf L$.\qed
\end{corollary}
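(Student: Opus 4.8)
The plan is to identify $L(\mathbf L)$ as the lattice $L(\mathbf D_2)$ with the single new top element $\mathbf L$ adjoined, and then read off $L(\mathbf D_2)$ from Lemma~\ref{L(D)}. Almost all of the work is already in hand: Lemma~\ref{L = var S(xzxyty)} gives $\mathbf L=\var S(xzxyty)$, and the preceding remark (via \cite[Lemma~5.10]{Jackson-05}) tells us that every proper subvariety of $\mathbf L$ is contained in $\var S(xyx)$, which by Lemma~\ref{D_n=var S(w)} equals $\mathbf D_2$. Thus every proper subvariety of $\mathbf L$ already lies in $L(\mathbf D_2)$, and what remains is to pin down $\mathbf D_2$ itself inside $L(\mathbf L)$ and to organise the resulting chain.

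First I would verify $\mathbf D_2\subseteq\mathbf L$, so that $\mathbf D_2$ and all its subvarieties genuinely belong to $L(\mathbf L)$. Since $xzx$ is a factor of the isoterm $xzxyty$, it is itself an isoterm for $\mathbf L$, and renaming $z$ to $y$ shows $xyx$ is an isoterm for $\mathbf L$ (this is exactly the observation already used in the proof of Lemma~\ref{L = var S(xzxyty)}). By Lemma~\ref{S(W) in V} this gives $S(xyx)\in\mathbf L$, whence $\mathbf D_2=\var S(xyx)\subseteq\mathbf L$. Combined with the previous paragraph, the proper subvarieties of $\mathbf L$ are then precisely the members of $L(\mathbf D_2)$, so that $L(\mathbf L)=L(\mathbf D_2)\cup\{\mathbf L\}$, provided only that $\mathbf D_2\neq\mathbf L$.

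The one point that does not come for free is the strictness $\mathbf D_2\subsetneq\mathbf L$, since the imported statement ``every proper subvariety of $\mathbf L$ lies in $\mathbf D_2$'' is by itself consistent with $\mathbf D_2=\mathbf L$; this is the step I expect to require a genuine argument. Here I would show that $xzxyty$ is not an isoterm for $\mathbf D_2$. As $\mathbf D_2\subseteq\mathbf D$ and $\mathbf D$ satisfies $\gamma_1$, performing the substitution $(y_1,y_0,x_1,x_0)\mapsto(x,z,y,t)$ in $\gamma_1$ shows that $\mathbf D_2$ satisfies the non-trivial identity $xzxyty\approx xzyxty$. By Lemma~\ref{S(W) in V} this forces $S(xzxyty)\notin\mathbf D_2$, so $\mathbf L=\var S(xzxyty)\not\subseteq\mathbf D_2$; together with $\mathbf D_2\subseteq\mathbf L$ this yields $\mathbf D_2\subsetneq\mathbf L$. (Alternatively one could note that $\mathbf L$ is non-finitely based whereas $\mathbf D_2$ is finitely based, hence the two differ.)

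Finally I would assemble the pieces. By Lemma~\ref{L(D)} the lattice $L(\mathbf D)$ is the chain $\mathbf{T\subset SL\subset C}_2\subset\mathbf D_1\subset\mathbf D_2\subset\cdots\subset\mathbf D$, so its initial segment below $\mathbf D_2$ gives $L(\mathbf D_2)=\{\mathbf T,\mathbf{SL},\mathbf C_2,\mathbf D_1,\mathbf D_2\}$, a chain with all inclusions strict. Adjoining $\mathbf L$ as a new top via the strict inclusion $\mathbf D_2\subsetneq\mathbf L$ produces exactly $L(\mathbf L)=\mathbf{T\subset SL\subset C}_2\subset\mathbf D_1\subset\mathbf D_2\subset\mathbf L$, as required. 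Everything except the strictness of $\mathbf D_2\subset\mathbf L$ is routine bookkeeping layered on top of Lemmas~\ref{L = var S(xzxyty)},~\ref{D_n=var S(w)} and~\ref{L(D)}.
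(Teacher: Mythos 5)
Your proof is correct and takes essentially the same route as the paper: Lemma~\ref{L = var S(xzxyty)} together with \cite[Lemma~5.10]{Jackson-05} reduces everything to the fact that every proper subvariety of $\mathbf L$ lies in $\var S(xyx)=\mathbf D_2$ (Lemma~\ref{D_n=var S(w)}), and Lemma~\ref{L(D)} then supplies the chain. The only difference is that the paper leaves the inclusion $\mathbf D_2\subseteq\mathbf L$ and its strictness implicit, whereas you verify them explicitly (via the factor-of-an-isoterm observation and the substitution into $\gamma_1$ giving the non-trivial identity $xzxyty\approx xzyxty$ in $\mathbf D_2$); both of these verifications are sound.
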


A non-finitely based variety all whose proper subvarieties are finitely based is called \emph{limit}. The variety $\var S(xzxyty)$ is limit by~\cite[Proposition~5.1]{Jackson-05}. Thus, Lemma~\ref{L = var S(xzxyty)} implies
 
\begin{corollary}
\label{L is limit}
The variety ${\bf L}$ is a limit variety. In particular, it does not have a finite basis of identities.\qed
\end{corollary}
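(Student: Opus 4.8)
The plan is to deduce the statement directly from the identification of $\mathbf L$ with a Rees quotient monoid variety that has already been established above. First I would recall Lemma~\ref{L = var S(xzxyty)}, which gives the equality $\mathbf L=\var S(xzxyty)$. The property of being a limit variety is intrinsic to a variety: it asserts that the variety admits no finite basis of identities while each of its proper subvarieties does. Equal varieties have the same subvariety lattice and the same (non-)finite-basis property, so this property transfers verbatim along the equality $\mathbf L=\var S(xzxyty)$.

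Next I would invoke the external result~\cite[Proposition~5.1]{Jackson-05}, according to which $\var S(xzxyty)$ is a limit variety. Combined with Lemma~\ref{L = var S(xzxyty)}, this immediately yields that $\mathbf L$ is limit. The ``in particular'' clause then follows from the definition, since a limit variety is by definition non-finitely based.

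Since all the substantive work has already been carried out---in the lengthy proof of Lemma~\ref{L = var S(xzxyty)} establishing the non-trivial equality $\mathbf L=\var S(xzxyty)$, and in the cited result of Jackson---there is no genuine obstacle remaining at this point; the corollary is a one-step consequence. If one preferred a more self-contained route that avoids relying on~\cite[Proposition~5.1]{Jackson-05}, I would instead combine two earlier facts: Corollary~\ref{L(L)} shows that $L(\mathbf L)$ is the finite chain $\mathbf{T\subset SL\subset C}_2\subset\mathbf D_1\subset\mathbf D_2\subset\mathbf L$, so every proper subvariety of $\mathbf L$ is one of the finitely based varieties $\mathbf T$, $\mathbf{SL}$, $\mathbf C_2$, $\mathbf D_1$, $\mathbf D_2$; and $\mathbf L$ itself is non-finitely based by~\cite[Lemma~5.5]{Jackson-05} together with Lemma~\ref{L = var S(xzxyty)}. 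These two observations verify the definition of a limit variety directly.
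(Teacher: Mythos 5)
Your primary argument is exactly the paper's proof: it combines Lemma~\ref{L = var S(xzxyty)} with the fact from~\cite[Proposition~5.1]{Jackson-05} that $\var S(xzxyty)$ is a limit variety, and notes that being limit (hence non-finitely based) transfers along the equality $\mathbf L=\var S(xzxyty)$. Your alternative route via Corollary~\ref{L(L)} (all proper subvarieties of $\mathbf L$ are finitely based) together with~\cite[Lemma~5.5]{Jackson-05} (non-finite basability of $\var S(xzxyty)$) is also correct, but the paper uses the first, one-step argument.
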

 
According to the result of~\cite{Kozhevnikov-12} mentioned in Section~\ref{introduction}, there are uncountably many periodic group varieties whose subvariety lattice is the 3-element chain. Let $\mathcal G$ be the class of all such varieties. Since the class of finitely based group varieties is countably infinite, the class $\mathcal G$ contains non-finitely based varieties. Group varieties whose subvariety lattice is the 2-element chain are varieties of Abelian groups of a prime exponent. They are finitely based. Thus, all non-finitely based varieties from the class $\mathcal G$ are limit varieties. But explicit examples of limit chain group varieties have not been published anywhere so far.
 
We denote by \textbf M the subvariety of \textbf N given within \textbf N by the following identity:
$$
\alpha_1:\enskip x_1y_1x_0x_1y_1\approx y_1x_1x_0x_1y_1.
$$
Note that $\alpha_1$ belongs to a countably infinite series of identities $\alpha_k$ that will be defined in Subsection~\ref{sufficiency: K - red to [E,K]}.
 
\begin{lemma}
\label{X contains D_2 but L or M}
Let ${\bf X}$ be a monoid variety and ${\bf D}_2\subseteq\mathbf X$.
\begin{itemize}
\item[\textup{(i)}] If ${\bf L}\nsubseteq{\bf X}$ then ${\bf X}$ satisfies the identity $\gamma_1$.
\item[\textup{(ii)}] If ${\bf M}\nsubseteq{\bf X}$ then ${\bf X}$ satisfies the identity $\sigma_1$.
\end{itemize}
\end{lemma}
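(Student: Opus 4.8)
The plan is to detect each non-containment through an isoterm and then recover the missing identity by projecting the resulting identity onto suitable three-letter subsets of letters; the two parts run in parallel.

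For~(i), recall that $\mathbf L=\var S(xzxyty)$ by Lemma~\ref{L = var S(xzxyty)}. Hence, by Lemma~\ref{S(W) in V}, the hypothesis $\mathbf L\nsubseteq\mathbf X$ means that $xzxyty$ is not an isoterm for $\mathbf X$, so $\mathbf X$ satisfies a non-trivial identity $xzxyty\approx\mathbf w$ with $\mathbf w\neq xzxyty$. Since $\mathbf C_2\subseteq\mathbf D_2\subseteq\mathbf X$, Proposition~\ref{word problem C_2} gives $\simple(\mathbf w)=\{z,t\}$ and $\mul(\mathbf w)=\{x,y\}$. The crucial extra input is that $\mathbf D_2=\var S(xy_1xy_2x)$ (Lemma~\ref{D_n=var S(w)}), so $xy_1xy_2x$ and all of its subwords are isoterms for $\mathbf X$; in particular $xzxt$ and $zyty$ (the subwords $xy_1xy_2$ and $y_1xy_2x$) are isoterms.

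Now I would project. Substituting $1$ for $y$ in $xzxyty\approx\mathbf w$ gives $xzxt\approx\mathbf w(x,z,t)$ in $\mathbf X$, so $\mathbf w(x,z,t)=xzxt$; substituting $1$ for $x$ gives $zyty\approx\mathbf w(z,y,t)$, so $\mathbf w(z,y,t)=zyty$. The first equality forces the occurrences of $x,z,t$ to appear in the order $x<z<x<t$, and the second forces $z,y,t$ to appear as $z<y<t<y$. Together these fix every pairwise order among the six occurrences in $\mathbf w$ except that of the second $x$ and the first $y$, both of which must lie strictly between $z$ and $t$; consequently $\mathbf w$ is either $xzxyty$ or $xzyxty$. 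As $\mathbf w\neq xzxyty$, we get $\mathbf w=xzyxty$, and the identity $xzxyty\approx xzyxty$ is exactly $\gamma_1$ (relabel $y_1,y_0,x_1,x_0$ by $x,z,y,t$). This proves~(i).

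For~(ii) the aim is the same computation carried out with the word $xyzxty$, the left-hand side of $\sigma_1$. Once we know that $xyzxty$ is not an isoterm for $\mathbf X$, the identical projection argument applies: $xzxt$ and $yzty$ are again subwords of $xy_1xy_2x$, so deleting $y$ forces the partner word $\mathbf w$ to satisfy $\mathbf w(x,z,t)=xzxt$ and deleting $x$ forces $\mathbf w(y,z,t)=yzty$, which pin $\mathbf w$ down to $xyzxty$ or $yxzxty$ and hence give $\sigma_1$. The whole difficulty of~(ii) is therefore to deduce from $\mathbf M\nsubseteq\mathbf X$ that $xyzxty$ fails to be an isoterm for $\mathbf X$. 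This cannot simply be read off a presentation $\mathbf M=\var S(xyzxty)$, for no such presentation exists: $\mathbf M$ satisfies $\alpha_1$, whereas $xyzxty$ contains $xyzxy$ as a subword and so $xyzxy$ is an isoterm for $\var S(xyzxty)$, whence $\var S(xyzxty)$ violates $\alpha_1$. The route I would take is to reduce the claim to $\mathbf M\subseteq\var S(xyzxty)$; granting this, if $xyzxty$ were an isoterm for $\mathbf X$ then $S(xyzxty)\in\mathbf X$ by Lemma~\ref{S(W) in V}, whence $\var S(xyzxty)\subseteq\mathbf X$ and so $\mathbf M\subseteq\mathbf X$, contradicting the hypothesis; therefore $xyzxty$ is not an isoterm and the projection argument finishes. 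Verifying $\mathbf M\subseteq\var S(xyzxty)$ — equivalently, that every identity of $S(xyzxty)$ holds in $\mathbf M$ — is the step I expect to be the main obstacle, since it requires a direct comparison of $\mathbf M$ with $\var S(xyzxty)$ rather than the purely combinatorial manipulation of a single non-isoterm that suffices in~(i).
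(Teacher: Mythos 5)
Your part~(i) is correct, but it follows a genuinely different route from the paper. The paper obtains $\mathbf w=xzyxty$ by citing Fact~4.1(i) of~\cite{Sapir-15}, which needs only that $xyx$ is an isoterm for $\mathbf X$; you instead reconstruct that conclusion from the two stronger isoterms $xzxt$ and $zyty$. This is legitimate: up to renaming these are the \emph{factors} $xy_1xy_2$ and $y_1xy_2x$ of $xy_1xy_2x$, the latter word is an isoterm for $\mathbf X$ by Lemmas~\ref{S(W) in V} and~\ref{D_n=var S(w)} together with $\mathbf D_2\subseteq\mathbf X$, and factors of isoterms are isoterms. Your order analysis (including the fact that the two projections force $x$ and $y$ to occur exactly twice in $\mathbf w$) then pins $\mathbf w$ down to $xzyxty$, and $xzxyty\approx xzyxty$ is indeed $\gamma_1$. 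What this buys is self-containedness: no appeal to~\cite{Sapir-15} in part~(i).

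Part~(ii), however, contains a genuine error and a gap. The error: $yzty$, whose letter pattern is $abca$, is \emph{not} a subword of $xy_1xy_2x$ --- the only four-letter factor patterns of $xy_1xy_2x$ are $abac$ (from $xy_1xy_2$) and $abcb$ (from $y_1xy_2x$) --- so your justification that $yzty$ is an isoterm for $\mathbf X$ fails as written. (The conclusion is true, but needs a separate argument: by Proposition~\ref{word problem D_1} any word $\mathbf v$ such that $\mathbf D_2$ satisfies $yzty\approx\mathbf v$ has the form $y^azy^bty^c$, and the substitutions $(y,z,t)\mapsto(x,y_1,1)$ and $(y,z,t)\mapsto(x,1,y_2)$ into $S(xy_1xy_2x)$ force $a=c=1$, $b=0$.) Far more seriously, your assertion that no presentation $\mathbf M=\var S(xyzxty)$ can exist is false, and the paper's proof of~(ii) rests precisely on that presentation, which is the dual of Proposition~1 in the Erratum to~\cite{Jackson-05}. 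Your argument breaks at ``$xyzxty$ contains $xyzxy$ as a subword'': it does not (the five-letter factors of $xyzxty$ are $xyzxt$ and $yzxty$), and scattered subwords of isoterms need not be isoterms --- for instance, $x^2$ is a scattered subword of $xyx$, an isoterm for $\mathbf D_2$, yet $\mathbf D_2$ satisfies $x^2\approx x^3$. In fact $S(xyzxty)$ \emph{does} satisfy $\alpha_1$: if $\theta(x_1)$ or $\theta(y_1)$ is the empty word, the two sides coincide literally, while if both are nonempty, each side could only be a nonzero element by being a factor of $xyzxty$ containing two disjoint occurrences of a nonempty word over $\{x,y\}$, which is impossible; hence both sides equal $0$.

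Consequently, the step you yourself flag as the main obstacle --- deducing from $\mathbf M\nsubseteq\mathbf X$ that $xyzxty$ is not an isoterm for $\mathbf X$ --- remains unproven in your proposal. The containment $\mathbf M\subseteq\var S(xyzxty)$ that your reduction needs is true (it is half of the very equality you rejected), but you neither prove it nor could the obstruction you describe be correct. As written, part~(ii) is incomplete; once the presentation $\mathbf M=\var S(xyzxty)$ is granted, your projection argument, with the repaired isoterm justification for $yzty$, does finish the proof.
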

 
\begin{proof}
(i) According to Lemmas~\ref{S(W) in V} and~\ref{L = var S(xzxyty)}, the variety $\bf X$ satisfies a non-trivial identity of the form $xzxyty\approx {\bf w}$. Note that the word $xyx$ is an isoterm for ${\bf X}$ by Lemmas~\ref{S(W) in V} and~\ref{D_n=var S(w)}. Then Fact~4.1(i) of~\cite{Sapir-15} implies that ${\bf w} = xzyxty$. Therefore, the identity $\gamma_1$ holds in ${\bf X}$.
 
\smallskip
 
(ii) According to Lemmas~\ref{S(W) in V} and~\ref{D_n=var S(w)}, the word $xyx$ is an isoterm for ${\bf X}$. Further, the variety ${\bf M}$ is generated by the monoid $S(xyzxty)$ (this fact is dual to Proposition~1 in Erratum to~\cite{Jackson-05}). Then $S(xyzxty)\notin\mathbf X$, whence $\bf X$ satisfies a non-trivial identity of the form $xyzxty\approx {\bf w}$ by Lemma~\ref{S(W) in V}. Fact~4.1(ii) of~\cite{Sapir-15} implies that ${\bf w} = yxzxty$. Therefore, the identity $\sigma_1$ holds in ${\bf X}$.
\end{proof}
 
Return to an examination of a chain variety \textbf V. In Subsection~\ref{necessity: red to over D_2} we reduce considerations to the case when ${\bf D}_2 \subseteq {\bf V}$. Then ${\bf E} \nsubseteq {\bf V}$ because the varieties $\mathbf D_2$ and \textbf E are non-comparable. The variety ${\bf V}$ satisfies the identity~\eqref{yxx=xxyxx} by Lemma~\ref{E nsubseteq X}. Similarly, the fact that $\overleftarrow{{\bf E}} \nsubseteq {\bf V}$ implies that ${\bf V}$ satisfies the identity~\eqref{xxy=xxyxx} by the dual of Lemma~\ref{E nsubseteq X}. Hence the identity~\eqref{xxy=yxx} holds in ${\bf V}$. If ${\bf V}$ does not contain ${\bf L}$, ${\bf M}$ and $\overleftarrow{{\bf M}}$ then Lemma~\ref{X contains D_2 but L or M} and the dual of its claim~(ii) imply that ${\bf V}$ satisfies $\sigma_1$, $\sigma_2$ and $\gamma_1$, whence ${\bf V} \subseteq {\bf D}$.
 
It remains to consider the case when ${\bf V}$ contains one of the varieties ${\bf L}$, ${\bf M}$ or $\overleftarrow{{\bf M}}$. Then ${\bf V}$ does not contain the variety ${\bf D}_3$ because ${\bf L}$, ${\bf M}$ and $\overleftarrow{{\bf M}}$ are non-comparable with $\mathbf D_3$. Lemma~\ref{D_{n+1} nsubseteq X} and the fact that ${\bf V}$ satisfies the identity~\eqref{xxy=yxx} imply that the identity~\eqref{xyxzx=xxyz} holds in ${\bf V}$.
 
Let ${\bf M} \subseteq {\bf V}$. Then ${\bf V}$ does not contain ${\bf L}$ and $\overleftarrow{{\bf M}}$. Lemma~\ref{X contains D_2 but L or M}(i) and the dual of Lemma~\ref{X contains D_2 but L or M}(ii) imply that ${\bf V} \subseteq {\bf N}$. Dual arguments show that if $\overleftarrow{{\bf M}} \subseteq {\bf V}$ then ${\bf V}\subseteq\overleftarrow{{\bf N}}$.
 
We reduce our considerations to the case when ${\bf L\subseteq V}$. 
 
\subsection{The case when $\mathbf{L\subseteq V}$}
\label{necessity: over L}
 
Clearly, here ${\bf M},\overleftarrow{{\bf M}} \nsubseteq {\bf V}$. Lemma~\ref{X contains D_2 but L or M}(ii) and the dual of it imply that \textbf V satisfies the identities $\sigma_1$ and $\sigma_2$. As we have already seen above, \textbf V satisfies the identities~\eqref{xyxzx=xxyz} and~\eqref{xxy=yxx} as well. Thus, \textbf V is contained in the variety
$$
{\bf O} = \var\{x^2y\approx yx^2,\,xyxzx\approx x^2yz,\,\sigma_1,\,\sigma_2\}.
$$
To complete the proof of the necessity in Theorem~\ref{main result}, it suffices to verify that $\mathbf{V\subseteq L}$ in the case under consideration. To achieve this goal, it remains to check that \textbf V satisfies all identities of the form~\eqref{w_n=w_n'} where $n$ is a natural number and $\pi,\tau\in S_n$. To do this, we need several auxiliary claims. Let $n\in\mathbb N$, $0\le k\le\ell\le n$ and $\pi,\tau\in S_n$. Put 
\begin{align*}
\mathbf w_n^{k,\ell}(\pi,\tau)={}&\biggl(\prod_{i=1}^n z_it_i\biggr)\biggl(\prod_{i=1}^k z_{\pi(i)}z_{n+\tau(i)}\biggr)x\biggl(\prod_{i=k+1}^\ell z_{\pi(i)}z_{n+\tau(i)}\biggr)x\\
&\cdot\,\biggl(\prod_{i=\ell+1}^n z_{\pi(i)}z_{n+\tau(i)}\biggr)\biggl(\prod_{i=n+1}^{2n} t_iz_i\biggr).
\end{align*}
We note that $\mathbf w_n^{0,n}(\pi,\tau)=\mathbf w_n(\pi,\tau)$ and $\mathbf w_n^{0,0}(\pi,\tau)=\mathbf w_n'(\pi,\tau)$.
 
\begin{lemma}
\label{S(w_n(pi,tau)) notin X}
Let ${\bf X}$ be a monoid variety such that ${\bf L} \subseteq{\bf X} \subseteq{\bf O}$, $n$ be a natural number and $\pi,\tau\in S_n$. If $S\bigl(\mathbf w_n(\pi,\tau)\bigr) \notin {\bf X}$ then ${\bf X}$ satisfies a non-trivial identity of the form
\begin{equation}
\label{w_n=w_n^{k,l}}
\mathbf w_n(\pi,\tau)\approx \mathbf w_n^{k,\ell}(\pi,\tau)
\end{equation}
for some $0\le k\le \ell \le n$.
\end{lemma}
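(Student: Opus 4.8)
The plan is to unpack the hypothesis through Lemma~\ref{S(W) in V}: since $S(\mathbf w_n(\pi,\tau))\notin\mathbf X$, the word $\mathbf w_n(\pi,\tau)$ is not an isoterm for $\mathbf X$, so $\mathbf X$ satisfies some non-trivial identity $\mathbf w_n(\pi,\tau)\approx\mathbf w$ with $\mathbf w\ne\mathbf w_n(\pi,\tau)$ as words. The whole task is then to rewrite $\mathbf w$, modulo the identities of $\mathbf X$, into a word of the shape $\mathbf w_n^{k,\ell}(\pi,\tau)$ and to check that the resulting identity stays non-trivial. Throughout I would use that $\mathbf C_2\subseteq\mathbf D_1\subseteq\mathbf D_2\subseteq\mathbf L\subseteq\mathbf X$ (Corollary~\ref{L(L)}), so the small word problems apply and every word that is an isoterm for $\mathbf L$ is an isoterm for $\mathbf X$; and that $\mathbf X\subseteq\mathbf O$, so $\mathbf X$ obeys $x^2y\approx yx^2$, $xyxzx\approx x^2yz$, $\sigma_1$, $\sigma_2$, and (substituting $1$ for $y,z$ in $xyxzx\approx x^2yz$) also $x^2\approx x^3$.

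First I would reconstruct the skeleton of $\mathbf w$, i.e. everything except the letters $x$. By Proposition~\ref{word problem C_2}, $\simple(\mathbf w)=\{t_1,\dots,t_{2n}\}$ and $\mul(\mathbf w)=\{z_1,\dots,z_{2n},x\}$, and since $\mathbf X$ is non-commutative, Lemma~\ref{u_mul=v_mul} forces the simple letters to occur in the order $t_1t_2\cdots t_{2n}$, so the $t_i$ cut $\mathbf w$ into the same blocks as $\mathbf w_n(\pi,\tau)$. Each $z_j$ then occurs exactly twice and lands in its proper blocks: projecting the identity onto $\{z_j,t_j\}$ and onto $\{z_j,t_n\}$ (for $j\le n$), and using that $xyx$ is an isoterm for $\mathbf X$ (Lemmas~\ref{S(W) in V} and~\ref{D_n=var S(w)}), pins the occurrences of $z_j$ exactly as in $\mathbf w_n(\pi,\tau)$; the letters $z_{n+i}$ are dual. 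The order of the $z$'s inside the central block (between $t_n$ and $t_{n+1}$) is then forced to be the middle factor $\prod_i z_{\pi(i)}z_{n+\tau(i)}$: for a second-occurrence letter $z_a$ ($a\le n$) and a first-occurrence letter $z_b$ ($b>n$) the projection onto $\{z_a,t_a,z_b,t_b\}$ is either $z_at_az_az_bt_bz_b$ or $z_at_az_bz_at_bz_b$, and both of these are isoterms for $\mathbf L$ (they are square-free, have no two equal-rank occurrences adjacent, and no letter thrice, so no identity of $\Psi$ applies; the first is exactly the word $xt_ixyt_{i+1}y$ from the proof of Lemma~\ref{L = var S(xzxyty)}). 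Since these cross-orders already determine the full interleaving, the skeleton $\mathbf w_x$ coincides letter-for-letter with $(\mathbf w_n(\pi,\tau))_x$.

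It then remains to place and normalise the occurrences of $x$. If $x$ occurs three or more times, or two occurrences are adjacent, then using $xyxzx\approx x^2yz$ together with the centrality and idempotency of $x^2$ ($x^2y\approx yx^2$, $x^2\approx x^3$) all occurrences collapse into a single central factor $x^2$, which slides past the simple dividers to just after $t_n$, giving $\mathbf w\approx\mathbf w_n'(\pi,\tau)=\mathbf w_n^{0,0}(\pi,\tau)$. Otherwise $x$ occurs exactly twice and non-adjacently; since $xyx$ is an isoterm while $\mathbf w_n(x,t_i)\in\{t_ix^2,x^2t_i\}$, the two occurrences cannot straddle any $t_i$, hence they lie in a common block. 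If that block is not the central one, its single skeleton letter $z$ is adjacent to an occurrence of $x$ of matching rank, so $\sigma_1$ or $\sigma_2$ merges the two $x$'s into a square and we are again reduced to $\mathbf w_n'(\pi,\tau)$. If the common block is the central one, then $\sigma_1$ slides the first occurrence of $x$ rightward past neighbouring first-occurrence letters $z_{n+\tau(i)}$ and $\sigma_2$ slides the second occurrence leftward past neighbouring second-occurrence letters $z_{\pi(i)}$, until each rests at a boundary between two consecutive factors; this yields exactly some $\mathbf w_n^{k,\ell}(\pi,\tau)$ with $0\le k\le\ell\le n$.

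Consequently $\mathbf X$ satisfies $\mathbf w_n(\pi,\tau)\approx\mathbf w_n^{k,\ell}(\pi,\tau)$, and this identity is non-trivial: were it trivial we would have $(k,\ell)=(0,n)$, and since the first occurrence of $x$ only ever slides rightward and the second only leftward, reaching the extreme boundaries forces both occurrences to have started there with the skeleton rigidly equal to that of $\mathbf w_n(\pi,\tau)$, i.e. $\mathbf w=\mathbf w_n(\pi,\tau)$ as words, contradicting non-triviality. I expect the genuine difficulty to lie in the third paragraph. The structural bookkeeping must be carried out with honest isoterms, and one must resist the tempting but false claim that words like $zxzx$ are rigid: in $\mathbf X$ one already has $zxzx\approx(zx)^3$, and a degenerate instance of $\sigma_2$ rewrites $ztxzx$, so naive two- or three-letter projections repeatedly create spurious swappable adjacencies. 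The localisation and sliding of the $x$'s therefore has to be argued through the valid isoterms $xyx$ and $xt_ixyt_{i+1}y$ and through explicit use of the defining identities of $\mathbf O$, with constant attention to which occurrences are first and which are second.
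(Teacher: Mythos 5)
Your proposal is correct and follows essentially the same path as the paper's proof: Lemma~\ref{S(W) in V} yields a non-trivial identity $\mathbf w_n(\pi,\tau)\approx\mathbf w$, the skeleton $\mathbf w_x$ is rigidified via the isoterms $xyx$ and $xzxyty$, the occurrences of $x$ are normalized using $xyxzx\approx x^2yz$ and $x^2y\approx yx^2$, and the two remaining occurrences are slid to factor boundaries by $\sigma_1$ and $\sigma_2$, with the same non-triviality check. The only local deviation is that you certify the cross-ordering of the middle block by asserting that both four-letter projection patterns, $xzxyty$ and $xzyxty$, are isoterms for $\mathbf L$ (which is true, but the second needs its own repetition of the deduction analysis from the proof of Lemma~\ref{L = var S(xzxyty)}), whereas the paper gets by with the single isoterm $xzxyty$ applied in both directions of an ``if and only if'' transfer.
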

 
\begin{proof}
Suppose that $S\bigl(\mathbf w_n(\pi,\tau)\bigr) \notin {\bf X}$. Then Lemma~\ref{S(W) in V} applies and we conclude that the variety ${\bf X}$ satisfies a non-trivial identity of the form
\begin{equation}
\label{w_n(pi,tau)=w}
\mathbf w_n(\pi,\tau)=\biggl(\prod_{i=1}^n z_it_i\biggr) x \biggl(\prod_{i=1}^n z_{\pi(i)}z_{n+\tau(i)}\biggr) x \biggl(\prod_{i=n+1}^{2n} t_iz_i\biggr)\approx{\bf w}.
\end{equation}
Put $\mathbf a=z_{\pi(1)}$, $\mathbf b=t_{\pi(1)}z_{\pi(1)+1}t_{\pi(1)+1}\cdots z_nt_nx$, $\mathbf c=z_{n+\tau(1)}$ and
$$
\mathbf d=z_{\pi(2)}z_{n+\tau(2)}\cdots z_{\pi(n)}z_{n+\tau(n)}xt_{n+1}z_{n+1}\cdots t_{n+\tau(1)-1}z_{n+\tau(1)-1}t_{n+\tau(1)}.
$$
The word $\mathbf w_n(\pi,\tau)$ contains the subword $\mathbf{abacdc}$. Therefore, the submonoid of the monoid $S\bigl(\mathbf w_n(\pi,\tau)\bigr)$ generated by the elements \textbf a, \textbf b, \textbf c and \textbf d is isomorphic to $S(xzxyty)$. Now Lemmas~\ref{S(W) in V} and~\ref{L = var S(xzxyty)} apply with the conclusion that the word $xzxyty$ is an isoterm for ${\bf X}$. Now we are going to verify that
\begin{equation}
\label{occurrences in w and w_n(pi,tau)}
\begin{array}{l}
\ell_2({\bf w},z_i)<\ell_1({\bf w},z_{n+j})\text{ if and only if }\\
\ell_2\bigl(\mathbf w_n(\pi,\tau),z_i\bigr)<\ell_1\bigl(\mathbf w_n(\pi,\tau),z_{n+j}\bigr)
\end{array}
\end{equation}
for any $1\le i,j\le n$. Indeed, let $1\le i,j\le n$. The word $xyx$ is an isoterm for \textbf X. Since $\bigl[\textbf w_n(\pi,\tau)\bigr](z_i,t_i)=z_it_iz_i$, $\bigl[\mathbf w_n(\pi,\tau)\bigr](z_{n+j},t_{n+j})=z_{n+j}t_{n+j}z_{n+j}$ and~\eqref{w_n(pi,tau)=w} holds in \textbf X, we have
$$
\mathbf w(z_i,t_i)=z_it_iz_i\text{ and }\mathbf w(z_{n+j},t_{n+j})=z_{n+j}t_{n+j}z_{n+j}.
$$
The variety \textbf X is non-commutative, whence $\ell_1({\bf w},t_i)<\ell_1({\bf w},t_{n+j})$. Therefore,
\begin{align*}
&\mathbf w(z_i,t_{n+j})=\bigl[\mathbf w_n(\pi,\tau)\bigr](z_i,t_{n+j})=z_i^2t_{n+j},\\
&\mathbf w(z_{n+j},t_i)=\bigl[\mathbf w_n(\pi,\tau)\bigr](z_{n+j},t_i)=t_iz_{n+j}^2.
\end{align*}
Summarizing the above, we have 
\begin{align*}
&\mathbf w(z_i,t_i,t_{n+j})=\bigl[\mathbf w_n(\pi,\tau)\bigr](z_i,t_i,t_{n+j})=z_it_iz_it_{n+j},\\
&\mathbf w(z_{n+j},t_i,t_{n+j})=\bigl[\mathbf w_n(\pi,\tau)\bigr](z_{n+j},t_i,t_{n+j})=t_iz_{n+j}t_{n+j}z_{n+j}.
\end{align*}

Suppose that $\ell_2({\bf w},z_i)<\ell_1({\bf w},z_{n+j})$. Then observations given in the previous paragraph imply that $\mathbf w(z_i,z_{n+j},t_i,t_{n+j})=z_it_iz_iz_{n+j}t_{n+j}z_{n+j}$. Since the word $xzxyty$ is an isoterm for \textbf X, we have 
$$
\bigl[\mathbf w_n(\pi,\tau)\bigr](z_i,z_{n+j},t_i,t_{n+j})=z_it_iz_iz_{n+j}t_{n+j}z_{n+j}=\mathbf w(z_i,z_{n+j},t_i,t_{n+j}),
$$ 
whence $\ell_2(\mathbf w_n(\pi,\tau),z_i)<\ell_1(\mathbf w_n(\pi,\tau),z_{n+j})$.
 
Suppose now that $\ell_2\bigl(\mathbf w_n(\pi,\tau),z_i\bigr)<\ell_1\bigl(\mathbf w_n(\pi,\tau),z_{n+j}\bigr)$. Then
$$
\bigl[\mathbf w_n(\pi,\tau)\bigr](z_i,z_{n+j},t_i,t_{n+j})=z_it_iz_iz_{n+j}t_{n+j}z_{n+j}.
$$
Now we apply the fact that $xzxyty$ is an isoterm for \textbf X again and obtain
$$
\mathbf w(z_i,z_{n+j},t_i,t_{n+j})=\bigl[\mathbf w_n(\pi,\tau)\bigr](z_i,z_{n+j},t_i,t_{n+j})=z_it_iz_iz_{n+j}t_{n+j}z_{n+j},
$$ 
whence $\ell_2({\bf w},z_i)<\ell_1({\bf w},z_{n+j})$.
 
The claim~\eqref{occurrences in w and w_n(pi,tau)} is proved. Then 
$$
{\bf w}_x= \biggl(\prod_{i=1}^n z_it_i\biggr) \biggl(\prod_{i=1}^n z_{\pi(i)}z_{n+\tau(i)}\biggr) \biggl(\prod_{i=n+1}^{2n} t_iz_i\biggr).
$$
Being a subvariety of \textbf O, the variety \textbf X satisfies the identities $xyxzx\approx x^2yz\approx yzx^2$. Therefore, we can assume that $\occ_x({\bf w})=2$ for any $x\in\con(\mathbf w)$. So,
$$
{\bf w}= \biggl(\prod_{i=1}^n \mathbf p_{2i-1}z_i\mathbf p_{2i}t_i\biggr)\mathbf q_0 \biggl(\prod_{i=1}^n z_{\pi(i)}\mathbf q_{2i-1}z_{n+\tau(i)}\mathbf q_{2i}\biggr) \biggl(\prod_{i=n+1}^{2n} t_i\mathbf r_{2i-2n-1}z_i\mathbf r_{2i-2n}\biggr)
$$
where 
$$
\biggl(\prod_{i=1}^{2n} \mathbf p_i\biggr)\biggl(\prod_{i=0}^{2n} \mathbf q_i\biggr)\biggl(\prod_{i=1}^{2n} \mathbf r_i\biggr) = x^2.
$$
 
Suppose at first that $x\in \con({\bf p}_{2j-1}{\bf p}_{2j})$ for some $1\le j\le n$ and $j$ is the least number with this property. If ${\bf p}_{2j-1}{\bf p}_{2j}=x$ then 
$$
\biggl(\prod_{i=2j+1}^{2n} \mathbf p_i\biggr)\biggl(\prod_{i=0}^{2n} \mathbf q_i\biggr)\biggl(\prod_{i=1}^{2n} \mathbf r_i\biggr) = x.
$$
It can be easily verified directly that substituting~1 for all letters except $x$ and $t_j$ in the identity~\eqref{w_n(pi,tau)=w} we obtain here the identity $t_jx^2\approx xt_jx$. But this is impossible because $xzx$ is an isoterm for ${\bf X}$. Therefore, ${\bf p}_{2j-1}{\bf p}_{2j}=x^2$, i.e., either $\mathbf p_{2j-1}=\mathbf p_{2j}=x$ or $\mathbf p_{2j-1}=x^2$ or $\mathbf p_{2j}=x^2$. If $\mathbf p_{2j-1}=\mathbf p_{2j}=x$ then \textbf X satisfies the identities
\begin{align*}
\mathbf w_n(\pi,\tau)\approx\mathbf w=\hskip8pt&\biggl(\prod_{i=1}^{j-1} z_it_i\biggr)xz_jxt_j\biggl(\prod_{i=j+1}^n z_it_i\biggr) \biggl(\prod_{i=1}^n z_{\pi(i)}z_{n+\tau(i)}\biggr) \biggl(\prod_{i=n+1}^{2n} t_iz_i\biggr)\\
\stackrel{\sigma_1}\approx\hskip8pt&\biggl(\prod_{i=1}^{j-1} z_it_i\biggr)z_jx^2t_j\biggl(\prod_{i=j+1}^n z_it_i\biggr) \biggl(\prod_{i=1}^n z_{\pi(i)}z_{n+\tau(i)}\biggr) \biggl(\prod_{i=n+1}^{2n} t_iz_i\biggr)\\
\stackrel{\eqref{xxy=yxx}}\approx{}&\biggl(\prod_{i=1}^n z_it_i\biggr)x^2 \biggl(\prod_{i=1}^n z_{\pi(i)}z_{n+\tau(i)}\biggr) \biggl(\prod_{i=n+1}^{2n} t_iz_i\biggr)\\
=\hskip8pt&\hskip5pt\mathbf w_n'(\pi,\tau)=\mathbf w_n^{0,0}(\pi,\tau),
\end{align*}
and we are done. If $\mathbf p_{2j-1}=x^2$ or $\mathbf p_{2j}=x^2$ then we can apply the identity~\eqref{xxy=yxx} and obtain the required conclusion. So, we can assume that $\mathbf p_1\mathbf p_2\cdots \mathbf p_{2n}=\lambda$.
 
The case when $x\in \con({\bf r}_{2j-1}{\bf r}_{2j})$ for some $1\le j\le n$ can be considered quite analogously to the previous case with the use of the identity $\sigma_2$ rather than $\sigma_1$.
 
Finally, let $x\notin \con({\bf p}_1{\bf p}_2\cdots {\bf p}_{2n})$ and $x\notin \con({\bf r}_1{\bf r}_2\cdots {\bf r}_{2n})$. Then ${\bf q}_0{\bf q}_1\cdots{\bf q}_{2n} = x^2$. Note that either $x \notin \con({\bf q}_0)$ or $x \notin \con({\bf q}_{2n})$ because otherwise the identity~\eqref{w_n(pi,tau)=w} is trivial. Assume without loss of generality that $x \notin \con({\bf q}_0)$, whence $\mathbf q_1\mathbf q_2\cdots \mathbf q_{2n}=x^2$. Let $x\in \con(\mathbf q_k)$ and $k$ is the least number with this property. If $\mathbf q_k = x^2$ then we can apply the identity~\eqref{xxy=yxx} and obtain the required conclusion. Suppose now that $x\in\con(\mathbf q_{\ell+1})$ for some $k\le\ell\le 2n-1$.{\sloppy

}
 
Each occurrence of $x$ in \textbf w lies either in a subword like $z_{\pi(i)}xz_{n+\tau(i)}$ or in a subword like $z_{\pi(i)}z_{n+\tau(i)}xz_{\pi(i+1)}z_{n+\tau(i+1)}$. We need to verify that \textbf w is equal in \textbf X to some word which has the same structure as \textbf w but contains only occurrences of $x$ of the second type. If both occurrences of $x$ in \textbf w are of the second type then we are done. Suppose that both occurrences are of the first type. Then the variety $\bf X$ satisfies the identities
\begin{align*}
\mathbf w_n(\pi,\tau)\approx \mathbf w={}&\biggl(\prod_{i=1}^n z_it_i\biggr) \biggl(\prod_{i=1}^{k-1} z_{\pi(i)}z_{n+\tau(i)}\biggr) z_{\pi(k)}xz_{n+\tau(k)}\biggl(\prod_{i=k+1}^{\ell} z_{\pi(i)}z_{n+\tau(i)}\biggr)\\[-3pt]
&\cdot\,\underline{z_{\pi(\ell+1)}x}\,z_{n+\tau(\ell+1)} \biggl(\prod_{i=\ell+2}^n z_{\pi(i)}z_{n+\tau(i)}\biggr)\biggl(\prod_{i=n+1}^{2n} t_iz_i\biggr)\\[-3pt]
\stackrel{\sigma_2}\approx{}&\biggl(\prod_{i=1}^n z_it_i\biggr) \biggl(\prod_{i=1}^{k-1} z_{\pi(i)}z_{n+\tau(i)}\biggr) z_{\pi(k)}\,\underline{xz_{n+\tau(k)}}\biggl(\prod_{i=k+1}^{\ell} z_{\pi(i)}z_{n+\tau(i)}\biggr)\\[-3pt]
&\cdot\,x\biggl(\prod_{i=\ell+1}^n z_{\pi(i)}z_{n+\tau(i)}\biggr)\biggl(\prod_{i=n+1}^{2n} t_iz_i\biggr)\\
\stackrel{\sigma_1}\approx{}&\biggl(\prod_{i=1}^n z_it_i\biggr) \biggl(\prod_{i=1}^k z_{\pi(i)}z_{n+\tau(i)}\biggr)x\biggl(\prod_{i=k+1}^\ell z_{\pi(i)}z_{n+\tau(i)}\biggr)x\\[-3pt]
&\cdot\,\biggl(\prod_{i=\ell+1}^n z_{\pi(i)}z_{n+\tau(i)}\biggr)\biggl(\prod_{i=n+1}^{2n} t_iz_i\biggr)\\
\stackrel{\phantom{\sigma_1}}={}&\,\mathbf w_n^{k,\ell}(\pi,\tau)
\end{align*}
(for reader convenience, we underline here pairs of adjacent letters that are transposed by one of the identities $\sigma_1$ or $\sigma_2$). Finally, if two occurrences of $x$ in \textbf w are of different types, then we can use analogous but simpler arguments. If occurrence of the first type lies in $\mathbf q_k$ [in $\mathbf q_{\ell+1}$] then it suffices to apply the identity $\sigma_1$ [respectively $\sigma_2$] only. Thus, in all cases an identity of the form~\eqref{w_n=w_n^{k,l}} holds in $\bf X$.
\end{proof}

\begin{lemma}
\label{w_q=w_q' implies w_m^{k,l}=w_m^{k,k}}
Let $m$ be a natural number, $0\le k<\ell<m$, $q=\ell-k$ and $\pi,\tau\in S_m$. Then there are permutations $\rho,\sigma\in S_q$ such that the identity $\mathbf w_q(\rho,\sigma)\approx\mathbf w_q'(\rho,\sigma)$ implies the identity $\mathbf w_m^{k,\ell}(\pi,\tau)\approx\mathbf w_m^{k,k}(\pi,\tau)$.
\end{lemma}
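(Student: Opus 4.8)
The plan is to exhibit the required identity as a consequence of a single substitution instance of $\mathbf w_q(\rho,\sigma)\approx\mathbf w_q'(\rho,\sigma)$, for a suitable choice of $\rho,\sigma\in S_q$. First I would separate out the parts of $\mathbf w_m^{k,\ell}(\pi,\tau)$ and $\mathbf w_m^{k,k}(\pi,\tau)$ that are unaffected. Put
\begin{align*}
\mathbf p&=\biggl(\prod_{i=1}^m z_it_i\biggr)\biggl(\prod_{i=1}^k z_{\pi(i)}z_{m+\tau(i)}\biggr),\\
\mathbf g&=\prod_{i=k+1}^\ell z_{\pi(i)}z_{m+\tau(i)},\\
\mathbf r&=\biggl(\prod_{i=\ell+1}^m z_{\pi(i)}z_{m+\tau(i)}\biggr)\biggl(\prod_{i=m+1}^{2m}t_iz_i\biggr).
\end{align*}
Since $\prod_{i=k+1}^m z_{\pi(i)}z_{m+\tau(i)}$ splits as $\mathbf g$ followed by $\prod_{i=\ell+1}^m z_{\pi(i)}z_{m+\tau(i)}$, a direct comparison with the definitions gives $\mathbf w_m^{k,\ell}(\pi,\tau)=\mathbf p\,x\,\mathbf g\,x\,\mathbf r$ and $\mathbf w_m^{k,k}(\pi,\tau)=\mathbf p\,x^2\,\mathbf g\,\mathbf r$. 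So the statement amounts to deriving $\mathbf p\,x\,\mathbf g\,x\,\mathbf r\approx\mathbf p\,x^2\,\mathbf g\,\mathbf r$ from an identity of the form~\eqref{w_n=w_n'}; only the letter $x$ is displaced, while $\mathbf p$, $\mathbf g$ and $\mathbf r$ are inert.

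The key observation is structural: $\mathbf g$ is a product of $q=\ell-k$ pairs, each a first-half letter $z_{\pi(i)}$ (whose second occurrence sits in the factor $\prod_{i=1}^m z_it_i$ of $\mathbf p$) followed by a second-half letter $z_{m+\tau(i)}$ (whose second occurrence sits in the factor $\prod_{i=m+1}^{2m}t_iz_i$ of $\mathbf r$). This is exactly the occurrence pattern of the middle block of $\mathbf w_q(\rho,\sigma)$. Accordingly I would list the first-half subscripts $\{\pi(k+1),\dots,\pi(\ell)\}$ in increasing order as $a_1<\dots<a_q$ and the second-half subscripts $\{\tau(k+1),\dots,\tau(\ell)\}$ in increasing order as $b_1<\dots<b_q$, and define $\rho,\sigma\in S_q$ by $a_{\rho(j)}=\pi(k+j)$ and $b_{\sigma(j)}=\tau(k+j)$; these are genuine permutations because $\pi,\tau$ are injective.

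Next I would build an endomorphism $\xi$ of $F^1$ carrying the (renamed) letters of $\mathbf w_q(\rho,\sigma)$ into those of $\mathbf w_m^{k,\ell}(\pi,\tau)$: the distinguished letter goes to $x$; the $q$ left-hand middle letters go to $z_{a_1},\dots,z_{a_q}$ and the $q$ right-hand ones to $z_{m+b_1},\dots,z_{m+b_q}$, so that $\xi$ maps the middle block of $\mathbf w_q(\rho,\sigma)$ precisely onto $\mathbf g$; the $t$-letters of the prefix are sent to the successive gaps of $\mathbf p$ between consecutive first-half letters, the final one absorbing the tail of $\prod_{i=1}^m z_it_i$ together with the whole factor $\prod_{i=1}^k z_{\pi(i)}z_{m+\tau(i)}$; and symmetrically the $t$-letters of the suffix are sent to the gaps of $\mathbf r$ between consecutive second-half letters, the first one absorbing the factor $\prod_{i=\ell+1}^m z_{\pi(i)}z_{m+\tau(i)}$. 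Prepending the (possibly empty) initial segment $\mathbf a$ of $\mathbf p$ that precedes $z_{a_1}$ and appending the trailing segment $\mathbf c$ of $\mathbf r$ that follows $z_{m+b_q}$, I would obtain $\mathbf a\,\xi(\mathbf w_q(\rho,\sigma))\,\mathbf c=\mathbf p\,x\,\mathbf g\,x\,\mathbf r$ and $\mathbf a\,\xi(\mathbf w_q'(\rho,\sigma))\,\mathbf c=\mathbf p\,x^2\,\mathbf g\,\mathbf r$. Since an identity is preserved under applying an endomorphism and multiplying both sides on the left and right by the same words, the identity~\eqref{w_n=w_n'} for this $\rho,\sigma$ implies $\mathbf w_m^{k,\ell}(\pi,\tau)\approx\mathbf w_m^{k,k}(\pi,\tau)$.

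The only real obstacle is the bookkeeping of the last step. One must check that each gap assigned to a $t$-image contains no further first-half (respectively second-half) letter — which holds because $a_1<\dots<a_q$ are precisely the first-half subscripts, with nothing of that kind between consecutive ones, and likewise for the $b_j$ — and that the spectator letters of $\mathbf p$ and $\mathbf r$ are correctly distributed among these gaps and the outer words $\mathbf a,\mathbf c$. In particular the $z$-letters occurring once in $\mathbf p$ and once in $\mathbf r$ (namely $z_{m+\tau(i)}$ with $i\le k$ and $z_{\pi(i)}$ with $i>\ell$) must be placed by hand; it is exactly these cross-occurrences that keep $\mathbf p\,x\,\mathbf g\,x\,\mathbf r\approx\mathbf p\,x^2\,\mathbf g\,\mathbf r$ from being literally an instance of the form treated in Lemma~\ref{w_{n,m}=w_{n,m}' in L}, so the explicit substitution cannot be bypassed by a direct appeal to that result.
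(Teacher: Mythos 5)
Your proposal is correct and follows essentially the same route as the paper: you choose $\rho,\sigma$ by the relative order of the subscripts $\{\pi(k+1),\dots,\pi(\ell)\}$ and $\{\tau(k+1),\dots,\tau(\ell)\}$, send the $t$-letters of $\mathbf w_q(\rho,\sigma)$ to the gap words of the prefix and suffix (with the boundary factors absorbed exactly as in the paper's words $\mathbf u_q$ and $\mathbf u_{q+1}$), and recover $\mathbf w_m^{k,\ell}(\pi,\tau)\approx\mathbf w_m^{k,k}(\pi,\tau)$ as an outer-context substitution instance. The paper packages the same endomorphism as a renaming of letters followed by the substitution $(t_1,\dots,t_{2q})\mapsto(\mathbf u_1',\dots,\mathbf u_{2q}')$ and then reverses the renaming, but the mathematical content is identical.
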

 
\begin{proof}
For convenience, we put
$$
\{z_{\pi(k+1)},z_{\pi(k+2)},\dots,z_{\pi(\ell)}\}=\{z_{p_1},z_{p_2},\dots,z_{p_q}\}
$$
and
$$
\{z_{m+\tau(k+1)},z_{m+\tau(k+2)},\dots,z_{m+\tau(\ell)}\}=\{z_{r_1},z_{r_2},\dots,z_{r_q}\}
$$
 where $1\le p_1<p_2<\cdots <p_q\le m< r_1<r_2<\cdots <r_q\le 2m$. The word $\mathbf w_m^{k,\ell}(\pi,\tau)$ has the form
$$ 
\mathbf u_0z_{p_1}\mathbf u_1\cdots z_{p_q}\mathbf u_q x z_{\pi(k+1)}z_{m+\tau(k+1)}\cdots z_{\pi(\ell)}z_{m+\tau(\ell)} x \mathbf u_{q+1}z_{r_1}\cdots \mathbf u_{2q}z_{r_q}\mathbf u_{2q+1}
$$
where
\begin{align*}
&\mathbf u_0=\prod_{i=1}^{p_1-1} z_it_i,\\[-3pt]
&\mathbf u_s=t_{p_s}\biggl(\prod_{i=p_s+1}^{p_{s+1}-1} z_it_i\biggr)\text{ for all } 1\le s<q,\\[-3pt]
&\mathbf u_q=t_{p_q}\biggl(\prod_{i=p_q+1}^m z_it_i\biggr)\biggl(\prod_{i=1}^k z_{\pi(i)}z_{m+\tau(i)}\biggr),\\[-3pt]
&\mathbf u_{q+1}=\biggl(\prod_{i=\ell+1}^m z_{\pi(i)}z_{m+\tau(i)}\biggr)\biggl(\prod_{i=m+1}^{r_1-1} t_iz_i\biggr)t_{r_1},\\[-3pt]
&\mathbf u_{q+1+s}=\biggl(\prod_{i=r_{s-1}+1}^{r_s-1} t_iz_i\biggr)t_{r_s}\text{ for all }1\le s<q,\\[-3pt]
&\mathbf u_{2q+1}=\prod_{i=r_q+1}^{2m} t_iz_i.
\end{align*}
 
We are going to rename all letters except $x$ in the word $\mathbf w_m^{k,\ell}(\pi,\tau)$. First, we rename all letters from the set
$$
\con\bigl(\mathbf w_m^{k,\ell}(\pi,\tau)\bigr)\setminus\{x,z_{p_1},z_{p_2}.\dots,z_{p_q},z_{r_1},z_{r_2},\dots,z_{r_q}\}
$$
by some pairwise different letters that do not occur in $\mathbf w_m^{k,\ell}(\pi,\tau)$. Further, we perform the substitution
$$
(z_{p_1},z_{p_2},\dots,z_{p_q},z_{r_1},z_{r_2},\dots,z_{r_q})\mapsto(z_1,z_2,\dots,z_q,z_{q+1},z_{q+2},\dots,z_{2q}).
$$
As a result, we get the word
$$
\mathbf u'=\mathbf u_0'z_1\mathbf u_1'\cdots z_q\mathbf u_q'xz_{\rho(1)}z_{q+\sigma(1)}\cdots z_{\rho(q)}z_{q+\sigma(q)}x\mathbf u_{q+1}'z_{q+1}\cdots\mathbf u_{2q}'z_{2q}\mathbf u_{2q+1}'
$$
for appropriate permutations $\rho,\sigma\in S_q$ and some words $\mathbf u_0',\mathbf u_1',\dots,\mathbf u_{2q+1}'$. 
 
Now we can perform the substitution $(t_1,t_2,\dots,t_{2q})\mapsto (\mathbf u_1',\mathbf u_2',\dots,\mathbf u_{2q}')$ in the identity $\mathbf w_q(\rho,\sigma)\approx\mathbf w_q'(\rho,\sigma)$. We get the identity
\begin{align*}
&z_1\mathbf u_1'\cdots z_q\mathbf u_q'xz_{\rho(1)}z_{q+\sigma(1)}\cdots z_{\rho(q)}z_{q+\sigma(q)}x\mathbf u_{q+1}'z_{q+1}\cdots \mathbf u_{2q}'z_{2q}\\
\approx{}&z_1\mathbf u_1'\cdots z_q\mathbf u_q'x^2z_{\rho(1)}z_{q+\sigma(1)}\cdots z_{\rho(q)}z_{q+\sigma(q)}\mathbf u_{q+1}'z_{q+1}\cdots \mathbf u_{2q}'z_{2q}.
\end{align*}
We apply this identity to the word $\bf u'$ and obtain the identity
$$
\mathbf u' \approx\mathbf u_0'z_1\mathbf u_1'\cdots z_q\mathbf u_q'x^2z_{\rho(1)}z_{q+\sigma(1)}\cdots z_{\rho(q)}z_{q+\sigma(q)}\mathbf u_{q+1}'z_{q+1}\cdots \mathbf u_{2q}'z_{2q}\mathbf u_{2q+1}'.
$$ 
Now we implement in this identity the renaming of letters, the reverse of the one described above. Then we obtain the identity
\begin{align*}
\mathbf w_m^{k,\ell}(\pi,\tau)\approx{}&\mathbf u_0z_{p_1}\mathbf u_1\cdots z_{p_q}\mathbf u_qx^2z_{\pi(k+1)}z_{m+\tau(k+1)}\cdots z_{\pi(\ell)}z_{m+\tau(\ell)}x\\
&\cdot\,\mathbf u_{q+1}z_{r_1}\cdots \mathbf u_{2q}z_{r_q}\mathbf u_{2q+1}=\mathbf w_m^{k,k}(\pi,\tau).
\end{align*}
The lemma is proved.
\end{proof}
 
Now we are well prepared to complete the proof of necessity of Theorem~\ref{main result}. Recall that we reduce our considerations to the case when ${\bf L\subseteq V\subseteq O}$. We denote by $\mathcal K$ the class of all varieties of the form $\var S\bigl(\mathbf w_n(\pi,\tau)\bigr)$ where $n\in\mathbb N$ and $\pi,\tau\in S_n$. It is clear that $\mathbf{L\subseteq X}$ whenever $\mathbf X\in \mathcal K$. We use this fact below without references. Let ${\bf X}\in \mathcal K$. We are going to verify that then the variety ${\bf X}$ contains at least two incomparable subvarieties from the class $\mathcal K$.
 
For an arbitrary permutation $\xi\in S_n$, we define the following two permutations from $S_{n+2}$:
{\arraycolsep=6pt
\begin{align*}
&\xi_1=
\begin{pmatrix}
1&2&3&4&5&\dots&n+2\\
\xi(1)+2&1&2&\xi(2)+2&\xi(3)+2&\dots&\xi(n)+2
\end{pmatrix}
,\\
&\xi_2=
\begin{pmatrix}
1&2&3&4&5&\dots&n+2\\
\xi(1)+2&2&1&\xi(2)+2&\xi(3)+2&\dots&\xi(n)+2
\end{pmatrix}
.
\end{align*}
}%
We have ${\bf X}=\var S\bigl(\mathbf w_n(\pi,\tau)\bigr)$ for some $n$, $\pi$ and $\tau$. Let $T_1 = S_{n+2}(\pi_1,\tau_1)$ and $T_2 = S_{n+2}(\pi_2,\tau_1)$. If $T_1\notin {\bf X}$ then Lemma~\ref{S(w_n(pi,tau)) notin X} allows us to assume without loss of generality that $\bf X$ satisfies a non-trivial identity of the form ${\bf w}_{n+2}(\pi_1,\tau_1) \approx {\bf w}_{n+2}^{k,\ell}(\pi_1,\tau_1)$ for some $1\le k\le\ell \le n+2$. Then we substitute 
\begin{itemize}
\item[$\bullet$] 1 for $z_1$, $z_2$, $z_{n+3}$, $z_{n+4}$, $t_1$, $t_2$, $t_{n+3}$ and $t_{n+4}$,
\item[$\bullet$] $z_{i-2}$ for $z_i$ whenever $3\le i\le n+2$ and $z_{i-4}$ for $z_i$ whenever $n+5\le i\le 2n+4$,
\item[$\bullet$] $t_{i-2}$ for $t_i$ whenever $3\le i\le n+2$ and $t_{i-4}$ for $t_i$ whenever $n+5\le i\le 2n+4$
\end{itemize}
in ${\bf w}_{n+2}(\pi_1,\tau_1) \approx {\bf w}_{n+2}^{k,\ell}(\pi_1,\tau_1)$. Then we obtain $\bf X$ satisfies the identity ${\bf w}_n(\pi,\tau) \approx {\bf w}_n^{s,t}(\pi,\tau)$ where $s=1$ whenever $k\le3$ and $s=k-2$ whenever $k>3$, while $t=1$ whenever $\ell\le3$ and $t=\ell-2$ whenever $\ell>3$. Since $s\ge1$, this identity is non-trivial. We obtain a contradiction with the fact that ${\bf X}=\var S\bigl(\mathbf w_n(\pi,\tau)\bigr)$ and Lemma~\ref{S(W) in V}. Thus, we have proved that $T_1\in {\bf X}$. Analogously, $T_2\in {\bf X}$.{\sloppy

}
 
Suppose that $T_1 \in \var T_2$. Then Lemma~\ref{S(W) in V} applies and we conclude that the word ${\bf w}_{n+2}(\pi_1,\tau_1)$ is an isoterm for $\var T_2$. At the same time, it is easy to verify that $\var T_2$ satisfies ${\bf w}_{n+2}(\pi_1,\tau_1) \approx {\bf w}'_{n+2}(\pi_1,\tau_1)$. Therefore, $\var T_1 \nsubseteq \var T_2$. Analogously, $\var T_2 \nsubseteq \var T_1$. We see that the varieties $\var T_1$ and $\var T_2$ are incomparable. Besides that, it is evident that these two varieties lie in $\mathcal K$.

Thus, if ${\bf X}=\var S\bigl(\mathbf w_n(\pi,\tau)\bigr)$ for some $n$, $\pi$ and $\tau$ then the variety ${\bf X}$ is not chain. Therefore, $S\bigl(\mathbf w_n(\pi,\tau)\bigr) \notin {\bf V}$ for all $n$, $\pi$ and $\tau$. For an arbitrary $n$, we denote the trivial permutation from $S_n$ by $\varepsilon$. Then $S\bigl(\mathbf w_1(\varepsilon,\varepsilon)\bigr)\notin \mathbf V$. According to Lemma~\ref{S(w_n(pi,tau)) notin X}, \textbf V satisfies a non-trivial identity of the form $\mathbf w_1(\varepsilon,\varepsilon)\approx \mathbf w_1^{k,\ell}(\varepsilon,\varepsilon)$ where $0\le k\le\ell\le 1$. Since $\mathbf w_1^{0,0}(\varepsilon,\varepsilon)=\mathbf w_1'(\varepsilon,\varepsilon)$, $\mathbf w_1^{0,1}(\varepsilon,\varepsilon)=\mathbf w_1(\varepsilon,\varepsilon)$ and the identity $\mathbf w_1(\varepsilon,\varepsilon)\approx \mathbf w_1^{k,\ell}(\varepsilon,\varepsilon)$ is non-trivial, we have \textbf V satisfies one of the identities $\mathbf w_1(\varepsilon,\varepsilon)\approx \mathbf w_1'(\varepsilon,\varepsilon)$ or $\mathbf w_1(\varepsilon,\varepsilon)\approx \mathbf w_1^{1,1}(\varepsilon,\varepsilon)$. Clearly, the latter identity together with~\eqref{xxy=yxx} implies the former one. Thus, \textbf V satisfies the identity $\mathbf w_1(\varepsilon,\varepsilon)\approx \mathbf w_1'(\varepsilon,\varepsilon)$.
 
Thus, there is a number $n$ such that $\bf V$ satisfies the identities of the form~\eqref{w_n=w_n'} for all $\pi,\tau\in S_n$ (for instance, $n=1$). We are going to verify that an arbitrary $n$ possesses this property. Arguing by contradiction, we suppose that the above-mentioned claim is true for $1,2,\dots,n$ but is false for $n+1$. Let $\pi_1,\tau_1\in S_{n+1}$. Since $S\bigl(\mathbf w_{n+1}(\pi_1,\tau_1)\bigr)\notin \mathbf V$, Lemma~\ref{S(w_n(pi,tau)) notin X} implies that $\bf V$ satisfies an identity of the form $\mathbf w_{n+1}(\pi_1,\tau_1)\approx \mathbf w_{n+1}^{k,\ell}(\pi_1,\tau_1)$ for some $0\le k\le\ell<n+1$. Suppose that $k<\ell$. Then Lemma~\ref{w_q=w_q' implies w_m^{k,l}=w_m^{k,k}} with $m=n+1$, $\pi=\pi_1$ and $\tau=\tau_1$ applies and we conclude that there exist permutations $\rho,\sigma\in S_{\ell-k}$ such that the identity $\mathbf w_{\ell-k}(\rho,\sigma)\approx \mathbf w_{\ell-k}'(\rho,\sigma)$ implies the identity $\mathbf w_{n+1}^{k,\ell}(\pi_1,\tau_1)\approx \mathbf w_{n+1}^{k,k}(\pi_1,\tau_1)$. The first of these identities holds in $\bf V$ because $\ell-k\le n$. Thus, in any case \textbf V satisfies the identity $\mathbf w_{n+1}(\pi_1,\tau_1)\approx \mathbf w_{n+1}^{k,k}(\pi_1,\tau_1)$. Note that $\mathbf w_{n+1}^{k,k}(\pi_1,\tau_1)\stackrel{\eqref{xxy=yxx}}\approx \mathbf w_{n+1}^{0,0}(\pi_1,\tau_1)=\mathbf w_{n+1}'(\pi_1,\tau_1)$. Therefore, the identity $\mathbf w_{n+1}(\pi_1,\tau_1)\approx\mathbf w_{n+1}'(\pi_1,\tau_1)$ holds in \textbf V. This is true for any $\pi_1,\tau_1\in S_{n+1}$. This contradicts the choice of $n$. So, the variety $\bf V$ satisfies the identities of the form~\eqref{w_n=w_n'} for all $n$ and $\pi,\tau\in S_n$, whence $\bf V = L$.
 
We have thus completed the proof of the ``only if'' part of Theorem~\ref{main result}.
 
\section{The proof of the ``if'' part: all varieties except \textbf K}
\label{sufficiency: not K}
 
In this and the following sections we are going to prove that if \textbf X is a subvariety of one of the varieties listed in Theorem~\ref{main result} then \textbf X is a chain variety. Since the property of being a chain variety is inherited by subvarieties, we can assume that \textbf X coincides with one of the varieties listed in Theorem~\ref{main result}. By symmetry, we can exclude from considerations the varieties $\overleftarrow{{\bf K}}$ and $\overleftarrow{{\bf N}}$. Thus, it suffices to verify that ${\bf C}_n$, ${\bf D}$, \textbf K, ${\bf L}$, ${\bf LRB}$, ${\bf N}$ and ${\bf RRB}$ are chain varieties. Here we consider all these varieties except the variety \textbf K, the last variety will be examined in the following section.
 
Lemmas~\ref{L(D)} and~\ref{L(BM)}(ii) and Corollary~\ref{L(L)} immediately imply that the varieties \textbf D, \textbf L, ${\bf LRB}$ and ${\bf RRB}$ are chain varieties.
 
\begin{proposition}
\label{L(C_n)}
The lattice $L({\bf C}_n)$ is the chain $\mathbf{T\subset SL\subset C}_2\subset\mathbf C_3\subset\cdots\subset\mathbf C_n$.
\end{proposition}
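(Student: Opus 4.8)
The plan is to establish the two containments together with their strictness first, and then to show that every subvariety of $\mathbf C_n$ coincides with one of the listed varieties. The containments $\mathbf{SL}\subseteq\mathbf C_2$ and $\mathbf C_k\subseteq\mathbf C_{k+1}$ for $2\le k<n$ are immediate, since $x\approx x^2$ implies $x^2\approx x^3$ and $x^k\approx x^{k+1}$ implies $x^{k+1}\approx x^{k+2}$ (multiply by $x$). For strictness I would use Lemma~\ref{C_{n+1}=var S(x^n)}: the monoid $S(x^k)$ generates $\mathbf C_{k+1}$ but violates $x^k\approx x^{k+1}$, because in $S(x^k)$ the power $x^k$ is the nonzero element corresponding to the whole word while $x^{k+1}$ is not a subword of $x^k$ and hence equals $0$. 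Thus $S(x^k)\notin\mathbf C_k$, so $\mathbf C_k\subsetneq\mathbf C_{k+1}$; the remaining strict steps $\mathbf T\subsetneq\mathbf{SL}\subsetneq\mathbf C_2$ are clear.

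Next I would fix an arbitrary subvariety $\mathbf X\subseteq\mathbf C_n$ and argue that it lies in the list. If $\mathbf X=\mathbf T$ we are done, so assume $\mathbf X$ is non-trivial. Since $\mathbf C_n$ is aperiodic, every group in $\mathbf X$ is trivial, so $\mathbf X$ is not a group variety; Lemma~\ref{group variety} then yields $\mathbf{SL}\subseteq\mathbf X$ and, moreover, guarantees that every identity holding in $\mathbf X$ preserves content. Let $k$ be the least positive integer for which $\mathbf X$ satisfies $x^k\approx x^{k+1}$; such a $k$ exists and satisfies $k\le n$ because $\mathbf X\subseteq\mathbf C_n$. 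If $k=1$, then $\mathbf X$ satisfies $x\approx x^2$, and being commutative it is contained in $\mathbf{SL}$, whence $\mathbf X=\mathbf{SL}$.

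It remains to treat the case $k\ge2$, where I would show $\mathbf X=\mathbf C_k$. The inclusion $\mathbf X\subseteq\mathbf C_k$ is clear from the choice of $k$ and commutativity. For the reverse inclusion I would invoke Lemma~\ref{C_{n+1}=var S(x^n)} in the form $\mathbf C_k=\var S(x^{k-1})$ together with Lemma~\ref{S(W) in V}, reducing the goal to showing that $x^{k-1}$ is an isoterm for $\mathbf X$. Suppose not: then $\mathbf X$ satisfies a non-trivial identity $x^{k-1}\approx\mathbf w$, and content preservation forces $\mathbf w=x^m$ with $m\ne k-1$, so $\mathbf X$ satisfies $x^{k-1}\approx x^m$.

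The main obstacle, and the only genuinely computational point, is the single-variable bookkeeping that follows. When $m<k-1$ I would first multiply $x^{k-1}\approx x^m$ by $x^{k-1-m}$ to replace it with $x^{k-1}\approx x^{2k-2-m}$, reducing to the case of an exponent exceeding $k-1$; this uses $2k-2-m\ge k$. Then, combining $x^{k-1}\approx x^m$ (with $m\ge k$) with the already available identity $x^k\approx x^{k+1}$, which makes all powers $x^j$ with $j\ge k$ equal, collapses everything to $x^{k-1}\approx x^k$. This contradicts the minimality of $k$. Hence $x^{k-1}$ is an isoterm for $\mathbf X$, so $S(x^{k-1})\in\mathbf X$ by Lemma~\ref{S(W) in V} and therefore $\mathbf C_k\subseteq\mathbf X$, giving $\mathbf X=\mathbf C_k$. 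Assembling the three cases shows that the subvarieties of $\mathbf C_n$ are exactly $\mathbf T,\mathbf{SL},\mathbf C_2,\dots,\mathbf C_n$ and that they form the asserted chain.
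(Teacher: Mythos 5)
Your proof is correct. It is close in spirit to the paper's but organized differently: the paper fixes $\mathbf V\subseteq\mathbf C_n$ and argues by downward induction on $n$, citing Lemma~\ref{C_{n+1} nsubseteq V} (if $\mathbf V\ne\mathbf C_n$ then $\mathbf V$ satisfies $x^{n-1}\approx x^{n-1+m}$, hence by aperiodicity $x^{n-1}\approx x^n$, i.e.\ $\mathbf V\subseteq\mathbf C_{n-1}$), and it disposes of the bottom of the chain via Corollary~\ref{cr or over C_2} (if $\mathbf C_2\nsubseteq\mathbf V$ then $\mathbf V$ is completely regular, hence $\mathbf V\subseteq\mathbf{SL}$). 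You instead extract the least $k$ with $x^k\approx x^{k+1}$ and prove the two inclusions $\mathbf X\subseteq\mathbf C_k$ and $\mathbf C_k\subseteq\mathbf X$ directly; your isoterm argument for the second inclusion is in substance a re-derivation of Lemma~\ref{C_{n+1} nsubseteq V}, and you could shorten your proof by invoking that lemma instead: $\mathbf C_k\nsubseteq\mathbf X$ would give $x^{k-1}\approx x^{k-1+m}$, which together with $x^k\approx x^{k+1}$ yields $x^{k-1}\approx x^k$, contradicting the minimality of $k$. What your version buys is self-containedness (no induction, only the lemmas on isoterms, on $\var S(x^m)$, and on content preservation) and an explicit verification, via $S(x^k)\in\mathbf C_{k+1}\setminus\mathbf C_k$, that the inclusions in the chain are strict, a point the paper treats as evident; the paper's version is shorter because it leans on two previously established results as black boxes.
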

 
\begin{proof}
Let $\mathbf V\subseteq\mathbf C_n$. Then \textbf V is commutative and aperiodic. If $\mathbf C_2\nsubseteq\mathbf V$ then \textbf V is completely regular by Corollary~\ref{cr or over C_2}. Then $\mathbf{V\subseteq SL}$, whence \textbf V coincides with either \textbf T or \textbf{SL}. It remains to verify that if ${\bf C}_2 \subseteq {\bf V} \subseteq {\bf C}_n$ then ${\bf V}={\bf C}_s$ for some $2\le s\le n$. We will use induction on $n$. If $n=2$ then the assertion is obvious. Let now $n>2$. Suppose that ${\bf V} \ne {\bf C}_n$. Then Lemma~\ref{C_{n+1} nsubseteq V} implies that ${\bf V}$ satisfies the identity $x^{n-1}\approx x^n$, whence ${\bf V} \subseteq {\bf C}_{n-1}$. By the induction assumption, ${\bf V}={\bf C}_s$ for some $2\le s\le n-1$.
\end{proof}
 
It remains to consider the variety \textbf N. 
 
\begin{proposition}
\label{L(N)}
The lattice $L({\bf N})$ is the chain $\mathbf{T\subset SL\subset C}_2\subset\mathbf D_1\subset\mathbf D_2\subset\mathbf M\subset\mathbf N$.
\end{proposition}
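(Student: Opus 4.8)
The plan is to show that every subvariety $\mathbf{V}$ of $\mathbf{N}$ occupies one of the prescribed levels of the claimed chain, by pushing $\mathbf{V}$ upward through a sequence of forced inclusions, in the spirit of Proposition~\ref{L(C_n)} and the reductions of Section~\ref{necessity}. First I record the arithmetic of $\mathbf{N}$ that is used throughout: setting $z=1$ in $xyxzx\approx x^2yz$ gives $xyx^2\approx x^2y$, which together with $x^2y\approx yx^2$ yields $xyx^2\approx yx^2$ and hence, on setting $y=1$, the identity $x^2\approx x^3$. Thus $\mathbf{N}$ is aperiodic and every subvariety inherits \eqref{xx=xxx}, \eqref{xxy=yxx}, $xyx^2\approx x^2y$, as well as $\sigma_2$ and $\gamma_1$. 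I also note that $\mathbf{M}=\var S(xyzxty)$, so that, by the dual of the relevant results of~\cite{Jackson-05} together with Lemmas~\ref{D_n=var S(w)} and~\ref{L(D)} (the exact analogue of Corollary~\ref{L(L)}), the lattice $L(\mathbf{M})$ is already the chain $\mathbf{T}\subset\mathbf{SL}\subset\mathbf{C}_2\subset\mathbf{D}_1\subset\mathbf{D}_2\subset\mathbf{M}$. It therefore suffices to prove that every proper subvariety of $\mathbf{N}$ is contained in $\mathbf{M}$.

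So I fix $\mathbf{V}\subsetneq\mathbf{N}$. If $\mathbf{C}_2\nsubseteq\mathbf{V}$, then $\mathbf{V}$ is completely regular by Corollary~\ref{cr or over C_2} and aperiodic, hence a band variety; since it also satisfies \eqref{xxy=yxx}, idempotency forces $xy\approx yx$ and so $\mathbf{V}\subseteq\mathbf{SL}$, giving $\mathbf{V}\in\{\mathbf{T},\mathbf{SL}\}$. If $\mathbf{C}_2\subseteq\mathbf{V}$ and $\mathbf{V}$ is commutative, then $\mathbf{V}$ satisfies $x^2\approx x^3$ and $xy\approx yx$, so $\mathbf{V}=\mathbf{C}_2$. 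Otherwise $\mathbf{V}$ is non-completely regular and non-commutative, whence $\mathbf{D}_1\subseteq\mathbf{V}$ by Lemma~\ref{non-cr and non-commut}. All of these varieties lie in $\mathbf{M}$.

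Next I split on $\mathbf{D}_2$. If $\mathbf{D}_2\nsubseteq\mathbf{V}$, then Lemma~\ref{D_{n+1} nsubseteq X} (with $n=1$) supplies an identity $xyx\approx x^{k_1}yx^{k_2}$ with $k_1>1$ or $k_2>1$; reducing all powers of $x$ to $x^2$ by \eqref{xx=xxx} and moving them across $y$ by \eqref{xxy=yxx} and $xyx^2\approx x^2y$, one deduces $xyx\approx x^2y$ in $\mathbf{V}$, and then $x^2y^2\approx y^2x^2$ follows from \eqref{xxy=yxx}. Thus $\mathbf{V}\subseteq\mathbf{E}$, and since the interval $[\mathbf{D}_1,\mathbf{E}]$ equals $\{\mathbf{D}_1,\mathbf{E}\}$ in the lattice $L(\mathbf{LRB}\vee\mathbf{C}_2)$ of Lemma~\ref{L(LRB+C_2)}(ii), while $\mathbf{E}\nsubseteq\mathbf{N}$ (as $\mathbf{E}\models x^2y\approx xyx$, which fails in $\mathbf{D}_2\subseteq\mathbf{N}$), I conclude $\mathbf{V}=\mathbf{D}_1\subseteq\mathbf{M}$. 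If instead $\mathbf{D}_2\subseteq\mathbf{V}$ but $\mathbf{M}\nsubseteq\mathbf{V}$, then Lemma~\ref{X contains D_2 but L or M}(ii) gives $\mathbf{V}\models\sigma_1$; combined with the inherited $\sigma_2$, \eqref{xxy=yxx}, \eqref{xyxzx=xxyz} and $\gamma_1$, and using that $\{x^2y\approx yx^2,\,xyxzx\approx x^2yz\}\models x^2\approx x^3$, I obtain $\mathbf{V}\subseteq\var\{x^2\approx x^3,\,x^2y\approx yx^2,\,xyxzx\approx x^2yz,\,\sigma_1,\,\sigma_2,\,\gamma_1\}=\mathbf{D}_2$, so $\mathbf{V}=\mathbf{D}_2\subseteq\mathbf{M}$.

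This leaves the decisive case $\mathbf{M}\subseteq\mathbf{V}\subsetneq\mathbf{N}$, where I must show $\mathbf{V}=\mathbf{M}$; this is the main obstacle and amounts to the statement that $\mathbf{N}$ covers $\mathbf{M}$. Since $\mathbf{M}$ is defined inside $\mathbf{N}$ by the single identity $\alpha_1$ and $\mathbf{V}\subseteq\mathbf{N}$, it is enough to prove $\mathbf{V}\models\alpha_1$. As $\mathbf{V}\neq\mathbf{N}$, there is an identity $\mathbf{u}\approx\mathbf{v}$ holding in $\mathbf{V}$ (hence in $\mathbf{M}$) but failing in $\mathbf{N}$, and the plan is to show that $\alpha_1$ is a consequence of the defining identities of $\mathbf{N}$ together with any such $\mathbf{u}\approx\mathbf{v}$, forcing $\mathbf{V}\models\alpha_1$ and so $\mathbf{V}\subseteq\mathbf{M}$. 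Carrying this out is where the real work lies: I expect to need a word problem for $\mathbf{N}$, in the style of Propositions~\ref{word problem C_2}, \ref{word problem D_1} and~\ref{word problem E}, describing exactly when $\mathbf{N}\models\mathbf{u}\approx\mathbf{v}$ in terms of the $k$-decompositions, restrictors and depths of Section~\ref{decomposition}, supplemented by the isoterm facts for $\mathbf{M}=\var S(xyzxty)$. From a failure of such a criterion one must locate in $\mathbf{u}$ and $\mathbf{v}$ two multiple letters whose first occurrences are transposed across a simple separating letter, and then use \eqref{xxy=yxx}, \eqref{xyxzx=xxyz}, $\sigma_2$ and $\gamma_1$ to reduce that transposition to precisely $\alpha_1$. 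The hardest point will be verifying that no strictly weaker swap can arise, i.e. that every identity separating $\mathbf{M}$ from $\mathbf{N}$ is equivalent, modulo the identities of $\mathbf{N}$, to $\alpha_1$.
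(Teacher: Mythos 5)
Your handling of everything below $\mathbf M$ is essentially sound (one slip is noted below), but the proposition is not proved, because the step you yourself call decisive---that $\mathbf M\subseteq\mathbf V\subsetneq\mathbf N$ forces $\mathbf V=\mathbf M$---is only announced, never carried out. Writing ``I expect to need a word problem for $\mathbf N$'' and ``the hardest point will be verifying that no strictly weaker swap can arise'' is a research plan, and that plan is exactly where the entire content of the paper's proof of Proposition~\ref{L(N)} lies. Concretely, the paper first shows that $\mathbf N$ satisfies every identity~\eqref{w_{n,m}=w_{n,m}'}, using $\sigma_2$ when $\theta(n+m)\le n$ and $\gamma_1$ when $\theta(n+m)>n$ to drag the second occurrence of $x$ leftwards; this is precisely the hypothesis needed to invoke Lemma~\ref{u=xxu_x holds in X}, an ingredient absent from your sketch. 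That lemma allows one to delete from an arbitrary identity $\mathbf u\approx\mathbf v$ of $\mathbf V$ every multiple letter $x$ with $\mathbf u(x,y)\ne xyx$ for all $y$, so that one may assume each multiple letter occurs exactly twice and straddles a simple letter. Next, inside each block between consecutive simple letters $t_i,t_{i+1}$, the identity $\gamma_1$ sorts the letters into a prefix $\mathbf p_1$ of first occurrences (letters whose second occurrence lies beyond $t_{i+1}$) followed by a suffix $\mathbf q_1$ of second occurrences (letters first occurring before $t_i$), and isoterm arguments give $\con(\mathbf p_1)=\con(\mathbf p_2)$, $\con(\mathbf q_1)=\con(\mathbf q_2)$ for the corresponding blocks of $\mathbf u$ and $\mathbf v$. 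Only then does your hoped-for dichotomy actually appear: if the letters of $\mathbf p_1$ and $\mathbf p_2$ occur in different orders, substituting $1$ for all other letters and applying $\sigma_2$ yields exactly $\alpha_1$, whence $\mathbf V=\mathbf M$; if they occur in the same order, $\sigma_2$ reorders $\mathbf q_1$ and $\mathbf u\approx\mathbf v$ is deduced from the identities of $\mathbf N$. Nothing in your outline excludes the third possibility you must rule out, namely an identity of $\mathbf V$ that fails in $\mathbf N$ yet does not imply $\alpha_1$ modulo the identities of $\mathbf N$.

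A second gap is your starting point: the assertion that $L(\mathbf M)$ is already known to be the chain $\mathbf{T\subset SL\subset C}_2\subset\mathbf D_1\subset\mathbf D_2\subset\mathbf M$ ``by the dual of the relevant results of~\cite{Jackson-05}''. No such result can be cited. Lemma~5.10 of~\cite{Jackson-05}, the input to Corollary~\ref{L(L)}, concerns $S(xzxyty)$, and the word $xzxyty$ is self-dual, so dualizing it yields nothing about $S(xyzxty)$; the only fact about $S(xyzxty)$ the paper can draw from~\cite{Jackson-05} is the dual of Proposition~1 of its Erratum, namely that this monoid generates $\mathbf M$. That $\mathbf M$ is a chain variety with the stated subvariety lattice is obtained in the paper only as a by-product of the very argument you are omitting, so your opening reduction (``it suffices to prove that every proper subvariety of $\mathbf N$ is contained in $\mathbf M$'') assumes part of what must be proved. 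Finally, a small but genuine logical error: to conclude $\mathbf E\nsubseteq\mathbf N$ you observe that $x^2y\approx xyx$ holds in $\mathbf E$ and fails in $\mathbf D_2\subseteq\mathbf N$; that shows $\mathbf N\nsubseteq\mathbf E$, which is the wrong direction. The correct reason is that the defining identity $x^2y\approx yx^2$ of $\mathbf N$ fails in $\mathbf E$, as follows at once from Proposition~\ref{word problem E}.
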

 
\begin{proof}
First of all, we are going to check that the variety \textbf N satisfies identities of the form~\eqref{w_{n,m}=w_{n,m}'} for all $n$, $m$ and $\theta\in S_{n+m}$. Indeed, $\mathbf w_{n,m}(\theta)=\mathbf px\mathbf qx\mathbf r$ where $\mathbf p=z_1t_1\cdots z_nt_n$, $\mathbf q=z_{\theta(1)}\cdots z_{\theta(n+m)}$ and $\mathbf r=t_{n+1}z_{n+1}\cdots t_{n+m}z_{n+m}$. Suppose at first that $\theta(n+m)\le n$. Then
$$
\mathbf w_{n,m}(\theta)=z_1t_1\cdots\stackrel{(1)}{z_{\theta(n+m)}}t_{\theta(n+m)}\cdots z_nt_n\stackrel{(1)}xz_{\theta(1)}\cdots\stackrel{(2)}{z_{\theta(n+m)}}\stackrel{(2)}x\mathbf r.
$$
We see that the second occurrences of the letters $z_{\theta(n+m)}$ and $x$ in $\mathbf w_{n,m}(\theta)$ are each adjacent to other. The identity $\sigma_2$ allows us to swap these occurrences. In other words,
$$
\mathbf w_{n,m}(\theta)\stackrel{\sigma_2}\approx\mathbf pxz_{\theta(1)}\cdots z_{\theta(n+m-1)}xz_{\theta(n+m)}\mathbf r.
$$
Suppose now that $\theta(n+m)>n$. Then
$$
\mathbf w_{n,m}(\theta)=\mathbf p\stackrel{(1)}xz_{\theta(1)}\cdots\stackrel{(1)}{z_{\theta(n+m)}}\stackrel{(2)}xt_{n+1}z_{n+1}\cdots t_{\theta(n+m)}\stackrel{(2)}{z_{\theta(n+m)}}\cdots t_{n+m}z_{n+m}.
$$
We see that first occurrence of $z_{\theta(n+m)}$ and second occurrence $x$ in $\mathbf w_{n,m}(\theta)$ are each adjacent to other. The identity $\gamma_1$ allows us to transpose these occurrences. In other words,
$$
\mathbf w_{n,m}(\theta)\stackrel{\gamma_1}\approx\mathbf pxz_{\theta(1)}\cdots z_{\theta(n+m-1)}xz_{\theta(n+m)}\mathbf r.
$$
We see that in any case the identity
$$
\mathbf w_{n,m}(\theta)\approx\mathbf pxz_{\theta(1)}\cdots z_{\theta(n+m-1)}xz_{\theta(n+m)}\mathbf r
$$
holds in \textbf N. Analogous arguments show that we can successively swap  second occurrence of $x$ with $z_{\theta(n+m-1)}$, $z_{\theta(n+m-2)}$, \dots, $z_{\theta(1)}$ and obtain \textbf N satisfies the identities
$$
\mathbf w_{n,m}(\theta)\approx\mathbf px^2z_{\theta(1)}\cdots z_{\theta(n+m)}\mathbf r=\mathbf px^2\mathbf{qr}=\mathbf w_{n,m}'(\theta).
$$
Therefore, we can apply Lemma~\ref{u=xxu_x holds in X} below.
 
Suppose that ${\bf V}\subseteq{\bf N}$. If ${\bf M}\nsubseteq{\bf V}$ then ${\bf V}\subseteq{\bf D}$ by Lemma~\ref{X contains D_2 but L or M}(ii). Therefore, in view of Lemma~\ref{L(D)}, it suffices to consider the case when ${\bf M} \subseteq {\bf V}$. We need to verify that \textbf V coincides with one of the varieties \textbf M or \textbf N. Let ${\bf u}\approx {\bf v}$ be an arbitrary identity that holds in the variety ${\bf V}$. Our aim is to verify that $\mathbf{u\approx v}$ either implies the identity $\alpha_1$ or holds in the variety \textbf N. Proposition~\ref{word problem C_2} implies that $\simple(\mathbf u)=\simple(\mathbf v)$. Let $\simple(\mathbf u)=\{t_0,t_1,\dots,t_m\}$. As in the proof of Lemma~\ref{L = var S(xzxyty)}, we can assume that
$$
\mathbf u=t_0\mathbf a_1t_1\mathbf a_2t_2\cdots t_{m-1}\mathbf a_mt_m\text{ and }\mathbf v=t_0\mathbf b_1t_1\mathbf b_2t_2\cdots t_{m-1}\mathbf b_mt_m
$$
for some possibly empty words $\mathbf a_1,\mathbf a_2,\dots,\mathbf a_m$ and $\mathbf b_1,\mathbf b_2,\dots,\mathbf b_m$.
 
Let $x$ be a letter multiple in $\bf u$  and ${\bf u}(x,y)\ne xyx$ for any letter $y$. By Lemma~\ref{u=xxu_x holds in X}, the variety ${\bf V}$ satisfies the identity~\eqref{u=xxu_x}. Obviously, ${\bf C}_2 \subseteq \mathbf M \subseteq {\bf V}$, whence $x\in \mul({\bf v})$ by Proposition~\ref{word problem C_2}. Since $\mathbf D_2 \subseteq \mathbf M \subseteq {\bf V}$, we apply Lemmas~\ref{S(W) in V} and~\ref{D_n=var S(w)} and conclude that the word $xyx$ is an isoterm for ${\bf V}$. Therefore, ${\bf v}(x,y)\ne xyx$ for any letter $y$. We apply Lemma~\ref{u=xxu_x holds in X} again and conclude that the identity $\mathbf v \approx x^2 \mathbf v_x$ holds in ${\bf V}$. Thus, the identity $\mathbf u \approx \mathbf v$ follows from the identities~\eqref{u=xxu_x}, $\mathbf v \approx x^2 \mathbf v_x$ and ${\bf u}_x\approx {\bf v}_x$. So, we can remove from $\mathbf u \approx \mathbf v$ all multiple letters $x$ such that ${\bf u}(x,y)\ne xyx$ for any $y$. In other words, we may assume without loss of generality that for any letter $x\in\mul({\bf u})$ there is a letter $y$ such that ${\bf u}(x,y)= xyx = {\bf v}(x,y)$. In particular, $\occ_x({\bf u}),\occ_x({\bf v})\le 2$ for any letter $x$.
 
Let $0\le i\le m-1$. Then $\mathbf u=\mathbf w_1t_i\mathbf a_{i+1}t_{i+1}\mathbf w_2$ where 
$$
\mathbf w_1=
\begin{cases}
t_0\mathbf a_1t_1\cdots t_{i-1}\mathbf a_i&\text{if }0<i\le m-1,\\
\lambda&\text{if }i=0
\end{cases}
$$
and
$$
\textbf w_2=
\begin{cases}
\mathbf a_{i+2}t_{i+2}\cdots\mathbf a_mt_m&\text{if }0\le i<m-1,\\
\lambda&\text{if }i=m-1.
\end{cases}
$$
Analogously, $\mathbf v=\mathbf w_1't_i\mathbf b_{i+1}t_{i+1}\mathbf w_2'$ where
$$
\mathbf w_1'=
\begin{cases}
t_0\mathbf b_1t_1\cdots t_{i-1}\mathbf b_i&\text{if }0<i\le m-1,\\
\lambda&\text{if }i=0
\end{cases}
$$
and
$$
\mathbf w_2'=
\begin{cases}
\mathbf b_{i+2}t_{i+2}\cdots\mathbf b_mt_m&\text{if }0\le i<m-1,\\
\lambda&\text{if }i=m-1.
\end{cases}
$$
Suppose that the word $\mathbf a_{i+1}$ contains the subword $\mathbf d=x_ix_j$ where $x_i\in\con(\mathbf w_1)$ and $x_j\in\con(\mathbf w_2)$. The occurrence of the letter $x_i$ in the word \textbf d is second occurrence of $x_i$ in \textbf u, while the occurrence of the letter $x_j$ in the word \textbf d is first occurrence of $x_j$ in \textbf u. The identity $\gamma_1$ allows us to swap these two occurrences. Therefore, the variety ${\bf N}$ satisfies the identity ${\bf u}\approx {\bf w}_1t_i{\bf p}_1{\bf q}_1t_{i+1}{\bf w}_2$ where $\con({\bf p}_1) \subseteq \con({\bf w}_2)$ and $\con({\bf q}_1) \subseteq \con({\bf w}_1)$. Analogously, we can prove that ${\bf N}$ satisfies ${\bf v} \approx {\bf w}_1't_i{\bf p}_2{\bf q}_2t_{i+1}{\bf w}'_2$ where $\con({\bf p}_2) \subseteq \con({\bf w}'_2)$ and $\con({\bf q}_2) \subseteq \con({\bf w}_1')$.
 
We are going to verify that $\con({\bf p}_1)=\con({\bf p}_2)$ and $\con({\bf q}_1)=\con({\bf q}_2)$. Let $x\in\con(\mathbf p_1)$. Then $\mathbf u(x,t_{i+1})=xt_{i+1}x$. Therefore, $\mathbf v(x,t_{i+1})=xt_{i+1}x$. This means that $x\in\con(\mathbf w_1'\mathbf p_2\mathbf q_2)$ and $x\in\con(\mathbf w_2')$. If $x\in\con(\mathbf q_2)$ then $x\in\con(\mathbf w_1')$ as well, whence $\occ_x(\mathbf v)\ge 3$. Therefore, $x\notin\con(\mathbf q_2)$. Note that $\mathbf u(x,t_i)=t_ix^2$. Therefore, $\mathbf v(x,t_i)\ne xt_ix$, whence $x\notin\con(\mathbf w_1')$. We see that $x\in\con(\mathbf p_2)$. We have just proved that $\con(\mathbf p_1)\subseteq\con(\mathbf p_2)$. By symmetry, $\con(\mathbf p_2)\subseteq\con(\mathbf p_1)$ , whence $\con(\mathbf p_1)=\con(\mathbf p_2)$. Analogous arguments imply that $\con(\mathbf q_1)=\con(\mathbf q_2)$.
 
Therefore, $\mathbf p_1=x_1x_2\cdots x_k$ and $\mathbf p_2=x_{\pi(1)}x_{\pi(2)}\cdots x_{\pi(k)}$ for some letters $x_1,x_2$, $\dots,x_k\in \con({\bf w}_2)\cap \con({\bf w}_2')$ and some permutation $\pi\in S_k$, whence ${\bf N}$ satisfies the identities
$$
{\bf u}\approx {\bf w}_1t_i x_1x_2\cdots x_k \mathbf q_1 t_{i+1}{\bf w}_2\text{ and }{\bf v}\approx {\bf w}_1't_i x_{\pi(1)}x_{\pi(2)}\cdots x_{\pi(k)} \mathbf q_2 t_{i+1}{\bf w}'_2.
$$
Then the identity
\begin{equation}
\label{x_1...x_k permute}
{\bf w}_1t_i x_1x_2\cdots x_k{\bf q}_1t_{i+1}{\bf w}_2\approx {\bf w}_1't_i x_{\pi(1)}x_{\pi(2)}\cdots x_{\pi(k)}t_{i+1}{\bf q}_2{\bf w}'_2
\end{equation}
holds in ${\bf V}$.
 
Suppose that the permutation $\pi$ is non-trivial. Then there are $j$ and $\ell$ such that $j<\ell$ but $\pi(j)>\pi(\ell)$. Substituting~1 for all letters occurring in the identity~\eqref{x_1...x_k permute} except $x_j,x_\ell$ and $t_{i+1}$, we obtain the identity $x_jx_\ell t_{i+1}\mathbf s\approx x_\ell x_jt_{i+1}\mathbf s'$ where $\mathbf s,\mathbf s'\in \{x_jx_\ell,x_\ell x_j\}$. Now we apply the identity $\sigma_2$ and get $x_jx_\ell t_{i+1}x_jx_\ell\approx x_\ell x_jt_{i+1}x_jx_\ell$. The last identity is nothing but $\alpha_1$ (up to renaming of letters). So, if the permutation $\pi$ is non-trivial then ${\bf V}$ satisfies $\alpha_1$. This means that $\mathbf{V\subseteq M}$, whence $\mathbf V=\mathbf M$. In other words, if $\mathbf p_1\ne\mathbf p_2$ then $\mathbf V=\mathbf M$.
 
Let now $\mathbf p_1=\mathbf p_2$. The words $\mathbf q_1$ and $\mathbf q_2$ are linear and $\con(\mathbf q_1)=\con(\mathbf q_2)\subseteq\con(\mathbf w_1)\cap\con(\mathbf w_1')$. Thus, if some letter $z$ occurs in $\con(\mathbf q_1)$ then this occurrence is the second occurrence of $z$ in the word \textbf u. Hence the identity $\sigma_2$ allows us to reorder letters in $\mathbf q_1$ in an arbitrary way. Therefore, we can replace $\mathbf q_1$ by $\mathbf q_2$ in \textbf u, and the word we obtain should be equal to \textbf u in \textbf N. Thus, \textbf N satisfies the identities
$$
\mathbf u={\bf w}_1t_i \mathbf a_{i+1}t_{i+1}{\bf w}_2\approx{\bf w}_1t_i\mathbf p_1\mathbf q_1 t_{i+1}{\bf w}_2\approx {\bf w}_1t_i\mathbf p_2\mathbf q_2 t_{i+1}{\bf w}_2\approx{\bf w}_1t_i\mathbf b_{i+1}t_{i+1}{\bf w}_2.
$$
This is true for all $i=0,1,\dots,m-1$. Therefore, \textbf N satisfies the identities
$$
\mathbf u=t_0\mathbf a_1t_1\mathbf a_2t_2\cdots t_{m-1}\mathbf a_mt_m\approx t_0\mathbf b_1t_1\mathbf b_2t_2\cdots t_{m-1}\mathbf b_mt_m=\mathbf v.
$$
The proposition is proved.
\end{proof}
 
\section{The proof of the ``if'' part: the variety \textbf K}
\label{sufficiency: K}
 
Here we are going to verify that $\mathbf K$ is a chain variety. This case is much more complex than all the ones discussed in the previous section, taken together, and its consideration will be many times longer. For reader convenience, we divide this section into four subsections.
 
\subsection{Reduction to the interval $[\mathbf E,\mathbf K]$}
\label{sufficiency: K - red to [E,K]}
 
We fix notation for the following identity system:
$$
\Phi=\{xyx\approx xyx^2,\,x^2y^2\approx y^2x^2,\,x^2y\approx x^2yx\}.
$$
Note that $\mathbf K=\var\Phi$. For any $s\in\mathbb N$ and $1\le q\le s$, we put
$$
{\bf b}_{s,q}=x_{s-1}x_sx_{s-2}x_{s-1}\cdots x_{q-1}x_q.
$$
For brevity, we will write $\mathbf b_s$ rather than $\mathbf b_{s,1}$. We put also ${\bf b}_0=\lambda$ for convenience. We introduce the following four countably infinite series of identities:
\begin{align*}
\alpha_k:&\enskip x_ky_kx_{k-1}x_ky_k{\bf b}_{k-1} \approx y_kx_kx_{k-1}x_ky_k {\bf b}_{k-1},\\
\beta_k:&\enskip xx_kx{\bf b}_k\approx x_kx^2{\bf b}_k,\\
\gamma_k:&\enskip y_1y_0x_ky_1{\bf b}_k \approx y_1y_0y_1x_k{\bf b}_k,\\
\delta_k^m:&\enskip y_{m+1}y_mx_ky_{m+1} {\bf b}_{k,m}y_m{\bf b}_{m-1}\approx y_{m+1}y_my_{m+1}x_k{\bf b}_{k,m}y_m{\bf b}_{m-1}
\end{align*}
where $k\in\mathbb N$ and $1\le m\le k$. Note that the identities $\alpha_1$ and $\gamma_1$ have already appeared above. We define the following four countably infinite series of varieties:
$$
{\bf F}_k=\var\{\Phi,\alpha_k\},\,{\bf H}_k=\var\{\Phi,\beta_k\},\,{\bf I}_k=\var\{\Phi,\gamma_k\},\,{\bf J}_k^m=\var\{\Phi,\delta_k^m\}.
$$
 
In this section we are going to verify the following
 
\begin{proposition}
\label{L(K)}
\begin{itemize}
\item[\textup{1)}] The lattice $L(\mathbf K)$ is the set-theoretical union of the lattice $L(\mathbf E)$ and the interval $[\mathbf E,\mathbf K]$.
\item[\textup{2)}] The lattice $L(\mathbf E)$ is the chain $\mathbf{T\subset SL\subset C}_2\subset\mathbf D_1\subset\mathbf E$.
\item[\textup{3)}] If $\mathbf X$ is a monoid variety such that $\mathbf{E\subset X\subset K}$ then $\mathbf X$ belongs to the interval $[\mathbf F_k,\mathbf F_{k+1}]$ for some $k$.
\item[\textup{4)}] The interval $[\mathbf F_k,\mathbf F_{k+1}]$ is the chain
\begin{equation}
\label{[F_k,F_{k+1}]}
\mathbf F_k\subset\mathbf H_k\subset\mathbf I_k\subset\mathbf J_k^1\subset\mathbf J_k^2\subset\cdots\subset\mathbf J_k^k\subset\mathbf F_{k+1}.
\end{equation}
\end{itemize}
\end{proposition}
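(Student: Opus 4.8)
The plan is to prove the four assertions separately, spending almost all the effort on~4). Assertion~2) comes essentially for free from the band picture already available: since $\mathbf E\subseteq\mathbf{LRB}\vee\mathbf C_2$ by Lemma~\ref{L(LRB+C_2)}(i), the lattice $L(\mathbf E)$ is just the principal ideal generated by $\mathbf E$ in $L(\mathbf{LRB}\vee\mathbf C_2)$; reading this ideal off Fig.~\ref{two lattices}b) (Lemma~\ref{L(LRB+C_2)}(ii)), where $\mathbf{LRB}$ is incomparable with $\mathbf E$, gives exactly the chain $\mathbf{T\subset SL\subset C}_2\subset\mathbf D_1\subset\mathbf E$.

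For~1) I would show that $\mathbf E$ is comparable with every $\mathbf X\in L(\mathbf K)$. The completely regular subvarieties of $\mathbf K$ are bands, hence (as $\mathbf K$ satisfies $x^2y^2\approx y^2x^2$) commutative and contained in $\mathbf{SL}$; and any commutative subvariety satisfies $x^2\approx x^3$ (put $y=1$ in $xyx\approx xyx^2$) and so lies in $\mathbf C_2$. Both families sit below $\mathbf E$, which uses Corollary~\ref{cr or over C_2} and Proposition~\ref{word problem C_2}. In the remaining non-commutative non-completely-regular case Lemma~\ref{non-cr and non-commut} gives $\mathbf D_1\subseteq\mathbf X$, and I would invoke the dichotomy of Lemma~\ref{D_1 subseteq X}: if every identity of $\mathbf X$ satisfies~\eqref{eq D_1 subseteq X}, then Proposition~\ref{word problem E} (with~\eqref{sim(u)=sim(v) & mul(u)=mul(v)}, available since $\mathbf C_2\subseteq\mathbf X$) shows every identity of $\mathbf X$ holds in $\mathbf E$, i.e. $\mathbf E\subseteq\mathbf X$; otherwise $\mathbf X$ satisfies some identity~\eqref{x^syx^t=yx^r}, which I would normalise modulo $\Phi$ — absorbing trailing factors through $x^2y\approx x^2yx$ and $xyx\approx xyx^2$ — to deduce both $x^2\approx x^3$ and $x^2y\approx xyx$ in $\mathbf X$, whence $\mathbf X\subseteq\mathbf E$. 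The second branch is the only fiddly point, and it is a routine case analysis on $s,t,r$.

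For~3) the varieties $\mathbf F_k=\var\{\Phi,\alpha_k\}$ ascend strictly, anchored by $\mathbf F_1=\mathbf E$ and with join $\mathbf K$; the task is to squeeze an arbitrary $\mathbf X$ with $\mathbf E\subset\mathbf X\subset\mathbf K$ into one link $[\mathbf F_k,\mathbf F_{k+1}]$. The index $k$ would be read from the maximal depth (in the sense of Section~\ref{decomposition}) that a multiple letter is forced to carry in a non-trivial identity of $\mathbf X$. The decisive input is Lemma~\ref{form of the identity}: once an identity of $\mathbf X$ contains a letter of depth $s$, both of its sides are pinned into a rigid canonical shape involving letters of every smaller depth, so the admissible depths are tightly controlled. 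Bounding this depth by $k$ yields the upper inclusion $\mathbf X\subseteq\mathbf F_{k+1}$ (depth $k+1$ cannot genuinely occur, so $\alpha_{k+1}$ holds), while the actual occurrence of depth $k$ yields the lower inclusion $\mathbf F_k\subseteq\mathbf X$.

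Finally,~4) — the heart of the argument — I would prove the chain~\eqref{[F_k,F_{k+1}]} by the six monotone steps matching the subsection headings: passing strictly above any one member of the chain while staying below $\mathbf F_{k+1}$ forces the next member into $\mathbf X$ ($\mathbf H_k$, then $\mathbf I_k$, then $\mathbf J_k^1,\dots,\mathbf J_k^k$), after which $[\mathbf J_k^k,\mathbf F_{k+1}]$ is seen to be two-element and all inclusions are checked strict. Each ascending step has the same anatomy: from an identity holding in $\mathbf X$ but failing in the preceding variety I would first bring it to the canonical form of Lemma~\ref{form of the identity}, then use Lemma~\ref{does not contain dividers} to determine which factors may carry $k$-dividers, and finally, by transposing letters inside $k$-blocks together with isoterm-and-substitution arguments resting on Lemma~\ref{S(W) in V}, extract precisely the next defining identity $\beta_k$, $\gamma_k$ or $\delta_k^m$. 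The main obstacle I anticipate is exactly this extraction: turning the mere \emph{existence} of a separating identity of arbitrary shape into one \emph{specific} identity requires the full depth-and-restrictor calculus of Lemmas~\ref{simple observations}--\ref{if first then second} to normalise occurrences and to guarantee that the transpositions permitted inside $k$-blocks actually suffice to reach the target. Establishing the strictness of every link in~\eqref{[F_k,F_{k+1}]} is then a separate obligation, met by exhibiting for each link a concrete Rees quotient $S(\mathbf w)$ separating the two varieties through Lemma~\ref{S(W) in V}.
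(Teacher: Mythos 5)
Your overall architecture matches the paper's: claims 1) and 2) are treated exactly as there, and your six ascending steps for claim 4) are precisely the paper's Lemmas~\ref{F_k or over H_k}--\ref{J_k^k or F_{k+1}}. Two of your ingredients, however, are genuinely defective.

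First, the strictness of the chain~\eqref{[F_k,F_{k+1}]} \emph{cannot} be obtained from Rees quotients $S(\mathbf w)$ via Lemma~\ref{S(W) in V}. Consider the link $\mathbf F_k\subset\mathbf H_k$. Since $\mathbf F_k\subseteq\mathbf H_k$, every isoterm for $\mathbf F_k$ is an isoterm for $\mathbf H_k$; but the converse also holds, so $S(\mathbf w)\in\mathbf H_k$ if and only if $S(\mathbf w)\in\mathbf F_k$, and no monoid of the form $S(\mathbf w)$ separates the two varieties. Indeed, suppose $\mathbf w$ is not an isoterm for $\mathbf F_k=\var\{\Phi,\alpha_k\}$ and take the first non-trivial step of a deduction witnessing this. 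If that step applies an identity of $\Phi$, it is an identity of $\mathbf K\supseteq\mathbf H_k$, so $\mathbf w$ is not an isoterm for $\mathbf H_k$. If it applies $\alpha_k$ non-trivially, then $\mathbf w$ contains a factor $\xi(x_k)\xi(y_k)$ (the image of the second occurrence block $x_ky_k$, present in \emph{both} sides of $\alpha_k$) all of whose letters occur earlier in $\mathbf w$; writing $c$ for the last letter of $\xi(x_k)$ and $d$ for the first letter of $\xi(y_k)$, the identity $\sigma_2$ (Lemma~\ref{identities in K}(i)) transposes this adjacent pair non-trivially when $c\ne d$, and the identity~\eqref{xyxzx=xyxz} (Lemma~\ref{identities in K}(ii)) deletes a letter non-trivially when $c=d$; either way $\mathbf w$ fails to be an isoterm for $\mathbf K$, hence for $\mathbf H_k$. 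The same obstruction recurs at every link (each of $\beta_k$, $\gamma_k$, $\delta_k^m$, applied non-trivially, creates a configuration to which $\sigma_2$, \eqref{xyxzx=xyxz} or $xyx\approx xyx^2$ applies non-trivially), so all the varieties in~\eqref{[F_k,F_{k+1}]} contain exactly the same monoids $S(\mathbf w)$. The paper proves strictness by an entirely different device: it feeds the defining identities $\alpha_k,\beta_k,\gamma_k,\delta_k^m$ themselves into the word-problem characterizations of Propositions~\ref{word problem F_k,K},~\ref{word problem H_k},~\ref{word problem I_k} and~\ref{word problem J_k^r}.

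Second, your anchor $\mathbf F_1=\mathbf E$ is false: $\mathbf E$ is \emph{properly} contained in $\mathbf F_1$. The identity $x^2y\approx xyx$ defining $\mathbf E$ fails in $\mathbf F_1$, since by Proposition~\ref{word problem F_k,K}(i) it would require $h_2^0(x^2y,x)=h_2^0(xyx,x)$, whereas the former equals $\lambda$ and the latter equals $y$. Consequently claim 3) has a genuine base step that your plan skips: one must prove that $\mathbf{E\subset X\subseteq K}$ forces $\mathbf F_1\subseteq\mathbf X$. The paper does this by showing that an identity of $\mathbf X$ obeying the word problem of $\mathbf E$ but violating that of $\mathbf F_1$ yields $xt_ix\approx x^2t_i$ and hence $\mathbf{X\subseteq E}$, a contradiction. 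More generally, both halves of claim 3) and every ascending step of claim 4) run through explicit solutions of the word problem in $\mathbf F_k$, $\mathbf H_k$, $\mathbf I_k$, $\mathbf J_k^m$ (all resting on the block-swapping Lemma~\ref{u'abu''=u'bau''}); your plan gestures at transpositions inside $k$-blocks but never sets these characterizations up, and your substitute for the upper bound in 3) (``depth $k+1$ cannot genuinely occur, so $\alpha_{k+1}$ holds'') is not an argument --- the paper obtains $\mathbf X\subseteq\mathbf F_{k+1}$ via $\mathbf X\subseteq\mathbf J_k^k\subseteq\mathbf F_{k+1}$, using Lemma~\ref{V subseteq E or J_(ell-1)^(ell-1)}.
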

 
This proposition immediately implies that the lattice $L(\mathbf K)$ is the following chain:
\begin{align*}
\mathbf{T\subset SL\subset C}_2\subset\mathbf D_1\subset\mathbf E&\subset\mathbf F_1\subset\mathbf H_1\subset\mathbf I_1\subset\mathbf J_1^1\\
&\subset\mathbf F_2\subset\mathbf H_2\subset\mathbf I_2\subset\mathbf J_2^1\subset\mathbf J_2^2\\
&\rule{6pt}{0pt}\vdots\\
&\subset\mathbf F_k\subset\mathbf H_k\subset\mathbf I_k\subset\mathbf J_k^1\subset\mathbf J_k^2\subset\cdots\subset\mathbf J_k^k\\
&\rule{6pt}{0pt}\vdots\\
&\subset\mathbf K.
\end{align*}
In the remainder of this subsection we verify the claim~1) of Proposition~\ref{L(K)}. The claim~2) follows from Lemma~\ref{L(LRB+C_2)}(ii). The claims~3) and~4) are proved in Subsections~\ref{sufficiency: K - red to [F_k,F_{k+1}]} and~\ref{sufficiency: K - structure of [F_k,F_{k+1}]} respectively. Subsection~\ref{sufficiency: K - aux} contains auxiliary assertions.
 
Let \textbf X be a monoid variety with ${\bf X\subseteq K}$. We need to verify that either $\mathbf{E\subseteq X}$ or $\mathbf{X\subseteq E}$. Substituting~1 for $y$ in the identity~\eqref{xyx=xyxx}, we obtain ${\bf X}$ satisfies the identity~\eqref{xx=xxx}. If ${\bf X}$ is commutative then ${\bf X}\subseteq{\bf C}_2\subseteq{\bf E}$, and we are done. Thus, we can assume that ${\bf X}$ is non-commutative. The variety ${\bf X}$ is aperiodic because it satisfies the identity~\eqref{xx=xxx}. Suppose that the variety ${\bf X}$ is completely regular. Every aperiodic completely regular variety is a variety of band monoids and every band satisfying the identity~\eqref{xxyy=yyxx} is commutative. Thus, if ${\bf X}$ is completely regular then it is commutative, a contradiction. Hence we can assume that ${\bf X}$ is non-completely regular. Then $\mathbf D_1\subseteq\mathbf X$ by Lemma~\ref{non-cr and non-commut}.
 
Suppose that ${\bf E\nsubseteq X}$. Then ${\bf X}$ satisfies the identity~\eqref{yxx=xxyxx} by Lemma~\ref{E nsubseteq X}. Further, \textbf X satisfies the identity~\eqref{xxy=xxyx} as well because $\mathbf{X\subseteq K}$. Hence $x^2y\stackrel{\eqref{xxy=xxyx}}\approx x^2yx^2\stackrel{\eqref{yxx=xxyxx}}\approx yx^2$. We see that the identity~\eqref{xxy=yxx} holds in \textbf X. Besides that,
$$
xyx\stackrel{\eqref{xyx=xyxx}}\approx xyx^2\stackrel{\eqref{yxx=xxyxx}}\approx x^3yx^2\stackrel{\eqref{xx=xxx}}\approx x^2yx^2\stackrel{\eqref{yxx=xxyxx}}\approx yx^2\stackrel{\eqref{xxy=yxx}}\approx x^2y,
$$
whence the identity
\begin{equation}
\label{xyx=xxy}
xyx\approx x^2y
\end{equation}
holds in \textbf X. So, $\mathbf X$ satisfies the identities $yx^2 \stackrel{\eqref{xxy=yxx}}\approx x^2y \stackrel{\eqref{xyx=xxy}}\approx xyx$. The identities~\eqref{xxy=yxx}~\eqref{xxyy=yyxx} and~\eqref{xyx=xxy} evidently imply $\sigma_1$, $\sigma_2$ and $\gamma_1$. Thus, $\mathbf{X\subseteq D}_1\subseteq\mathbf E$. We proved that if ${\bf E\nsubseteq X}$ then ${\bf X\subseteq E}$. Hence the claim~1) of Proposition~\ref{L(K)} is proved.
 
\subsection{Several auxiliary results}
\label{sufficiency: K - aux}
 
Here we prove several lemmas that will be used many times below. This subsection is divided into three subsubsections.

\subsubsection{Some properties of the varieties $\mathbf F_k$, $\mathbf H_k$, $\mathbf I_k$, $\mathbf J_k^m$, $\mathbf K$ and their identities}
\label{sufficiency: K - aux-FHIJK}
 
\begin{lemma}
\label{identities in K}
The variety ${\bf K}$ satisfies:
\begin{itemize}
\item[\textup{(i)}] the identity $\sigma_2$;
\item[\textup{(ii)}] the identity
\begin{equation}
\label{xyxzx=xyxz}
xyxzx\approx xyxz;
\end{equation}
\item[\textup{(iii)}] any identity $\mathbf{u\approx v}$ such that $\con(\mathbf u)=\con(\mathbf v)$ and $\occ_x(\mathbf u),\occ_x(\mathbf v)\ge 2$ for any letter $x\in \con(\mathbf u)$.
\end{itemize}
\end{lemma}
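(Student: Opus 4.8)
The plan is to treat the three claims with a common toolkit obtained by substituting words for letters in the defining identities of $\mathbf K$. Substituting $y\mapsto\mathbf b$ in~\eqref{xyx=xyxx} gives the generalized form $x\mathbf bx\approx x\mathbf bx^2$ (so the last of two occurrences of a letter may be doubled); substituting $y\mapsto\mathbf b$ in~\eqref{xxy=xxyx} gives $x^2\mathbf b\approx x^2\mathbf bx$ (so, read backwards, an occurrence of $x$ may be deleted whenever some $x^2$ precedes it); and substituting $x\mapsto\mathbf a$, $y\mapsto\mathbf b$ in~\eqref{xxyy=yyxx} gives $\mathbf a^2\mathbf b^2\approx\mathbf b^2\mathbf a^2$, so squares commute. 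Finally, putting $y\mapsto1$ in~\eqref{xyx=xyxx} yields~\eqref{xx=xxx}, whence $x^k\approx x^2$ for all $k\ge2$.

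Claim~(ii) is then a three-step computation $xyxzx\stackrel{\eqref{xyx=xyxx}}\approx xyx^2zx\stackrel{\eqref{xxy=xxyx}}\approx xyx^2z\stackrel{\eqref{xyx=xyxx}}\approx xyxz$, where the middle step deletes the trailing $x$ because it is preceded by $x^2$. For claim~(i) I would double the last occurrences of both $x$ and $y$ and then normalize by commuting squares: on the left $xtyzxy\approx xtyzx^2y\approx xtyzx^2y^2\stackrel{\eqref{xxyy=yyxx}}\approx xtyzy^2x^2$, and on the right $xtyzyx\approx xtyzyx^2\approx xtyzy^2x^2$; since both sides reduce to $xtyzy^2x^2$, the identity $\sigma_2$ holds.

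The substantive claim is~(iii), and here the plan is to prove that every word $\mathbf w$ all of whose letters are multiple satisfies $\mathbf w\approx x_1^2x_2^2\cdots x_n^2$ in $\mathbf K$, where $\con(\mathbf w)=\{x_1,\dots,x_n\}$ (the order of the factors being immaterial, as squares commute). Granting this, any two words $\mathbf u,\mathbf v$ as in~(iii) reduce to the same product of squares, so $\mathbf u\approx\mathbf v$. I would reach the canonical form in two phases. In Phase~1, for each letter $x$ I use $x\mathbf bx\approx x\mathbf bx^2$ on its first two occurrences to manufacture a block $x^2$, and then delete every later occurrence of $x$ by means of $x^2\mathbf b\approx x^2\mathbf bx$; performing this for all letters leaves a word in which each letter occurs exactly as its single first occurrence together with one block $x^2$ lying to its right, and in which everything to the right of the last first occurrence is a product of squares.

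In Phase~2 I absorb these isolated first occurrences one at a time, starting from the rightmost. If $z$ is the rightmost letter whose first occurrence is still single, then only squares lie between that occurrence and its own block $z^2$, so repeated square-commutation brings $z^2$ adjacent to $z$, and the merge $z\cdot z^2=z^3\approx z^2$ removes the single occurrence; this strictly decreases the number of single first occurrences while keeping the word in the same shape. Iterating converts $\mathbf w$ into a product of squares, which a final application of square-commutation puts into the order $x_1^2\cdots x_n^2$. The main obstacle is exactly this absorption step: an isolated first occurrence of $z$ cannot in general be merged into a later $z^2$ (for instance $xy^2\not\approx x^2y^2$, since $\mathbf C_2\subseteq\mathbf K$ and $x$ is simple on one side only), and it is solely the global hypothesis that every letter is multiple --- which guarantees that to the right of the last single occurrence the word is already a product of squares --- that makes the commutation legitimate. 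Carrying, through Phase~1, the invariant that the region past the last first occurrence consists only of squares is therefore the crux of the argument.
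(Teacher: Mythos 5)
Your proposal is correct and follows essentially the same route as the paper: parts (i) and (ii) are the very computations given there, and for part (iii) the paper likewise reduces both sides to the canonical product of squares $x_1^2x_2^2\cdots x_k^2$ using the same toolkit (doubling later occurrences via $xyx\approx xyx^2$, commuting squares $x^2y^2\approx y^2x^2$, merging $x^3\approx x^2$, and deleting redundant occurrences). The only difference is bookkeeping: the paper first invokes (ii) to assume every letter occurs exactly twice and then inducts on the number of letters, peeling off the letter whose first occurrence is rightmost, whereas you normalize all letters at once and absorb the single first occurrences iteratively from right to left --- the same mechanism, differently organized.
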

 
\begin{proof}
(i) We have $xzytxy\stackrel{\eqref{xyx=xyxx}}\approx xzytx^2y^2\stackrel{\eqref{xxyy=yyxx}}\approx xzyty^2x^2\stackrel{\eqref{xyx=xyxx}}\approx xzytyx$.
 
\medskip
 
(ii) Here we have $xyxzx\stackrel{\eqref{xyx=xyxx}}\approx xyx^2zx\stackrel{\eqref{xxy=xxyx}}\approx xyx^2z\stackrel{\eqref{xyx=xyxx}}\approx xyxz$.
 
\medskip
 
(iii) According to the claim~(ii), \textbf V satisfies the identity~\eqref{xyxzx=xyxz}. This allows us to assume that $\occ_x(\mathbf u)=\occ_x(\mathbf v)=2$ for any letter $x\in\con(\mathbf u)$. Let $\con(\mathbf u)=\con(\mathbf v)=\{x_1,x_2,\dots, x_k\}$. We are going to verify that ${\bf u}\approx x_1^2x_2^2\cdots x_k^2$ holds in ${\bf K}$. We will use induction on $k$.
 
\smallskip
 
\emph{Induction base}. Suppose that $k=1$. Here the identity $\mathbf{u\approx v}$ has the form $x_1^2\approx x_1^2$, whence it trivially holds in $\bf K$.
 
\smallskip
 
\emph{Induction step}. Let now $k>1$. We may assume without loss of generality that $\ell_1(\mathbf u, x_i)<\ell_1(\mathbf u, x_k)$ for any $1\le i<k$. Then 
$$
\mathbf u = \mathbf u'x_kx_{j_1}x_{j_2}\cdots x_{j_s}x_kx_{j_{s+1}}x_{j_{s+2}}\cdots x_{j_{s+t}}
$$
where $x_{j_r}\in \con(\mathbf u')$ for any $1\le r\le s+t$. Then the identities
\begin{align*}
\mathbf u\stackrel{\eqref{xyx=xyxx}}\approx{}& \mathbf u'x_kx_{j_1}^2x_{j_2}^2\cdots x_{j_s}^2x_k^2x_{j_{s+1}}^2x_{j_{s+2}}^2\cdots x_{j_{s+t}}^2\\
\stackrel{\;\eqref{xxyy=yyxx}\;}\approx{}&\mathbf u'x_k^3x_{j_1}^2x_{j_2}^2\cdots x_{j_{s+t}}^2\\
\stackrel{\;\eqref{xx=xxx}\;}\approx{}&\mathbf u'x_k^2x_{j_1}^2x_{j_2}^2\cdots x_{j_{s+t}}^2\\
\stackrel{\;\eqref{xxyy=yyxx}\;}\approx{}&\mathbf u'x_{j_1}^2x_{j_2}^2\cdots x_{j_{s+t}}^2x_k^2\\
\stackrel{\;\eqref{xyxzx=xyxz}\;}\approx{}&\mathbf u'x_{j_1}x_{j_2}\cdots x_{j_{s+t}}x_k^2\\
\hskip-12pt=\hskip10pt&\mathbf u_{x_k}x_k^2
\end{align*}
hold in $\bf K$. The word ${\bf u}_{x_k}$ contains exactly $k-1$ letters. By the induction assumption, the identity ${\bf u}_{x_k} \approx x_1^2x_2^2\cdots x_{k-1}^2$ holds in ${\bf K}$, whence this variety satisfies the identities $\mathbf u\approx\mathbf u_{x_k}x_k^2\approx x_1^2x_2^2\cdots x_k^2$. Similarly, ${\bf v} \approx x_1^2x_2^2\cdots x_k^2$ holds in ${\bf K}$, whence ${\bf K}$ satisfies $\bf u \approx v$.
\end{proof}
 
\begin{lemma}
\label{basis for J_k^k}
The identity system $\Phi$ together with the identity
\begin{equation}
\label{xx_kxb_k=x^2x_kb_k}
xx_kx{\bf b}_k \approx x^2x_k{\bf b}_k
\end{equation}
forms an identity basis of the variety ${\bf J}_k^k$.
\end{lemma}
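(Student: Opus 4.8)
The plan is to prove that, modulo the system $\Phi$, the identity $\delta_k^k$ and the identity~\eqref{xx_kxb_k=x^2x_kb_k} are consequences of one another. Since ${\bf J}_k^k=\var\{\Phi,\delta_k^k\}$ by definition, this yields $\var\{\Phi,\eqref{xx_kxb_k=x^2x_kb_k}\}={\bf J}_k^k$, which is the assertion. Throughout I would use the factorisation ${\bf b}_k={\bf b}_{k,1}={\bf b}_{k,k}\,{\bf b}_{k-1}=x_{k-1}x_k{\bf b}_{k-1}$, so that ${\bf b}_{k,k}$ is recognised as a prefix of ${\bf b}_k$; this is what lets one line up the two sides of $\delta_k^k$ against a substitution instance of~\eqref{xx_kxb_k=x^2x_kb_k}.

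The first inclusion is immediate and uses no identity of $\Phi$. Substituting $1$ for $y_k$ and $x$ for $y_{k+1}$ in $\delta_k^k$ turns its left-hand side $y_{k+1}y_kx_ky_{k+1}{\bf b}_{k,k}y_k{\bf b}_{k-1}$ into $x x_k x\,{\bf b}_{k,k}{\bf b}_{k-1}=x x_k x{\bf b}_k$ and its right-hand side into $x^2x_k{\bf b}_k$; that is, \eqref{xx_kxb_k=x^2x_kb_k} is a plain substitution instance of $\delta_k^k$. Hence \eqref{xx_kxb_k=x^2x_kb_k} holds in $\var\{\Phi,\delta_k^k\}={\bf J}_k^k$, giving ${\bf J}_k^k\subseteq\var\{\Phi,\eqref{xx_kxb_k=x^2x_kb_k}\}$.

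For the reverse inclusion I would start from \eqref{xx_kxb_k=x^2x_kb_k} and substitute the word $y_{k+1}y_k$ for $x$. This produces $y_{k+1}y_kx_ky_{k+1}y_k{\bf b}_k\approx y_{k+1}y_ky_{k+1}y_kx_k{\bf b}_k$, whose two sides agree with those of $\delta_k^k$ \emph{except} that the second occurrence of $y_k$ stands immediately after the second occurrence of $y_{k+1}$ rather than after the block ${\bf b}_{k,k}$. It then remains to slide this surplus $y_k$ rightwards (past ${\bf b}_{k,k}$ on the left-hand side, and past $x_k{\bf b}_{k,k}$ on the right-hand side) until it sits just before ${\bf b}_{k-1}$, at which point both sides become exactly the sides of $\delta_k^k$. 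These slides are to be realised by the consequences of $\Phi$ recorded in Lemma~\ref{identities in K}, namely $\sigma_2$, the identity~\eqref{xyxzx=xyxz}, and part~(iii), possibly together with further substitution instances of~\eqref{xx_kxb_k=x^2x_kb_k} itself; each elementary step preserves the set of simple letters and the order of first occurrences, so no obstruction of that kind arises.

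The main obstacle is precisely this relocation of $y_k$, and it is the step I would write out in full. The delicate point is that the occurrence of $y_k$ being moved is a \emph{second} occurrence while the letter $x_{k-1}$ it must cross (inside ${\bf b}_{k,k}$) occurs there for the \emph{first} time, so the move cannot be performed by a single application of $\sigma_2$ and instead has to be engineered through the defining identities of $\Phi$ (creating and transporting squares via $xyx\approx xyx^2$ and $x^2y\approx x^2yx$, then commuting squares). The genuinely exceptional case is $k=1$: there ${\bf b}_{k,k}=x_0x_1$ contains the unique simple letter $x_0$, and the surplus $y_1$ must be carried across it; I expect this case to need the finest use of $\Phi$ and to carry the bulk of the verification.
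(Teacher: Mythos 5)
Your easy direction is exactly the paper's (it performs the same substitution $(y_k,y_{k+1})\mapsto(1,x)$ in $\delta_k^k$), but the reverse direction has a fatal gap: the ``slide'' of the surplus $y_k$ on which your whole plan rests is not merely hard to engineer from $\Phi$ --- it is impossible, because the identity it would establish is false in $\mathbf J_k^k$. Write $\mathbf u=y_{k+1}y_kx_ky_{k+1}y_k\mathbf b_k$ for the left-hand side of your substitution instance and $\mathbf w=y_{k+1}y_kx_ky_{k+1}\mathbf b_{k,k}y_k\mathbf b_{k-1}$ for the left-hand side of $\delta_k^k$; your plan needs $\mathbf u\approx\mathbf w$ to be a consequence of $\Phi\cup\{\eqref{xx_kxb_k=x^2x_kb_k}\}$. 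Every consequence of that system holds in $\mathbf F_k$, since $\Phi$ holds in $\mathbf K$, the identity \eqref{xx_kxb_k=x^2x_kb_k} (being an instance of $\delta_k^k$) holds in $\mathbf J_k^k$, and $\mathbf F_k\subseteq\mathbf J_k^k\subseteq\mathbf K$ by Lemma~\ref{between F_k and F_{k+1}}. But $\mathbf u\approx\mathbf w$ fails in $\mathbf F_k$: in $\mathbf w$ the first occurrence of $x_{k-1}$, which is a \mbox{$(k-1)$}-divider because $D(\mathbf w,x_{k-1})=k-1$, precedes the second occurrence of $y_k$, so $h_2^{k-1}(\mathbf w,y_k)=x_{k-1}$; in $\mathbf u$ the second occurrence of $y_k$ precedes all of $\mathbf b_k$, and the only letters to its left are $y_{k+1}$, $y_k$, $x_k$, whose depths in $\mathbf u$ are $k+1$, $k+1$, $k$, so none is a \mbox{$(k-1)$}-divider and $h_2^{k-1}(\mathbf u,y_k)=\lambda$. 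Thus the claim~\eqref{eq the same l-dividers} with $\ell=k$ fails for $\mathbf u\approx\mathbf w$, and Proposition~\ref{word problem F_k,K}(i) shows this identity does not hold in $\mathbf F_k$, hence not in $\mathbf J_k^k$, hence is not derivable --- no matter which consequences of $\Phi$, or further instances of \eqref{xx_kxb_k=x^2x_kb_k}, you allow yourself. The same computation kills the slide on the right-hand sides. For $k=1$ the failure is even more visible (the moved $y_1$ would have to cross the simple letter $x_0$, changing $h_2^0(\cdot,y_1)$), so the case you flagged as merely the most delicate is the one where the obstruction is most immediate.

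The invariant you overlooked is precisely the one the whole hierarchy is built on: identities of $\mathbf J_k^k$ preserve the position of second occurrences relative to \mbox{$(k-1)$}-dividers, not just the set of simple letters and the order of first occurrences. Your substitution $x\mapsto y_{k+1}y_k$ lands on the wrong side of this invariant, since $D(\mathbf u,y_k)=k+1$ while $y_k$ has depth $k$ in both sides of $\delta_k^k$; so the substituted identity is genuinely different from $\delta_k^k$ modulo the admissible transformations, not $\delta_k^k$ ``up to a rearrangement''. This is exactly why the paper's deduction never moves the second occurrence of $y_k$ across the first occurrence of $x_{k-1}$: it converts the left side of $\delta_k^k$ directly into the right side, applying \eqref{xx_kxb_k=x^2x_kb_k} under the compound substitutions $(x,x_k)\mapsto(y_{k+1},y_kx_k)$ and $(x,x_k,x_{k-1})\mapsto(y_{k+1},y_k,x_kx_{k-1}x_k)$ --- so that the letter that travels across $x_k$ is $y_{k+1}$, of depth $k+1$, while $y_k$ stays glued to $x_k$ on the right of $x_{k-1}$ --- interleaved with $\sigma_2$ and \eqref{xyxzx=xyxz} from Lemma~\ref{identities in K}. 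A correct proof along your general lines would have to choose substitution instances with this property; the one you chose cannot be repaired.
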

 
\begin{proof}
First of all, we note that the identity~\eqref{xx_kxb_k=x^2x_kb_k} holds in the variety ${\bf J}_k^k$. To check this fact, it suffices to perform the substitution $(y_k,y_{k+1})\mapsto(1,x)$ in the identity $\delta_k^k$ and use the equality $\mathbf b_k=x_{k-1}x_k\mathbf b_{k-1}$. So, we need to verify that $\delta_k^k$ follows from $\Phi$ and~\eqref{xx_kxb_k=x^2x_kb_k}. In view of Lemma~\ref{identities in K}, we can use the identities $\sigma_2$ and~\eqref{xyxzx=xyxz}. Here is the required deduction (letters in the right column refer to comments after the deduction):
\begin{align*}
y_{k+1}y_kx_ky_{k+1}\mathbf b_{k,k}y_k\mathbf b_{k-1}={}&y_{k+1}y_kx_ky_{k+1}x_{k-1}x_ky_k\mathbf b_{k-1}&&\textup{(a)}\\
\approx{}&y_{k+1}y_kx_ky_{k+1}x_{k-1}y_kx_k\mathbf b_{k-1}&&\textup{(b)}\\
\approx{}&y_{k+1}^2y_kx_kx_{k-1}y_kx_k\mathbf b_{k-1}&&\textup{(c)}\\
\approx{}&y_{k+1}^2y_kx_kx_{k-1}x_ky_k\mathbf b_{k-1}&&\textup{(d)}\\
={}&y_{k+1}^2y_kx_kx_{k-1}x_ky_kx_{k-2}x_{k-1}\mathbf b_{k-2}&&\textup{(e)}\\
\approx{}&y_{k+1}^2y_kx_kx_{k-1}x_ky_kx_{k-2}x_kx_{k-1}x_k\mathbf b_{k-2}&&\textup{(f)}\\
\approx{}&y_{k+1}y_ky_{k+1}x_kx_{k-1}x_ky_kx_{k-2}x_kx_{k-1}x_k\mathbf b_{k-2}&&\textup{(g)}\\
\approx{}&y_{k+1}y_ky_{k+1}x_kx_{k-1}x_ky_kx_{k-2}x_{k-1}\mathbf b_{k-2}&&\textup{(h)}\\
={}&y_{k+1}y_ky_{k+1}x_k\mathbf b_{k,k}y_k\mathbf b_{k-1}.&&\textup{(i)}
\end{align*}
 
(a) Here we use the equality $\mathbf b_{k,k}=x_{k-1}x_k$.
 
(b) Here we modify the subword $y_kx_ky_{k+1}x_{k-1}x_ky_k$ by performing the substitution $(x,t,y,z)\mapsto(y_k,1,x_k,y_{k+1}x_{k-1})$ in $\sigma_2$.
 
(c) Here we perform the substitution $(x,x_k)\mapsto(y_{k+1},y_kx_k)$ in \eqref{xx_kxb_k=x^2x_kb_k} and use the equality $\mathbf b_k=x_{k-1}x_k\mathbf b_{k-1}$.
 
(d) Here we modify the subword $y_kx_kx_{k-1}y_kx_k$ by performing the substitution $(x,t,y,z)\mapsto(y_k,1,x_k,x_{k-1})$ in $\sigma_2$.
 
(e) Here we use the equality $\mathbf b_{k-1}=x_{k-2}x_{k-1}\mathbf b_{k-2}$.
 
(f) Here \eqref{xyxzx=xyxz} allows us to add two new occurrences of the letter $x_k$ after its second occurrence in the word $y_{k+1}^2y_k\stackrel{(1)}{x_k}x_{k-1}\stackrel{(2)}{x_k}y_kx_{k-2}x_{k-1}\mathbf b_{k-2}$.
 
(g) Here we perform the substitution $(x,x_k,x_{k-1})\mapsto(y_{k+1},y_k,x_kx_{k-1}x_k)$ in \eqref{xx_kxb_k=x^2x_kb_k} and use the equality $\mathbf b_k=x_{k-1}x_kx_{k-2}x_{k-1}\mathbf b_{k-2}$.
 
(h) Here \eqref{xyxzx=xyxz} allows us to delete the third and the fourth occurrences of the letter $x_k$ in the word $y_{k+1}y_ky_{k+1}\stackrel{(1)}{x_k}x_{k-1}\stackrel{(2)}{x_k}y_kx_{k-2}\stackrel{(3)}{x_k}x_{k-1}\stackrel{(4)}{x_k}\mathbf b_{k-2}$.
 
(i) Here we use the equalities $\mathbf b_{k,k}=x_{k-1}x_k$ and $\mathbf b_{k-1}=x_{k-2}x_{k-1}\mathbf b_{k-2}$.
\end{proof}
 
\begin{lemma}
\label{between F_k and F_{k+1}}
The inclusions
\begin{equation}
\label{non-strict inclusions}
\mathbf F_k\subseteq\mathbf H_k\subseteq\mathbf I_k\subseteq\mathbf J_k^1\subseteq\mathbf J_k^2\subseteq\cdots\subseteq\mathbf J_k^k\subseteq\mathbf F_{k+1}
\end{equation}
are valid.
\end{lemma}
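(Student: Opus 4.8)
The plan is to reduce the chain \eqref{non-strict inclusions} to a short list of deducibility statements. Each of the varieties $\mathbf F_k,\mathbf H_k,\mathbf I_k,\mathbf J_k^m$ is defined by $\Phi$ together with a single extra identity, and all of them are contained in $\mathbf K=\var\Phi$. Hence, for two such varieties $\mathbf A=\var\{\Phi,\varepsilon\}$ and $\mathbf B=\var\{\Phi,\eta\}$, the inclusion $\mathbf A\subseteq\mathbf B$ is equivalent to the claim that $\eta$ is a consequence of $\{\Phi,\varepsilon\}$ (because $\mathbf A\models\Phi$ already). Thus it suffices to establish the five deductions: $\beta_k$ from $\{\Phi,\alpha_k\}$; $\gamma_k$ from $\{\Phi,\beta_k\}$; $\delta_k^1$ from $\{\Phi,\gamma_k\}$; $\delta_k^{m+1}$ from $\{\Phi,\delta_k^m\}$ for $1\le m<k$; and $\alpha_{k+1}$ from $\{\Phi,\delta_k^k\}$. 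Since every variety appearing here lies in $\mathbf K$, throughout these deductions I may freely invoke the identities of $\mathbf K$ recorded in Lemma~\ref{identities in K}, in particular $\sigma_2$ and~\eqref{xyxzx=xyxz}, alongside the members of $\Phi$.

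All of these are carried out by the technique already illustrated in the proof of Lemma~\ref{basis for J_k^k}: one substitutes suitably chosen (possibly compound) words into the hypothesis identity and then rearranges the two sides using $\Phi$, $\sigma_2$ and~\eqref{xyxzx=xyxz}. The easiest case is $\mathbf I_k\subseteq\mathbf J_k^1$: renaming $y_1\mapsto y_2$ and $y_0\mapsto y_1$ in $\gamma_k$ and multiplying both sides on the right by $y_1$ produces exactly $\delta_k^1$ (recall $\mathbf b_{k,1}=\mathbf b_k$ and $\mathbf b_0=\lambda$), so this inclusion uses no identity of $\Phi$ whatsoever. For the last inclusion $\mathbf J_k^k\subseteq\mathbf F_{k+1}$, I would first apply Lemma~\ref{basis for J_k^k} to replace the hypothesis $\delta_k^k$ by the identity~\eqref{xx_kxb_k=x^2x_kb_k}, which axiomatises $\mathbf J_k^k$ over $\Phi$ as well, and then derive $\alpha_{k+1}$ from $\Phi$ together with~\eqref{xx_kxb_k=x^2x_kb_k}; here the square-merging effect of~\eqref{xx_kxb_k=x^2x_kb_k} is what converts the two separated occurrences of $x_{k+1}$ in the sides of $\alpha_{k+1}$ into an adjacent pair that can subsequently be reordered. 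The two remaining deductions $\beta_k$ from $\alpha_k$ and $\gamma_k$ from $\beta_k$ are of the same character and are handled by the analogous substitute-and-rearrange bookkeeping.

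The main obstacle is the generic step $\mathbf J_k^m\subseteq\mathbf J_k^{m+1}$, which must be proved uniformly in $m$. Relabelling $y_{m+1}\mapsto y_{m+2}$ and $y_m\mapsto y_{m+1}$ in $\delta_k^m$ brings its two sides very close to those of $\delta_k^{m+1}$, but the two identities disagree on the placement of the factor $x_{m-1}x_m$ relative to the second occurrence of $y_{m+1}$; this mismatch is forced by the decompositions $\mathbf b_{k,m}=\mathbf b_{k,m+1}\,x_{m-1}x_m$ and $\mathbf b_m=x_{m-1}x_m\,\mathbf b_{m-1}$, which carry this factor across the relevant block boundary. Reconciling the two placements is delicate because it amounts to transposing a \emph{first} occurrence of $x_{m-1}$ past a \emph{second} occurrence of $y_{m+1}$, a swap not directly licensed by $\sigma_2$ (which only exchanges second occurrences). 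The remedy, exactly as in the proof of Lemma~\ref{basis for J_k^k}, is to substitute compound words into the variables $y_m$, $y_{m+1}$ and $x_k$ of $\delta_k^m$ so that the identity itself performs the required transposition inside a controlled context, while~\eqref{xyxzx=xyxz} is used to create and afterwards cancel the auxiliary occurrences of $x_k$ needed to set that context up. Careful tracking of which occurrences are first and which are second, together with the precise shapes of $\mathbf b_{k,m}$, $\mathbf b_{k,m+1}$, $\mathbf b_m$ and $\mathbf b_{m-1}$, is the bookkeeping that makes this step the crux of the proof.
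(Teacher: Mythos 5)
Your reduction of \eqref{non-strict inclusions} to five deductions over $\Phi$, performed modulo the $\mathbf K$-identities of Lemma~\ref{identities in K}, is exactly the paper's scheme, and the two steps you actually carry out agree with the paper: $\delta_k^1$ is obtained from $\gamma_k$ by renaming and right multiplication by $y_1$, and $\alpha_{k+1}$ is derived from $\Phi$ and \eqref{xx_kxb_k=x^2x_kb_k} by merging $x_{k+1}y_{k+1}x_kx_{k+1}y_{k+1}\mathbf b_k$ into $(x_{k+1}y_{k+1})^2x_k\mathbf b_k$, commuting the square, and splitting back. However, the three deductions you leave to ``analogous bookkeeping'' are where the content of the lemma sits, and for the step you yourself call the crux the plan you outline targets the wrong letters, so it would not go through as described.

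After the renaming $y_m\mapsto y_{m+1}$, $y_{m+1}\mapsto y_{m+2}$, each side of $\delta_k^m$ carries the second occurrence of $y_{m+1}$ immediately to the \emph{right} of the pair $x_{m-1}x_m$, while $\delta_k^{m+1}$ needs it immediately to the \emph{left} of that pair. Substitution only expands letters in place, and none of $y_m$, $y_{m+1}$, $x_k$ has an occurrence at that landing spot ($x_k$ occurs at the front and in the first pair of $\mathbf b_{k,m}$; the two $y$'s occur at the front and just after $\mathbf b_{k,m}$), so feeding compound words into those three letters cannot produce a word of the required shape. The letter that must receive the compound word is $x_{m-1}$, whose \emph{first} occurrence is precisely the landing spot: substituting $y_{m+1}x_{m-1}$ for $x_{m-1}$ in the renamed $\delta_k^m$ turns each side into the corresponding side of $\delta_k^{m+1}$ marred only by one (if $m=1$) or two extra occurrences of $y_{m+1}$, all lying after its second occurrence, and \eqref{xyxzx=xyxz} deletes them; so the auxiliary occurrences to be created and cancelled are occurrences of $y_{m+1}$, not of $x_k$. (Check for $k=2$, $m=1$: the substitution $(y_2,y_1,x_0)\mapsto(y_3,y_2,y_2x_0)$ in $\delta_2^1$ gives $y_3y_2x_2y_3x_1x_2y_2x_0x_1y_2\approx y_3y_2y_3x_2x_1x_2y_2x_0x_1y_2$, and deleting the final $y_2$ on both sides by \eqref{xyxzx=xyxz} is exactly $\delta_2^2$.) Likewise, $\mathbf F_k\subseteq\mathbf H_k$ and $\mathbf H_k\subseteq\mathbf I_k$ need the specific substitutions $(x_k,y_k)\mapsto(x_kx,x)$ in $\alpha_k$ and $x\mapsto y_1$ in $\beta_k$, which your proposal does not supply. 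For comparison, the paper settles $\mathbf J_k^m\subseteq\mathbf J_k^{m+1}$ by a pure renaming trick (multiply $\delta_k^m$ by $x_{-1}x_0$ and raise every index by one); but raising every index turns $x_k$ into $x_{k+1}$, so the identity obtained involves one more letter than $\delta_k^{m+1}$ and is a form of $\delta_{k+1}^{m+1}$ rather than $\delta_k^{m+1}$. Your instinct that this step needs a genuine substitution argument is therefore sound --- but that argument is precisely what is missing from your proposal.
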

 
\begin{proof}
Since all varieties that appear in the inclusions~\eqref{non-strict inclusions} are contained in \textbf K, we can apply Lemma~\ref{identities in K}. In particular, this allows us to use below the identities $\sigma_2$ and~\eqref{xyxzx=xyxz}.
 
\smallskip
 
$1^\circ$. \emph{The inclusion} $\mathbf F_k\subseteq\mathbf H_k$. We need to verify that $\beta_k$ follows from $\Phi$ and $\alpha_k$. Here is the required deduction:
\begin{align*}
xx_kx\mathbf b_k&=xx_kxx_{k-1}x_k\mathbf b_{k-1}&&\textup{because }\mathbf b_k=x_{k-1}x_k\mathbf b_{k-1}\\
&\approx xx_kxx_{k-1}x_kx^2\mathbf b_{k-1}&&\textup{by }\eqref{xyxzx=xyxz}\\
&\approx x_kx^2x_{k-1}x_kx^2\mathbf b_{k-1}&&\textup{we perform the substitution}\\
&&&(x_k,y_k)\mapsto(x_kx,x)\textup{ in }\alpha_k
\end{align*}
\begin{align*}
&\approx x_kx^2x_{k-1}x_k\mathbf b_{k-1}&&\textup{by }\eqref{xyxzx=xyxz}\\
&=x_kx^2\mathbf b_k&&\textup{because }\mathbf b_k=x_{k-1}x_k\mathbf b_{k-1}.
\end{align*}
 
\smallskip
 
$2^\circ$. \emph{The inclusion} $\mathbf H_k\subseteq\mathbf I_k$. Here we need to verify that $\gamma_k$ follows from $\Phi$ and $\beta_k$. Indeed,
\begin{align*}
y_1y_0x_ky_1\mathbf b_k&\approx y_1y_0x_ky_1^2\mathbf b_k&&\textup{by }\eqref{xyxzx=xyxz}\\
&\approx y_1y_0y_1x_ky_1\mathbf b_k&&\textup{we modify the subword }x_ky_1^2\mathbf b_k\\
&&&\textup{by substitution }y_1\textup{ for }x\textup{ in }\beta_k\\
&\approx y_1y_0y_1x_k\mathbf b_k&&\textup{by }\eqref{xyxzx=xyxz}.
\end{align*}
 
\smallskip
 
$3^\circ$. \emph{The inclusion} $\mathbf I_k\subseteq\mathbf J_k^1$. It suffices to verify that $\delta_k^1$ follows from $\gamma_k$. Since $\mathbf b_{k,1}=\mathbf b_k$ and $\mathbf b_0=\lambda$, the identity $\delta_k^1$ has the form
$$
y_2y_1x_ky_2\mathbf b_ky_1\approx y_2y_1y_2x_k\mathbf b_ky_1.
$$
To deduce this identity from $\gamma_k$, it suffices to modify the subword $y_2y_1x_ky_2\mathbf b_k$ by performing the substitution $(y_0,y_1)\mapsto(y_1,y_2)$ in $\gamma_k$.
 
\smallskip
 
$4^\circ$. \emph{The inclusion $\mathbf J_k^m\subseteq\mathbf J_k^{m+1}$ where $1\le m<k$}. It suffices to verify that $\delta_k^{m+1}$ follows from $\delta_k^m$. Indeed, we get $\delta_k^{m+1}$ if we multiply $\delta_k^m$ by $x_{-1}x_0$ on the left and then increase by~1 the index of each letter in the identity we obtain.
 
\smallskip
 
$5^\circ$. \emph{The inclusion} $\mathbf J_k^k\subseteq\mathbf F_{k+1}$. In view of Lemma~\ref{basis for J_k^k}, it suffices to verify that $\alpha_{k+1}$ follows from $\Phi$ and~\eqref{xx_kxb_k=x^2x_kb_k}. We have:
\begin{align*}
x_{k+1}y_{k+1}x_kx_{k+1}y_{k+1}\mathbf b_k&\approx (x_{k+1}y_{k+1})^2x_k\mathbf b_k&&\textup{(a)}\\
&\approx (y_{k+1}x_{k+1})^2x_k\mathbf b_k&&\textup{(b)}\\
&\approx y_{k+1}x_{k+1}x_ky_{k+1}x_{k+1}\mathbf b_k&&\textup{(c)}\\
&\approx y_{k+1}x_{k+1}x_kx_{k+1}y_{k+1}\mathbf b_k.&&\textup{(d)}
\end{align*}
 
(a) Here we substitute $x_ky_k$ for $x$ in~\eqref{xx_kxb_k=x^2x_kb_k}.
 
(b) Here we apply the identity $(xy)^2\approx(yx)^2$ that holds in \textbf K according to Lemma~\ref{identities in K}(iii).
 
(c) Here we substitute $y_kx_k$ for $x$ in~\eqref{xx_kxb_k=x^2x_kb_k}.
 
(d) Here we perform the substitution $(x,t,y,z)\mapsto(y_{k+1},1,x_{k+1},x_k)$ in the identity $\sigma_2$.
\end{proof}
 
Below we often use the inclusions~\eqref{non-strict inclusions} without references to Lemma~\ref{between F_k and F_{k+1}}. Note that in fact strict inclusions~\eqref{[F_k,F_{k+1}]} are valid. We will prove these inclusions in Subsubsection~\ref{structure of [F_k,F_{k+1}] 6 step}.

\subsubsection{$k$-decompositions of sides of the identities $\alpha_k$, $\beta_k$, $\gamma_k$ and $\delta_k^m$}
\label{sufficiency: K - aux-decompositions}
 
\begin{lemma}
\label{depth and index}
Let ${\bf u}$ be a left-hand or right-hand side of one of the identities $\alpha_k$, $\beta_k$, $\gamma_k$ or $\delta_k^m$. Then:
\begin{itemize}
\item[\textup{1)}] If  $x_i,y_j\in\con(\mathbf u)$ then $D(\mathbf u,x_i)=i$ and $D(\mathbf u,y_j)=j$. The depth of the letter $x$ in the left-hand \textup[right-hand\textup] side of the identity $\beta_k$ equals $k+1$ \textup[respectively $\infty$\textup].
\item[\textup{2)}] The $k$-decomposition of the word $\mathbf u$ has the form indicated in Table~\textup{\ref{k-decomposition of eight words}}.
\end{itemize}
\end{lemma}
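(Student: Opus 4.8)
The plan is to reduce everything to a single computation of letter depths and then let the recursive definition of the $k$-decomposition do the rest, since by Lemma~\ref{k-divider and depth} the $r$-dividers of a word are exactly its letters of depth at most $r$. The engine I would use is the following reformulation of depth, immediate from the definition of the restrictors $h_i^r$ together with Lemma~\ref{k-divider and depth} and Remark~\ref{only first occurrences}: for a letter $x$ multiple in a word $\mathbf w$, one has $h_1^r(\mathbf w,x)\ne h_2^r(\mathbf w,x)$ if and only if some letter whose first occurrence lies strictly between the two occurrences of $x$ has depth at most $r$; consequently $D(\mathbf w,x)=d^\ast+1$, where $d^\ast$ is the least depth of a letter whose first occurrence lies strictly between the two occurrences of $x$ (and $D(\mathbf w,x)=\infty$ when no first occurrence lies strictly between them, e.g. when the two occurrences are adjacent). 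Thus part~1) is reduced to locating, for each multiple letter of each of the eight words, the first occurrences that fall between its two occurrences.

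Second, I would treat the staircase subword $\mathbf b_{s,q}=\prod_{i=s}^{q}x_{i-1}x_i$ (read with $i$ decreasing from $s$ to $q$), which is common to all eight words and satisfies $\mathbf b_{s,q}=x_{s-1}x_s\,\mathbf b_{s-1,q}$. The crucial local fact is that in $\mathbf b_{s,q}$ the two occurrences of $x_i$ (for $q\le i\le s-1$) are separated by exactly two letters, namely the \emph{second} occurrence of $x_{i+1}$ and the \emph{first} occurrence of $x_{i-1}$. Since the former is not a divider (Remark~\ref{only first occurrences}) and, by a straightforward strong induction on the index, $D(\mathbf u,x_{i-1})=i-1$, the engine gives $d^\ast=i-1$ and hence $D(\mathbf u,x_i)=i$. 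Because in every one of the eight words each letter of index below the top index first occurs inside the staircase, this already pins down the depths of all the ``bulk'' letters.

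Third, I would dispose of the finitely many boundary letters word by word: the top letter $x_k$, whose first occurrence lies in the prefix and whose second occurrence opens the staircase, is separated from itself only by the first occurrence of $x_{k-1}$, giving depth $k$; the leading $y_{m+1}$ is separated from itself by the first occurrence of $y_m$ (depth $m$), giving depth $m+1$; the letter $y_m$ in $\delta_k^m$, whose two occurrences straddle all of $\mathbf b_{k,m}$, meets first the occurrence of $x_{m-1}$ (depth $m-1$), giving depth $m$; the seam letter $x_{m-1}$ of $\delta_k^m$, whose window crosses $y_m$, meets only the first occurrence of $x_{m-2}$; and the letter $x$ of $\beta_k$ has depth $k+1$ on the left (its two occurrences straddle $x_k$, of depth $k$, with nothing shallower between them) and depth $\infty$ on the right (the two occurrences are adjacent in $x^2$, so no first occurrence lies between them). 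Each of these is an instance of the same engine.

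Finally, with the whole depth function in hand, part~2) is pure bookkeeping: Lemma~\ref{k-divider and depth} identifies the $k$-dividers as the letters of depth at most $k$ (so that $x$ in $\beta_k$ and $y_{m+1}$ in $\delta_k^k$, being too deep, stay inside $k$-blocks), and feeding these into the recursive construction of the $k$-decomposition produces exactly the eight rows of Table~\ref{k-decomposition of eight words}. I expect the main obstacle to be precisely the boundary analysis of the third paragraph: verifying in each word that the only first occurrence forced between the two occurrences of a given letter is the predicted one, and that the letters glued onto the staircase (the leading $y$'s, and the trailing $y_m\mathbf b_{m-1}$ in $\delta_k^m$) contribute only repeat occurrences in the relevant windows. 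Everything else is the uniform staircase recursion and the mechanical reading-off of the decompositions.
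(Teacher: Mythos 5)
Your proposal is correct and takes essentially the same route as the paper: the paper likewise computes all depths by induction (checking, for the left-hand side of $\alpha_k$, that the only first occurrence lying in the window between the two occurrences of $x_r$ is that of $x_{r-1}$, with the boundary letters $x_k$, $y_k$ handled separately), and then obtains the $k$-decomposition directly from Lemma~\ref{k-divider and depth}. Your ``engine'' $D(\mathbf w,x)=d^\ast+1$ is a clean restatement of the window arguments the paper runs by contradiction with the subwords $\mathbf u_k[a;1,2]$, and your uniform staircase treatment simply fills in the seven remaining words that the paper dismisses as ``quite similar considerations''.
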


{\tabcolsep=4pt
\small
\begin{table}[tbh]
\caption{$k$-decompositions of some words}
\begin{center}
\begin{tabular}{|c|l|l|}
\hline
&\multicolumn{2}{c|}{The $k$-decomposition of the}\\
\cline{2-3}
The identity&\multicolumn{1}{c|}{left-hand side}&\multicolumn{1}{c|}{right-hand side}\\
\hline
$\alpha_k$&$\lambda\cdot\underline{\lambda}\cdot x_k \cdot\underline{\lambda}\cdot y_k\cdot \underline{\lambda}\cdot x_{k-1}\cdot\underline{x_ky_k}$&$\lambda\cdot\underline{\lambda}\cdot y_k \cdot\underline{\lambda}\cdot x_k\cdot \underline{\lambda}\cdot x_{k-1}\cdot\underline{x_ky_k}$\\
&\rule[-6pt]{0pt}{0pt}$\cdot\,x_{k-2}\cdot\underline{x_{k-1}}\cdots x_1\cdot\underline{x_2}\cdot x_0\cdot\underline{x_1}$&$\cdot\,x_{k-2}\cdot\underline{x_{k-1}}\cdots x_1\cdot\underline{x_2}\cdot x_0\cdot\underline{x_1}$\\
\hline
$\beta_k$&$\lambda\cdot\underline{x}\cdot x_k \cdot\underline{x}\cdot x_{k-1}\cdot\underline{x_k}\cdot x_{k-2}$&$\lambda\cdot\underline{\lambda}\cdot x_k\cdot\underline{x^2}\cdot x_{k-1}\cdot\underline{x_k}\cdot x_{k-2}$\\
&\rule[-6pt]{0pt}{0pt}$\cdot\,\underline{x_{k-1}}\cdots x_1\cdot\underline{x_2}\cdot x_0\cdot\underline{x_1}$&$\cdot\,\underline{x_{k-1}}\cdots x_1\cdot\underline{x_2}\cdot x_0\cdot\underline{x_1}$\\
\hline
$\gamma_k$&$\lambda\cdot\underline{\lambda}\cdot y_1\cdot\underline{\lambda}\cdot y_0\cdot\underline{\lambda}\cdot x_k \cdot\underline{y_1}\cdot x_{k-1}$&$\lambda\cdot\underline{\lambda}\cdot y_1\cdot\underline{\lambda}\cdot y_0\cdot\underline{y_1}\cdot x_k \cdot\underline{\lambda}\cdot x_{k-1}$\\
&\rule[-6pt]{0pt}{0pt}$\cdot\,\underline{x_k}\cdot x_{k-2}\cdot\underline{x_{k-1}}\cdots x_1\cdot\underline{x_2}\cdot x_0\cdot\underline{x_1}$&$\cdot\,\underline{x_k}\cdot x_{k-2}\cdot\underline{x_{k-1}}\cdots x_1\cdot\underline{x_2}\cdot x_0\cdot\underline{x_1}$\\
\hline
$\delta_k^m$ with&$\lambda\cdot\underline{\lambda}\cdot y_{m+1}\cdot\underline{\lambda}\cdot y_m\cdot\underline{\lambda}\cdot x_k \cdot\underline{y_{m+1}}$&$\lambda\cdot\underline{\lambda}\cdot y_{m+1}\cdot\underline{\lambda}\cdot y_m\cdot\underline{y_{m+1}}\cdot x_k \cdot\underline{\lambda}$\\
$m<k$&$\cdot\,x_{k-1}\cdot \underline{x_k}\cdots x_{m-1}\cdot \underline{x_my_m}\cdot x_{m-2}$&$\cdot\,x_{k-1}\cdot \underline{x_k}\cdots x_{m-1}\cdot \underline{x_my_m}\cdot x_{m-2}$\\
&\rule[-6pt]{0pt}{0pt}$\cdot\,\underline{x_{m-1}}\cdots x_1\cdot\underline{x_2}\cdot x_0\cdot\underline{x_1}$&$\cdot\,\underline{x_{m-1}}\cdots x_1\cdot\underline{x_2}\cdot x_0\cdot\underline{x_1}$\\
\hline
&$\lambda\cdot\underline{y_{k+1}}\cdot y_k\cdot\underline{\lambda}\cdot x_k \cdot\underline{y_{k+1}}\cdot x_{k-1}$&$\lambda\cdot\underline{y_{k+1}}\cdot y_k\cdot\underline{y_{k+1}}\cdot x_k \cdot\underline{\lambda}\cdot x_{k-1}$\\
$\delta_k^k$&$\cdot\,\underline{x_ky_k}\cdot x_{k-2}\cdot\underline{x_{k-1}}\cdots x_1\cdot\underline{x_2}\cdot x_0$&$\cdot\,\underline{x_ky_k}\cdot x_{k-2}\cdot\underline{x_{k-1}}\cdots x_1\cdot\underline{x_2}\cdot x_0$\\
&\rule[-6pt]{0pt}{0pt}$\cdot\,\underline{x_1}$&$\cdot\,\underline{x_1}$\\
\hline
\end{tabular}
\end{center}
\label{k-decomposition of eight words}
\end{table}
}

 As in Example~\ref{example decompositions}, we underline $k$-blocks of words in Table~\ref{k-decomposition of eight words} to distinguish them from $k$-dividers.
 
\begin{proof}[Proof of Lemma~\textup{\ref{depth and index}}]
We allow ourselves to verify both the claims for the left-hand side of the identity $\alpha_k$ only. In all other cases the proof is given by quite similar considerations. We denote  the left-hand side of the identity $\alpha_k$ by $\mathbf u_k$. So,
$$
\mathbf u_k=x_ky_kx_{k-1}x_ky_kx_{k-2}x_{k-1}x_{k-3}x_{k-2}\cdots x_1x_2x_0x_1.
$$

\smallskip

1) The letter $x_0$ is simple in $\mathbf u_k$, whence $D(\mathbf u_k,x_0)=0$. All other letters from $\con(\mathbf u_k)$ occur in $\mathbf u_k$ exactly twice. In particular, they are multiple in $\mathbf u_k$, and therefore their depth in $\mathbf u_k$ grater than 0. First occurrence of $x_1$ in $\mathbf u_k$ is not preceded by any simple letter. Therefore, $h_1^0(\mathbf u_k,x_1)=\lambda$. Further, only $x_0$ is the simple in $\mathbf u_k$ letter that precedes second occurrence of $x_1$ in $\mathbf u_k$. Hence $h_2^0(\mathbf u_k,x_1)=x_0$. We see
that $h_1^0(\mathbf u_k,x_1)\ne h_2^0(\mathbf u_k,x_1)$, whence $D(\mathbf u_k,x_1)=1$.

Both the first and the second occurrences of $x_2$ in $\mathbf u_k$ are not preceded by any simple in $\mathbf u_k$ letter. This means that $h_1^0(\mathbf u_k,x_2)= h_2^0(\mathbf u_k,x_2)=\lambda$, whence $D(\mathbf u_k,x_2)>1$. Second occurrence of $x_2$ in $\mathbf u_k$ is preceded by exactly one occurrence of $x_1$ and there are no any letters between these occurrences of $x_1$ and $x_2$. Besides that, $h_1^0(\mathbf u_k,x_1)\ne h_2^0(\mathbf u_k,x_1)$. Therefore, $h_2^1(\mathbf u_k,x_2)=x_1$. On the other hand, $h_1^1(\mathbf u_k,x_2)\ne x_1$ because $x_1$ does not occur before first occurrence of $x_2$ in $\mathbf u_k$. Thus, $h_1^1(\mathbf u_k,x_2)\ne h_2^1(\mathbf u_k,x_2)$, whence $D(\mathbf u_k,x_2)=2$.

We introduce some new notation to facilitate further considerations. For a letter $a\in\mul(\mathbf u_k)$, we denote by $\mathbf u_k[a;1,2]$ the subword of $\mathbf u_k$ located between the first and the second occurrences of $a$ in $\mathbf u_k$. For instance, $\mathbf u_k[x_k;1,2]=y_kx_{k-1}$, $\mathbf u_k[y_k;1,2]=x_{k-1}x_k$, while $\mathbf u_k[x_1;1,2]=x_2x_0$. Let now $2<r<k$. Suppose that we prove the equality $D(\mathbf u_k,x_i)=i$ for all $i=0,1,\dots,r-1$. We are going to check that $D(\mathbf u_k,x_r)=r$. Suppose that $D(\mathbf u_k,x_r)=s<r$. This means that $h_1^{s-1}(\mathbf u_k,x_r)\ne h_2^{s-1}(\mathbf u_k,x_r)$. Therefore, there is a letter $z$ such that first occurrence of $z$ in $\mathbf u_k$ lies in $\mathbf u_k[x_r;1,2]$ and $h_1^{s-2}(\mathbf u_k,z)\ne h_2^{s-2}(\mathbf u_k,z)$. But  $\mathbf u_k[x_r;1,2]=x_{r+1}x_{r-1}$ whenever $r<k-1$ and $\mathbf u_k[x_{k-1};1,2]=x_ky_kx_{k-2}$. In any case, a unique letter whose first occurrence in $\mathbf u_k$ lies in $\mathbf u_k[x_r;1,2]$ is $x_{r-1}$. In view of our assumption, $D(\mathbf u_k,x_{r-1})=r-1$. Since $s-2<r-2$ the latest equality implies that $h_1^{s-2}(\mathbf u_k,x_{r-1})=h_2^{s-2}(\mathbf u_k,x_{r-1})$. Thus, there are no letters $z$ with the above-mentioned properties. Therefore, $D(\mathbf u_k,x_r)\ge r$. Suppose now that $D(\mathbf u_k,x_r)=t>r$. Then $h_1^{r-1}(\mathbf u_k,x_r)=h_2^{r-1}(\mathbf u_k,x_r)$. Therefore, there are no letters $z$ such that first occurrence of $z$ in $\mathbf u_k$ lies in $\mathbf u_k[x_r;1,2]$ and $D(\mathbf u_k,z)=r-1$. But our assumption implies that the letter $x_{r-1}$ have these properties. Thus, $D(\mathbf u_k,x_r)=r$.

The arguments quite analogous to ones from the previous paragraph permit establish that $D(\mathbf u_k,y_k)=k$. It is necessary to take into account the equality $D(\mathbf u_k,x_{k-1})=k-1$ proved above and the fact that a unique letter whose first occurrence in $\mathbf u_k$ lies in $\mathbf u[y_k;1,2]$ is $x_{k-1}$.

It remains to verify that $D(\mathbf u_k,x_k)=k$. We note that both the first and the second occurrences of $x_k$ in $\mathbf u_k$ are not preceded by any simple letter, whence $h_1^0(\mathbf u_k,x_k)=h_2^0(\mathbf u_k,x_k)=\lambda$. Suppose that $h_1^i(\mathbf u_k,x_k)\ne h_2^i(\mathbf u_k,x_k)$ for some $0<i<k-1$. Then there is a letter $z$ such that first occurrence of $z$ in $\mathbf u_k$ lies in $\mathbf u[x_k;1,2]$ and $h_1^{i-1}(\mathbf u_k,z)=h_2^{i-1}(\mathbf u_k,z)$. The latest equality means that $D(\mathbf u_k,z)\le i<k-1$. Further, $\mathbf u[x_k;1,2]=y_kx_{k-1}$ and occurrences of both $y_k$ and $x_{k-1}$ are the first occurrences of these letters in $\mathbf u_k$. As we have seen above, $D(\mathbf u_k,y_k),D(\mathbf u_k,x_{k-1})\ge k-1$. Thus, $h_1^i(\mathbf u_k,x_k)=h_2^i(\mathbf u_k,x_k)$ for all $0\le i<k-1$. Now we check that $h_1^k(\mathbf u_k,x_k)\ne h_2^k(\mathbf u_k,x_k)$. Indeed, we have seen above that $D(\mathbf u_k,x_{k-1})=k-1$ and $D(\mathbf u_k,y_k)=k$. Therefore, $h_1^{k-2}(\mathbf u_k,x_{k-1})\ne h_2^{k-2}(\mathbf u_k,x_{k-1})$ and $h_1^{k-2}(\mathbf u_k,y_k)=h_2^{k-2}(\mathbf u_k,y_k)$. This implies that $h_2^{k-1}(\mathbf u_k,x_k)=x_{k-1}$. On the other hand, first occurrence of $x_k$ in $\mathbf u_k$ is not preceded by any letter, whence $h_1^{k-1}(\mathbf u_k,x_k)=\lambda$. We see that $h_1^k(\mathbf u_k,x_k)\ne h_2^k(\mathbf u_k,x_k)$. In view of the above, this means that $D(\mathbf u_k,x_k)=k$.

\smallskip

2) By Lemma~\ref{k-divider and depth}, $k$-dividers of a word $\mathbf w$ are exactly the first occurrences of letters $x\in\con(\mathbf w)$ with $D(\mathbf w,x)\le k$ and the empty word at the beginning of the word $\mathbf w$. As we have proved above, $D(\mathbf u_k,x)\le k$ for any letter $x\in\con(\mathbf u_k)$. Thus, $k$-dividers of $\mathbf u_k$ are just the first occurrences of all letters from $\con(\mathbf u_k)$ and the empty word at the beginning of $\mathbf u_k$. All subwords of $\mathbf u_k$ between these $ k$-dividers and only they are $k$-blocks of $\mathbf u_k$. Thus, the $k$-decomposition of the word $\mathbf u_k$ has the form indicated in Table~\ref{k-decomposition of eight words}.
\end{proof}

Note that the claim 1) of Lemma~\ref{depth and index} explains the choice of indexes of letters in the identities $\alpha_k$, $\beta_k$, $\gamma_k$ and $\delta_k^m$.

\subsubsection{Swapping letters within $k$-blocks}
\label{sufficiency: K - aux-swapping}
 
In this subsubsection we verify only one statement. It can be called the ``core'' of the whole proof of Theorem~\ref{main result}. Its proof is very long and based on a quite hard technique. At the same time, it is the basis for the rest of the proof of Theorem~\ref{main result} and plays a key role there.
 
\begin{lemma}
\label{u'abu''=u'bau''}
Let $\mathbf V$ be a monoid variety such that $\mathbf{V\subseteq K}$, $\mathbf u$ be a word and $k$ be a natural number. Further, let $\mathbf u=\mathbf u'ab\mathbf u''$ where $\mathbf u'$ and $\mathbf u''$ are possibly empty words, while $ab$ is a subword of some \mbox{$(k-1)$}-block of $\mathbf u$. Suppose that one of the following holds:
\begin{itemize}
\item[\textup{(i)}] $\mathbf V$ satisfies $\delta_k^m$, $a\in\con({\bf u}')$ and $D({\bf u},a)>m$;
\item[\textup{(ii)}] $\mathbf V$ satisfies $\gamma_k$ and $a\in\con(\mathbf u')$;
\item[\textup{(iii)}] $\mathbf V$ satisfies $\beta_k$ and $D(\mathbf u,a)\ne D(\mathbf u,b)$;
\item[\textup{(iv)}] $\mathbf V$ satisfies $\alpha_k$.
\end{itemize}
Then $\mathbf V$ satisfies the identity $\mathbf u\approx\mathbf u'ba\mathbf u''$.
\end{lemma}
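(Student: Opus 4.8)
The plan is to deduce the target identity $\mathbf u\approx\mathbf u'ba\mathbf u''$ from the relevant defining identity ($\alpha_k$, $\beta_k$, $\gamma_k$ or $\delta_k^m$) together with the identities of $\mathbf K$ collected in Lemma~\ref{identities in K}. The guiding observation is that, by Lemma~\ref{depth and index} and Table~\ref{k-decomposition of eight words}, the two sides of each of these four identities are $(k-1)$-equivalent in the sense of Lemma~\ref{k-equivalent}: they have the same $(k-1)$-decomposition and differ only by transposing two adjacent letter occurrences lying inside a single $(k-1)$-block (neither of which is a $(k-1)$-divider). Thus each identity is itself an instance of exactly the operation we wish to perform, and the real task is to transfer it from the concrete words of Table~\ref{k-decomposition of eight words} to the arbitrary word $\mathbf u$.

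To carry this out I would, in each case, build an endomorphism $\xi$ of $F^1$ that maps the skeleton letters $x_{k-1},x_{k-2},\dots,x_0$ of the identity onto the chain of restrictors of $\mathbf u$ preceding the block containing $ab$ (so that $\xi$ respects depths, by Lemma~\ref{k-divider and depth}), and maps the two transposed letters of the identity onto $a$ and $b$. Since the sides of $\alpha_k,\dots,\delta_k^m$ carry repeated occurrences that $\mathbf u$ need not contain literally, the first move is to normalise $\mathbf u$ using the identity~\eqref{xyxzx=xyxz} of Lemma~\ref{identities in K}(ii) and the identity $\sigma_2$, inserting auxiliary occurrences so that $\xi$ applied to a side of the identity becomes an explicit factor of the normalised word. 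Applying the identity then transposes $a$ and $b$, and undoing the normalisation returns $\mathbf u'ba\mathbf u''$. Here Lemma~\ref{does not contain dividers} is the tool that guarantees that the subwords substituted for the skeleton letters introduce no unexpected low-level dividers, so that the $(k-1)$-decomposition is preserved throughout and the transposition stays inside the block.

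The hypotheses (i)--(iv) single out which adjacent transpositions the chosen identity can realise, and this is dictated by the depths computed in Lemma~\ref{depth and index}. In $\alpha_k$ the transposed letters are two first occurrences of depth $k$; being the defining identity of the smallest variety in the interval, it imposes no restriction on $a$ and $b$, giving case (iv). In $\beta_k$ the transposed first occurrences have the distinct depths $k+1$ and $k$, and correspondingly the construction goes through precisely when $D(\mathbf u,a)\ne D(\mathbf u,b)$, which is case (iii). The identities $\gamma_k$ and $\delta_k^m$ instead transpose a \emph{repeated} occurrence ($y_1$, respectively $y_{m+1}$) with the first occurrence of $x_k$; applying them demands that $a$ itself be a repeated occurrence, i.e.\ $a\in\con(\mathbf u')$, and that $\mathbf u$ carry to the left the prefix $y_1y_0$ (respectively $y_{m+1}y_m$), whose presence with the correct depths is supplied exactly by $a\in\con(\mathbf u')$ and, in the $\delta_k^m$ case, by $D(\mathbf u,a)>m$. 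These are cases (ii) and (i). I expect the main obstacle to be the bookkeeping in these last two cases: one must verify, using the restrictor and depth machinery of Section~\ref{decomposition} (notably Lemmas~\ref{k-divider and depth} and~\ref{does not contain dividers}) and presumably an induction on $k$, that the chain of restrictors preceding the block containing $ab$ faithfully reproduces the skeleton $\mathbf b_{k,m}$ of $\delta_k^m$ (respectively $\mathbf b_k$ of $\gamma_k$), and that the occurrence-normalisation performed via~\eqref{xyxzx=xyxz} never disturbs those restrictors. This verification, carried out uniformly for all four identities, is what makes the statement the technical core of the proof of Theorem~\ref{main result}.
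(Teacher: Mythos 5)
Your high-level mechanism is the right one, and it is the paper's: pad $\mathbf u$ with extra occurrences via the identity~\eqref{xyxzx=xyxz} of Lemma~\ref{identities in K}(ii) until a substitution instance of the relevant identity appears as a factor, apply the identity to transpose $a$ and $b$, then undo the padding. But your description of where the substitution instance comes from is wrong in an essential way. In all four identities the skeleton $\mathbf b_k$ (resp.\ $\mathbf b_{k,m}$) \emph{follows} the transposed pair, so the letters playing the roles of $x_{k-1},x_{k-2},\dots,x_0$ must be found in $\mathbf u$ to the \emph{right} of $ab$, nested so that the first occurrence of $x_{s-1}$ lies between the two occurrences of $x_s$ (with $x_k=b$) and second occurrences are suitably ordered. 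They cannot be ``the chain of restrictors of $\mathbf u$ preceding the block containing $ab$'': dividers to the left of the block cannot produce the suffix pattern $x_{k-1}x_kx_{k-2}x_{k-1}\cdots x_0x_1$ that the identity requires after the pair. The paper builds this skeleton inside $\mathbf u$ by taking, at each level $s$, an $(s-1)$-divider lying between the two occurrences of $x_s$ whose \emph{second} occurrence is as far to the right as possible, and the verification that this works rests on Lemmas~\ref{h_2^{k-1}} and~\ref{if first then second}, not on Lemma~\ref{does not contain dividers} (which the paper uses elsewhere, in the necessity halves of the word-problem propositions).

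The deeper gap is that such a skeleton exists only when $D(\mathbf u,b)=k$, and the lemma does not assume this: since $b$ is not a $(k-1)$-divider one only knows $D(\mathbf u,b)\ge k$ (Lemma~\ref{k-divider and depth}). If $D(\mathbf u,b)=r>k$, or $D(\mathbf u,b)=\infty$, there is \emph{no} $(k-1)$-divider between the two occurrences of $b$, hence no substitution instance of $\alpha_k,\beta_k,\gamma_k,\delta_k^m$ with $ab$ as the transposed pair can be carved out of $\mathbf u$, and your construction cannot start. The paper's proof therefore splits into three cases on $D(\mathbf u,b)$. For $k<D(\mathbf u,b)=r<\infty$ it passes from level $k$ to level $r$: by Lemma~\ref{between F_k and F_{k+1}} the variety also satisfies the corresponding level-$r$ identity ($\mathbf F_k\subseteq\mathbf F_r$, $\mathbf H_k\subseteq\mathbf H_r$, etc.), and $ab$ lies in an $(r-1)$-block, so the core case applies with $r$ in place of $k$. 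For $D(\mathbf u,b)=\infty$ the swap is obtained by an entirely different mechanism: after padding occurrences with~\eqref{xyx=xyxx}, every letter of the ambient block occurs at least twice, and Lemma~\ref{identities in K}(iii) permits arbitrary rearrangement of such a block (in case (iii) one additionally argues on the word $\mathbf u^\ast=\mathbf u'ba\mathbf u''$ and derives the identity in the reverse direction). None of this appears in your proposal, so as written it proves the lemma only under the extra hypothesis $D(\mathbf u,b)=k$; the ``bookkeeping'' you defer is in fact this existence-and-case analysis, which is the substance of the proof.
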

 
\begin{proof}
We will prove the assertions~(i)--(iv) simultaneously. Suppose that the variety \textbf V satisfies the hypothesis of one of these four claims. In particular, $\mathbf V$ satisfies $\delta_k^k$ in any case. Let~\eqref{t_0u_0t_1u_1 ... t_mu_m} be the \mbox{$(k-1)$}-decomposition of $\bf u$ and $ab$ is a subword of $\mathbf u_i$ for some $0\le i\le m$. Then $\mathbf u_i= \mathbf u_i'ab\mathbf u_i''$ for some possibly empty words $\mathbf u_i'$ and $\mathbf u_i''$. Clearly, $\mathbf u' = t_0\mathbf u_0t_1\mathbf u_1\cdots t_i\mathbf u_i'$ and $\mathbf u''= \mathbf u_i''t_{i+1}\mathbf u_{i+1}\cdots t_m\mathbf u_m$.
 
If $a,b\in\con(\mathbf u')$ then
$$
{\bf u}= {\bf u}'ab{\bf u}''\stackrel{\eqref{xyx=xyxx}}\approx{\bf u}'a^2b^2{\bf u}''\stackrel{\eqref{xxyy=yyxx}}\approx{\bf u}'b^2a^2{\bf u}''\stackrel{\eqref{xyx=xyxx}}\approx{\bf u}'ba{\bf u}'',
$$
and we are done. Thus, we can assume without loss of generality that
\begin{equation}
\label{b notin u'}
b\notin \con(\mathbf u').
\end{equation}
If $D(\mathbf u, b)\le k-1$ then $b$ is a \mbox{$(k-1)$}-divider of $\bf u$ by Lemma~\ref{k-divider and depth}. But this is not the case because first occurrence of $b$ in \textbf u lies in the \mbox{$(k-1)$}-block $\mathbf u_i$. Therefore, $D(\mathbf u, b)\ge k$. Further, if $a\in\mul(\mathbf u')$ then Lemma~\ref{identities in K}(ii) implies that the identities $\mathbf u'ab\mathbf u''\approx\mathbf u'b\mathbf u''\approx\mathbf u'ba\mathbf u''$ hold in \textbf V. Thus, we can assume that
\begin{equation}
\label{a is not multiple in u'}
\text{if }a\in\con(\mathbf u')\text{ then }a\in\simple(\mathbf u').
\end{equation}
 
Further considerations are divided into three cases depending on the depth of $b$ in \textbf u: $D(\mathbf u, b)=k$, $k<D(\mathbf u, b)<\infty$ and $D(\mathbf u, b)=\infty$. Each of these cases is divided into subcases corresponding to the claims~(i)--(iv). Thus, the proof of each of the assertions~(i)--(iv) will be completed after considering the corresponding subcase of Case~3.
 
\medskip
 
\emph{Case} 1: $D(\mathbf u, b)=k$. This case is the most difficult from the technical point of view and the longest. By examining two other cases, we will repeatedly refer to properties that will be verified here. Let $\mathbf{p\approx q}$ be one of the identities $\alpha_k$, $\beta_k$, $\gamma_k$ or $\delta_k^m$. In a sense, the identity $\mathbf{p\approx q}$ ``looks like'' to $\mathbf u'ab\mathbf u''\approx\mathbf u'ba\mathbf u''$. We have in mind that the words \textbf p and \textbf q start with the same prefix (which is empty for $\alpha_k$ and $\beta_k$) and end with the same suffix, and the subword between these prefix and suffix is the product of two letters in \textbf p and the product of the same two letters in the reverse order in \textbf q. This makes it possible in principle to apply the identity $\mathbf{p\approx q}$ to one of the sides of the identity $\mathbf u'ab\mathbf u''\approx\mathbf u'ba\mathbf u''$ in order to obtain the other side of it. To realize this possibility, we need, with the use of the identities that hold in \textbf K, to reduce, say, the right-hand side of the identity $\mathbf u'ab\mathbf u''\approx\mathbf u'ba\mathbf u''$ to a form to which the identity $\mathbf{p\approx q}$ can be applied. To do this, we first need to find ``inside of'' the word \textbf u the letters $x_0,x_1,\dots,x_k$ which would appear in the same order as the letters with the same names in one of the sides of the identity $\mathbf{p\approx q}$.
 
Put $x_k=b$. Let $X_{k-1}$ be the set of \mbox{$(k-1)$}-dividers $z$ of $\bf u$ such that
$$
\ell_1(\mathbf u, x_k)<\ell_1(\mathbf u, z)<\ell_2(\mathbf u, x_k).
$$
The fact that $D(\mathbf u,x_k)=k$ implies that $h_1^{k-1}(\mathbf u,x_k)\ne h_2^{k-1}(\mathbf u,x_k)$, whence $h_2^{k-1}(\mathbf u,x_k)\in X_{k-1}$. Therefore, the set $X_{k-1}$ is non-empty. Further, Lemma~\ref{h_2^{k-1}}(ii) implies that $D(\mathbf u, z)=k-1$ and $\ell_2({\bf u},x_k)< \ell_2({\bf u},z)$ for any $z\in X_{k-1}$. Now we consider the letter $x_{k-1}\in X_{k-1}$ such that $\ell_2({\bf u},z)\le \ell_2({\bf u},x_{k-1})$ for any $z\in X_{k-1}$.{\sloppy

}
 
Let $X_{k-2}$ be the set of \mbox{$(k-2)$}-dividers $z$ of $\bf u$ such that $\ell_1(\mathbf u, x_{k-1})<\ell_1(\mathbf u, z)<\ell_2(\mathbf u, x_{k-1})$. Then the fact that $D(\mathbf u,x_{k-1})=k-1$ implies that $h_1^{k-2}(\mathbf u,x_{k-1})\ne h_2^{k-2}(\mathbf u,x_{k-1})$, whence $h_2^{k-2}(\mathbf u,x_{k-1})\in X_{k-2}$. Therefore, the set $X_{k-2}$ is non-empty. Further, Lemma~\ref{h_2^{k-1}}(ii) implies that $D(\mathbf u, z)=k-2$ and $\ell_2({\bf u},x_{k-1})< \ell_2({\bf u},z)$ for any $z\in X_{k-2}$. Now we consider the letter $x_{k-2}\in X_{k-2}$ such that $\ell_2({\bf u},z)\le \ell_2({\bf u},x_{k-2})$ for any $z\in X_{k-2}$. Since $\ell_1({\bf u},x_k)< \ell_1({\bf u},x_{k-1})<\ell_1({\bf u},x_{k-2})$, Lemma~\ref{if first then second} implies that $\ell_2({\bf u},x_k)<\ell_1({\bf u},x_{k-2})$.
 
Further, for $s=k-3,k-4,\dots,1$ we denote one by one the set $X_s$ and the letter $x_s$ in the following way: $X_s$ is the set of all $s$-dividers $z$ of \textbf u such that $\ell_1(\mathbf u, x_{s+1})<\ell_1(\mathbf u, z)<\ell_2(\mathbf u, x_{s+1})$, and $x_s$ is a letter such that $x_s\in X_s$ and $\ell_2({\bf u},z)\le \ell_2({\bf u},x_s)$ for any $z\in X_s$. Arguments similar to those from the previous two paragraphs allow us to verify that the set $X_s$ is non-empty, $D(\mathbf u,x_s)=s$, $\ell_j(\mathbf u,x_{s+1})<\ell_j(\mathbf u,x_s)$ for any $j=1,2$ and $\ell_2(\mathbf u,x_{s+2})<\ell_1(\mathbf u,x_s)$.
 
Finally, put $x_0=h_2^0({\bf u},x_1)$. In view of Lemma~\ref{h_2^{k-1}}, $D(\mathbf u, x_0)=0$ and $\ell_1(\mathbf u,x_1)<\ell_1(\mathbf u,x_0)$. Since $\ell_1({\bf u},x_2)< \ell_1({\bf u},x_1)$, Lemma~\ref{if first then second} implies that $\ell_2(\mathbf u,x_2)<\ell_1(\mathbf u,x_0)$. Then
\begin{equation}
\label{u = long word 1}
{\bf u}={\bf u}'ab{\bf v}_{2k}x_{k-1}{\bf v}_{2k-1}b{\bf v}_{2k-2}x_{k-2}{\bf v}_{2k-3}x_{k-1}\cdots {\bf v}_4x_1{\bf v}_3x_2{\bf v}_2x_0{\bf v}_1x_1{\bf v}_0
\end{equation}
for some possibly empty words $\mathbf v_0,\mathbf v_1,\dots, \mathbf v_{2k}$. One can verify that if $2\le s\le k$ then
\begin{equation}
\label{ell_2(u,z)<ell_2(u,x_{s-1})}
\ell_2(\mathbf u,z)<\ell_2(\mathbf u,x_{s-1})\text{ for any }z\in \con({\bf v}_{2s}{\bf v}_{2s-1}).
\end{equation}
Put
$$
\mathbf w_s= \mathbf u'ab{\bf v}_{2k}x_{k-1}{\bf v}_{2k-1}b{\bf v}_{2k-2}x_{k-2}{\bf v}_{2k-3}x_{k-1}\cdots \mathbf v_{2s+2}x_s\mathbf v_{2s+1}x_{s+1}.
$$
The word $\mathbf w_s$ is the prefix of \textbf u that immediately precedes the word $\mathbf v_{2s}$, while the word $\mathbf v_{2s-1}$ precedes second occurrence of $x_{s-1}$ in \textbf u. This implies the required conclusion whenever $z\in\con(\mathbf w_s)$. Suppose now that $z\notin\con(\mathbf w_s)$. Then $\ell_1(\mathbf u,x_s)<\ell_1(\mathbf u,z)<\ell_2(\mathbf u,x_s)$. If $z$ is an \mbox{$(s-1)$}-divider of $\bf u$ then $z\in X_{s-1}$, whence $\ell_2(\mathbf u,z)< \ell_2(\mathbf u,x_{s-1})$ by the choice of the letter $x_{s-1}$. Otherwise $D(\mathbf u, z)>s-1$ by Lemma~\ref{k-divider and depth}. Then since $\ell_1(\mathbf u,z)<\ell_1(\mathbf u,x_{s-2})$, Lemma~\ref{if first then second} implies that $\ell_2(\mathbf u,z)<\ell_1(\mathbf u,x_{s-2})$, whence $\ell_2(\mathbf u,z)<\ell_2(\mathbf u,x_{s-1})$.
 
Further realization of the plan outlined at the beginning of Case~1 depends on the identity that plays the role of $\mathbf{p\approx q}$. Therefore, further considerations are divided into four subcases.
 
\smallskip
 
\emph{Subcase} 1.1: \textbf V satisfies the hypothesis of the claim~(i), i.e., $\delta_k^m$ holds in \textbf V, $a\in\con({\bf u}')$ and $D({\bf u},a)>m$. The claim~\eqref{a is not multiple in u'} allows us to assume that $a\in\simple(\mathbf u')$. Then $\mathbf u'={\bf w}a{\bf v}$ for some possibly empty words $\mathbf v$ and $\bf w$. This implies that
\begin{equation}
\label{u = long word 2}
\begin{array}{rl}
{\bf u}={}&{\bf w}a{\bf v}ab{\bf v}_{2k}x_{k-1}{\bf v}_{2k-1}b{\bf v}_{2k-2}x_{k-2}{\bf v}_{2k-3}x_{k-1}\cdots{\bf v}_4x_1{\bf v}_3x_2{\bf v}_2x_0\\
&\cdot\,{\bf v}_1x_1{\bf v}_0.
\end{array}
\end{equation}
Put $D(\mathbf u,a)=r$. Further considerations are divided into two parts corresponding to the cases when $r\le k+1$ and $r>k+1$.
 
\smallskip
 
A) $r\le k+1$. Here we need to define two more letters, namely $y_{r-1}$ and $y_{r-2}$ and clarify the location of these letters within \textbf u. Let $Y_{r-1}$ be the set of \mbox{$(r-1)$}-dividers $z$ of $\bf u$ such that $\ell_1(\mathbf u, a)<\ell_1(\mathbf u, z)<\ell_2(\mathbf u, a)$. The fact that $D(\mathbf u,a)=r$ implies that $h_1^{r-1}(\mathbf u,a)\ne h_2^{r-1}(\mathbf u,a)$, whence $h_2^{r-1}(\mathbf u,a)\in Y_{r-1}$. Therefore, the set $Y_{r-1}$ is non-empty. Lemma~\ref{h_2^{k-1}}(ii) implies that $D(\mathbf u, z)=r-1$ and $\ell_2({\bf u},a)< \ell_2({\bf u},z)$ for any $z\in Y_{r-1}$. Then $\ell_1({\bf u},b)< \ell_2({\bf u},z)$ for any $z\in Y_{r-1}$. Now we consider the letter $y_{r-1}\in Y_{r-1}$ such that $\ell_2({\bf u},z)\le \ell_2({\bf u},y_{r-1})$ for any $z\in Y_{r-1}$.
 
Now we check some additional properties of the letter $x_r$, which are fulfilled under certain restrictions to $r$. Suppose that $r<k+1$. Then the letter $x_r$ is defined. Our aim is to prove that
\begin{equation}
\label{ell_2(x_r)<ell_2(y_{r-1}}
\ell_2(\mathbf u, x_r)<\ell_2(\mathbf u, y_{r-1}).
\end{equation}
Put $y_{r-2}=h_2^{r-2}(\mathbf u, y_{r-1})$. Since $D(\mathbf u, y_{r-1})=r-1$, Lemma~\ref{h_2^{k-1}} implies that $D(\mathbf u, y_{r-2})=r-2$ and $\ell_1({\bf u},y_{r-1})< \ell_1({\bf u},y_{r-2})$. Recall that $\ell_1(\mathbf u,a)<\ell_1({\bf u},y_{r-1})$, whence $\ell_1(\mathbf u,a)<\ell_1({\bf u},y_{r-2})$. Since $D(\mathbf u,a)=r$, we can apply Lemma~\ref{if first then second} and conclude that $\ell_2({\bf u},a)< \ell_1({\bf u},y_{r-2})$. Second occurrence of $a$ in \textbf u immediately precedes first occurrence of $b=x_k$, whence $\ell_1({\bf u},x_k)< \ell_1({\bf u},y_{r-2})$. Then Lemma~\ref{if first then second} implies that $\ell_2({\bf u},x_k)< \ell_1({\bf u},y_{r-2})$. This implies that $\ell_1(\mathbf u,x_{k-1})<\ell_2({\bf u},x_k)< \ell_1({\bf u},y_{r-2})$. If $k-1\ge r$ then Lemma~\ref{if first then second} applies with the conclusion that $\ell_2({\bf u},x_{k-1})< \ell_1({\bf u},y_{r-2})$. Continuing this process, we eventually obtain $\ell_2(\mathbf u, x_r)<\ell_1(\mathbf u, y_{r-2})$. The choice of $y_{r-2}$ implies that first occurrence of $y_{r-2}$ in \textbf u precedes second occurrence of $y_{r-1}$. Therefore, $\ell_2(\mathbf u, x_r)<\ell_2(\mathbf u, y_{r-1})$. So, we have proved that if $r<k+1$ then the claim~\eqref{ell_2(x_r)<ell_2(y_{r-1}} is true.
 
Let now $r>2$. Note that
$$
\ell_1(\mathbf u,y_{r-1})<\ell_2(\mathbf u,a)<\ell_1(\mathbf u,b)=\ell_1(\mathbf u,x_k)<\ell_1(\mathbf u,x_{k-1})<\cdots<\ell_1(\mathbf u,x_{r-3}).
$$
If $\ell_1({\bf u},x_{r-3})<\ell_2({\bf u},y_{r-1})$ then the letter $x_{r-3}$ lies between the first and the second occurrences of $y_{r-1}$ in \textbf u. Since $x_{r-3}$ is an \mbox{$(r-3)$}-divider of \textbf u, we obtain a contradiction with the equality $D(\mathbf u, y_{r-1})=r-1$. Thus,
\begin{equation}
\label{ell_2(y_{r-1})<ell_1(x_{r-3})}
\ell_2(\mathbf u, y_{r-1})<\ell_1(\mathbf u, x_{r-3})
\end{equation}
whenever $r>2$.
 
One can return to arbitrary $r\le k+1$. This restriction on $r$ guarantees that the letters $x_{r-2}$ and $x_{r-1}$ are defined. There are three possibilities for second occurrence of the letter $y_{r-1}$ in \textbf u:
\begin{align}
\label{2nd occurrence y_{r-1} in u 1}&\ell_1(\mathbf u, x_{r-2})<\ell_2(\mathbf u, y_{r-1})<\ell_2(\mathbf u, x_{r-1});\\
\label{2nd occurrence y_{r-1} in u 2}&\ell_2(\mathbf u,y_{r-1})<\ell_1(\mathbf u, x_{r-2});\\
\label{2nd occurrence y_{r-1} in u 3}&\ell_2(\mathbf u, x_{r-1})<\ell_2(\mathbf u,y_{r-1}).
\end{align}
 
The equality~\eqref{u = long word 2} may be rewritten in the form
\begin{equation}
\label{u = long word 3}
\begin{array}{rl}
{\bf u}={}&{\bf w}a{\bf v}ab{\bf v}_{2k}x_{k-1}{\bf v}_{2k-1}b{\bf v}_{2k-2}x_{k-2}{\bf v}_{2k-3}x_{k-1}\cdots{\bf v}_{2r}\stackrel{(1)}{x_{r-1}}\\
&\cdot\,{\bf v}_{2r-1}\stackrel{(2)}{x_r}{\bf v}_{2r-2}\stackrel{(1)}{x_{r-2}}{\bf v}_{2r-3}\stackrel{(2)}{x_{r-1}}{\bf v}_{2r-4}\stackrel{(1)}{x_{r-3}}{\bf v}_{2r-5}\stackrel{(2)}{x_{r-2}}\cdots\\
&\cdot\,{\bf v}_4x_1{\bf v}_3x_2{\bf v}_2x_0{\bf v}_1x_1{\bf v}_0.
\end{array}
\end{equation}
Suppose that the claim~\eqref{2nd occurrence y_{r-1} in u 1} holds. Then second occurrence of $y_{r-1}$ in \textbf u belongs to the word ${\bf v}_{2r-3}$, whence ${\bf v}_{2r-3}={\bf v}_{2r-3}'y_{r-1}{\bf v}_{2r-3}''$ for possibly empty words ${\bf v}_{2r-3}'$ and ${\bf v}_{2r-3}''$. Further, since $\ell_1(\mathbf u, a)<\ell_1(\mathbf u, y_{r-1})<\ell_2(\mathbf u, a)$, first occurrence of $y_{r-1}$ belongs to \textbf v. Therefore, $\mathbf v = \mathbf v_{2k+2} y_{r-1}\mathbf v_{2k+1}$ for possibly empty words ${\bf v}_{2k+2}$ and ${\bf v}_{2k+1}$.
 
Combining all we above, we can clarify the presentation~\eqref{u = long word 2} of the word $\mathbf u$ and write this word in the form
\begin{align*}
{\bf u}={}&{\bf w}a{\bf v}_{2k+2}y_{r-1}{\bf v}_{2k+1}ab{\bf v}_{2k}x_{k-1}{\bf v}_{2k-1}b{\bf v}_{2k-2}x_{k-2}{\bf v}_{2k-3}x_{k-1}\cdots\\
&\cdot\,{\bf v}_{2r}x_{r-1}{\bf v}_{2r-1}x_r{\bf v}_{2r-2}x_{r-2}{\bf v}_{2r-3}'y_{r-1}{\bf v}_{2r-3}''x_{r-1}{\bf v}_{2r-4}x_{r-3}{\bf v}_{2r-5}x_{r-2}\cdots\\
&\cdot\,{\bf v}_4x_1{\bf v}_3x_2{\bf v}_2x_0{\bf v}_1x_1{\bf v}_0.
\end{align*}
Note that $\mathbf u'={\bf w}a{\bf v}_{2k+2}y_{r-1}{\bf v}_{2k+1}$ and
\begin{align*}
\mathbf u''={}&{\bf v}_{2k}x_{k-1}{\bf v}_{2k-1}b{\bf v}_{2k-2}x_{k-2}{\bf v}_{2k-3}x_{k-1}\cdots{\bf v}_{2r}x_{r-1}{\bf v}_{2r-1}x_r{\bf v}_{2r-2}x_{r-2}\\
&\cdot\,{\bf v}_{2r-3}'y_{r-1}{\bf v}_{2r-3}''x_{r-1}{\bf v}_{2r-4}x_{r-3}{\bf v}_{2r-5}x_{r-2}\cdots{\bf v}_4x_1{\bf v}_3x_2{\bf v}_2x_0{\bf v}_1x_1{\bf v}_0.
\end{align*}
 
Similarly to the proof of~\eqref{ell_2(u,z)<ell_2(u,x_{s-1})}, we can verify that if $z\in\con(\mathbf v_{2k+2}\mathbf v_{2k+1})$ then $\ell_2(\mathbf u,z)\le \ell_2(\mathbf u,y_{r-1})$.
 
Now we are ready to begin the process of modifying the word \textbf u to get the word $\mathbf u'ba\mathbf u''$. But first, we will outline the general scheme of further considerations, since arguments of that type will be repeated many times below. We are based on the fact that the identity~\eqref{xyxzx=xyxz} is satisfied by the variety \textbf K. This allows us to add any letter that is multiple in a given word to any place after second occurrence of this letter in the word. Using this, we will add different missing letters or even words in different places in \textbf u (or in a word which equals \textbf u in \textbf V) in order to make it possible to apply that word to the identity that is fulfilled in \textbf V at the moment (now such identity is $\delta_k^m$). After that, we will apply this identity, and then ``reverse the process'', i.e., based on~\eqref{xyxzx=xyxz}, remove unnecessary letters or even words from the resulting word to obtain the word $\mathbf u'ba\mathbf u''$. 
 
Let us proceed with the implementation of this plan. First, we apply the identity~\eqref{xyxzx=xyxz} to the word $\bf u$ and insert the letter $y_{r-1}$ after second occurrence of $x_{r-1}$ in $\bf u$. We obtain the identity
\begin{equation}
\label{u = long word 4}
\begin{array}{rl}
{\bf u}\approx&{\bf w}a{\bf v}_{2k+2}y_{r-1}{\bf v}_{2k+1}ab{\bf v}_{2k}x_{k-1}{\bf v}_{2k-1}b{\bf v}_{2k-2}x_{k-2}{\bf v}_{2k-3}x_{k-1}\cdots\\
&\cdot\,{\bf v}_{2r}\stackrel{(1)}{x_{r-1}}{\bf v}_{2r-1}x_r{\bf v}_{2r-2}x_{r-2}{\bf v}_{2r-3}\stackrel{(2)}{x_{r-1}}y_{r-1}{\bf v}_{2r-4}x_{r-3}\cdots\\
&\cdot\,{\bf v}_{2r-5}x_{r-2}{\bf v}_4x_1{\bf v}_3x_2{\bf v}_2x_0{\bf v}_1x_1{\bf v}_0.
\end{array}
\end{equation}
Further, we apply the identity~\eqref{xyxzx=xyxz} sufficiently many times to the right-hand side of the identity~\eqref{u = long word 4} and replace there third occurrence of $y_{r-1}$ with $\mathbf v_{2k+2}y_{r-1}\mathbf v_{2k+1}$ and second occurrence of $x_{s-1}$ with $\mathbf v_{2s}x_{s-1}\mathbf v_{2s-1}$ for any $2\le s\le k$. We have \textbf V satisfies the identity
\begin{equation}
\label{u = long word 5}
\mathbf u\approx {\bf w}a{\bf v}_{2k+2}y_{r-1}{\bf v}_{2k+1} ab\mathbf p\mathbf v_0
\end{equation}
where
\begin{align*}
\mathbf p={}&{\bf v}_{2k}x_{k-1}{\bf v}_{2k-1}b{\bf v}_{2k-2}x_{k-2}{\bf v}_{2k-3}{\bf v}_{2k}x_{k-1}{\bf v}_{2k-1}{\bf v}_{2k-4}\cdots{\bf v}_{2r-2}x_{r-2}{\bf v}_{2r-3}\\
&\cdot\,\mathbf v_{2r} x_{r-1}\mathbf v_{2r-1}\mathbf v_{2k+2}y_{r-1}\mathbf v_{2k+1}{\bf v}_{2r-4}x_{r-3}{\bf v}_{2r-5}\mathbf v_{2r-2}x_{r-2}\mathbf v_{2r-3}{\bf v}_{2r-6}\cdots\\
&\cdot\,{\bf v}_4x_1{\bf v}_3\mathbf v_6 x_2\mathbf v_5{\bf v}_2x_0{\bf v}_1\mathbf v_4x_1\mathbf v_3.
\end{align*}
By the hypothesis, $r=D(\mathbf u,a)>m$. Then by Lemma~\ref{between F_k and F_{k+1}}, $\delta_k^{r-1}$ holds in \textbf V. Now we perform the substitution
$$
(x_0,\dots,x_{k-1},x_k,y_{r-1},y_r)\mapsto(\mathbf v_2x_0\mathbf v_1,\dots,\mathbf v_{2k}x_{k-1}\mathbf v_{2k-1},b, \mathbf v_{2k+2}y_{r-1}\mathbf v_{2k+1},a)
$$
in this identity. Then we obtain the identity
$$
a\mathbf v_{2k+2}y_{r-1}\mathbf v_{2k+1}ab\mathbf p\approx a\mathbf v_{2k+2}y_{r-1}\mathbf v_{2k+1}ba\mathbf p.
$$
This identity together with~\eqref{u = long word 5} implies that \textbf V satisfies the identity
$$
{\bf u}\approx \mathbf wa\mathbf v_{2k+2}y_{r-1}\mathbf v_{2k+1}ba\mathbf p\mathbf v_0.
$$
Now we apply the identity~\eqref{xyxzx=xyxz} to the right-hand side of the last identity ``in the opposite direction'' and replace the subword $\mathbf v_{2k+2}y_{r-1}\mathbf v_{2k+1}$ with $y_{r-1}$ and the subword $\mathbf v_{2s}x_{s-1}\mathbf v_{2s-1}$ with $x_{s-1}$ for any $2\le s\le k$. As a result, we obtain the identity
\begin{align*}
{\bf u}\approx{}&{\bf w}a{\bf v}_{2k+2}y_{r-1}{\bf v}_{2k+1}ba{\bf v}_{2k}x_{k-1}{\bf v}_{2k-1}b{\bf v}_{2k-2}x_{k-2}{\bf v}_{2k-3}x_{k-1}\cdots{\bf v}_{2r-2}x_{r-2}\\
&\cdot\,{\bf v}_{2r-3}x_{r-1}y_{r-1}{\bf v}_{2r-4}x_{r-3}{\bf v}_{2r-5}x_{r-2}{\bf v}_{2r-6}\cdots{\bf v}_4x_1{\bf v}_3x_2{\bf v}_2x_0{\bf v}_1x_1{\bf v}_0.
\end{align*}
Finally, we apply the identity~\eqref{xyxzx=xyxz} to the right-hand side of the last identity and delete third occurrence $y_{r-1}$. We obtain the identity
\begin{align*}
{\bf u}\approx{}&{\bf w}a{\bf v}_{2k+2}y_{r-1}{\bf v}_{2k+1}ba{\bf v}_{2k}x_{k-1}{\bf v}_{2k-1}b{\bf v}_{2k-2}x_{k-2}{\bf v}_{2k-3}x_{k-1}\cdots{\bf v}_{2r-2}x_{r-2}\\
&\cdot\,{\bf v}_{2r-3}'y_{r-1}{\bf v}_{2r-3}''x_{r-1}{\bf v}_{2r-4}x_{r-3}{\bf v}_{2r-5}x_{r-2}{\bf v}_{2r-6}\cdots{\bf v}_4x_1{\bf v}_3x_2{\bf v}_2\\
&\cdot\,x_0{\bf v}_1x_1{\bf v}_0\\
={}&{\bf u}'ba{\bf u}''.
\end{align*}
 
It remains to consider the case when either~\eqref{2nd occurrence y_{r-1} in u 2} or~\eqref{2nd occurrence y_{r-1} in u 3} holds. We are going to verify that in both the cases the identity~\eqref{u = long word 4} holds. It suffices to our aim because then we can complete considerations in the same arguments as above. If the claim~\eqref{2nd occurrence y_{r-1} in u 2} holds then~\eqref{ell_2(x_r)<ell_2(y_{r-1}} and~\eqref{u = long word 3} imply that the word \textbf u has the form
\begin{align*}
{\bf u}\!={}\!&{\bf w}a{\bf v}_{2k+2}y_{r-1}{\bf v}_{2k+1}ab{\bf v}_{2k}x_{k-1}{\bf v}_{2k-1}b{\bf v}_{2k-2}x_{k-2}{\bf v}_{2k-3}x_{k-1}\cdots{\bf v}_{2r}\stackrel{(1)}{x_{r-1}}{\bf v}_{2r-1}\\[-3pt]
&\cdot\,x_r{\bf v}_{2r-2}'y_{r-1}{\bf v}_{2r-2}''x_{r-2}{\bf v}_{2r-3}\stackrel{(2)}{x_{r-1}}{\bf v}_{2r-4}x_{r-3}{\bf v}_{2r-5}x_{r-2}\cdots{\bf v}_4x_1{\bf v}_3x_2\\
&\cdot\,{\bf v}_2x_0{\bf v}_1x_1{\bf v}_0
\end{align*}
for some possibly empty words ${\bf v}_{2r-2}',{\bf v}_{2r-2}''$ such that ${\bf v}_{2r-2}={\bf v}_{2r-2}'y_{r-1}{\bf v}_{2r-2}''$. Here we add one more occurrence of the letter $y_{r-1}$ immediately after second occurrence of $x_{r-1}$. As a result, we obtain the identity~\eqref{u = long word 4}. Finally, if~\eqref{2nd occurrence y_{r-1} in u 3} is the case then we use~\eqref{ell_2(y_{r-1})<ell_1(x_{r-3})}. The word \textbf u here has the form
\begin{align*}
{\bf u}={}&{\bf w}a{\bf v}_{2k+2}\stackrel{(1)}{y_{r-1}}{\bf v}_{2k+1}ab{\bf v}_{2k}x_{k-1}{\bf v}_{2k-1}b{\bf v}_{2k-2}x_{k-2}{\bf v}_{2k-3}x_{k-1}\cdots{\bf v}_{2r}x_{r-1}{\bf v}_{2r-1}\\[-3pt]
&\cdot\,x_r{\bf v}_{2r-2}x_{r-2}{\bf v}_{2r-3}x_{r-1}{\bf v}_{2r-4}'\stackrel{(2)}{y_{r-1}}{\bf v}_{2r-4}''\stackrel{(1)}{x_{r-3}}{\bf v}_{2r-5}x_{r-2}\cdots{\bf v}_4x_1{\bf v}_3x_2\\
&\cdot\,{\bf v}_2x_0{\bf v}_1x_1{\bf v}_0
\end{align*}
for some possibly empty words ${\bf v}_{2r-4}',{\bf v}_{2r-4}''$ such that ${\bf v}_{2r-4}={\bf v}_{2r-4}'y_{r-1}{\bf v}_{2r-4}''$. Then we can add third occurrence of the letter $x_{r-1}$ immediately before second occurrence of $y_{r-1}$ and obtain the identity
\begin{align*}
{\bf u}\approx{}&{\bf w}a{\bf v}_{2k+2}\stackrel{(1)}{y_{r-1}}{\bf v}_{2k+1}ab{\bf v}_{2k}x_{k-1}{\bf v}_{2k-1}b{\bf v}_{2k-2}x_{k-2}{\bf v}_{2k-3}x_{k-1}\cdots{\bf v}_{2r}\stackrel{(1)}{x_{r-1}}\\[-3pt]
&\cdot\,{\bf v}_{2r-1}x_r{\bf v}_{2r-2}x_{r-2}{\bf v}_{2r-3}\stackrel{(2)}{x_{r-1}}{\bf v}_{2r-4}'\stackrel{(3)}{x_{r-1}}\stackrel{(2)}{y_{r-1}}{\bf v}_{2r-4}''\stackrel{(1)}{x_{r-3}}{\bf v}_{2r-5}x_{r-2}\cdots\\
&\cdot\,{\bf v}_4x_1{\bf v}_3x_2{\bf v}_2x_0{\bf v}_1x_1{\bf v}_0.
\end{align*}
The last identity is nothing but~\eqref{u = long word 4} (up to renaming of $\mathbf v_{2r-3}x_{r-1}\mathbf v_{2r-4}'$ to $\mathbf v_{2r-3}$, and $\mathbf v_{2r-4}''$ to $\mathbf v_{2r-4}$).
 
\smallskip
 
B) $r>k+1$. Recall that the equality~\eqref{u = long word 2} is true. Suppose that the word ${\bf v}$ is non-empty. Let $y\in\con(\mathbf v)$. Suppose that $\ell_1({\bf u},x_{k-1})<\ell_2({\bf u},y)$. This implies that $h_1^{k-1}(\mathbf u, y)\ne h_2^{k-1}(\mathbf u, y)$ because $x_{k-1}$ is a \mbox{$(k-1)$}-divider of \textbf u. Then $y$ is a $k$-divider of $\bf u$. Since \textbf v (and, in particular, $y$) is located between the first and the second occurrences of $a$ in \textbf u, this contradicts the fact that $D(\mathbf u,a)=r>k+1$. So, $\ell_2({\bf u},y)\le \ell_1({\bf u},x_{k-1})$ for any $y\in\con(\mathbf v)$. Then we apply the identity~\eqref{xyxzx=xyxz} sufficiently many times to the right-hand side of the identity~\eqref{u = long word 2}, namely, we insert the word $\bf v$ after second occurrence of $b$ there. Clearly, we can formally insert the word $\bf v$ after second occurrence of $b$ whenever $\mathbf v=\lambda$ too. Further, in view of~\eqref{ell_2(u,z)<ell_2(u,x_{s-1})}, we can replace second occurrence of $x_{s-1}$ in the right-hand side of~\eqref{u = long word 2} with the word $\mathbf v_{2s}x_{s-1}\mathbf v_{2s-1}$ for any $2\le s\le k$. We have \textbf V satisfies the identity
\begin{equation}
\label{u = long word 6}
{\bf u}\approx{\bf w}a{\bf v}ab{\bf p}{\bf v}_0
\end{equation}
where
\begin{align*}
\mathbf p={}&{\bf v}_{2k}x_{k-1}{\bf v}_{2k-1}b{\bf v}{\bf v}_{2k-2}x_{k-2}{\bf v}_{2k-3}{\bf v}_{2k}x_{k-1}{\bf v}_{2k-1}\cdots{\bf v}_4x_1{\bf v}_3{\bf v}_6x_2{\bf v}_5{\bf v}_2x_0{\bf v}_1\\
&\cdot\,{\bf v}_4x_1{\bf v}_3.
\end{align*}
In view of Lemma~\ref{between F_k and F_{k+1}}, \textbf V satisfies the identity $\delta_k^k$. Now we perform the substitution
$$
(x_0,\dots,x_{k-1},x_k,y_k,y_{k+1})\mapsto(\mathbf v_2x_0\mathbf v_1,\dots,\mathbf v_{2k}x_{k-1}\mathbf v_{2k-1},b, \mathbf v,a)
$$
in this identity. Then we obtain the identity
$$
a\mathbf vab\mathbf p\approx a\mathbf vba\mathbf p.
$$
This identity together with~\eqref{u = long word 6} implies that \textbf V satisfies the identity
$$
{\bf u}\approx \mathbf wa\mathbf vba\mathbf p\mathbf v_0.
$$
Now we apply the identity~\eqref{xyxzx=xyxz} to the right-hand side of the last identity ``in the opposite direction'', namely delete the word $\bf v$ after second occurrence of $b$ and replace the subword $\mathbf v_{2s}x_{s-1}\mathbf v_{2s-1}$ with $x_{s-1}$ for any $2\le s\le k$. As a result, we obtain the identity
\begin{align*}
{\bf u}\approx{}&{\bf w}a{\bf v}ba{\bf v}_{2k}x_{k-1}{\bf v}_{2k-1}b{\bf v}_{2k-2}x_{k-2}{\bf v}_{2k-3}x_{k-1}\cdots{\bf v}_4x_1{\bf v}_3x_2{\bf v}_2x_0{\bf v}_1x_1{\bf v}_0\\
={}&{\bf u}'ba{\bf u}''.
\end{align*}
 
\smallskip
 
\emph{Subcase} 1.2: \textbf V satisfies the hypothesis of the claim~(ii), i.e., $\gamma_k$ holds in \textbf V and $a\in\con(\mathbf u')$. Recall that the equality~\eqref{u = long word 1} is true. The claim~\eqref{a is not multiple in u'} allows us to assume that $a\in\simple(\mathbf u')$. Then, as well as in Subcase~1.1, the word \textbf u has the form~\eqref{u = long word 2}. Note that $\mathbf u'={\bf w}a{\bf v}$ and
$$
{\bf u}''={\bf v}_{2k}x_{k-1}{\bf v}_{2k-1}b{\bf v}_{2k-2}x_{k-2}{\bf v}_{2k-3}x_{k-1}\cdots{\bf v}_4x_1{\bf v}_3x_2{\bf v}_2x_0{\bf v}_1x_1{\bf v}_0.
$$
 
Recall that the claim~\eqref{ell_2(u,z)<ell_2(u,x_{s-1})} is true for any $2\le s\le k$. Now we can apply the identity~\eqref{xyxzx=xyxz} sufficiently many times to the right-hand side of the identity~\eqref{u = long word 2} and replace second occurrence of $x_{s-1}$ with the word $\mathbf v_{2s}x_{s-1}\mathbf v_{2s-1}$ for any $2\le s\le k$. We have \textbf V satisfies the identity
\begin{align*}
{\bf u}\approx{}&{\bf w}a{\bf v}ab{\bf v}_{2k}x_{k-1}{\bf v}_{2k-1}b{\bf v}_{2k-2}x_{k-2}{\bf v}_{2k-3}{\bf v}_{2k}x_{k-1}{\bf v}_{2k-1}\cdots\\
&\cdot\,{\bf v}_4x_1{\bf v}_3{\bf v}_6x_2{\bf v}_5{\bf v}_2x_0{\bf v}_1{\bf v}_4x_1{\bf v}_3{\bf v}_0.
\end{align*}
Put $\mathbf p_1=a{\bf v}$ and
\begin{align*}
{\bf p}_2={}&{\bf v}_{2k}x_{k-1}{\bf v}_{2k-1}b{\bf v}_{2k-2}x_{k-2}{\bf v}_{2k-3}{\bf v}_{2k}x_{k-1}{\bf v}_{2k-1}\cdots{\bf v}_4x_1{\bf v}_3{\bf v}_6x_2{\bf v}_5{\bf v}_2x_0{\bf v}_1\\
&\cdot\,{\bf v}_4x_1{\bf v}_3.
\end{align*}
Then the last identity has the form
\begin{equation}
\label{u = long word 7}
{\bf u}\approx{\bf w}\mathbf p_1ab\mathbf p_2{\bf v}_0.
\end{equation}
By the hypothesis, \textbf V satisfies the identity $\gamma_k$. Now we perform the substitution
$$
(x_0,x_1,\dots, x_{k-1},x_k,y_0,y_1)\mapsto(\mathbf v_2x_0\mathbf v_1,\mathbf v_4x_1\mathbf v_3,\dots,\mathbf v_{2k}x_{k-1}\mathbf v_{2k-1},b,{\bf v},a)
$$
in this identity. Then we obtain the identity $\mathbf p_1ba\mathbf p_2\approx \mathbf p_1ab\mathbf p_2$. This identity together with~\eqref{u = long word 7} implies that \textbf V satisfies the identity ${\bf u}\approx {\bf w}\mathbf p_1ba\mathbf p_2{\bf v}_0$, i.e., the identity
\begin{align*}
{\bf u}\approx{}&{\bf w}a{\bf v}ba{\bf v}_{2k}x_{k-1}{\bf v}_{2k-1}b{\bf v}_{2k-2}x_{k-2}{\bf v}_{2k-3}{\bf v}_{2k}x_{k-1}{\bf v}_{2k-1}\cdots\\
&\cdot\,{\bf v}_4x_1{\bf v}_3{\bf v}_6x_2{\bf v}_5{\bf v}_2x_0{\bf v}_1{\bf v}_4x_1{\bf v}_3{\bf v}_0.
\end{align*}
Now we apply the identity~\eqref{xyxzx=xyxz} to the right-hand side of the last identity ``in the opposite direction'' and replace the subword $\mathbf v_{2s}x_{s-1}\mathbf v_{2s-1}$ with $x_{s-1}$ for any $2\le s\le k$. As a result, we obtain the identity
\begin{align*}
{\bf u}\approx{}&{\bf w}a{\bf v}ba{\bf v}_{2k}x_{k-1}{\bf v}_{2k-1}b{\bf v}_{2k-2}x_{k-2}{\bf v}_{2k-3}x_{k-1}\cdots{\bf v}_4x_1{\bf v}_3x_2{\bf v}_2x_0{\bf v}_1x_1{\bf v}_0\\
={}&{\bf u}'ba{\bf u}''.
\end{align*}
 
\smallskip
 
\emph{Subcase} 1.3: \textbf V satisfies the hypothesis of the claim~(iii), i.e., $\beta_k$ holds in \textbf V and $D(\mathbf u,a)\ne D(\mathbf u,b)$. Subcase~1.2 allows us to assume that $a\notin\con(\mathbf u')$. This fact and~\eqref{b notin u'} immediately imply that $\ell_1(\mathbf u,a)<\ell_1(\mathbf u,b)$. If $D(\mathbf u, a)\le k-1$ then $a$ is a \mbox{$(k-1)$}-divider of $\bf u$ by Lemma~\ref{k-divider and depth}. But this is not the case. Therefore, $D(\mathbf u, a)\ge k$. Since $D(\mathbf u,b)\ne D(\mathbf u,a)$ and $D(\mathbf u, b)=k$, we obtain $D(\mathbf u,a)>k$.
 
Note that $\ell_2(\mathbf u,a)<\ell_1({\bf u},x_{k-1})$ because $h_1^{k-1}(\mathbf u,a)=h_2^{k-1}(\mathbf u,a)$ and $x_{k-1}$ is a \mbox{$(k-1)$}-divider. Recall that the equality~\eqref{u = long word 1} is true. Then $\mathbf v_{2k}=\mathbf v_{2k}'a\mathbf v_{2k}''$ for some possibly empty words $\mathbf v_{2k}',\mathbf v_{2k}''$. Thus,
$$
{\bf u}={\bf u}'ab{\bf v}_{2k}'a{\bf v}_{2k}''x_{k-1}{\bf v}_{2k-1}b{\bf v}_{2k-2}x_{k-2}{\bf v}_{2k-3}x_{k-1}\cdots{\bf v}_4x_1{\bf v}_3x_2{\bf v}_2x_0{\bf v}_1x_1{\bf v}_0.
$$
Now we are going to verify that the identity
\begin{equation}
\label{u = long word 8}
{\bf u}\approx{\bf u}'aba{\bf v}_{2k}x_{k-1}{\bf v}_{2k-1}b{\bf v}_{2k-2}x_{k-2}{\bf v}_{2k-3}x_{k-1}\cdots{\bf v}_4x_1{\bf v}_3x_2{\bf v}_2x_0{\bf v}_1x_1{\bf v}_0
\end{equation}
holds in $\mathbf V$. This statement is evident whenever ${\bf v}_{2k}'=\lambda$. Suppose now that ${\bf v}_{2k}'=\mathbf v^\ast d$ for some possibly empty word $\mathbf v^\ast$ and some letter $d$. Then \textbf u may be rewritten in the form
$$
{\bf u}={\bf u}'\stackrel{(1)}ab{\bf v}^\ast d\stackrel{(2)}a{\bf v}_{2k}''x_{k-1}{\bf v}_{2k-1}b{\bf v}_{2k-2}x_{k-2}{\bf v}_{2k-3}x_{k-1}\cdots{\bf v}_4x_1{\bf v}_3x_2{\bf v}_2x_0{\bf v}_1x_1{\bf v}_0.
$$
Note that the subword $da$ located between $\mathbf v^\ast$ and ${\bf v}_{2k}''$ lies in some \mbox{$(k-1)$}-block of \textbf u. Indeed, the occurrence of $d$ in this subword is not a \mbox{$(k-1)$}-divider of \textbf u because otherwise the first and the second occurrences of $a$ in \textbf u lies in different \mbox{$(k-1)$}-blocks, contradicting the inequality $D(\mathbf u,a)>k$, while the occurrence of $a$ in this subword is not a \mbox{$(k-1)$}-divider of \textbf u because this is not first occurrence of $a$ in \textbf u.
 
According to Lemma~\ref{between F_k and F_{k+1}}, the variety $\bf V$ satisfies the identity $\gamma_k$. In view of the statement that was proved in Subcase 1.2, $\bf V$ satisfies the identity
$$
{\bf u}\approx{\bf u}'ab{\bf v}^\ast ad{\bf v}_{2k}''x_{k-1}{\bf v}_{2k-1}b{\bf v}_{2k-2}x_{k-2}{\bf v}_{2k-3}x_{k-1}\cdots{\bf v}_4x_1{\bf v}_3x_2{\bf v}_2x_0{\bf v}_1x_1{\bf v}_0.
$$
Acting in this way, we can successively swap the letter $a$ with all letters of the word $\mathbf v_{2k}'$ and obtain
$$
{\bf u}\approx{\bf u}'aba{\bf v}_{2k}'{\bf v}_{2k}''x_{k-1}{\bf v}_{2k-1}b{\bf v}_{2k-2}x_{k-2}{\bf v}_{2k-3}x_{k-1}\cdots{\bf v}_4x_1{\bf v}_3x_2{\bf v}_2x_0{\bf v}_1x_1{\bf v}_0
$$
holds in $\bf V$. Now we apply the identity~\eqref{xyxzx=xyxz} to the right-hand side of the last identity and insert the letter $a$ after the word ${\bf v}_{2k}'$. We obtain the identity~\eqref{u = long word 8}.
 
Recall that~\eqref{ell_2(u,z)<ell_2(u,x_{s-1})} is true for any $2\le s\le k$. Now we can apply the identity~\eqref{xyxzx=xyxz} sufficiently many times to the right-hand side of the identity~\eqref{u = long word 8} and replace second occurrence of $x_{s-1}$ with the word $\mathbf v_{2s}x_{s-1}\mathbf v_{2s-1}$ for any $2\le s\le k$. We have \textbf V satisfies the identity
\begin{equation}
\label{u = long word 9}
{\bf u}\approx{\bf u}'aba\mathbf p{\bf v}_0
\end{equation}
where
\begin{align*}
{\bf p}={}&{\bf v}_{2k}x_{k-1}{\bf v}_{2k-1}b{\bf v}_{2k-2}x_{k-2}{\bf v}_{2k-3}{\bf v}_{2k}x_{k-1}{\bf v}_{2k-1}\cdots{\bf v}_4x_1{\bf v}_3{\bf v}_6x_2{\bf v}_5{\bf v}_2x_0{\bf v}_1\\
&\cdot\,{\bf v}_4x_1{\bf v}_3.
\end{align*}
 
Now we perform the substitution
$$
(x_0,x_1,\dots,x_{k-1},x_k,x)\mapsto({\bf v}_2x_0{\bf v}_1,{\bf v}_4x_1{\bf v}_3,\dots,{\bf v}_{2k}x_{k-1}{\bf v}_{2k-1},b,a)
$$
in the identity $\beta_k$. Then we obtain the identity $aba\mathbf p\approx ba^2\mathbf p$. One can apply this identity to the identity~\eqref{u = long word 9}. We get that the identity
\begin{align*}
{\bf u}\approx{\bf u}'ba^2\mathbf p{\bf v}_0={}&{\bf u}'ba^2{\bf v}_{2k}x_{k-1}{\bf v}_{2k-1}b{\bf v}_{2k-2}x_{k-2}{\bf v}_{2k-3}{\bf v}_{2k}x_{k-1}{\bf v}_{2k-1}\cdots\\
&\cdot\,{\bf v}_4x_1{\bf v}_3{\bf v}_6x_2{\bf v}_5{\bf v}_2x_0{\bf v}_1{\bf v}_4x_1{\bf v}_3{\bf v}_0
\end{align*}
holds in $\bf V$. Now we apply the identity~\eqref{xyxzx=xyxz} to the right-hand side of the last identity ``in the opposite direction'' and replace the subword $\mathbf v_{2s}x_{s-1}\mathbf v_{2s-1}$ with $x_{s-1}$ for any $2\le s\le k$. As a result, we obtain the identity
$$
{\bf u}\approx{\bf u}'ba^2{\bf v}_{2k}x_{k-1}{\bf v}_{2k-1}b{\bf v}_{2k-2}x_{k-2}{\bf v}_{2k-3}x_{k-1}\cdots{\bf v}_4x_1{\bf v}_3x_2{\bf v}_2x_0{\bf v}_1x_1{\bf v}_0.
$$
Repeating arguments used above in the deduction of the identity~\eqref{u = long word 8}, we obtain \textbf V satisfies the identity
\begin{align*}
{\bf u}\approx{}&{\bf u}'ba{\bf v}_{2k}x_{k-1}{\bf v}_{2k-1}b{\bf v}_{2k-2}x_{k-2}{\bf v}_{2k-3}x_{k-1}\cdots{\bf v}_4x_1{\bf v}_3x_2{\bf v}_2x_0{\bf v}_1x_1{\bf v}_0\!=\!{\bf u}'ba{\bf u}''\\
={}&\mathbf u'ba\mathbf u''.
\end{align*}
 
\smallskip
 
\emph{Subcase} 1.4: \textbf V satisfies the hypothesis of the claim~(iv), i.e., $\alpha_k$ holds in \textbf V. By Subcases~1.2 and~1.3 and the claim~\eqref{b notin u'}, we can assume that $a,b\notin\con(\mathbf u')$ and $D(\mathbf u,b)=D(\mathbf u,a)$. Recall that the equality~\eqref{u = long word 1} is true.
 
Note that $\ell_2(\mathbf u,a)<\ell_1({\bf u},x_{k-2})$ because $h_1^{k-2}(\mathbf u,a)=h_2^{k-2}(\mathbf u,a)$ and $x_{k-2}$ is a \mbox{$(k-2)$}-divider. Therefore, there are possibly empty words $\mathbf v'$ and $\mathbf v''$ such that one of the following equalities holds:
$$
{\bf v}_{2k}={\bf v}'a{\bf v}'',\ {\bf v}_{2k-1}={\bf v}'a{\bf v}'' \text{ or } {\bf v}_{2k-2}={\bf v}'a{\bf v}''.
$$
Then one of the following equalities holds:
\begin{align*}
{\bf u}={}&{\bf u}'ab{\bf v}'a{\bf v}''x_{k-1}{\bf v}_{2k-1}b{\bf v}_{2k-2}x_{k-2}{\bf v}_{2k-3}x_{k-1}\cdots{\bf v}_4x_1{\bf v}_3x_2{\bf v}_2x_0{\bf v}_1x_1{\bf v}_0,\\
{\bf u}={}&{\bf u}'ab{\bf v}_{2k}x_{k-1}{\bf v}'a{\bf v}''b{\bf v}_{2k-2}x_{k-2}{\bf v}_{2k-3}x_{k-1}\cdots{\bf v}_4x_1{\bf v}_3x_2{\bf v}_2x_0{\bf v}_1x_1{\bf v}_0,\\
{\bf u}={}&{\bf u}'ab{\bf v}_{2k}x_{k-1}{\bf v}_{2k-1}b{\bf v}'a{\bf v}''x_{k-2}{\bf v}_{2k-3}x_{k-1}\cdots{\bf v}_4x_1{\bf v}_3x_2{\bf v}_2x_0{\bf v}_1x_1{\bf v}_0.
\end{align*}
We consider only the first case. Two other cases can be considered similarly. Since the variety $\bf V$ satisfies the identity~\eqref{xyxzx=xyxz}, we obtain the identity
\begin{equation}
\label{u = long word 10}
{\bf u}\approx{\bf u}'ab{\bf v}_{2k}x_{k-1}{\bf v}_{2k-1}ab{\bf v}_{2k-2}x_{k-2}{\bf v}_{2k-3}x_{k-1}\cdots{\bf v}_4x_1{\bf v}_3x_2{\bf v}_2x_0{\bf v}_1x_1{\bf v}_0
\end{equation}
holds in this variety.
 
Recall that the claim~\eqref{ell_2(u,z)<ell_2(u,x_{s-1})} is true for any $2\le s\le k$. Now we can apply the identity~\eqref{xyxzx=xyxz} sufficiently many times to the right-hand side of the identity~\eqref{u = long word 10} and replace second occurrence of $x_{s-1}$ with the word $\mathbf v_{2s}x_{s-1}\mathbf v_{2s-1}$ for any $2\le s\le k$. We have \textbf V satisfies the identity
\begin{equation}
\label{u = long word 11}
{\bf u}\approx{\bf u'}ab\mathbf p{\bf v}_0
\end{equation}
where
\begin{align*}
\mathbf p={}&{\bf v}_{2k}x_{k-1}{\bf v}_{2k-1}ab{\bf v}_{2k-2}x_{k-2}{\bf v}_{2k-3}{\bf v}_{2k}x_{k-1}{\bf v}_{2k-1}\cdots{\bf v}_4x_1{\bf v}_3{\bf v}_6x_2{\bf v}_5{\bf v}_2x_0{\bf v}_1\\
&\cdot\,{\bf v}_4x_1{\bf v}_3.
\end{align*}
Now we perform the substitution
$$
(x_0,x_1,\dots,x_{k-1},x_k,y_k)\mapsto(\mathbf v_2x_0\mathbf v_1,\mathbf v_4x_1\mathbf v_3,\dots,\mathbf v_{2k}x_{k-1}\mathbf v_{2k-1},a,b)
$$
in the identity $\alpha_k$. Then we obtain the identity $ab\mathbf p\approx ba\mathbf p$. Let us apply this identity to the identity~\eqref{u = long word 11}. We get that the identity ${\bf u}\approx {\bf u'}ba\mathbf p{\bf v}_0$ holds in $\bf V$. Now we apply the identity~\eqref{xyxzx=xyxz} to the right-hand side of the last identity ``in the opposite direction'' and replace the subword $\mathbf v_{2s}x_{s-1}\mathbf v_{2s-1}$ with $x_{s-1}$ for any $2\le s\le k$. As a result, we obtain the identity
$$
{\bf u}\approx{\bf u}'ba{\bf v}_{2k}x_{k-1}{\bf v}_{2k-1}ab{\bf v}_{2k-2}x_{k-2}{\bf v}_{2k-3}x_{k-1}\cdots{\bf v}_4x_1{\bf v}_3x_2{\bf v}_2x_0{\bf v}_1x_1{\bf v}_0.
$$
 
Now we apply the identity~\eqref{xyxzx=xyxz} again and delete the occurrence of $a$ located between $\mathbf v_{2k-1}$ and second occurrence of $b$ in the right-hand side of the last identity. We obtain \textbf V satisfies the identity
\begin{align*}
{\bf u}\approx{}&{\bf u}'ba{\bf v}'a{\bf v}''x_{k-1}{\bf v}_{2k-1}b{\bf v}_{2k-2}x_{k-2}{\bf v}_{2k-3}x_{k-1}\cdots{\bf v}_4x_1{\bf v}_3x_2{\bf v}_2x_0{\bf v}_1x_1{\bf v}_0\\
={}&{\bf u}'ba{\bf u}''.
\end{align*}
 
\medskip
 
\emph{Case} 2: $k<D(\mathbf u, b)<\infty$. As we will see below, this case reduces to the previous one by relatively simple arguments. Put $D(\mathbf u,b)=r$. Further considerations are divided into three subcases.
 
\smallskip
 
\emph{Subcase} 2.1: \textbf V satisfies the hypothesis of one of the claims~(i) and~(ii). Here $a\in\con(\mathbf u')$. Hence the occurrence of the letter $a$ in the subword $ab$ of the word \textbf u mentioned in the formulation of the lemma is not first occurrence of $a$ in \textbf u. Therefore, this occurrence of $a$ in \textbf u is not an \mbox{$(r-1)$}-divider of \textbf u. Lemma~\ref{k-divider and depth} together with the fact that $D(\mathbf u, b)=r$ implies that the occurrence of the letter $b$ in the same subword $ab$ of the word \textbf u also is not an \mbox{$(r-1)$}-divider of \textbf u. Therefore, the above-mentioned subword $ab$ of the word \textbf u lies in some \mbox{$(r-1)$}-block of \textbf u.
 
Let
\begin{equation}
\label{s_0w_0s_1w_1 ... s_nw_n}
s_0\mathbf w_0 s_1 \mathbf w_1 \cdots s_n \mathbf w_n
\end{equation}
be the \mbox{$(r-1)$}-decomposition of $\bf u$. Then there exists a number $0\le j\le n$ such that $\mathbf w_j = \mathbf w_j'ab\mathbf w_j''$, whence
$$
\mathbf u' = s_0\mathbf w_0s_1\mathbf w_1\cdots s_j\mathbf w_j'\text{ and }\mathbf u''= \mathbf w_j''s_{j+1}\mathbf w_{j+1}\cdots s_n\mathbf w_n.
$$
Since $\mathbf J_k^m\subseteq\mathbf J_r^m$ and $\mathbf I_k\subseteq\mathbf I_r$ by Lemma~\ref{between F_k and F_{k+1}}, we apply the statements that proved in Subcases~1.1 and~1.2 and obtain the required conclusion that the identity $\mathbf u\approx \mathbf u'ba\mathbf u''$ holds in $\bf V$.
 
\smallskip
 
\emph{Subcase} 2.2: \textbf V satisfies the hypothesis of the claim~(iii), i.e., $\beta_k$ holds in \textbf V and $D(\mathbf u,a)\ne D(\mathbf u,b)$. Subcase~2.1 allows us to assume that $a\notin\con(\mathbf u')$.
 
Suppose that $D(\mathbf u,a)=s<r$. If $s\le k-1$ then $a$ is a \mbox{$(k-1)$}-divider of $\bf u$ by Lemma~\ref{k-divider and depth}. But this is not the case because first occurrence of $a$ in \textbf u lies in the \mbox{$(k-1)$}-block $\mathbf u_i$. Therefore, $s\ge k$. Let~\eqref{s_0w_0s_1w_1 ... s_nw_n} be the \mbox{$(s-1)$}-decomposition of $\bf u$. Then there exists a number $0\le j\le n$ such that $\mathbf w_j = \mathbf w_j'ab\mathbf w_j''$, $\mathbf u' = s_0\mathbf w_0s_1\mathbf w_1\cdots s_j\mathbf w_j'$ and $\mathbf u''= \mathbf w_j''s_{j+1}\mathbf w_{j+1}\cdots s_n\mathbf w_n$. Put $\mathbf u^\ast=\mathbf u'ba\mathbf u''$. Since $a,b\notin\{s_1,s_2,\dots,s_n\}$, the \mbox{$(s-1)$}-decomposition of $\bf u^\ast$ has the form
$$
s_0\mathbf w_0 s_1 \mathbf w_1 \cdots s_j \mathbf w_j^\ast \cdots s_n \mathbf w_n
$$
where $\mathbf w_j^\ast = \mathbf w_j'ba\mathbf w_j''$. Then the claims~\eqref{sim(u)=sim(v) & mul(u)=mul(v)} and~\eqref{eq the same l-dividers} with $\mathbf v = \mathbf u^\ast$ and $\ell=s$ are true. Now Lemma~\ref{D(u,x)=k iff D(v,x)=k} applies with the conclusion that $D(\mathbf u^\ast, a)=s$. Since $\bf V$ satisfies the identity $\beta_s$ by Lemma~\ref{between F_k and F_{k+1}}, we apply the statement that proved in Subcase~1.3 and obtain the identity $\mathbf u^\ast= \mathbf u'ba\mathbf u''\approx \mathbf u'ab\mathbf u''=\mathbf u$ holds in $\bf V$.
 
Suppose now that $D(\mathbf u,a)>r$. Let now~\eqref{s_0w_0s_1w_1 ... s_nw_n} be the \mbox{$(r-1)$}-decomposition of $\bf u$. Then there exists a number $0\le j\le n$ such that $\mathbf w_j = \mathbf w_j'ab\mathbf w_j''$, whence $\mathbf u' = s_0\mathbf w_0s_1\mathbf w_1\cdots s_j\mathbf w_j'$ and $\mathbf u''= \mathbf w_j''s_{j+1}\mathbf w_{j+1}\cdots s_n\mathbf w_n$. Since $\mathbf H_k\subseteq\mathbf H_r$ by Lemma~\ref{between F_k and F_{k+1}}, we apply the statement that proved in Subcase~1.3 and obtain the identity $\mathbf u\approx \mathbf u'ba\mathbf u''$ holds in $\bf V$.
 
\smallskip
 
\emph{Subcase} 2.3: \textbf V satisfies the hypothesis of the claim~(iv), i.e., $\alpha_k$ holds in \textbf V. Subcase~2.2 allows us to assume that $D(\mathbf u,a)=D(\mathbf u,b)$. Put $D(\mathbf u,a)=r$. Then the subword $ab$ of the word \textbf u above-mentioned in the formulation of the lemma lies in some \mbox{$(r-1)$}-block of \textbf u. Let~\eqref{s_0w_0s_1w_1 ... s_nw_n} be the \mbox{$(r-1)$}-decomposition of $\bf u$. Then there exists a number $0\le j\le n$ such that $\mathbf w_j = \mathbf w_j'ab\mathbf w_j''$, whence $\mathbf u' = s_0\mathbf w_0s_1\mathbf w_1\cdots s_j\mathbf w_j'$ and $\mathbf u''= \mathbf w_j''s_{j+1}\mathbf w_{j+1}\cdots s_n\mathbf w_n$. Since $\mathbf F_k\subseteq\mathbf F_r$ by Lemma~\ref{between F_k and F_{k+1}}, we apply the statement that proved in Subcase~1.4 and obtain the identity $\mathbf u\approx \mathbf u'ba\mathbf u''$ holds in $\bf V$.
 
\medskip
 
\emph{Case} 3: $D(\mathbf u, b)=\infty$. This case, as well as the previous one, is divided into three subcases.
 
\smallskip
 
\emph{Subcase} 3.1: \textbf V satisfies the hypothesis of one of the claims~(i) and~(ii). Let $s$ be a non-negative integer. Repeating literally arguments from Subcase~2.1, we obtain the subword $ab$ of the word \textbf u above-mentioned in the formulation of the lemma lies in some $s$-block of \textbf u. By Remark~\ref{max decomposition}, there is a number $r\ge k$ such that~\eqref{s_0w_0s_1w_1 ... s_nw_n} is the $\ell$-decomposition of $\bf u$ for any $\ell\ge r$. Then $ab$ is a subword of $\mathbf w_j$ for some $0\le j\le n$. We have $\mathbf w_j = \mathbf w_j'ab\mathbf w_j''$ for some possibly empty words $\mathbf w_j'$ and $\mathbf w_j''$. Then $\mathbf u' = s_0\mathbf w_0s_1\mathbf w_1\cdots s_j\mathbf w_j'$ and $\mathbf u''= \mathbf w_j''s_{j+1}\mathbf w_{j+1}\cdots s_n\mathbf w_n$. One can prove that
\begin{equation}
\label{occ_z(w_j)>1}
\occ_z(\mathbf w_j)\ge2
\end{equation}
for any letter $z\in \con(\mathbf w_j)$. Suppose at first that $s_j=h_1^r(\mathbf u, z)$ and $\occ_z(\mathbf w_j)=1$. If $\occ_z(\mathbf u)=1$ then $z$ is a 0-divider of $\bf u$. Lemma~\ref{simple observations}(i) implies that then $z\in\{s_1,s_2,\dots,s_n\}$, a contradiction. Therefore, $\occ_z(\mathbf u)\ge2$. Since $\occ_z(\mathbf w_j)=1$, we have $s_j\ne h_2^r(\mathbf u,z)$. This means that $D(\mathbf u, z)\le r+1$. According to Lemma~\ref{k-divider and depth}, $z$ is an \mbox{$(r+1)$}-divider of $\bf u$. We obtain a contradiction with the fact that~\eqref{s_0w_0s_1w_1 ... s_nw_n} is the \mbox{$(r+1)$}-decomposition of $\bf u$. So, the claim~\eqref{occ_z(w_j)>1} is true whenever $s_j=h_1^r(\mathbf u, z)$. Suppose now that $s_j\ne h_1^r(\mathbf u, z)$. Then the \mbox{$(1,r)$}-restrictor of $z$ in \textbf u is $s_p$ for some $p<j$. This means that $z\in\con(s_0\mathbf w_0s_1\mathbf w_1\cdots s_{j-1}\mathbf w_{j-1})$. Then
$$
\mathbf u=\mathbf fz\mathbf g\mathbf w_{j1}z\mathbf w_{j2}s_{j+1}\mathbf w_{j+1}\cdots s_n\mathbf w_n
$$
for some possibly empty words $\mathbf f,\mathbf g,\mathbf w_{j1}$ and $\mathbf w_{j2}$ with
$$
\mathbf fz\mathbf g=s_0\mathbf w_0s_1\mathbf w_1\cdots s_{j-1}\mathbf w_{j-1}\mathbf s_j
$$
and $\mathbf w_j=\mathbf w_{j1}z\mathbf w_{j2}$. Then the identity~\eqref{xyx=xyxx} applies with the conclusion that \textbf V satisfies the identity
$$
\mathbf u\approx\mathbf fz\mathbf g\mathbf w_{j1}z^2\mathbf w_{j2}s_{j+1}\mathbf w_{j+1}\cdots s_n\mathbf w_n.
$$
Therefore, we can assume that~\eqref{occ_z(w_j)>1} is true again. Thus, this claim holds for any letter $z\in\con(\mathbf w_j)$. Then Lemma~\ref{identities in K}(iii) implies that the variety $\bf V$ satisfies ${\bf w}_j\approx{\bf w}_j'ba{\bf w}_j''$, whence
\begin{align*}
\mathbf u={}&s_0\mathbf w_0s_1\mathbf w_1\cdots s_j\mathbf w_js_{j+1}\mathbf w_{j+1}\cdots s_n\mathbf w_n\\
\approx{}&s_0\mathbf w_0s_1\mathbf w_1\cdots s_j\mathbf w_j'ba\mathbf w_j''s_{j+1}\mathbf w_{j+1}\cdots s_n\mathbf w_n\\
={}&\mathbf u'ba\mathbf u''
\end{align*}
hold in this variety.
 
We have completed the proof of the statements~(i) and~(ii).
 
\smallskip
 
\emph{Subcase} 3.2: \textbf V satisfies the hypothesis of the claim~(iii), i.e., $\beta_k$ holds in \textbf V and $D(\mathbf u,a)\ne D(\mathbf u,b)$. Then $D(\mathbf u, a)<\infty$. Put $D(\mathbf u,a)=r$. Repeating arguments from Subcase~1.3, we have $a\notin\con(\mathbf u')$ and $r\ge k$. Let~\eqref{s_0w_0s_1w_1 ... s_nw_n} be the \mbox{$(r-1)$}-decomposition of $\bf u$. Then there exists a number $0\le j\le n$ such that $\mathbf w_j = \mathbf w_j'ab\mathbf w_j''$, $\mathbf u' = s_0\mathbf w_0s_1\mathbf w_1\cdots s_j\mathbf w_j'$ and $\mathbf u''= \mathbf w_j''s_{j+1}\mathbf w_{j+1}\cdots s_n\mathbf w_n$. Put $\mathbf u^\ast=\mathbf u'ba\mathbf u''$. Since $a,b\notin\{s_1,s_2,\dots,s_n\}$, the \mbox{$(r-1)$}-decomposition of $\bf u^\ast$ has the form $s_0\mathbf w_0 s_1 \mathbf w_1 \cdots s_j \mathbf w_j^\ast \cdots s_n \mathbf w_n$ where $\mathbf w_j^\ast = \mathbf w_j'ba\mathbf w_j''$. Then the claims~\eqref{sim(u)=sim(v) & mul(u)=mul(v)} and~\eqref{eq the same l-dividers} with $\mathbf v = \mathbf u^\ast$ and $\ell=r$ are true. Now Lemma~\ref{D(u,x)=k iff D(v,x)=k} applies with the conclusion that $D(\mathbf u^\ast, a)=r$, whence $a$ is an $r$-divider of $\mathbf u^\ast$ by Lemma~\ref{k-divider and depth}. Then $h_1^r(\mathbf u^\ast, b)\ne h_2^r(\mathbf u^\ast, b)$. This implies that $D(\mathbf u^\ast, b)>r$. Since $\bf V$ satisfies the identity $\beta_r$ by Lemma~\ref{between F_k and F_{k+1}}, we apply the statement that proved in Subcase~1.3 and obtain the identities $\mathbf u^\ast= \mathbf u'ba\mathbf u''\approx \mathbf u'ab\mathbf u''=\mathbf u$ hold in $\bf V$.
 
We have completed the proof of the statement~(iii).
 
\smallskip
 
\emph{Subcase} 3.3: \textbf V satisfies the hypothesis of the claim~(iv), i.e., $\alpha_k$ holds in \textbf V. Subcase~3.2 allows us to assume that $D(\mathbf u,a)=D(\mathbf u,b)=\infty$. This fact together with Lemma~\ref{k-divider and depth} implies that the subword $ab$ of the word \textbf u above-mentioned in the formulation of the lemma lies in some $s$-block of \textbf u for any $s$. Now we repeat literally arguments used in Subcase~3.1 and prove that the identity $\mathbf u\approx\mathbf u'ba\mathbf u''$ holds in \textbf V.
 
We have completed the proof of the statement~(iv) and the lemma as a whole.
\end{proof}
 
\subsection{Reduction to intervals of the form $[\mathbf F_k,\mathbf F_{k+1}]$}
\label{sufficiency: K - red to [F_k,F_{k+1}]}
 
Here we prove the claim~3) of Proposition~\ref{L(K)}. We need several auxiliary results.
 
\begin{lemma}
\label{V subseteq H_s or J_s^s}
Let ${\bf V}$ be a monoid variety such that ${\bf V} \subseteq {\bf K}$ and $\mathbf V$ satisfies an identity ${\bf u}\approx {\bf v}$, and $s$ be a natural number. Suppose that the claims~\eqref{sim(u)=sim(v) & mul(u)=mul(v)} and~\eqref{eq the same l-dividers} with $\ell=s$ are true and there are letters $x$ and $x_s$ such that $D(\mathbf u, x_s)=s$, $\ell_i(\mathbf u,x)<\ell_1(\mathbf u,x_s)$ and $\ell_1(\mathbf v,x_s)<\ell_i(\mathbf v,x)$ for some $i\in\{1,2\}$.
\begin{itemize}
\item[\textup{(i)}] If $i=1$ then ${\bf V} \subseteq {\bf H}_s$.
\item[\textup{(ii)}] If $i=2$ then ${\bf V} \subseteq {\bf J}_s^s$.
\end{itemize}
\end{lemma}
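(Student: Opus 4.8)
The plan is to reduce each part to verifying a single identity which, together with $\Phi$, axiomatises the target variety. For part~(i) this identity is $\beta_s$, since $\mathbf H_s=\var\{\Phi,\beta_s\}$; for part~(ii) it is the identity~\eqref{xx_kxb_k=x^2x_kb_k}, which by Lemma~\ref{basis for J_k^k} forms, together with $\Phi$, a basis of $\mathbf J_s^s$. Because $\mathbf V\subseteq\mathbf K=\var\Phi$, the variety $\mathbf V$ obeys $\Phi$ and hence every consequence collected in Lemma~\ref{identities in K}; in particular I may freely use $\sigma_2$ and the identity~\eqref{xyxzx=xyxz}, which allows any occurrence of a letter situated after its second occurrence to be inserted or deleted. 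The crucial point is that $\beta_s$ and~\eqref{xx_kxb_k=x^2x_kb_k} share the left-hand side $xx_sx\mathbf b_s$ and differ only on the right, where in $\beta_s$ the letter $x_s$ precedes the block $x^2$, while in~\eqref{xx_kxb_k=x^2x_kb_k} the block $x^2$ precedes $x_s$. These are exactly the two possible ways in which $x$ can cross the first occurrence of $x_s$: with its first occurrence (the case $i=1$) or only with its second (the case $i=2$), and this matches the hypotheses precisely.

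First I would normalise the given identity. The hypotheses are exactly those of Lemma~\ref{form of the identity}, so there are letters $x_0,x_1,\dots,x_{s-1}$ with $D(\mathbf u,x_r)=D(\mathbf v,x_r)=r$ for which $\mathbf u\approx\mathbf v$ has the staircase form~\eqref{form of u=v}; reading off the displayed first and second occurrences, the letters $x_0,\dots,x_s$ spell out the comb $x_s\mathbf b_s$ on each side. Since $\ell_1(\mathbf u,x_s)<\ell_1(\mathbf u,x_r)$ for every $r<s$ while $\ell_i(\mathbf u,x)<\ell_1(\mathbf u,x_s)$, the letter $x$ is distinct from each $x_r$. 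Let $\xi$ be the endomorphism of $F^1$ fixing $x,x_0,\dots,x_s$ and sending every other letter to $1$. Then $\mathbf V$ satisfies $\xi(\mathbf u)\approx\xi(\mathbf v)$, an identity involving only the letters $x,x_0,\dots,x_s$.

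Next I would reduce $\xi(\mathbf u)$ and $\xi(\mathbf v)$ modulo $\Phi$. By Lemma~\ref{form of the identity} the first and second occurrences of each $x_r$ are the displayed staircase occurrences, so every further occurrence of $x_r$ lies to the right of its second occurrence and may be deleted by~\eqref{xyxzx=xyxz}; all occurrences of $x$ beyond the ones needed are removed likewise. What survives on each side is the comb $x_s\mathbf b_s$ together with the relevant occurrences of $x$, whose position relative to $x_s$ is fixed by the hypothesis on $i$: for $i=1$ the first occurrence of $x$ precedes $x_s$ in $\xi(\mathbf u)$ but follows it in $\xi(\mathbf v)$, and for $i=2$ the same dichotomy holds for the second occurrence of $x$. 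Using~\eqref{xyxzx=xyxz} once more to bring the mobile occurrence of $x$ into the slot immediately after $x_s$, and $\sigma_2$ together with Lemma~\ref{identities in K}(iii) to tidy the remaining multiple letters, I would identify $\xi(\mathbf u)$ with the common left-hand side $xx_sx\mathbf b_s$ and $\xi(\mathbf v)$ with $x_sx^2\mathbf b_s$ in case $i=1$, respectively with $x^2x_s\mathbf b_s$ in case $i=2$. This exhibits $\beta_s$, respectively~\eqref{xx_kxb_k=x^2x_kb_k}, as a consequence of $\xi(\mathbf u)\approx\xi(\mathbf v)$ and $\Phi$, which gives $\mathbf V\subseteq\mathbf H_s$, respectively $\mathbf V\subseteq\mathbf J_s^s$.

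The hard part will be this last reduction. While~\eqref{xyxzx=xyxz} freely manages occurrences lying past a second occurrence, forcing the mobile occurrence of $x$ into the exact position demanded by $\beta_s$ or~\eqref{xx_kxb_k=x^2x_kb_k} is delicate, because the simple letter $x_0$ and all first occurrences — which determine the shape of $\mathbf b_s$ — must remain undisturbed, and the letter $x_s$ recurs inside $\mathbf b_s$. I expect to split into subcases according to the multiplicity of $x$ and the precise placement of its occurrences among the staircase letters, justifying each move through the depth computations of Lemmas~\ref{k-divider and depth} and~\ref{h_2^{k-1}}: these guarantee that no $(s-1)$-divider separates the relevant occurrence of $x$ from $x_s$, so that the intervening letters all have depth at least $s$ and may be absorbed by~\eqref{xyxzx=xyxz}. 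Checking that the comb $x_s\mathbf b_s$ is reproduced faithfully, rather than some mere rearrangement of it, is where the bookkeeping will be heaviest.
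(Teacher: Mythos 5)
Your reduction targets ($\beta_s$ for part~(i); the identity~\eqref{xx_kxb_k=x^2x_kb_k} together with Lemma~\ref{basis for J_k^k} for part~(ii)) and your opening move via Lemma~\ref{form of the identity} coincide with the paper's. The gap is in your central step: after deleting all letters other than $x,x_0,\dots,x_s$ you try to bring $\xi(\mathbf u)$ and $\xi(\mathbf v)$ \emph{separately, within $\mathbf K$,} to the canonical forms $xx_sx\mathbf b_s$ and $x_sx^2\mathbf b_s$ (or $x^2x_s\mathbf b_s$). That reduction is false. Take $s=1$, $\mathbf u=xx_1x_0xx_1$ and $\mathbf v=x_1xx_0xx_1$: one checks directly that $D(\mathbf u,x_1)=1$, that the claims~\eqref{sim(u)=sim(v) & mul(u)=mul(v)} and~\eqref{eq the same l-dividers} with $\ell=1$ hold, and that the hypothesis of case $i=1$ holds, while $\xi$ changes nothing. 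Your plan then requires $xx_1x_0xx_1\approx xx_1xx_0x_1$ to follow from $\Phi$; but $h_2^0(xx_1x_0xx_1,x)=x_0$ whereas $h_2^0(xx_1xx_0x_1,x)=\lambda$, so by the necessity part of Proposition~\ref{word problem F_k,K}(ii) this identity fails in $\mathbf K$. The tools you invoke cannot repair this: \eqref{xyxzx=xyxz} only inserts or deletes occurrences lying \emph{after} the second occurrence of a letter and never relocates the second occurrence itself, and $\sigma_2$ only swaps two adjacent \emph{second} occurrences; sliding the uncontrolled occurrence of $x$ past the first occurrence of a comb letter, or past the simple letter $x_0$, is precisely the kind of move that $\Phi$ does not license---it is what the identities $\beta_k$, $\gamma_k$, $\delta_k^m$ exist to govern.

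The missing idea is that the identity must be canonicalized \emph{as a whole}, not one side at a time. The paper substitutes $x_sx^2$ for $x_s$ in the identity $\mathbf u\approx\mathbf v$ itself (legitimate in $\mathbf V$); the planted occurrences of $x$, sitting immediately after the two occurrences of $x_s$, then become the controlled early occurrences of $x$ on each side, every original uncontrolled occurrence of $x$ lies beyond the second occurrence and is erased by~\eqref{xyxzx=xyxz}, and substituting $1$ for the remaining letters lands exactly on the desired identity. Even granting this, your case analysis is incomplete: in case~(i) with $\ell_2(\mathbf u,x)<\ell_1(\mathbf u,x_s)$ the procedure yields $x^2x_s\mathbf b_s\approx x_sx^2\mathbf b_s$ rather than $\beta_s$, and an additional derivation using~\eqref{xx=xxx} and~\eqref{xyxzx=xyxz} is needed; and in case~(ii) the possibility $\ell_1(\mathbf v,x_s)<\ell_1(\mathbf v,x)$ must first be disposed of by reducing it to case~(i) and invoking $\mathbf H_s\subseteq\mathbf J_s^s$.
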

 
\begin{proof}
Lemma~\ref{identities in K}(iii) allows us to assume that $\occ_y(\mathbf u),\occ_y(\mathbf v)\le 2$ for any letter $y$. Now Lemma~\ref{form of the identity} applies with the conclusion that there are letters $x_0,x_1,\dots, x_{s-1}$ such that $D(\mathbf u,x_r)=D(\mathbf v,x_r)=r$ for any $0\le r<s$ and the identity $\mathbf u \approx \mathbf v$ has the form~\eqref{form of u=v} for some possibly empty words $\mathbf u_0, \mathbf u_1,\dots,\mathbf u_{2s+1}$ and $\mathbf v_0, \mathbf v_1,\dots,\mathbf v_{2s+1}$.
 
Suppose that $i=1$. Then $\ell_1(\mathbf u,x)<\ell_1(\mathbf u,x_s)$ and $\ell_1(\mathbf v,x_s)<\ell_1(\mathbf v,x)$. Suppose that $\ell_1(\mathbf u,x_s)<\ell_2(\mathbf u,x)$. In view of the above, we have
\begin{itemize}
\item[$\bullet$] first occurrence of $x$ in $\bf u$ lies in $\mathbf u_{2s+1}$,
\item[$\bullet$] second occurrence of $x$ in $\bf u$ lies in $\mathbf u_{2s}\mathbf u_{2s-1}\cdots \mathbf u_0$,
\item[$\bullet$] the first and the second occurrences of $x$ in $\bf v$ lie in $\mathbf v_{2s}\mathbf v_{2s-1}\cdots \mathbf v_0$.
\end{itemize}
Now we substitute $x_sx^2$ for $x_s$ in the identity ${\bf u}\approx {\bf v}$ and obtain the identity
\begin{equation}
\label{alternating identity}
\begin{array}{rl}
&\mathbf u_{2s+1} x_sx^2 \mathbf u_{2s} x_{s-1} \mathbf u_{2s-1} x_sx^2 \mathbf u_{2s-2} x_{s-2} \mathbf u_{2s-3} x_{s-1} \cdots\\
&\cdot\,\mathbf u_4 x_1 \mathbf u_3 x_2 \mathbf u_2 x_0 \mathbf u_1x_1 \mathbf u_0\\
\approx{}&\mathbf v_{2s+1} x_sx^2 \mathbf v_{2s} x_{s-1} \mathbf v_{2s-1} x_sx^2 \mathbf v_{2s-2} x_{s-2} \mathbf v_{2s-3} x_{s-1} \cdots\\
&\cdot\,\mathbf v_4 x_1 \mathbf v_3x_2 \mathbf v_2 x_0 \mathbf v_1 x_1 \mathbf v_0.
\end{array}
\end{equation}
Further, we apply the identity~\eqref{xyxzx=xyxz} and delete the third and subsequent occurrences of $x$ in both the sides of the identity~\eqref{alternating identity}. As a result, we obtain the identity
\begin{align*}
&\mathbf u_{2s+1} x_sx (\mathbf u_{2s} x_{s-1} \mathbf u_{2s-1} x_s \mathbf u_{2(s-1)} x_{s-2} \mathbf u_{2s-1} x_{s-1} \cdots \mathbf u_4 x_1 \mathbf u_3 x_2 \mathbf u_2 x_0 \mathbf u_1 x_1 \mathbf u_0)_x\\
\approx{}&\mathbf v_{2s+1} x_sx^2 (\mathbf v_{2s} x_{s-1} \mathbf v_{2s-1} x_s \mathbf v_{2(s-1)} x_{s-2} \mathbf v_{2s-1} x_{s-1} \cdots\mathbf v_4 x_1 \mathbf v_3 x_2 \mathbf v_2 x_0 \mathbf v_1 x_1 \mathbf v_0)_x.
\end{align*}
Now substitute~1 for all letters occurring in the last identity except $x,x_0,x_1,\dots$, $x_s$. We get the identity
$$
xx_sxx_{s-1}x_sx_{s-2}x_{s-1}\cdots x_1x_2x_0x_1\approx x_sx^2x_{s-1}x_sx_{s-2}x_{s-1}\cdots x_1x_2x_0x_1,
$$
i.e., the identity $\beta_s$. Therefore, ${\bf V} \subseteq {\bf H}_s$. 

Suppose now that $\ell_2(\mathbf u,x)<\ell_1(\mathbf u,x_s)$. In view of the above, we have
\begin{itemize}
\item[$\bullet$] the first and the second occurrences of $x$ in $\bf u$ lie in $\mathbf u_{2s+1}$,
\item[$\bullet$] the first and the second occurrences of $x$ in $\bf v$ lie in $\mathbf v_{2s}\mathbf v_{2s-1}\cdots \mathbf v_0$.
\end{itemize}
Now we substitute $x_sx^2$ for $x_s$ in the identity ${\bf u}\approx {\bf v}$ and obtain the identity~\eqref{alternating identity}. The identity~\eqref{xyxzx=xyxz} allows us to delete the third and subsequent occurrences of $x$ in both the sides of the identity~\eqref{alternating identity}. As a result, we obtain the identity
\begin{align*}
&\mathbf u_{2s+1} x_s \mathbf u_{2s} x_{s-1} \mathbf u_{2s-1} x_s \mathbf u_{2s-2} x_{s-2} \mathbf u_{2s-3} x_{s-1} \cdots \mathbf u_4 x_1 \mathbf u_3 x_2 \mathbf u_2 x_0 \mathbf u_1 x_1 \mathbf u_0\\
\approx{}&\mathbf v_{2s+1} x_sx^2 (\mathbf v_{2s} x_{s-1} \mathbf v_{2s-1} x_s \mathbf v_{2(s-1)} x_{s-2} \mathbf v_{2s-1} x_{s-1} \cdots\mathbf v_4 x_1 \mathbf v_3 x_2 \mathbf v_2 x_0 \mathbf v_1 x_1 \mathbf v_0)_x.
\end{align*}
Now substitute~1 for all letters occurring in the last identity except $x,x_0,x_1,\dots$, $x_s$. We get the identity
\begin{equation}
\label{last minute identity}
x^2x_sx_{s-1}x_sx_{s-2}x_{s-1}\cdots x_1x_2x_0x_1\approx x_sx^2x_{s-1}x_sx_{s-2}x_{s-1}\cdots x_1x_2x_0x_1.
\end{equation}
Then ${\bf V}$ satisfies the identities
\begin{align*}
x_sx^2x_{s-1}x_sx_{s-2}x_{s-1}\cdots x_1x_2x_0x_1\stackrel{\eqref{last minute identity}}\approx{}& x^2x_sx_{s-1}x_sx_{s-2}x_{s-1}\cdots x_1x_2x_0x_1\\
\stackrel{\eqref{xx=xxx}}\approx{}&x^3x_sx_{s-1}x_sx_{s-2}x_{s-1}\cdots x_1x_2x_0x_1\\
\stackrel{\eqref{last minute identity}}\approx{}&xx_sx^2x_{s-1}x_sx_{s-2}x_{s-1}\cdots x_1x_2x_0x_1\\
\stackrel{\eqref{xyxzx=xyxz}}\approx{}&xx_sxx_{s-1}x_sx_{s-2}x_{s-1}\cdots x_1x_2x_0x_1,
\end{align*}
whence the identity $\beta_s$ holds in $\mathbf V$. Therefore, ${\bf V} \subseteq {\bf H}_s$. The claim~(i) is proved.
 
\smallskip
 
Suppose now that $i=2$. Then $\ell_1(\mathbf u,x)<\ell_2(\mathbf u,x)<\ell_1(\mathbf u,x_s)$. If $\ell_1(\mathbf v,x_s)< \ell_1(\mathbf v,x)$ then we return to the already proved claim~(i). Then $\mathbf{V\subseteq H}_s\subseteq\mathbf J_s^s$ by Lemma~\ref{between F_k and F_{k+1}}. So, we can assume that $\ell_1(\mathbf v,x)< \ell_1(\mathbf v,x_s)$. In view of the above, we have
\begin{itemize}
\item[$\bullet$] the first and the second occurrences of $x$ in $\bf u$ lie in $\mathbf u_{2s+1}$,
\item[$\bullet$] first occurrence of $x$ in $\bf v$ lies in $\mathbf v_{2s+1}$,
\item[$\bullet$] second occurrence of $x$ in $\bf v$ lies in $\mathbf v_{2s}\mathbf v_{2s-1}\cdots \mathbf v_0$.
\end{itemize}
Now we substitute $x_sx^2$ for $x_s$ in the identity ${\bf u}\approx {\bf v}$ and obtain the identity~\eqref{alternating identity}. The identity~\eqref{xyxzx=xyxz} allows us to delete the third and subsequent occurrences of $x$ in both the sides of the identity~\eqref{alternating identity}. As a result, we obtain the identity
\begin{align*}
&\mathbf u_{2s+1} x_s \mathbf u_{2s} x_{s-1} \mathbf u_{2s-1} x_s \mathbf u_{2s-2} x_{s-2} \mathbf u_{2s-3} x_{s-1} \cdots \mathbf u_4 x_1 \mathbf u_3 x_2 \mathbf u_2 x_0 \mathbf u_1 x_1 \mathbf u_0\\
\approx{}&\mathbf v_{2s+1} x_sx (\mathbf v_{2s} x_{s-1} \mathbf v_{2s-1} x_s \mathbf v_{2(s-1)} x_{s-2} \mathbf v_{2s-1} x_{s-1} \cdots\mathbf v_4 x_1 \mathbf v_3 x_2 \mathbf v_2 x_0 \mathbf v_1 x_1 \mathbf v_0)_x.
\end{align*}
Now substitute~1 for all letters occurring in the last identity except $x,x_0,x_1,\dots$, $x_s$. We get the identity
$$
x^2x_sx_{s-1}x_sx_{s-2}x_{s-1}\cdots x_1x_2x_0x_1\approx xx_sxx_{s-1}x_sx_{s-2}x_{s-1}\cdots x_1x_2x_0x_1,
$$
i.e., the identity~\eqref{xx_kxb_k=x^2x_kb_k} with $k=s$. Lemma~\ref{basis for J_k^k} implies now that ${\bf V} \subseteq {\bf J}_s^s$. The claim~(ii) is proved.
\end{proof}
 
\begin{lemma}
\label{V subseteq E or J_(ell-1)^(ell-1)}
Let ${\bf V}$ be a monoid variety such that ${\bf V} \subseteq {\bf K}$ and $\mathbf V$ satisfies an identity ${\bf u}\approx {\bf v}$. If the claim~\eqref{sim(u)=sim(v) & mul(u)=mul(v)} is true, while the claim~\eqref{eq the same l-dividers} is false for some $\ell>1$ then ${\bf V} \subseteq {\bf J}_{\ell-1}^{\ell-1}$.
\end{lemma}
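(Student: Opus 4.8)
The plan is to reduce to the least level at which~\eqref{eq the same l-dividers} breaks down and then apply Lemma~\ref{V subseteq H_s or J_s^s}. First I would let $\ell_0$ be the least natural number for which the claim~\eqref{eq the same l-dividers} with $\ell=\ell_0$ is false; by the contrapositive of Lemma~\ref{h_i^k(u,x)=h_i^k(v,x) to h_i^s(u,x)=h_i^s(v,x)}, falsity of~\eqref{eq the same l-dividers} is upward closed in its parameter, so $\ell_0\le\ell$. Since $\mathbf J_{\ell_0-1}^{\ell_0-1}\subseteq\mathbf J_{\ell-1}^{\ell-1}$ by the chain of inclusions of Lemma~\ref{between F_k and F_{k+1}}, it suffices to show $\mathbf V\subseteq\mathbf J_{\ell_0-1}^{\ell_0-1}$. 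Put $s=\ell_0-1$. I would treat the generic case $\ell_0\ge2$ first, leaving $\ell_0=1$ to the end.

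So assume $s\ge1$. Then~\eqref{eq the same l-dividers} holds with $\ell=s$ but fails with $\ell=s+1$, so there are a letter $z$, an index $i\in\{1,2\}$ and distinct letters $p=h_i^{s}(\mathbf u,z)$ and $q=h_i^{s}(\mathbf v,z)$. By Lemma~\ref{the same l-dividers} (with parameter $s$) the words $\mathbf u$ and $\mathbf v$ have the same $s$-dividers, so $p$ and $q$ are $s$-dividers of both words. The first key observation is that at least one of $p,q$ has depth exactly $s$: if $p$ were an $(s-1)$-divider then, being the right-most $s$-divider preceding the $i$th occurrence of $z$ in $\mathbf u$, it would also equal $h_i^{s-1}(\mathbf u,z)$, and~\eqref{eq the same l-dividers} with $\ell=s$ would give $p=h_i^{s-1}(\mathbf u,z)=h_i^{s-1}(\mathbf v,z)=q$ whenever $q$ too were an $(s-1)$-divider, a contradiction. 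I would fix as $x_s$ whichever of $p,q$ has depth $s$; by Lemma~\ref{D(u,x)=k iff D(v,x)=k} its depth is $s$ in both words, as Lemma~\ref{V subseteq H_s or J_s^s} requires.

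The technical heart is to extract from $p\ne q$ an honest reversal of order of the exact shape Lemma~\ref{V subseteq H_s or J_s^s} demands. Here I would run a short case analysis on the positions of the first occurrences of $p$ and $q$ relative to the $i$th occurrence of $z$ in each of $\mathbf u,\mathbf v$ (in $\mathbf u$ the divider $q$ lies either left of $p$ or right of the $i$th $z$, and symmetrically in $\mathbf v$). In every configuration one finds that either $p$ and $q$ reverse their mutual order, or one of them reverses its order with the $i$th occurrence of $z$; and the configurations in which the depth-$s$ divider would fail to be the swapping letter are ruled out precisely by the restrictor argument above (for instance, if in $\mathbf v$ the letter $p$ occurred after the $i$th occurrence of $z$, then $p$ could not be an $(s-1)$-divider, forcing $D(\mathbf u,p)=s$). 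Taking $x$ to be the partner letter and, if necessary, exchanging the roles of $\mathbf u$ and $\mathbf v$ (legitimate since $\mathbf V$ also satisfies $\mathbf v\approx\mathbf u$ and all hypotheses are symmetric), I obtain $x,x_s$ and an index realizing $\ell_i(\mathbf u,x)<\ell_1(\mathbf u,x_s)$ and $\ell_1(\mathbf v,x_s)<\ell_i(\mathbf v,x)$. Lemma~\ref{V subseteq H_s or J_s^s} then gives $\mathbf V\subseteq\mathbf H_s$ or $\mathbf V\subseteq\mathbf J_s^s$; since $\mathbf H_s\subseteq\mathbf J_s^s=\mathbf J_{\ell_0-1}^{\ell_0-1}$ by Lemma~\ref{between F_k and F_{k+1}}, the main step is finished.

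Finally, the base case $\ell_0=1$. Using claim~1) of Proposition~\ref{L(K)} I would split on $\mathbf V\subseteq\mathbf E$ versus $\mathbf E\subseteq\mathbf V$. In the first case $\mathbf V\subseteq\mathbf E\subseteq\mathbf J_{\ell-1}^{\ell-1}$ by the chain and we are done. In the second, Proposition~\ref{word problem E} forces $h_1^0(\mathbf u,x)=h_1^0(\mathbf v,x)$ for every $x$, so the failure of~\eqref{eq the same l-dividers} at level $1$ can only occur for $i=2$: the second occurrence of some multiple letter $z$ crosses a simple letter between $\mathbf u$ and $\mathbf v$. Restricting the identity to these two letters (using Lemma~\ref{identities in K}(iii) to assume each letter occurs at most twice) yields $\mathbf V\models x^2y\approx xyx$, whence $\mathbf V\subseteq\mathbf E$ and again $\mathbf V\subseteq\mathbf J_{\ell-1}^{\ell-1}$. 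I expect the case analysis of the third paragraph — producing a genuine order reversal that places a depth-$s$ divider in the precise role required by Lemma~\ref{V subseteq H_s or J_s^s} — to be the main obstacle; everything else is bookkeeping with the restrictor and depth lemmas of Section~\ref{decomposition}.
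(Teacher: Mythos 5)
Your proposal is correct and follows essentially the same route as the paper's own proof: pass to the least level of failure, use Lemmas~\ref{the same l-dividers} and~\ref{h_i^k(u,x)=h_i^k(v,x) to h_i^s(u,x)=h_i^s(v,x)} to see that the two distinct restrictors $p$ and $q$ are dividers of both words, run a positional case analysis (either $p,q$ reverse their mutual order, or one of them jumps past the relevant occurrence of $z$) to manufacture an order reversal, and feed it into Lemma~\ref{V subseteq H_s or J_s^s}, finishing with the chain inclusions of Lemma~\ref{between F_k and F_{k+1}}. The only deviations are minor: the paper applies Lemma~\ref{V subseteq H_s or J_s^s} at whatever depth $r\ge1$ the swapping letter happens to have (verifying only that it is multiple, then using $\mathbf H_r,\mathbf J_r^r\subseteq\mathbf J_{\ell-1}^{\ell-1}$), whereas you pin that depth to exactly $\ell_0-1$; and your explicit treatment of the base case $\ell_0=1$ — which the paper's proof tacitly assumes away — is sound, though there the reduction to at most two occurrences should cite Lemma~\ref{identities in K}(ii) rather than (iii), and the inclusion $\mathbf E\subseteq\mathbf F_1$ you invoke, while easy (it follows from Proposition~\ref{word problem E}), does need to be checked since it is not contained in Lemma~\ref{between F_k and F_{k+1}}.
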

 
\begin{proof}
Suppose that the claim~\eqref{sim(u)=sim(v) & mul(u)=mul(v)} is true, while the claim~\eqref{eq the same l-dividers} is false for some $\ell=k>1$ and $k$ is the least number with this property. Then there exists a letter $x$ such that $h_i^{k-1}({\bf u},x)\ne h_i^{k-1}({\bf v},x)$ where either $i=1$ or $i=2$. Let~\eqref{t_0u_0t_1u_1 ... t_mu_m} be the \mbox{$(k-1)$}-decomposition of $\bf u$. In particular, the set of \mbox{$(k-1)$}-dividers of \textbf u is $\{t_0,t_1,\dots,t_m\}$. Since the claim~\eqref{eq the same l-dividers} with $\ell=k-1$ is true, Lemma~\ref{the same l-dividers} applies with the conclusion that $\bf v$ has the same \mbox{$(k-1)$}-dividers as \textbf u (but the order of the first occurrences of these letters in the words \textbf u and \textbf v may be different). Put $t_p=h_i^{k-1}({\bf u},x)$ and $t_q=h_i^{k-1}({\bf v},x)$. Clearly, $p\ne q$. 
 
Suppose at first that $\ell_i(\mathbf u, x)<\ell_1(\mathbf u, t_q)$. The choice of $t_p$ and $t_q$ guarantees that $\ell_1(\mathbf u,t_p)<\ell_i(\mathbf u,x)$ and $\ell_1(\mathbf v, t_q)<\ell_i(\mathbf v, x)$. Therefore, $\ell_1(\mathbf u,t_p)<\ell_1(\mathbf u,t_q)$, whence $p<q$ in the case we consider. If $t_q$ is simple in \textbf u then the claim~\eqref{sim(u)=sim(v) & mul(u)=mul(v)} implies that $t_q$ is simple in \textbf v too. Therefore, the letter $t_q$ is a 0-divider of the words $\bf u$ and \textbf v. Since $t_q=h_i^{k-1}({\bf v},x)$, we have $t_q=h_i^0({\bf v},x)$. The claim~\eqref{eq the same l-dividers} with $\ell=1$ implies that $t_q=h_i^0({\bf u},x)$. But this contradicts the fact that $p<q$. So, $t_q$ is multiple in \textbf u, whence $t_q$ is multiple in \textbf v as well by the claim~\eqref{sim(u)=sim(v) & mul(u)=mul(v)}. Therefore, $D(\mathbf v, t_q)>0$. Besides that, $D(\mathbf v, t_q)\le k-1$ by Lemma~\ref{k-divider and depth} because $t_q$ is a \mbox{$(k-1)$}-divider of \textbf v. Put $r=D(\mathbf v,t_q)$. If $i=1$ then Lemma~\ref{V subseteq H_s or J_s^s}(i) with $s=r$ and $x_s=t_q$ applies with the conclusion that $\mathbf V\subseteq \mathbf H_r\subseteq \mathbf J_{k-1}^{k-1}$. If $i=2$ then $\mathbf V\subseteq \mathbf J_r^r\subseteq \mathbf J_{k-1}^{k-1}$ by Lemma~\ref{V subseteq H_s or J_s^s}(ii) with $s=r$ and $x_s=t_q$.
 
If $\ell_i(\mathbf v, x)<\ell_1(\mathbf v, t_p)$ then we can obtain the required conclusion using arguments similar to ones from the previous paragraph.
 
Finally, suppose that $\ell_1(\mathbf u, t_q)<\ell_i(\mathbf u, x)$ and $\ell_1(\mathbf v, t_p)<\ell_i(\mathbf v, x)$. The first of these inequalities implies that first occurrence of $t_q$ in \textbf u precedes $i$th occurrence of $x$ in \textbf u. But $t_p$ is the right-most \mbox{$(k-1)$}-divider of the word \textbf u precedes $i$th occurrence of $x$. Therefore, $\ell_1(\mathbf u, t_q)<\ell_1(\mathbf u, t_p)$. Analogously, it follows from $\ell_1(\mathbf v, t_p)<\ell_i(\mathbf v, x)$ and $t_q=h_i^{k-1}({\bf v},x)$ that $\ell_1(\mathbf v, t_p)<\ell_1(\mathbf v, t_q)$. Suppose that $t_p$ is simple in \textbf u. Then the claim~\eqref{sim(u)=sim(v) & mul(u)=mul(v)} implies that $t_p$ is simple in \textbf v too. Then the letter $t_p$ is a 0-divider of the words $\bf u$ and \textbf v. Since $t_p=h_i^{k-1}({\bf u},x)$, we have $t_p=h_i^0({\bf u},x)$. The claim~\eqref{eq the same l-dividers} with $\ell=1$ implies that $t_p=h_i^0({\bf v},x)$. Note that $\ell_1(\mathbf v, t_p)<\ell_1(\mathbf v, t_q)<\ell_i(\mathbf v, x)$. Being the right-most simple in \textbf v letter that is located to the left of $x$, the letter $t_p$ turns out to be also the right-most simple in \textbf v letter that is located to the left of $t_q$. In other words, $t_p=h_1^0({\bf v},t_q)$. The claim~\eqref{eq the same l-dividers} with $\ell=1$ implies that $t_p=h_1^0({\bf u},t_q)$. But this contradicts the fact that $\ell_1(\mathbf u, t_q)<\ell_1(\mathbf u, t_p)$. So, $t_p$ is multiple in \textbf u. Therefore, $D(\mathbf u, t_p)>0$. Besides that, $D(\mathbf u, t_p)\le k-1$ by Lemma~\ref{k-divider and depth} because $t_p$ is a \mbox{$(k-1)$}-divider of \textbf u. Put $r=D(\mathbf u,t_p)$. Then the hypothesis of Lemma~\ref{V subseteq H_s or J_s^s} with $i=1$, $s=r$, $x=t_q$ and $x_s=t_p$ holds. Therefore, Lemma~\ref{V subseteq H_s or J_s^s}(i) implies that $\mathbf V\subseteq \mathbf H_r\subseteq \mathbf J_{k-1}^{k-1}$.
\end{proof}
 
The following statement opens a series of one-type assertions, which also includes Propositions~\ref{word problem H_k},~\ref{word problem I_k} and~\ref{word problem J_k^r}. These results provide solutions of the word problem in the varieties $\mathbf F_k$, $\mathbf H_k$, $\mathbf I_k$, $\mathbf J_k^m$ and \textbf K. All of them are proved in the same scheme. For the ``only if'' part, this scheme almost does not changed from proposition to proposition. As to the ``if'' part, the mentioned scheme is generally outlined in the proof of Proposition~\ref{word problem F_k,K}(i) but technically its implementation will be more and more complicated each time.
 
\begin{proposition}
\label{word problem F_k,K}
A non-trivial identity ${\bf u}\approx {\bf v}$ holds:
\begin{itemize}
\item[\textup{(i)}]in the variety ${\bf F}_k$ if and only if the claims~\eqref{sim(u)=sim(v) & mul(u)=mul(v)} and~\eqref{eq the same l-dividers} with $\ell=k$ are true;
\item[\textup{(ii)}]in the variety ${\bf K}$ if and only if the claims~\eqref{sim(u)=sim(v) & mul(u)=mul(v)} and~\eqref{eq the same l-dividers} for all $\ell$ are true.
\end{itemize}
\end{proposition}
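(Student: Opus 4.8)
The plan is to prove the two equivalences in parallel, splitting each into its ``only if'' and ``if'' halves, with essentially all of the work concentrated in the ``if'' half of~(i). Throughout I would use that $\mathbf F_k\subseteq\mathbf K$, so that the identities collected in Lemma~\ref{identities in K} (in particular $\sigma_2$ and~\eqref{xyxzx=xyxz}) are at my disposal, and that $\mathbf C_2\subseteq\mathbf F_k\subseteq\mathbf K$. For the ``only if'' direction of both statements, the claim~\eqref{sim(u)=sim(v) & mul(u)=mul(v)} is immediate from Proposition~\ref{word problem C_2}. For the divider condition I would argue by contradiction: if \eqref{eq the same l-dividers} fails for some $\ell$ (with $\ell\le k$ in case~(i), and for some $\ell$ at all in case~(ii)), then Lemma~\ref{V subseteq E or J_(ell-1)^(ell-1)} forces $\mathbf F_k\subseteq\mathbf J_{\ell-1}^{\ell-1}$, respectively $\mathbf K\subseteq\mathbf J_{\ell-1}^{\ell-1}$; together with the inclusions $\mathbf J_{\ell-1}^{\ell-1}\subseteq\mathbf F_\ell\subseteq\mathbf F_k\subseteq\mathbf K$ of Lemma~\ref{between F_k and F_{k+1}} this contradicts the strict position of $\mathbf J_{\ell-1}^{\ell-1}$ below $\mathbf F_k$ (resp.\ below $\mathbf K$) in the chain~\eqref{[F_k,F_{k+1}]}. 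This is the uniform scheme that should recur, with only cosmetic changes, in Propositions~\ref{word problem H_k},~\ref{word problem I_k} and~\ref{word problem J_k^r}.

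The substantive part is the ``if'' direction of~(i). First I would invoke Lemma~\ref{identities in K}(ii),(iii) to reduce to the case $\occ_x(\mathbf u),\occ_x(\mathbf v)\le2$ for every letter $x$. Next, from \eqref{sim(u)=sim(v) & mul(u)=mul(v)} and \eqref{eq the same l-dividers} with $\ell=k$, Lemma~\ref{h_i^k(u,x)=h_i^k(v,x) to h_i^s(u,x)=h_i^s(v,x)} propagates the restrictor agreement down to every level $1\le s\le k$. In particular the agreement of $h_1^{k-1}$ together with Lemma~\ref{k-equivalent} makes $\mathbf u$ and $\mathbf v$ $(k-1)$-equivalent, so by Lemma~\ref{the same l-dividers} they admit a common $(k-1)$-decomposition $t_0\mathbf u_0t_1\mathbf u_1\cdots t_m\mathbf u_m$ and $t_0\mathbf v_0t_1\mathbf v_1\cdots t_m\mathbf v_m$ with the same dividers appearing in the same order. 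Using the agreement of both $h_1^{k-1}$ and $h_2^{k-1}$ for all letters, together with $\occ\le2$, I would then place each letter lying inside a block into the corresponding blocks of $\mathbf u$ and of $\mathbf v$ (a letter of depth exactly $k$ straddles two consecutive blocks; a letter of larger depth sits inside a single block), concluding that $\con(\mathbf u_i)=\con(\mathbf v_i)$ for each $i$, so that $\mathbf u_i$ and $\mathbf v_i$ are rearrangements of the same letters.

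It then remains to carry $\mathbf u$ to $\mathbf v$ by reordering letters inside each $(k-1)$-block, and here the core Lemma~\ref{u'abu''=u'bau''}(iv) does the heavy lifting: since $\mathbf F_k$ satisfies $\alpha_k$, any two adjacent letters lying in a common $(k-1)$-block may be transposed. A bubble-sort argument realizes the permutation taking $\mathbf u_i$ to $\mathbf v_i$ by successive adjacent swaps, and since the $(k-1)$-dividers $t_0,\dots,t_m$ remain fixed throughout, every intermediate word retains the same $(k-1)$-skeleton, so each subsequent swap stays legitimate; performing this block by block gives $\mathbf F_k\models\mathbf u\approx\mathbf v$. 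For part~(ii) the same strategy applies: \eqref{eq the same l-dividers} for all $\ell$ yields, via the stabilization guaranteed by Remark~\ref{max decomposition}, a common $N$-decomposition for $N$ large; inside each finest block every letter is multiple, so the reordering within blocks is supplied directly by Lemma~\ref{identities in K}(iii) in place of $\alpha_k$.

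The hard part will be the bookkeeping in the ``if'' direction of~(i): extracting the matching block contents $\con(\mathbf u_i)=\con(\mathbf v_i)$ from the two families of restrictor equalities while correctly separating the depth-$k$ letters (which cross a divider) from the deeper ones, and verifying that the chain of adjacent transpositions never leaves the class of words carrying the fixed $(k-1)$-skeleton, so that Lemma~\ref{u'abu''=u'bau''}(iv) remains applicable at every step. Both difficulties, however, are precisely what Lemmas~\ref{k-equivalent},~\ref{the same l-dividers} and~\ref{u'abu''=u'bau''} were engineered to absorb, so I expect the argument to be an assembly of already-established tools rather than a genuinely new obstacle.
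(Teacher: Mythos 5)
Your ``if'' halves coincide in essence with the paper's proof: the same reduction via Lemma~\ref{identities in K}, the same use of Lemmas~\ref{h_i^k(u,x)=h_i^k(v,x) to h_i^s(u,x)=h_i^s(v,x)},~\ref{k-equivalent} and~\ref{the same l-dividers} to obtain a common $(k-1)$-decomposition with matching block contents, and the same reliance on Lemma~\ref{u'abu''=u'bau''}(iv) (resp.\ on Lemma~\ref{identities in K}(iii) for $\mathbf K$) to rearrange letters inside $(k-1)$-blocks; whether one bubble-sorts $\mathbf u_i$ into $\mathbf v_i$ or, as the paper does, brings both words to a canonical form $\overline{\mathbf u_i}$ is immaterial.

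The ``only if'' halves, however, contain a genuine gap: your argument is circular. From Lemma~\ref{V subseteq E or J_(ell-1)^(ell-1)} and a failure of~\eqref{eq the same l-dividers} you get $\mathbf F_k\subseteq\mathbf J_{k-1}^{k-1}$, and you then invoke ``the strict position of $\mathbf J_{k-1}^{k-1}$ below $\mathbf F_k$ in the chain~\eqref{[F_k,F_{k+1}]}'' to obtain a contradiction. But at this stage only the non-strict inclusions~\eqref{non-strict inclusions} of Lemma~\ref{between F_k and F_{k+1}} are known; the strictness of~\eqref{[F_k,F_{k+1}]} is established only in Subsubsection~\ref{structure of [F_k,F_{k+1}] 6 step}, and precisely by applying Propositions~\ref{word problem F_k,K},~\ref{word problem H_k},~\ref{word problem I_k} and~\ref{word problem J_k^r} to the identities $\alpha_k$, $\beta_k$, $\gamma_k$, $\delta_k^m$ --- in particular, $\mathbf J_k^k\subset\mathbf F_{k+1}$ is deduced there from Proposition~\ref{word problem F_k,K}(i) itself. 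Without that, the equality $\mathbf F_k=\mathbf J_{k-1}^{k-1}$ cannot be excluded, so your conclusion yields no contradiction; the same objection applies to $\mathbf K$ in part~(ii), where you would need $\mathbf K\ne\mathbf J_{\ell-1}^{\ell-1}$. There is also a boundary defect: Lemma~\ref{V subseteq E or J_(ell-1)^(ell-1)} requires $\ell>1$, so your scheme is silent when $k=1$ in part~(i) and when~\eqref{eq the same l-dividers} fails at $\ell=1$ in part~(ii). The paper's necessity proof avoids the lattice entirely and is syntactic: writing $\mathbf F_k=\var\{\Phi,\alpha_k\}$, it reduces by induction on the length of a deduction to a single elementary step $\mathbf u=\mathbf p\,\xi(\mathbf a)\,\mathbf q$, $\mathbf v=\mathbf p\,\xi(\mathbf b)\,\mathbf q$ with $\mathbf a\approx\mathbf b\in\{\Phi,\alpha_k\}$, and checks via Lemmas~\ref{depth and index} and~\ref{does not contain dividers} that such a step keeps the swapped material inside a single $(k-1)$-block, so the claims~\eqref{sim(u)=sim(v) & mul(u)=mul(v)} and~\eqref{eq the same l-dividers} with $\ell=k$ are preserved. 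To salvage your route you would have to prove $\mathbf J_{k-1}^{k-1}\ne\mathbf F_k$ (and treat $\ell=1$) by independent means, e.g.\ a concrete separating monoid, which is essentially the content of the very proposition you are proving.
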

 
\begin{proof}
(i) \emph{Necessity}. Suppose that a non-trivial identity $\mathbf u\approx \mathbf v$ holds in $\mathbf F_k$. Proposition~\ref{word problem C_2} and the inclusion ${\bf C}_2\subseteq{\bf F}_k$ imply that the claim~\eqref{sim(u)=sim(v) & mul(u)=mul(v)} is true. Since $\mathbf F_k$ satisfies $\mathbf u\approx \mathbf v$, there is a sequence of words $\mathbf u=\mathbf w_0,\mathbf w_1,\dots,\mathbf w_n=\mathbf v$ such that, for any $i=0,1,\dots,n-1$, there are words $\mathbf p_i,\mathbf q_i\in F^1$, an endomorphism $\xi_i$ of $F^1$ and an identity $\mathbf a_i\approx \mathbf b_i$ from the system $\{\Phi,\alpha_k\}$ such that either $\mathbf w_i=\mathbf p_i\xi_i(\mathbf a_i)\mathbf q_i$ and $\mathbf w_{i+1}=\mathbf p_i\xi_i(\mathbf b_i)\mathbf q_i$ or $\mathbf w_i=\mathbf p_i\xi_i(\mathbf b_i)\mathbf q_i$ and $\mathbf w_{i+1}=\mathbf p_i\xi_i(\mathbf a_i)\mathbf q_i$. By induction we can assume without loss of generality that $\mathbf u=\mathbf p\xi(\mathbf a)\mathbf q$ and $\mathbf v=\mathbf p\xi(\mathbf b)\mathbf q$ for some possibly empty words $\mathbf p$ and $\mathbf q$, an endomorphism $\xi$ of $F^1$ and an identity $\mathbf a\approx \mathbf b\in\{\Phi,\alpha_k\}$. 
 
If $\mathbf a\approx\mathbf b\in\{xyx\approx xyx^2,x^2y\approx x^2yx\}$ then the required assertion is obvious because the first and second occurrences of the letters of $\bf u$ do not take part in replacing $\xi(\mathbf a)$ to $\xi(\mathbf b)$. Suppose now that $\mathbf a\approx \mathbf b$ coincides with the identity~\eqref{xxyy=yyxx}. Then, since $D(\mathbf a,x)=D(\mathbf a,y)=\infty$, Lemma~\ref{does not contain dividers} implies that the subword $\xi(\mathbf a)$ of $\bf u$ located between $\mathbf p$ and $\mathbf q$ is contained in some $s$-block for all $s$. In particular, this subword is contained is some \mbox{$(k-1)$}-block. This implies that the claim~\eqref{eq the same l-dividers} with $\ell=k$ is true.
 
Finally, suppose that $\mathbf a\approx \mathbf b$ coincides with $\alpha_k$. Then 
\begin{align*}
\xi(\mathbf a)={}&\mathbf a_k\mathbf b_k\mathbf a_{k-1}\mathbf a_k\mathbf b_k\mathbf a_{k-2}\mathbf a_{k-1}\cdots \mathbf a_1\mathbf a_2\mathbf a_0\mathbf a_1,\\
\xi(\mathbf b)={}&\mathbf b_k\mathbf a_k\mathbf a_{k-1}\mathbf a_k\mathbf b_k\mathbf a_{k-2}\mathbf a_{k-1}\cdots \mathbf a_1\mathbf a_2\mathbf a_0\mathbf a_1
\end{align*}
for some words $\mathbf a_0,\mathbf a_1,\dots,\mathbf a_k$ and $\mathbf b_k$, whence
\begin{align*}
\mathbf u={}&\mathbf p\mathbf a_k\mathbf b_k\mathbf a_{k-1}\mathbf a_k\mathbf b_k\mathbf a_{k-2}\mathbf a_{k-1}\cdots \mathbf a_1\mathbf a_2\mathbf a_0\mathbf a_1\mathbf q,\\
\mathbf v={}&\mathbf p\mathbf b_k\mathbf a_k\mathbf a_{k-1}\mathbf a_k\mathbf b_k\mathbf a_{k-2}\mathbf a_{k-1}\cdots \mathbf a_1\mathbf a_2\mathbf a_0\mathbf a_1\mathbf q.
\end{align*}
By Lemma~\ref{depth and index}, $D(\mathbf a,x_k)=D(\mathbf a,y_k)=k$. Then Lemma~\ref{does not contain dividers} implies that the subword $\mathbf a_k\mathbf b_k$ of $\bf u$ located between $\mathbf p$ and $\mathbf a_{k-1}$ is contained in some \mbox{$(k-1)$}-block. This implies that the claim~\eqref{eq the same l-dividers} with $\ell=k$ is true.
 
\smallskip
 
\emph{Sufficiency}. Let us describe the outline of our further arguments. We note that sufficiency in Propositions~\ref{word problem H_k},~\ref{word problem I_k} and~\ref{word problem J_k^r} will be proved below in the same scheme. Let $\mathbf{u\approx v}$ be an identity which satisfies the hypothesis of the proposition. We start with a consideration of the \mbox{$(k-1)$}-decomposition of the word \textbf u. Basing on Lemma~\ref{u'abu''=u'bau''} and using identities which hold in the variety $\textbf F_k$, we show that any \mbox{$(k-1)$}-block of \textbf u can be replaced by a word of some ``canonical form''. We replace all \mbox{$(k-1)$}-blocks of \textbf u with getting some word $\mathbf u^\sharp$. After that we consider the word \textbf v. It turns out that, up to identities in $\mathbf F_k$, this word has exactly the same \mbox{$(k-1)$}-blocks and \mbox{$(k-1)$}-dividers as the word \textbf u. This allows us to change \mbox{$(k-1)$}-blocks of \textbf v in the same way as \mbox{$(k-1)$}-blocks of \textbf u with getting the word $\mathbf u^\sharp$ again. This evidently implies that the identity $\mathbf{u\approx v}$ holds in $\mathbf F_k$.
 
Now we proceed to implement the above plan. Suppose that the identity ${\bf u}\approx {\bf v}$ satisfies the claims~\eqref{sim(u)=sim(v) & mul(u)=mul(v)} and~\eqref{eq the same l-dividers} with $\ell=k$. Let~\eqref{t_0u_0t_1u_1 ... t_mu_m} be the \mbox{$(k-1)$}-decomposition of \textbf u. Let us fix an index $i\in\{0,1,\dots,m\}$. Lemma~\ref{identities in K}(ii) allows us to suppose that every letter from $\con(\mathbf u_i)$ occurs in $\mathbf u_i$ at most twice. Put $\mul(\mathbf u_i)=\{x_1,x_2.\dots,x_p\}$, $\simple(\mathbf u_i)=\{y_1,y_2,\dots, y_q\}$ and
$$
\overline{\mathbf u_i}\,=x_1^2x_2^2\cdots x_p^2y_1y_2\cdots y_q.
$$
Note that $\overline{\mathbf u_i}$ is nothing but the ``canonical form'' of the \mbox{$(k-1)$}-block $\mathbf u_i$ mentioned above. Indeed, $\mathbf u=\mathbf w_1\mathbf u_i\mathbf w_2$ for some possibly empty words $\mathbf w_1$ and $\mathbf w_2$. Lemmas~\ref{identities in K}(ii) and~\ref{u'abu''=u'bau''}(iv) imply now that the variety $\mathbf F_k$ satisfies the identity
$$
\mathbf u=\mathbf w_1\mathbf u_i\mathbf w_2\approx\mathbf w_1\,\overline{\mathbf u_i}\,\mathbf w_2.
$$
In particular, $\mathbf F_k$ satisfies the identities
$$
\mathbf u=t_0\mathbf u_0 t_1\mathbf u_1\cdots t_{m-1}\mathbf u_{m-1}t_m\mathbf u_m\approx t_0\mathbf u_0 t_1\mathbf u_1\cdots t_{m-1}\mathbf u_{m-1}t_m\,\overline{\mathbf u_m}\,.
$$
 
Put $\mathbf u'=t_0\mathbf u_0 t_1\mathbf u_1\cdots t_{m-1}\mathbf u_{m-1}t_m\,\overline{\mathbf u_m}$\,. Note that the claims~\eqref{sim(u)=sim(v) & mul(u)=mul(v)} and~\eqref{eq the same l-dividers} with $\mathbf v=\mathbf u'$ and $\ell=k$ are true. Then Lemma~\ref{k-equivalent} implies that the words $\bf u$ and $\mathbf u'$ are \mbox{$(k-1)$}-equivalent, i.e., the letters $t_0,t_1,\dots,t_m$ are \mbox{$(k-1)$}-dividers of the word $\mathbf u'$, while $\mathbf u_0,\mathbf u_1,\dots,\mathbf u_{m-1},\overline{\mathbf u_m}$ are \mbox{$(k-1)$}-blocks of this word. After that, we can repeat literally arguments given above by changing the word \textbf u to the word $\mathbf u'$ and obtain the identities
$$
\mathbf u'=t_0\mathbf u_0 t_1\mathbf u_1\cdots t_{m-1}\mathbf u_{m-1}t_m\,\overline{\mathbf u_m}\,\approx t_0\mathbf u_0 t_1\mathbf u_1\cdots t_{m-1}\,\overline{\mathbf u_{m-1}}\,t_m\,\overline{\mathbf u_m}
$$
hold in $\mathbf F_k$. Continuing this process, we get that $\mathbf F_k$ satisfies the identities
\begin{equation}
\label{u = canonical form}
\begin{array}{rl}
\mathbf u={}&t_0\mathbf u_0 t_1\mathbf u_1\cdots t_{m-1}\mathbf u_{m-1}t_m\mathbf u_m\approx t_0\mathbf u_0 t_1\mathbf u_1\cdots t_{m-1}\mathbf u_{m-1}t_m\,\overline{\mathbf u_m}\\
\approx{}&t_0\mathbf u_0 t_1\mathbf u_1\cdots t_{m-1}\,\overline{\mathbf u_{m-1}}\,t_m\,\overline{\mathbf u_m}\approx\cdots\approx t_0\,\overline{\mathbf u_0}\,t_1\,\overline{\mathbf u_1}\,\cdots t_m\,\overline{\mathbf u_m}\,.
\end{array}
\end{equation}
Put $\mathbf u^\sharp=t_0\,\overline{\mathbf u_0}\,t_1\,\overline{\mathbf u_1}\,\cdots t_m\,\overline{\mathbf u_m}$\,.
 
One can return to the word \textbf v. By Lemma~\ref{k-equivalent}, the \mbox{$(k-1)$}-decomposition of $\bf v$ has the form~\eqref{t_0v_0t_1v_1 ... t_mv_m}. The claim~\eqref{eq the same l-dividers} with $\ell=k$ implies that $j$th occurrence of a letter $x$ in $\bf u$ lies in the \mbox{$(k-1)$}-block $\mathbf u_i$ if and only if $j$th occurrence of a letter $x$ in $\bf v$ lies in the \mbox{$(k-1)$}-block $\mathbf v_i$ for any $x$ and any $j=1,2$. We are going to check that $\simple(\mathbf u_i)=\simple(\mathbf v_i)$ and $\mul(\mathbf u_i)=\mul(\mathbf v_i)$. Let $x\in\con(\mathbf u_i)$. Lemma~\ref{identities in K}(ii) allows us to assume that $\occ_x(\mathbf u)\le 2$. There are three possibilities. First, if the first and the second occurrences of $x$ in \textbf u lie in $\mathbf u_i$ then the first and the second occurrences of $x$ in \textbf v lie in $\mathbf v_i$, whence $x\in\mul(\mathbf u_i)$ and $x\in\mul(\mathbf v_i)$. Second, if the first occurrences of $x$ in \textbf u lies in $\mathbf u_i$ but the second one does not lie in $\mathbf u_i$ then the first occurrences of $x$ in \textbf v lies in $\mathbf v_i$ but the second one does not lie in $\mathbf v_i$, whence $x\in\simple(\mathbf u_i)$ and $x\in\simple(\mathbf v_i)$. Finally, third, if first occurrence of $x$ in \textbf u is located to the left of $\mathbf u_i$, while the second one lies in $\mathbf u_i$ then first occurrence of $x$ in \textbf v is located to the left of $\mathbf v_i$, while the second one lies in $\mathbf v_i$. In this case we can apply the identity~\eqref{xyx=xyxx}. This allows us to suppose that $x\in\mul(\mathbf u_i)$ and $x\in\mul(\mathbf v_i)$. Thus, $\simple(\mathbf u_i)=\simple(\mathbf v_i)$ and $\mul(\mathbf u_i)=\mul(\mathbf v_i)$. This implies that the \mbox{$(k-1)$}-blocks $\mathbf u_i$ and $\mathbf v_i$ have the same ``canonical form''. Repeating literally arguments given above, we obtain the variety $\mathbf F_k$ satisfies the identities $\mathbf v\approx\mathbf u^\sharp\approx\mathbf u$.
 
\medskip
 
(ii) \emph{Necessity} follows from the already proved assertion~(i) of this proposition and the evident inclusion $\mathbf F_k\subseteq\mathbf K$, while \emph{sufficiency} is proved in the same way as in the assertion~(i).
\end{proof}
 
Now we are well prepared to quickly complete the proof of the claim~3) of Proposition~\ref{L(K)}. Let $\mathbf{E\subset X\subset K}$. We have to verify that $\mathbf X\in[\mathbf F_k,\mathbf F_{k+1}]$ for some $k$. Suppose that ${\bf F}_1\nsubseteq {\bf X}$. Then there is an identity ${\bf u}\approx {\bf v}$ that holds in ${\bf X}$ but does not hold in ${\bf F}_1$. Propositions~\ref{word problem E} and~\ref{word problem F_k,K}(i) and the inclusion ${\bf E} \subseteq {\bf X}$ imply that the claims~\eqref{sim(u)=sim(v) & mul(u)=mul(v)} and~\eqref{eq D_1 subseteq X} hold, while the claim~\eqref{eq the same l-dividers} with $\ell=1$ is false. Let~\eqref{t_0u_0t_1u_1 ... t_mu_m} be the 0-decomposition of $\bf u$. Then Lemma~\ref{k-equivalent} applies with the conclusion that the 0-decomposition of $\bf v$ has the form~\eqref{t_0v_0t_1v_1 ... t_mv_m}. Since ${\bf u}\approx {\bf v}$ violates the claim~\eqref{eq the same l-dividers} with $\ell=1$ but satisfies~\eqref{eq D_1 subseteq X}, there is a letter $x$ such that $h_2^0({\bf u},x)\ne h_2^0({\bf v},x)$. Put $t_i=h_2^0({\bf u},x)$ and $t_j=h_2^0({\bf v},x)$. We may assume without loss of generality that $j<i$. Since the claim~\eqref{eq D_1 subseteq X} is true, we have $h_1^0({\bf u},x)=h_1^0({\bf v},x)=t_q$ for some $q$. Clearly, $q\le j$. Thus, the identity $\mathbf{u\approx v}$ has the form
$$
\mathbf u_1t_q\mathbf u_2x\mathbf u_3t_i\mathbf u_4x\mathbf u_5\approx\mathbf v_1t_q\mathbf v_2x\mathbf v_3x\mathbf v_4t_i\mathbf v_5
$$
for some possibly empty words $\mathbf u_s$ and $\mathbf v_s$ with $s=1,2,\dots,5$. Substitute~1 for all letters occurring in the identity ${\bf u}\approx {\bf v}$ except $x$ and $t_i$. Then we obtain an identity of the form $xt_ix^p\approx x^qt_ix^r$ where $p\ge 1$, $q\ge 2$ and $r\ge 0$. Now the identity~\eqref{xyxzx=xyxz} applies and we conclude that ${\bf X}$ satisfies $xt_ix\approx x^2t_i$. This fact together with the inclusion $\mathbf{X\subseteq K}$ implies that ${\bf X} \subseteq {\bf E}$, contradicting the choice of \textbf X. Thus, ${\bf F}_1 \subseteq {\bf X}$. If ${\bf X}$ contains an infinite number of varieties of the form ${\bf F}_k$ then Proposition~\ref{word problem F_k,K} implies that ${\bf X}={\bf K}$. Hence there is a natural number $k$ such that ${\bf F}_k \subseteq {\bf X}$ but $\mathbf F_{k+1}\nsubseteq\mathbf X$. Then Proposition~\ref{word problem F_k,K}(i) implies that the claim~\eqref{sim(u)=sim(v) & mul(u)=mul(v)} holds, while the claim~\eqref{eq the same l-dividers} with $\ell=k+1$ fails. Now we apply Lemma~\ref{V subseteq E or J_(ell-1)^(ell-1)} and conclude that ${\bf X} \subseteq {\bf J}_k^k\subset\mathbf F_{k+1}$. Thus, $\mathbf X\in[\mathbf F_k,\mathbf F_{k+1}]$. The claim~3) of Proposition~\ref{L(K)} is proved.
 
\subsection{Structure of the interval $[\mathbf F_k,\mathbf F_{k+1}]$}
\label{sufficiency: K - structure of [F_k,F_{k+1}]} 
 
Here we prove the claim~4) of Proposition~\ref{L(K)}. We divide this subsection into six subsubsections. In Subsubsections~\ref{structure of [F_k,F_{k+1}] 1 step}--\ref{structure of [F_k,F_{k+1}] 5 step} we verify that an arbitrary variety from the interval $[\mathbf F_k,\mathbf F_{k+1}]$ coincides with one of the varieties $\mathbf F_k$, $\mathbf H_k$, $\mathbf I_k$, $\mathbf J_k^1$, $\mathbf J_k^2$, \dots, $\mathbf J_k^k$, $\mathbf F_{k+1}$. In Subsubsection~\ref{structure of [F_k,F_{k+1}] 6 step} we check that all these varieties are pairwise different. These facts together with Lemma~\ref{between F_k and F_{k+1}} imply the claim~4) of Proposition~\ref{L(K)}.
 
\subsubsection{If $\mathbf F_k\subset\mathbf X\subseteq\mathbf F_{k+1}$ then $\mathbf H_k\subseteq\mathbf X$}
\label{structure of [F_k,F_{k+1}] 1 step}
 
The first step in the verification of the claim~4) of Proposition \ref{L(K)} is the following
 
\begin{lemma}
\label{F_k or over H_k}
If $\mathbf X$ is a monoid variety such that $\mathbf X \in [\mathbf F_k, \mathbf F_{k + 1}]$ then either $\mathbf{X = F}_k$ or $\mathbf{X \supseteq H}_k$.
\end{lemma}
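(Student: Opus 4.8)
The plan is to prove the contrapositive: assuming $\mathbf H_k\nsubseteq\mathbf X$, I will show $\mathbf X=\mathbf F_k$. Since $\mathbf F_k\subseteq\mathbf X\subseteq\mathbf F_{k+1}\subseteq\mathbf K=\var\Phi$, the variety $\mathbf X$ satisfies $\Phi$, so it suffices to establish that $\mathbf X$ satisfies $\alpha_k$; indeed this gives $\mathbf X\subseteq\var\{\Phi,\alpha_k\}=\mathbf F_k$, which together with $\mathbf F_k\subseteq\mathbf X$ yields equality. From $\mathbf H_k\nsubseteq\mathbf X$ I first extract an identity $\mathbf u\approx\mathbf v$ that holds in $\mathbf X$ but fails in $\mathbf H_k$. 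Because $\mathbf F_k\subseteq\mathbf X$, the identity holds in $\mathbf F_k$, so by Proposition~\ref{word problem F_k,K}(i) the claims~\eqref{sim(u)=sim(v) & mul(u)=mul(v)} and~\eqref{eq the same l-dividers} with $\ell=k$ are true; and since it fails in $\mathbf H_k\subseteq\mathbf F_{k+1}$ it fails in $\mathbf F_{k+1}$, so the same proposition shows that~\eqref{eq the same l-dividers} with $\ell=k+1$ is false. Using Lemma~\ref{identities in K}(iii) I may normalise so that every letter occurs at most twice in $\mathbf u$ and in $\mathbf v$.

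By Lemma~\ref{k-equivalent} the agreement of the $(k-1)$-level restrictors makes $\mathbf u$ and $\mathbf v$ $(k-1)$-equivalent, so they have the same $(k-1)$-dividers in the same order and, block by block, the same content; hence $\mathbf v$ is obtained from $\mathbf u$ by permuting the letters inside each $(k-1)$-block. The idea is to realise such a permutation in $\mathbf H_k$ by a sequence of adjacent transpositions of letters lying in a common $(k-1)$-block. Here the key tool is the core swapping Lemma~\ref{u'abu''=u'bau''}: since $\mathbf H_k\subseteq\mathbf I_k\subseteq\mathbf J_k^m$ for every $m$ by Lemma~\ref{between F_k and F_{k+1}}, the variety $\mathbf H_k$ satisfies $\beta_k$, $\gamma_k$ and all the $\delta_k^m$, so cases~(i)--(iii) of that lemma are available in $\mathbf H_k$; only case~(iv), which requires $\alpha_k$, is not. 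Consequently every adjacent transposition $ab\mapsto ba$ inside a $(k-1)$-block can be carried out in $\mathbf H_k$ unless $a$ makes its first appearance at that spot and $D(\mathbf u,a)=D(\mathbf u,b)$. Were no transposition of this exceptional type needed, $\mathbf u\approx\mathbf v$ would hold in $\mathbf H_k$, contrary to the choice of the identity. Therefore the reordering forces at least one transposition of two letters $a,b$ that are both first occurrences with a common depth $r$; Lemma~\ref{k-divider and depth} and the fact that these occurrences sit inside a $(k-1)$-block give $r\le k$.

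Finally, I extract $\alpha_r$ from this exceptional configuration. Putting $\mathbf u\approx\mathbf v$ into the alternating canonical shape of Lemma~\ref{form of the identity} around the two depth-$r$ letters and substituting $1$ for all remaining letters, the identities of $\Phi$ collapse $\mathbf u\approx\mathbf v$ to $\alpha_r$ (up to renaming). Since $\mathbf u\approx\mathbf v$ holds in $\mathbf X$, so does $\alpha_r$, whence $\mathbf X\subseteq\var\{\Phi,\alpha_r\}=\mathbf F_r\subseteq\mathbf F_k$, and in particular $\mathbf X$ satisfies $\alpha_k$, as required. The main obstacle is the middle step: making rigorous, via Lemma~\ref{u'abu''=u'bau''}, that the only obstruction in $\mathbf H_k$ to transforming $\mathbf u$ into $\mathbf v$ is a transposition of two equal-depth first occurrences — this is in effect the delicate half of the word problem for $\mathbf H_k$ — together with the careful bookkeeping needed to locate such a transposition and to read off a clean copy of $\alpha_r$ after the substitution. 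As an alternative that avoids some of this bookkeeping, one may instead apply Lemma~\ref{V subseteq H_s or J_s^s} (or Lemma~\ref{V subseteq E or J_(ell-1)^(ell-1)}) to the letter and depth-$s$ divider witnessing the failure of~\eqref{eq the same l-dividers} with $\ell=k+1$: when the resulting $s$ satisfies $s<k$ one gets $\mathbf X\subseteq\mathbf H_s\subseteq\mathbf F_{s+1}\subseteq\mathbf F_k$ at once, and only the borderline case $s=k$ needs the depth analysis above to exclude the $\beta_k$- and $\delta_k^k$-patterns (which hold in $\mathbf H_k$) and isolate the $\alpha_k$-pattern.
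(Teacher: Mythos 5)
Your overall architecture matches the paper's (contrapositive, an identity of $\mathbf X$ failing in $\mathbf H_k$, the word problem of $\mathbf F_k$, and a final extraction of $\alpha_r$ that is in substance the paper's Lemma~\ref{V=F_k}), but the middle step is a genuine gap, and the route you sketch for it cannot work as stated. Lemma~\ref{u'abu''=u'bau''} gives only \emph{sufficient} conditions for a swap, so the inference ``if no exceptional transposition is needed then $\mathbf u\approx\mathbf v$ holds in $\mathbf H_k$'' is not a consequence of that lemma; it is precisely the sufficiency half of Proposition~\ref{word problem H_k}, which the paper proves by a separate canonical-form argument, and you leave it unproven. Treating the lemma's hypotheses as necessary also misidentifies the obstruction: two adjacent first occurrences of letters of equal depth $r>k$ fall outside cases (i)--(iii), yet such a pair \emph{can} be swapped in $\mathbf H_k$ --- in the canonical form of the proof of Proposition~\ref{word problem H_k}, all letters of a $(k-1)$-block other than its $k$-dividers are pulled to the front as squares in a fixed order, so only the mutual order of the $k$-dividers is an invariant. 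Finally, your ``exceptional transposition'' lives in an intermediate word of the swap sequence, whose depths and dividers need not agree with those of $\mathbf u$; you never verify what the extraction via Lemma~\ref{form of the identity} actually requires, namely two letters with $D(\mathbf u,a)=D(\mathbf u,b)=r$ whose \emph{first} occurrences are oppositely ordered in $\mathbf u$ and in $\mathbf v$.

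There is also a decisive sign error: a first occurrence lying inside a $(k-1)$-block is not a $(k-1)$-divider, so Lemma~\ref{k-divider and depth} gives depth $\ge k$, not $r\le k$ as you claim. This matters, because if the common depth $r$ of your pair were $>k$, the conclusion $\mathbf X\subseteq\mathbf F_r$ would be vacuous, since already $\mathbf X\subseteq\mathbf F_{k+1}\subseteq\mathbf F_r$. Everything therefore hinges on pinning $r=k$, i.e.\ on showing that the wrongly ordered pair consists of $k$-dividers, and this is exactly what the paper's proof does: since $\mathbf u\approx\mathbf v$ holds in $\mathbf F_k$ but fails in $\mathbf H_k$, Proposition~\ref{word problem H_k} forces the claim~\eqref{h_1^l(u,x)= h_1^l(v,x)} with $\ell=k$ to fail; Lemmas~\ref{the same l-dividers} and~\ref{k-equivalent} then produce two $k$-dividers $a,b$ occurring in opposite orders in $\mathbf u$ and $\mathbf v$; an $r$-equivalence argument (Lemmas~\ref{h_i^k(u,x)=h_i^k(v,x) to h_i^s(u,x)=h_i^s(v,x)},~\ref{D(u,x)=k iff D(v,x)=k} and~\ref{k-equivalent}) excludes $D(\mathbf u,a)<k$ and $D(\mathbf u,b)<k$; and Lemma~\ref{V=F_k} with $s=k$ yields $\mathbf X\subseteq\mathbf F_k$. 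Both Proposition~\ref{word problem H_k} and Lemma~\ref{V=F_k} are stated in the paper immediately before this lemma and are the tools you should invoke. Your fallback via Lemma~\ref{V subseteq H_s or J_s^s} stalls at $s=k$ for the same reason, as you yourself note: there it yields only $\mathbf X\subseteq\mathbf H_k$ or $\mathbf X\subseteq\mathbf J_k^k$, not $\mathbf X=\mathbf F_k$.
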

 
To check this fact, we need
 
\begin{lemma}
\label{V=F_k}
Let ${\bf V}$ be a monoid variety with ${\bf F}_s \subseteq {\bf V} \subseteq {\bf K}$ for some $s$. If $\mathbf V$ satisfies an identity ${\bf u}\approx {\bf v}$ such that $\ell_1({\bf u},a)<\ell_1({\bf u},b)$, $\ell_1({\bf v},b)<\ell_1({\bf v},a)$ and $D({\bf u},a)=D({\bf u},b)=s$ for some $a,b\in \con({\bf u})$ then ${\bf V} = {\bf F}_s$.
\end{lemma}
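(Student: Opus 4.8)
The plan is to prove $\mathbf V\subseteq\mathbf F_s$, which together with the standing assumption $\mathbf F_s\subseteq\mathbf V$ gives $\mathbf V=\mathbf F_s$. Since $\mathbf F_s=\var\{\Phi,\alpha_s\}$ while $\mathbf V\subseteq\mathbf K=\var\Phi$, the variety $\mathbf V$ already satisfies $\Phi$, so it suffices to show that $\mathbf V$ satisfies $\alpha_s$. Throughout I would use freely the $\mathbf K$-identities provided by Lemma~\ref{identities in K}, in particular~\eqref{xyxzx=xyxz} and claim~(iii). Because the identity $\mathbf u\approx\mathbf v$ holds in $\mathbf V$ and $\mathbf F_s\subseteq\mathbf V$, it holds in $\mathbf F_s$ too; hence Proposition~\ref{word problem F_k,K}(i) shows that the claims~\eqref{sim(u)=sim(v) & mul(u)=mul(v)} and~\eqref{eq the same l-dividers} with $\ell=s$ are true. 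Lemma~\ref{D(u,x)=k iff D(v,x)=k} then yields $D(\mathbf v,a)=D(\mathbf v,b)=s$, and by Lemma~\ref{k-divider and depth} both $a$ and $b$ are $s$-dividers of $\mathbf u$ and of $\mathbf v$.

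Next I would build a descending chain of dividers. Applying Lemma~\ref{form of the identity} with $x_s:=b$ produces letters $x_0,x_1,\dots,x_{s-1}$ with $D(\mathbf u,x_r)=D(\mathbf v,x_r)=r$ for $r<s$ and puts the identity into the staircase form~\eqref{form of u=v} (with $b$ in the role of $x_s$) for suitable blocks $\mathbf u_0,\dots,\mathbf u_{2s+1}$ and $\mathbf v_0,\dots,\mathbf v_{2s+1}$; Lemma~\ref{identities in K}(iii) lets me assume every letter occurs at most twice. The letter $a$ is to play the role of the second depth-$s$ letter $y_s$ of $\alpha_s$. From $\ell_1(\mathbf u,a)<\ell_1(\mathbf u,b)$ the first occurrence of $a$ in $\mathbf u$ lies in $\mathbf u_{2s+1}$, i.e. before the first occurrence of $b$, whereas $\ell_1(\mathbf v,b)<\ell_1(\mathbf v,a)$ places the first occurrence of $a$ in $\mathbf v$ to the right of that of $b$. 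The decisive use of the hypothesis $D(\mathbf u,a)=s$ is to pin the second occurrence of $a$: since $\ell_1(\mathbf u,a)<\ell_1(\mathbf u,x_{s-2})$ and $D(\mathbf u,x_{s-2})=s-2$, Lemma~\ref{if first then second} (applied with $r=s$) forces $\ell_2(\mathbf u,a)<\ell_1(\mathbf u,x_{s-2})$, while $D(\mathbf u,a)=s$ forces an $(s-1)$-divider to lie between the two occurrences of $a$ by Lemma~\ref{h_2^{k-1}}; thus the second occurrence of $a$ sits strictly between the first occurrences of $x_{s-1}$ and $x_{s-2}$, exactly where $y_s$ stands in $\alpha_s$ (the degenerate case $s=1$ being handled directly).

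With the positions identified, I would substitute $1$ for every letter other than $a,b,x_0,\dots,x_{s-1}$ and use~\eqref{xyxzx=xyxz} to discard all occurrences beyond the first two of each multiple letter. The projection of $\mathbf u$ then reduces to $y_sx_sx_{s-1}x_sy_s\mathbf b_{s-1}$ (with $x_s=b$, $y_s=a$), the right-hand side of $\alpha_s$, and, by the symmetric analysis of $\mathbf v$, the projection of $\mathbf v$ reduces to the left-hand side $x_sy_sx_{s-1}x_sy_s\mathbf b_{s-1}$. Hence $\mathbf V$ satisfies $\alpha_s$, so $\mathbf V\subseteq\mathbf F_s$ and $\mathbf V=\mathbf F_s$. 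As in the proof of Lemma~\ref{V subseteq H_s or J_s^s}, a few positional subcases arise, according to whether the second occurrence of $a$ falls before or after the second occurrence of $b$ (and the mirror choice inside $\mathbf v$); in each one the surviving word is brought to the canonical $\alpha_s$-shape by a further use of~\eqref{xyxzx=xyxz} together with Lemma~\ref{identities in K}(iii) for reordering second occurrences inside a single block.

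The main obstacle I expect is exactly this last position-control step. Unlike Lemma~\ref{V subseteq H_s or J_s^s}, where only one of the two transposed letters has controlled depth and one obtains merely $\beta_s$ or~\eqref{xx_kxb_k=x^2x_kb_k}, here \emph{both} $a$ and $b$ have depth exactly $s$, and it is the careful bookkeeping of their second occurrences relative to the chain $x_{s-1},x_{s-2}$ — justified by Lemmas~\ref{h_2^{k-1}} and~\ref{if first then second} — that upgrades the conclusion to the full identity $\alpha_s$. Keeping track of the several positional subcases and verifying that the $\mathbf K$-identities normalise each of them to $\alpha_s$, rather than to a strictly weaker identity, is the technically delicate part of the argument.
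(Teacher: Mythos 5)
Your overall strategy (reduce to deriving $\alpha_s$, invoke Proposition~\ref{word problem F_k,K}(i), Lemma~\ref{D(u,x)=k iff D(v,x)=k} and Lemma~\ref{form of the identity}, then delete all other letters) is the paper's strategy, but the execution contains a genuine gap at the decisive positional step. You anchor the staircase at $x_s:=b$ and then assert that $D(\mathbf u,a)=s$ forces the second occurrence of $a$ to lie between the first occurrences of $x_{s-1}$ and $x_{s-2}$. This does not follow: $D(\mathbf u,a)=s$ only provides \emph{some} $(s-1)$-divider between the two occurrences of $a$, and nothing makes that divider equal to $x_{s-1}=h_2^{s-1}(\mathbf u,b)$. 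Concretely, take $s=2$ and
$$
\mathbf u=a\,b\,t\,a\,x_1\,b\,x_0\,t\,x_1,\qquad\mathbf v=b\,a\,t\,a\,x_1\,b\,x_0\,t\,x_1.
$$
Direct computation gives $D(\mathbf u,a)=D(\mathbf u,b)=D(\mathbf v,a)=D(\mathbf v,b)=2$, $D(\mathbf u,t)=D(\mathbf u,x_1)=1$, $D(\mathbf u,x_0)=0$, and the claims~\eqref{sim(u)=sim(v) & mul(u)=mul(v)} and~\eqref{eq the same l-dividers} with $\ell=2$ hold, so by Proposition~\ref{word problem F_k,K}(i) this identity holds in $\mathbf F_2$ and satisfies every hypothesis of the lemma. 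Yet the staircase anchored at $b$ yields $x_1=h_2^1(\mathbf u,b)$ and $x_0$, while $\ell_2(\mathbf u,a)<\ell_1(\mathbf u,x_1)$: the $1$-divider separating the occurrences of $a$ is $t$, not $x_1$, so your positional claim fails.

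The failure is fatal to the proof as written, not merely a missing verification. Substituting $1$ for every letter outside $\{a,b,x_0,x_1\}$ deletes $t$ and produces the identity $abax_1bx_0x_1\approx ba^2x_1bx_0x_1$, in which $a$ has depth $3$ on the left and $\infty$ on the right; one checks that this identity satisfies the conditions of Proposition~\ref{word problem H_k} and therefore holds in $\mathbf H_2$, which strictly contains $\mathbf F_2$. Since $\mathbf H_2$ satisfies $\Phi$ but not $\alpha_2$, no further manipulation by identities of $\mathbf K$ can recover $\alpha_2$ from the projected identity: the essential information was destroyed when $t$ was deleted, so your concluding subcase repair cannot be carried out. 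The paper avoids this by choosing the anchor according to the order of \emph{second} occurrences in $\mathbf u$ (hence its case split~\eqref{order of second occurrences} versus its negation): when $\ell_2(\mathbf u,a)<\ell_2(\mathbf u,b)$ the staircase is built on $a$, so that in the example above the retained letter is $t$ rather than $x_1$ and both words project exactly onto the two sides of $\alpha_s$; when $\ell_2(\mathbf u,b)<\ell_2(\mathbf u,a)$ it is built on $b$. With the correct anchor, the other letter's second occurrence is squeezed into the block between $x_{s-1}$ and $x_{s-2}$ by Lemma~\ref{if first then second}, the first occurrences are aligned by the same-$(s-1)$-block argument (which you also leave implicit on the $\mathbf v$ side), and the residual order mismatches of the two adjacent second occurrences are repaired by $\sigma_2$ from Lemma~\ref{identities in K}(i) --- not by Lemma~\ref{identities in K}(iii), which requires every letter to be multiple in both sides and so does not apply here.
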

 
\begin{proof}
Put $x_s= a$ and $y_s= b$. Since $\mathbf F_s\subseteq \mathbf V$, Proposition~\ref{word problem F_k,K}(i) implies that the claims~\eqref{sim(u)=sim(v) & mul(u)=mul(v)} and~\eqref{eq the same l-dividers} with $\ell=s$ are true. Suppose that
\begin{equation}
\label{order of second occurrences}
\ell_2(\mathbf u,x_s)<\ell_2(\mathbf u,y_s)\text{ and }\ell_2(\mathbf v,x_s)<\ell_2(\mathbf v,y_s).
\end{equation}
Now Lemma~\ref{form of the identity} applies with the conclusion that there are letters $x_0,x_1,\dots$, $ x_{s-1}$ such that $D(\mathbf u,x_r)=D(\mathbf v,x_r)=r$ for any $0\le r<s$ and the identity $\mathbf u \approx \mathbf v$ has the form~\eqref{form of u=v} for some possibly empty words $\mathbf u_0, \mathbf u_1,\dots,\mathbf u_{2s+1}$ and $\mathbf v_0, \mathbf v_1,\dots,\mathbf v_{2s+1}$.
 
One can verify that the first occurrences of $x_s$ and $y_s$ in \textbf u lie in the same \mbox{$(s-1)$}-block. Put $t_1=h_1^{s-1}(\mathbf u, x_s)$ and $t_2=h_1^{s-1}(\mathbf u, y_s)$. Arguing by contradiction, suppose that $t_1\ne t_2$. Since $\ell_1({\bf u},x_s)<\ell_1({\bf u},y_s)$, we have $\ell_1({\bf u},t_1)<\ell_1({\bf u},t_2)$. Lemma~\ref{k-equivalent} with $k=s-1$ implies that $\ell_1({\bf v},t_1)<\ell_1({\bf v},t_2)$. In view of the claim~\eqref{eq the same l-dividers} with $\ell=s$, $t_1=h_1^{s-1}(\mathbf v, x_s)$ and $t_2=h_1^{s-1}(\mathbf v, y_s)$. But this contradicts the fact that $\ell_1({\bf v},y_s)<\ell_1({\bf v},x_s)$. So, the first occurrences $x_s$ and $y_s$ in \textbf u lie in the same \mbox{$(s-1)$}-block. In particular, first occurrence of $y_s$ in \textbf u precedes first occurrence of $x_{s-1}$ in \textbf u because $\ell_1(\mathbf u,x_s)<\ell_1(\mathbf u,x_{s-1})$ and $x_{s-1}$ is an \mbox{$(s-1)$}-divider. This implies that $\mathbf u_{2s}=\mathbf u_{2s}'y_s\mathbf u_{2s}''$ for some possibly empty words $\mathbf u_{2s}'$ and $\mathbf u_{2s}''$. Since first occurrence $y_s$ in \textbf v precedes first occurrence of $x_s$ in \textbf v, we have $\mathbf v_{2s+1}=\mathbf v_{2s+1}'y_s\mathbf v_{2s+1}''$ for some possibly empty words $\mathbf v_{2s+1}'$ and $\mathbf v_{2s+1}''$.
 
Further, since $\ell_1(\mathbf u,y_s)<\ell_1(\mathbf u,x_{s-2})$, we apply Lemma~\ref{if first then second} with $\bf w = u$, $z=y_s$, $t=x_{s-2}$ and $r=s$ and obtain $\ell_2(\mathbf u,y_s)<\ell_1(\mathbf u,x_{s-2})$. This implies that $\mathbf u_{2s-2}=\mathbf u_{2s-2}'y_s\mathbf u_{2s-2}''$ for some possibly empty words $\mathbf u_{2s-2}'$ and $\mathbf u_{2s-2}''$. Analogously, we can verify that $\mathbf v_{2s-2}=\mathbf v_{2s-2}'y_s\mathbf v_{2s-2}''$ for some possibly empty words $\mathbf v_{2s-2}'$ and $\mathbf v_{2s-2}''$.
 
In view of the above, we have the identity $\mathbf{u\approx v}$ has the form
\begin{align*}
&\mathbf u_{2s+1} x_s \mathbf u_{2s}'\stackrel{(1)}{y_s}\mathbf u_{2s}' x_{s-1} \mathbf u_{2s-1} x_s \mathbf u_{2s-2}'\stackrel{(2)}{y_s}\mathbf u_{2s-2}' x_{s-2} \mathbf u_{2s-3} x_{s-1} \cdots\\
&\cdot\,\mathbf u_4 x_1 \mathbf u_3 x_2 \mathbf u_2 x_0 \mathbf u_1 x_1 \mathbf u_0\\[-3pt]
\approx{}&\mathbf v_{2s+1}' \stackrel{(1)}{y_s} \mathbf v_{2s+1}'' x_s \mathbf v_{2s} x_{s-1} \mathbf v_{2s-1} x_s \mathbf v_{2s-2}' \stackrel{(2)}{y_s}\mathbf v_{2s-2}'' x_{s-2} \mathbf v_{2s-3} x_{s-1} \cdots\\
&\cdot\,\mathbf v_4 x_1 \mathbf v_3 x_2 \mathbf v_2 x_0 \mathbf v_1 x_1 \mathbf v_0.
\end{align*}
Lemma~\ref{identities in K}(ii) allows us to assume that the letters $x_r$ with $1\le r\le s$ and $y_s$ occur twice in each of the words \textbf u and \textbf v. Now substitute~1 for all letters occurring in this identity except $x_0,x_1,\dots,x_s$ and $y_s$. We get the identity
$$
x_sy_sx_{s-1}x_sy_sx_{s-2}x_{s-1}\cdots x_1x_2x_0x_1\approx y_sx_sx_{s-1}x_sy_sx_{s-2}x_{s-1}\cdots x_1x_2x_0x_1,
$$
i.e., the identity $\alpha_s$.
 
Suppose now that~\eqref{order of second occurrences} is false. If $\ell_2(\mathbf u,x_s)<\ell_2(\mathbf u,y_s)$ but $\ell_2(\mathbf v,y_s)<\ell_2(\mathbf v,x_s)$ then the same considerations as above show that \textbf V satisfies the identity
$$
x_sy_sx_{s-1}x_sy_sx_{s-2}x_{s-1}\cdots x_1x_2x_0x_1\approx{}\stackrel{(1)}{y_s}\stackrel{(1)}{x_s}x_{s-1}\stackrel{(2)}{y_s}\stackrel{(2)}{x_s}x_{s-2}x_{s-1}\cdots x_1x_2x_0x_1.
$$
According to Lemma~\ref{identities in K}(i), the variety \textbf V satisfies the identity $\sigma_2$. This allows us to transpose the second occurrences of the letters $x_s$ and $y_s$ in the right-hand side of the last identity. As a result, we get $\alpha_s$ as well.
 
Finally, if $\ell_2(\mathbf u,y_s)<\ell_2(\mathbf u,x_s)$ then we can repeat the above arguments but apply Lemmas~\ref{if first then second} and~\ref{form of the identity} for the letter $y_s$ rather than $x_s$. As a result, we obtain an identity of the form
$$
x_sy_sx_{s-1}y_sx_sx_{s-2}x_{s-1}\cdots x_1x_2x_0x_1\approx y_sx_sx_{s-1}\mathbf ax_{s-2}x_{s-1}\cdots x_1x_2x_0x_1
$$
where
$$
\mathbf a=
\begin{cases}
x_sy_s&\text{whenever }\ell_2(\mathbf v,x_s)<\ell_2(\mathbf v,y_s),\\
y_sx_s&\text{otherwise}.
\end{cases}
$$
In the case when $\mathbf a=x_sy_s$ this identity coincides with $\alpha_s$, while otherwise we apply $\sigma_2$ once again and obtain $\alpha_s$ too. Thus, \textbf V satisfies $\alpha_s$ in any case, whence ${\bf V} \subseteq {\bf F}_s$.
\end{proof}
 
\begin{proposition}
\label{word problem H_k}
A non-trivial identity ${\bf u}\approx {\bf v}$ holds in the variety ${\bf H}_k$ if and only if the claims~\eqref{sim(u)=sim(v) & mul(u)=mul(v)},~\eqref{eq the same l-dividers} and
\begin{equation}
\label{h_1^l(u,x)= h_1^l(v,x)}
\text{if either }D({\bf u},x)\le \ell\text{ or }D({\bf v},x)\le \ell\text{ then }h_1^\ell({\bf u},x)= h_1^\ell({\bf v},x)
\end{equation}
with $\ell=k$ are true.
\end{proposition}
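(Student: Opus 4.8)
The plan is to follow the two-part scheme already used for Proposition~\ref{word problem F_k,K}(i), the only genuinely new ingredient being the condition~\eqref{h_1^l(u,x)= h_1^l(v,x)} with $\ell=k$. I would first record the reformulation that makes this condition transparent: by Lemma~\ref{k-equivalent}, for words satisfying~\eqref{sim(u)=sim(v) & mul(u)=mul(v)} the claim~\eqref{h_1^l(u,x)= h_1^l(v,x)} with $\ell=k$ holds exactly when $\mathbf u$ and $\mathbf v$ are $k$-equivalent, i.e.\ have the same $k$-dividers occurring in the same order. Thus, compared with $\mathbf F_k$ (where only~\eqref{sim(u)=sim(v) & mul(u)=mul(v)} and~\eqref{eq the same l-dividers} are required), passing to $\mathbf H_k$ adds precisely the demand that the sequences of $k$-dividers of $\mathbf u$ and of $\mathbf v$ coincide.

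For the \emph{necessity} part, since $\mathbf F_k\subseteq\mathbf H_k$ by Lemma~\ref{between F_k and F_{k+1}}, any identity holding in $\mathbf H_k$ holds in $\mathbf F_k$, so~\eqref{sim(u)=sim(v) & mul(u)=mul(v)} and~\eqref{eq the same l-dividers} with $\ell=k$ come for free from Proposition~\ref{word problem F_k,K}(i). It remains to verify~\eqref{h_1^l(u,x)= h_1^l(v,x)}, i.e.\ $k$-equivalence. As in Proposition~\ref{word problem F_k,K} I would reduce, by induction on the length of a deduction, to the case $\mathbf u=\mathbf p\xi(\mathbf a)\mathbf q$, $\mathbf v=\mathbf p\xi(\mathbf b)\mathbf q$ for a single defining identity $\mathbf a\approx\mathbf b\in\{\Phi,\beta_k\}$, and check that each such step preserves $k$-equivalence. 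The identities $xyx\approx xyx^2$ and $x^2y\approx x^2yx$ only alter third and later occurrences, hence leave the $k$-decomposition untouched; for $x^2y^2\approx y^2x^2$ Lemma~\ref{does not contain dividers} shows $\xi(\mathbf a)$ lies in a single $k$-block, so no $k$-divider is disturbed. The one substantial case is $\beta_k$: by Lemma~\ref{depth and index} the letter $x$ has depth $k+1$ on the left and $\infty$ on the right, so on both sides $D(\cdot,x)>k$; Lemma~\ref{does not contain dividers} then gives that the moved factors $\xi(x)$ contain no $k$-divider, whence transposing $\xi(x)$ across $\xi(x_k)$ neither creates, destroys, nor reorders $k$-dividers. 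Thus every step preserves $k$-equivalence and necessity follows.

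For the \emph{sufficiency} part I would mimic the canonical-form argument of Proposition~\ref{word problem F_k,K}(i), but with a canonical form adapted to the weaker reordering power of $\mathbf H_k$. The crucial observation is that, via the chain $\mathbf F_k\subseteq\mathbf H_k\subseteq\cdots\subseteq\mathbf F_{k+1}$ of Lemma~\ref{between F_k and F_{k+1}}, the variety $\mathbf H_k$ satisfies not only $\Phi$ and $\beta_k$ but also $\gamma_k$, every $\delta_k^m$, and $\alpha_{k+1}$. Consequently all four cases of Lemma~\ref{u'abu''=u'bau''} are available: inside a single $(k-1)$-block of $\mathbf u$ I can transpose an adjacent pair $ab$ whenever the left letter first occurs earlier (cases~(i),~(ii), using $\delta_k^m$ or $\gamma_k$), whenever $a$ and $b$ have different depth (case~(iii), using $\beta_k$), and --- applying case~(iv) at level $k+1$ --- whenever $ab$ lies in a common $k$-block (using $\alpha_{k+1}$). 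A short depth bookkeeping shows these moves reorder everything inside a $(k-1)$-block except the left-to-right sequence of first occurrences of depth-exactly-$k$ letters, which are precisely the $k$-dividers lying in that block and form a rigid skeleton. I would therefore define $\overline{\mathbf u_i}$ to keep this skeleton fixed while gathering squares of block-multiple letters (freely commuted by $x^2y^2\approx y^2x^2$) and normalising the remaining single occurrences, and then reduce $\mathbf u$ to $\mathbf u^\sharp=t_0\,\overline{\mathbf u_0}\,t_1\cdots t_m\,\overline{\mathbf u_m}$ exactly as in~\eqref{u = canonical form}.

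Finally I would pass to $\mathbf v$. As in Proposition~\ref{word problem F_k,K}, the claims~\eqref{sim(u)=sim(v) & mul(u)=mul(v)} and~\eqref{eq the same l-dividers} force $\mathbf v$ to have the same $(k-1)$-dividers and the same distribution of first and second occurrences among the $(k-1)$-blocks, so $\mathbf u_i$ and $\mathbf v_i$ carry the same multiset of letters with matching block-simple/block-multiple status; the additional claim~\eqref{h_1^l(u,x)= h_1^l(v,x)}, i.e.\ $k$-equivalence, guarantees that the rigid skeleton of depth-$k$ first occurrences appears in the same order in $\mathbf u_i$ and in $\mathbf v_i$. Hence $\overline{\mathbf v_i}=\overline{\mathbf u_i}$ for every $i$, and the same reduction yields $\mathbf v\approx\mathbf u^\sharp$ in $\mathbf H_k$, so that $\mathbf u\approx\mathbf v$ holds in $\mathbf H_k$. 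I expect the main obstacle to be exactly this sufficiency bookkeeping: isolating which transpositions Lemma~\ref{u'abu''=u'bau''} actually grants to $\mathbf H_k$, proving that the depth-$k$ $k$-dividers are the sole non-reorderable invariant of a $(k-1)$-block, and verifying that~\eqref{h_1^l(u,x)= h_1^l(v,x)} controls precisely that invariant --- the interplay of the four cases of Lemma~\ref{u'abu''=u'bau''} being considerably more delicate here than the single free-reordering case~(iv) used for $\mathbf F_k$.
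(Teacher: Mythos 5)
Your proposal is correct and takes essentially the same route as the paper's proof: necessity by reducing to a one-step deduction from $\{\Phi,\beta_k\}$ and checking via Lemmas~\ref{depth and index} and~\ref{does not contain dividers} that each step preserves $k$-equivalence, and sufficiency by reducing every \mbox{$(k-1)$}-block to a canonical form through Lemma~\ref{u'abu''=u'bau''} and then matching the canonical forms of $\mathbf u$ and $\mathbf v$ by means of~\eqref{h_1^l(u,x)= h_1^l(v,x)}. The only (harmless) deviation is in the tooling for sufficiency: the paper gets by with cases~(ii) and~(iii) of Lemma~\ref{u'abu''=u'bau''} together with Lemma~\ref{identities in K}(ii),(iii) to square up and sort the non-divider letters, so your additional appeals to $\delta_k^m$ and to $\alpha_{k+1}$ (case~(iv) at level $k+1$), though legitimately available by Lemma~\ref{between F_k and F_{k+1}}, are redundant.
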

 
\begin{proof}
\emph{Necessity}. Suppose that a non-trivial identity $\mathbf u\approx \mathbf v$ holds in $\mathbf H_k$. Proposition~\ref{word problem F_k,K}(i) and the inclusion ${\bf F}_k\subseteq{\bf H}_k$ imply that the claims~\eqref{sim(u)=sim(v) & mul(u)=mul(v)} and~\eqref{eq the same l-dividers} with $\ell=k$ are true. As in the proof of necessity in Proposition~\ref{word problem F_k,K}(i), we can assume that $\mathbf u=\mathbf p\xi(\mathbf a)\mathbf q$ and $\mathbf v=\mathbf p\xi(\mathbf b)\mathbf q$ for some possibly empty words $\mathbf p$ and $\mathbf q$, an endomorphism $\xi$ of $F^1$ and an identity $\mathbf a\approx \mathbf b\in\{\Phi,\beta_k\}$. 
 
If $\mathbf a\approx\mathbf b\in\Phi$ then the claim~\eqref{eq the same l-dividers} is true for any $\ell$ by Proposition~\ref{word problem F_k,K}(ii). Evidently, this implies the required conclusion. Suppose now that $\mathbf a\approx \mathbf b$ coincides with $\beta_k$. Then 
\begin{align*}
\xi(\mathbf a)={}&\mathbf a_{k+1}\mathbf a_k\mathbf a_{k+1}\mathbf a_{k-1}\mathbf a_k\mathbf a_{k-2}\mathbf a_{k-1}\cdots \mathbf a_1\mathbf a_2\mathbf a_0\mathbf a_1,\\
\xi(\mathbf b)={}&\mathbf a_k\mathbf a_{k+1}^2\mathbf a_{k-1}\mathbf a_k\mathbf a_{k-2}\mathbf a_{k-1}\cdots \mathbf a_1\mathbf a_2\mathbf a_0\mathbf a_1
\end{align*}
for some words $\mathbf a_0,\mathbf a_1,\dots,\mathbf a_k$ and $\mathbf a_{k+1}$, whence 
\begin{align*}
\mathbf u={}&\mathbf p\mathbf a_{k+1}\mathbf a_k\mathbf a_{k+1}\mathbf a_{k-1}\mathbf a_k\mathbf a_{k-2}\mathbf a_{k-1}\cdots \mathbf a_1\mathbf a_2\mathbf a_0\mathbf a_1\mathbf q,\\
\mathbf v={}&\mathbf p\mathbf a_k\mathbf a_{k+1}^2\mathbf a_{k-1}\mathbf a_k\mathbf a_{k-2}\mathbf a_{k-1}\cdots \mathbf a_1\mathbf a_2\mathbf a_0\mathbf a_1\mathbf q.
\end{align*}
By Lemma~\ref{depth and index}, $D(\mathbf a,x),D(\mathbf a,x_k)>k-1$. Then Lemma~\ref{does not contain dividers} implies that the subword $\mathbf a_{k+1}\mathbf a_k\mathbf a_{k+1}$ of $\bf u$ located between $\mathbf p$ and $\mathbf a_{k-1}$ is contained in some \mbox{$(k-1)$}-block. Besides that, in view of Lemma~\ref{does not contain dividers}, both occurrences of the word $\mathbf a_{k+1}$ in $\bf u$ do not contain any $k$-dividers of $\bf u$ because $D(\mathbf u,x)>k$ by Lemma~\ref{depth and index}. This means that the words $\bf u$ and $\bf v$ are $k$-equivalent. Now Lemma~\ref{k-equivalent} applies with the conclusion that the claim~\eqref{h_1^l(u,x)= h_1^l(v,x)} with $\ell=k$ is true. 
 
\smallskip
 
\emph{Sufficiency}. The outline of our arguments here is the same as in the proof of sufficiency in Proposition~\ref{word problem F_k,K}(i). But the canonical form of a \mbox{$(k-1)$}-block of \textbf u looks more complicated here than in that proposition.
 
Suppose that the claims~\eqref{sim(u)=sim(v) & mul(u)=mul(v)},~\eqref{eq the same l-dividers} and~\eqref{h_1^l(u,x)= h_1^l(v,x)} with $\ell=k$ are true. Let~\eqref{t_0u_0t_1u_1 ... t_mu_m} be the \mbox{$(k-1)$}-decomposition of \textbf u. Let us fix an index $i\in\{0,1,\dots,m\}$. Let
\begin{equation}
\label{presentation for t_iu_i}
t_i\mathbf u_i=s_0\mathbf a_0s_1\mathbf a_1\cdots s_n\mathbf a_n
\end{equation}
be the presentation of the word $t_i\mathbf u_i$ as the product of alternating $k$-dividers $s_0,s_1,\dots, s_n$ and $k$-blocks $\mathbf a_0, \mathbf a_1,\dots,\mathbf a_n$. Put $\mathbf u_i^\ast=\mathbf a_0\mathbf a_1\cdots\mathbf a_n$. Let $\con(\mathbf u_i^\ast)=\{x_1,x_2,\dots,x_p\}$ and
$$
\overline{\mathbf u_i}\,=x_1^2x_2^2\cdots x_p^2s_1 s_2\cdots s_n.
$$
As we will see below, $\overline{\mathbf u_i}$ is nothing but the mentioned above ``canonical form'' of the \mbox{$(k-1)$}-block $\mathbf u_i$.
 
Clearly, $\mathbf u=\mathbf w_1\mathbf u_i\mathbf w_2$ for some possibly empty words $\mathbf w_1$ and $\mathbf w_2$. Suppose that $x\in\con(\mathbf u_i^\ast)$ but $x\notin\con(\mathbf w_1)$. If $x$ is simple in $\mathbf u_i$ then $x$ is a $k$-divider of \textbf u but this is not the case. Therefore, $x$ is multiple in $\mathbf u_i$. Since $x\notin\con(\mathbf w_1)$, this means that the first and the second occurrences of $x$ in \textbf u lie in the same \mbox{$(k-1)$}-block of \textbf u, whence $D(\mathbf u,x)>k$. Further, Lemma~\ref{k-divider and depth} implies that $D(\mathbf u,s_j)=k$ for all $j=1,\dots,n$. We see that if $a\in\con(\mathbf u_i^\ast)$ and $b\in\{s_1,s_2,\dots,s_n\}$ then either $a\in\con(\mathbf w_1)$ or $D(\mathbf u,a)\ne D(\mathbf u,b)$. Now the assertions~(ii) and~(iii) of Lemma~\ref{u'abu''=u'bau''} apply with the conclusion that the identities 
$$
\mathbf u=\mathbf w_1\mathbf u_i\mathbf w_2\approx\mathbf w_1\mathbf u_i^\ast s_1s_2\cdots s_n\mathbf w_2
$$
hold in $\mathbf H_k$. As we have seen above, if $x\in\con(\mathbf u_i^\ast)\setminus\con(\mathbf w_1)$ then $\occ_x(\mathbf u_i^\ast)\ge 2$. Further, if $x\in\con(\mathbf w_1)\,\cap\,\con(\mathbf u_i^\ast)$ then we can apply the identity~\eqref{xyx=xyxx} and obtain $\occ_x(\mathbf u_i^\ast)\ge 2$ too. Now Lemma~\ref{identities in K}(ii) applies with the conclusion that $\occ_x(\mathbf u_i^\ast)=2$ for any $x\in\con(\mathbf u_i^\ast)$. Then Lemma~\ref{identities in K}(iii) applies and we conclude that the identities
$$
\mathbf u\approx\mathbf w_1\mathbf u_i^\ast s_1s_2\cdots s_n\mathbf w_2\approx\mathbf w_1\,\overline{\mathbf u_i}\,\mathbf w_2
$$
hold in $\mathbf H_k$. 
 
So, as in the proof of Proposition~\ref{word problem F_k,K}(i), using identities which hold in the variety $\mathbf H_k$, we can replace the \mbox{$(k-1)$}-blocks $\mathbf u_i$ of \textbf u successively, one after another, by the ``canonical form'' $\overline{\mathbf u_i}$ for $i=m,m-1,\dots,0$. Then the variety $\mathbf H_k$ satisfies the identities~\eqref{u = canonical form}. Put $\mathbf u^\sharp=t_0\,\overline{\mathbf u_0}\,t_1\,\overline{\mathbf u_1}\,\cdots t_m\,\overline{\mathbf u_m}$\,.
 
One can return to the word \textbf v. By Lemma~\ref{k-equivalent}, the \mbox{$(k-1)$}-decomposition of $\bf v$ has the form~\eqref{t_0v_0t_1v_1 ... t_mv_m}. By~\eqref{h_1^l(u,x)= h_1^l(v,x)} and Lemma~\ref{k-equivalent}, the words $\bf u$ and $\bf v$ are $k$-equivalent. This means that the word $t_i\mathbf v_i $ is the product of alternating $k$-dividers $s_0,s_1,\dots,s_n$ and $k$-blocks $\mathbf b_0,\mathbf b_1,\dots,\mathbf b_n$, i.e.,
\begin{equation}
\label{presentation for t_iv_i}
t_i\mathbf v_i=s_0\mathbf b_0s_1\mathbf b_1\cdots s_n\mathbf b_n.
\end{equation}
The claim~\eqref{eq the same l-dividers} with $\ell=k$ implies that $j$th occurrence of a letter $x$ in $\bf u$ lies in the \mbox{$(k-1)$}-block $\mathbf u_i$ if and only if $j$th occurrence of a letter $x$ in $\bf v$ lies in the \mbox{$(k-1)$}-block $\mathbf v_i$ for any $x$ and any $j=1,2$. Also, Lemma~\ref{identities in K}(ii) allows us to assume that if the first and the second occurrences of the letter $x$ in $\bf u$ do not lie in the \mbox{$(k-1)$}-block $\mathbf u_i$ then this letter does not occur in $\mathbf u_i$. Then $\con(\mathbf u_i^\ast)=\con(\mathbf b_0\mathbf b_1\cdots\mathbf b_n)$. This implies that the \mbox{$(k-1)$}-blocks $\mathbf u_i$ and $\mathbf v_i$ have the same ``canonical form''. Repeating literally arguments given above, we obtain the variety $\mathbf H_k$ satisfies the identities $\mathbf v\approx\mathbf u^\sharp\approx\mathbf u$.
\end{proof}
 
Now we are well prepared to quickly complete the proof of Lemma~\ref{F_k or over H_k}. Let $\mathbf F_k\subset\mathbf{X\subseteq F}_{k+1}$. We have to verify that $\mathbf{X \supseteq H}_k$. Arguing by contradiction, suppose that ${\bf H}_k\nsubseteq{\bf X}$. Then there exists an identity ${\bf u}\approx {\bf v}$ that holds in ${\bf X}$ but does not hold in ${\bf H}_k$. Propositions~\ref{word problem F_k,K}(i) and~\ref{word problem H_k} and the inclusion ${\bf F}_k \subset {\bf X}$ together imply that the claims~\eqref{sim(u)=sim(v) & mul(u)=mul(v)} and~\eqref{eq the same l-dividers} are true, while the claim~\eqref{h_1^l(u,x)= h_1^l(v,x)} with $\ell=k$ is false. According to Lemma~\ref{the same l-dividers}, the words $\mathbf u$ and $\mathbf v$ have the same set of $k$-dividers but $\mathbf u$ and $\mathbf v$ are not $k$-equivalent by Lemma~\ref{k-equivalent}. Then there are $k$-dividers $a$, $b$ of the words $\bf u$, $\bf v$ such that $\ell_1(\mathbf u, a)< \ell_1(\mathbf u, b)$, while $\ell_1(\mathbf v, b)< \ell_1(\mathbf v, a)$. In view of Lemma~\ref{k-divider and depth}, $D(\mathbf u, a),D(\mathbf u, b)\le k$. Suppose that $D(\mathbf u, a)=r<k$. According to Lemma~\ref{h_i^k(u,x)=h_i^k(v,x) to h_i^s(u,x)=h_i^s(v,x)}, the claim~\eqref{eq the same l-dividers} with $\ell=r$ is true. Then Lemma~\ref{D(u,x)=k iff D(v,x)=k} implies that $D(\mathbf v, a)=r$. Also the words $\bf u$ and $\bf v$ are $r$-equivalent by Lemma~\ref{k-equivalent}. Put $c=h_1^r(\mathbf u, b)$. Since $a$ is an $r$-divider of $\bf u$ by Lemma~\ref{k-divider and depth}, first occurrence of $a$ in \textbf u precedes first occurrence of $c$ in \textbf u. On the other hand, the claim~\eqref{eq the same l-dividers} with $\ell=r$ implies that $c=h_1^r(\mathbf v, b)$, whence $\ell_1(\mathbf v, c)< \ell_1(\mathbf v, a)$. This contradicts the fact that the words $\bf u$ and $\bf v$ are $r$-equivalent. So, $D(\mathbf u, a)=k$. Analogously, $D(\mathbf u, b)=k$. Now Lemma~\ref{V=F_k} with $s=k$ applies and we conclude that ${\bf X} \subseteq {\bf F}_k$, a contradiction. Lemma~\ref{F_k or over H_k} is proved.\qed
 
\subsubsection{If $\mathbf H_k\subset\mathbf X\subseteq\mathbf F_{k+1}$ then $\mathbf I_k\subseteq\mathbf X$}
\label{structure of [F_k,F_{k+1}] 2 step}
 
The second step in the verification of the claim~4) of Proposition~\ref{L(K)} is the following
 
\begin{lemma}
\label{H_k or over I_k}
If $\mathbf X$ is a monoid variety such that $\mathbf X \in [\mathbf H_k, \mathbf F_{k + 1}]$ then either $\mathbf{X = H}_k$ or $\mathbf{X \supseteq I}_k$.
\end{lemma}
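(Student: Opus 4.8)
The plan is to imitate the template of the first step, Lemma~\ref{F_k or over H_k}, almost verbatim. Two ingredients are needed: a solution of the word problem for $\mathbf I_k$ (Proposition~\ref{word problem I_k}), and one application of the already-proved Lemma~\ref{V subseteq H_s or J_s^s}. So the first task is to establish Proposition~\ref{word problem I_k}, which I expect to read: a non-trivial identity $\mathbf u\approx\mathbf v$ holds in $\mathbf I_k$ if and only if the claims~\eqref{sim(u)=sim(v) & mul(u)=mul(v)} and~\eqref{eq the same l-dividers} with $\ell=k$ are true and, in addition, $h_1^k(\mathbf u,x)=h_1^k(\mathbf v,x)$ holds for \emph{every} $x\in\con(\mathbf u)$ — that is, the condition~\eqref{h_1^l(u,x)= h_1^l(v,x)} of $\mathbf H_k$ strengthened by dropping the restriction $D\le k$. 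The heuristic behind this form is that $\mathbf I_k$ is governed by $\gamma_k$, for which Lemma~\ref{u'abu''=u'bau''}(ii) permits the swap $ab\to ba$ inside a $(k-1)$-block only when $a\in\con(\mathbf u')$; such a swap moves a non-first occurrence and therefore preserves $h_1^k(\mathbf u,x)$ for all letters, whereas the $\beta_k$-swap available in $\mathbf H_k$ (Lemma~\ref{u'abu''=u'bau''}(iii)) can push the first occurrence of a depth-$>k$ letter across a $k$-divider. Necessity is verified on a single application of an identity from $\{\Phi,\gamma_k\}$ via Lemmas~\ref{depth and index} and~\ref{does not contain dividers}, exactly as in Propositions~\ref{word problem F_k,K}(i) and~\ref{word problem H_k}; sufficiency runs the same ``canonical form of a $(k-1)$-block'' reduction as in Proposition~\ref{word problem H_k}, now using Lemma~\ref{u'abu''=u'bau''}(ii) to sort the non-first occurrences.

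Granting Proposition~\ref{word problem I_k}, the proof of the lemma is short. If $\mathbf I_k\subseteq\mathbf X$ we are done, so assume $\mathbf I_k\nsubseteq\mathbf X$ and pick an identity $\mathbf u\approx\mathbf v$ that holds in $\mathbf X$ but not in $\mathbf I_k$. Since $\mathbf H_k\subseteq\mathbf X$, Proposition~\ref{word problem H_k} gives that~\eqref{sim(u)=sim(v) & mul(u)=mul(v)}, \eqref{eq the same l-dividers} and \eqref{h_1^l(u,x)= h_1^l(v,x)} with $\ell=k$ all hold, so by Lemma~\ref{k-equivalent} the words $\mathbf u$ and $\mathbf v$ are $k$-equivalent; in particular they have the same $k$-dividers in the same order. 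Because $\mathbf u\approx\mathbf v$ fails in $\mathbf I_k$ while the $\mathbf H_k$-conditions hold, the extra requirement of Proposition~\ref{word problem I_k} must fail: there is a letter $x$ with $h_1^k(\mathbf u,x)\ne h_1^k(\mathbf v,x)$. The $\mathbf H_k$-condition~\eqref{h_1^l(u,x)= h_1^l(v,x)} already forces this equality whenever $D(\mathbf u,x)\le k$ or $D(\mathbf v,x)\le k$, so necessarily $D(\mathbf u,x),D(\mathbf v,x)>k$.

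Write the common ordered list of $k$-dividers as $s_0,s_1,\dots,s_n$ and set $s_p=h_1^k(\mathbf u,x)$, $s_q=h_1^k(\mathbf v,x)$ with $p\ne q$; up to interchanging $\mathbf u$ and $\mathbf v$ we may assume $p<q$. Then $x_s:=s_{p+1}$ is a $k$-divider with $\ell_1(\mathbf u,x)<\ell_1(\mathbf u,x_s)$ (the first occurrence of $x$ in $\mathbf u$ lies between $s_p$ and $s_{p+1}$) and $\ell_1(\mathbf v,x_s)\le\ell_1(\mathbf v,s_q)<\ell_1(\mathbf v,x)$ (since $q\ge p+1$). Put $s=D(\mathbf u,x_s)\le k$; by Lemma~\ref{h_i^k(u,x)=h_i^k(v,x) to h_i^s(u,x)=h_i^s(v,x)} the claim~\eqref{eq the same l-dividers} with $\ell=s$ holds, so the hypotheses of Lemma~\ref{V subseteq H_s or J_s^s} are met with $i=1$. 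Its part~(i) yields $\mathbf X\subseteq\mathbf H_s\subseteq\mathbf H_k$ (using the inclusions~\eqref{non-strict inclusions}), and together with $\mathbf H_k\subseteq\mathbf X$ this gives $\mathbf X=\mathbf H_k$, as required.

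The genuinely laborious part is not this final deduction but the supporting Proposition~\ref{word problem I_k}, and within it the sufficiency direction. There one must show that every $(k-1)$-block of $\mathbf u$ can be driven to its canonical form using only $\Phi$ and $\gamma_k$, which means applying Lemma~\ref{u'abu''=u'bau''}(ii) under its restrictive hypothesis $a\in\con(\mathbf u')$; keeping track of which letters have already occurred, so that each swap is legitimate, is the delicate bookkeeping. A secondary point requiring care is pinning down the exact extra condition in Proposition~\ref{word problem I_k} and confirming that its failure (relative to the $\mathbf H_k$-conditions) is precisely a mismatch $h_1^k(\mathbf u,x)\ne h_1^k(\mathbf v,x)$ for a depth-$>k$ letter, since that is exactly the configuration that feeds into the $i=1$ case of Lemma~\ref{V subseteq H_s or J_s^s} and thereby produces $\mathbf H$ rather than $\mathbf J$.
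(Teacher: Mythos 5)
Your proposal follows the paper's own route almost exactly: the statement you anticipate for Proposition~\ref{word problem I_k} (the $\mathbf H_k$-conditions plus $h_1^k(\mathbf u,x)=h_1^k(\mathbf v,x)$ for \emph{all} letters, i.e.\ the claim~\eqref{identities in I_k} with $\ell=k$) is precisely the paper's, and your final deduction — pass from the failure of~\eqref{identities in I_k} to a $k$-divider whose first occurrence is transposed with that of $x$, then invoke Lemma~\ref{V subseteq H_s or J_s^s}(i) to force $\mathbf X\subseteq\mathbf H_k$ — is the paper's argument, with the inessential difference that you take $x_s=s_{p+1}$ where the paper takes the restrictor $t_j=h_1^k(\mathbf v,x)$ itself.

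There is, however, one genuine gap. You put $s=D(\mathbf u,x_s)$ and immediately claim that ``the hypotheses of Lemma~\ref{V subseteq H_s or J_s^s} are met,'' but that lemma requires $s$ to be a \emph{natural} number, and you never rule out $D(\mathbf u,x_s)=0$, i.e.\ $x_s$ being simple in $\mathbf u$. If $s=0$ the application collapses on three counts: $\mathbf H_0$ is not defined, the claim~\eqref{eq the same l-dividers} with $\ell=0$ is meaningless, and Lemma~\ref{h_i^k(u,x)=h_i^k(v,x) to h_i^s(u,x)=h_i^s(v,x)} only delivers~\eqref{eq the same l-dividers} for $1\le\ell\le k$. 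The paper spends several lines excluding exactly this case (its ``$r=0$'' paragraph), and the same argument closes your gap: if $x_s=s_{p+1}$ were simple in $\mathbf u$, then by~\eqref{sim(u)=sim(v) & mul(u)=mul(v)} it is simple in $\mathbf v$ as well, hence a $0$-divider of both words; since the first occurrence of $x$ in $\mathbf u$ precedes that of $s_{p+1}$ while in $\mathbf v$ it follows it, and all $0$-dividers are $k$-dividers occurring in the same order in $\mathbf u$ and $\mathbf v$, the restrictors $h_1^0(\mathbf u,x)$ and $h_1^0(\mathbf v,x)$ must be distinct — contradicting~\eqref{eq the same l-dividers} with $\ell=1$, which holds by Lemma~\ref{h_i^k(u,x)=h_i^k(v,x) to h_i^s(u,x)=h_i^s(v,x)}. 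With this case excluded, your argument is complete and coincides with the paper's.
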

 
To check this fact, we need the following auxiliary result.

\begin{proposition}
\label{word problem I_k}
A non-trivial identity ${\bf u}\approx {\bf v}$ holds in the variety ${\bf I}_k$ if and only if the claims \eqref{sim(u)=sim(v) & mul(u)=mul(v)}, \eqref{eq the same l-dividers} and
\begin{equation}
\label{identities in I_k}
h_1^\ell({\bf u},x)= h_1^\ell({\bf v},x)\text{ for all }x\in \con({\bf u})
\end{equation}
with $\ell=k$ are true.
\end{proposition}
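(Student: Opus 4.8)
The plan is to follow the scheme already used in the necessity and sufficiency parts of Propositions \ref{word problem F_k,K} and \ref{word problem H_k}, adapting it to the defining system $\{\Phi,\gamma_k\}$ of $\mathbf I_k$. The whole structure of the argument is: necessity reduces to a single application of one defining identity; sufficiency reduces every word to a canonical form by replacing $(k-1)$-blocks one after another, using Lemma~\ref{u'abu''=u'bau''} as the swapping engine together with Lemma~\ref{identities in K}.

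For necessity, I would argue by induction on the length of a deduction and reduce to $\mathbf u=\mathbf p\xi(\mathbf a)\mathbf q$, $\mathbf v=\mathbf p\xi(\mathbf b)\mathbf q$ for a single identity $\mathbf a\approx\mathbf b\in\{\Phi,\gamma_k\}$, exactly as in Propositions~\ref{word problem F_k,K}(i) and~\ref{word problem H_k}. If $\mathbf a\approx\mathbf b\in\Phi$, the identity holds in $\mathbf K$, so Proposition~\ref{word problem F_k,K}(ii) yields \eqref{sim(u)=sim(v) & mul(u)=mul(v)} and \eqref{eq the same l-dividers} for every $\ell$; instantiating at $\ell=k$ and $\ell=k+1$ gives all three required conditions at $\ell=k$ (the case $\ell=k+1$ supplies the $h_1^k$-equalities of \eqref{identities in I_k}). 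If $\mathbf a\approx\mathbf b=\gamma_k$, I read off from Table~\ref{k-decomposition of eight words} via Lemma~\ref{depth and index} that $D(\gamma_k,x_k)=k$ and $D(\gamma_k,y_1)=1$, and that the two sides of $\xi(\gamma_k)$ differ only by transposing $\xi(x_k)$ with the second copy of $\xi(y_1)$. Lemma~\ref{does not contain dividers} places this transposition inside one $(k-1)$-block, giving \eqref{eq the same l-dividers} with $\ell=k$; and since both copies of $\xi(y_1)$ carry the same letters, the second copy contains no first occurrence, hence no $k$-divider, so sliding $\xi(x_k)$ across it changes no first-occurrence $k$-restrictor, giving \eqref{identities in I_k} with $\ell=k$. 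It is crucial that only $h_1^k$, not $h_2^k$, is demanded, since $\gamma_k$ genuinely moves the second occurrence of $y_1$ relative to the $k$-divider $x_k$.

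For sufficiency I would mirror Proposition~\ref{word problem H_k} but with a finer canonical form. Starting from the $(k-1)$-decomposition $t_0\mathbf u_0t_1\mathbf u_1\cdots t_m\mathbf u_m$ of $\mathbf u$, I refine each block $t_i\mathbf u_i$ into its $k$-decomposition with $k$-dividers $s_1,\dots,s_n$ and $k$-blocks $\mathbf a_0,\dots,\mathbf a_n$. The canonical form $\overline{\mathbf u_i}$ must now leave every $k$-divider $s_j$ in place and retain, for each letter, the $k$-block of its first occurrence — in contrast to the $\mathbf H_k$ form, which was permitted to collapse this $k$-structure. The engine is Lemma~\ref{u'abu''=u'bau''}(ii): under $\gamma_k$ one may replace $ab$ by $ba$ inside a $(k-1)$-block whenever $a\in\con(\mathbf u')$, i.e.\ whenever the moved occurrence of $a$ is not its first. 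Thus repeated occurrences may be bubbled rightward freely, never disturbing a first occurrence, so the $k$-divider sequence and all restrictors $h_1^k$ are preserved; the identities~\eqref{xyx=xyxx} and~\eqref{xyxzx=xyxz} together with Lemma~\ref{identities in K}(iii) then normalise the all-multiple remainder of each $k$-block. Replacing blocks successively reduces $\mathbf u$ to a word $\mathbf u^\sharp$. Finally, \eqref{eq the same l-dividers} and \eqref{identities in I_k} with Lemma~\ref{k-equivalent} show that $\mathbf u$ and $\mathbf v$ are $k$-equivalent and that each letter sits in the same first-occurrence $k$-block in both words, so the same procedure carries $\mathbf v$ to the identical $\mathbf u^\sharp$, whence $\mathbf u\approx\mathbf v$ holds in $\mathbf I_k$.

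The main obstacle I anticipate is pinning down the correct canonical form $\overline{\mathbf u_i}$ and proving it is simultaneously reachable from $\mathbf u_i$ using only $\gamma_k$-swaps and the $\Phi$-identities, and an invariant of the data recorded by \eqref{eq the same l-dividers} and \eqref{identities in I_k}. The delicate point is that $\gamma_k$ permits moving a repeated occurrence past a $k$-divider, so one must check that such moves neither create nor destroy $k$-dividers and leave every $h_1^k$ intact, while still affording enough freedom to sort the interior of each $k$-block into a unique normal form. Making these two requirements meet exactly — so that the three restrictor conditions become not merely necessary but sufficient — is the technical heart of the argument.
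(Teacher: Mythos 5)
Your strategy is the paper's own: necessity by reduction to a single application of an identity from $\{\Phi,\gamma_k\}$, handled with Lemma~\ref{depth and index}, Lemma~\ref{does not contain dividers}, and the observation that the second copy of $\xi(y_1)$ contains no first occurrence of any letter; sufficiency by a block-by-block canonical form driven by Lemma~\ref{u'abu''=u'bau''}(ii) and transferred to $\mathbf v$ via Lemma~\ref{k-equivalent}. Your necessity half is essentially complete and agrees with the paper's (your instantiation of \eqref{eq the same l-dividers} at $\ell=k+1$ in the $\Phi$-case is exactly what the paper's ``evidently'' abbreviates).

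The gap lies in the sufficiency half, at the very point you defer as ``the technical heart''. The one concrete feature you commit to --- bubbling repeated occurrences rightward and then normalising ``the all-multiple remainder of each $k$-block'' --- points the wrong way: a normal form that remembers which $k$-block houses a non-first occurrence is strictly finer than the invariant recorded by \eqref{sim(u)=sim(v) & mul(u)=mul(v)}, \eqref{eq the same l-dividers} and \eqref{identities in I_k}, since these conditions control $h_1^{k-1}$, $h_2^{k-1}$ and $h_1^k$ but say nothing about $h_2^k$. Indeed $\gamma_k$ itself, which holds in $\mathbf I_k$, moves the second occurrence of $y_1$ from the $k$-block after the $k$-divider $x_k$ to the $k$-block before it; so two words satisfying all three conditions can place a letter's second occurrence in different $k$-blocks, would then receive different normal forms under a per-$k$-block scheme, and the concluding step ``$\mathbf v$ reduces to the same $\mathbf u^\sharp$'' would fail. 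The paper's canonical form avoids this by expelling every non-first occurrence from the $k$-block structure altogether: each letter whose first occurrence lies in the $k$-block following $s_j$ is collapsed to a square immediately after $s_j$ (its second occurrence is dragged leftward, across $k$-dividers if necessary, which Lemma~\ref{u'abu''=u'bau''}(ii) licenses because the letter has already occurred), while every letter already seen before the $(k-1)$-block is pulled out to the very front of that block, before all the $k$-dividers, made linear by \eqref{xyxzx=xyxz} and sorted using $\sigma_2$, i.e.\ Lemma~\ref{identities in K}(i) --- not part (iii), which you invoke but which only applies to words in which every letter occurs at least twice and so cannot normalise this linear remainder. With that form, the reduction of each side depends only on data fixed by the three conditions, and the argument closes as you intend.
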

 
\begin{proof}
\emph{Necessity}. Suppose that a non-trivial identity $\mathbf u\approx \mathbf v$ holds in $\mathbf I_k$. Proposition~\ref{word problem H_k} and the inclusion ${\bf H}_k\subseteq{\bf I}_k$ imply that the claims~\eqref{sim(u)=sim(v) & mul(u)=mul(v)} and~\eqref{eq the same l-dividers} with $\ell=k$ are true. As in the proof of Proposition~\ref{word problem F_k,K}(i), we can assume that $\mathbf u=\mathbf p\xi(\mathbf a)\mathbf q$ and $\mathbf v=\mathbf p\xi(\mathbf b)\mathbf q$ for some possibly empty words $\mathbf p$ and $\mathbf q$, an endomorphism $\xi$ of $F^1$ and an identity $\mathbf a\approx \mathbf b\in\{\Phi,\gamma_k\}$. 

If $\mathbf a\approx\mathbf b\in\Phi$ then the claim~\eqref{eq the same l-dividers} is true for any $\ell$ by Proposition~\ref{word problem F_k,K}(ii). Evidently, this implies the required conclusion. Suppose now that $\mathbf a\approx \mathbf b$ coincides with $\gamma_k$. Then 
\begin{align*}
\xi(\mathbf a)={}&\mathbf b_1\mathbf b_0\mathbf b_1\mathbf a_k\mathbf a_{k-1}\mathbf a_k\mathbf a_{k-2}\mathbf a_{k-1}\cdots \mathbf a_1\mathbf a_2\mathbf a_0\mathbf a_1,\\
\xi(\mathbf b)={}&\mathbf b_1\mathbf b_0\mathbf a_k\mathbf b_1\mathbf a_{k-1}\mathbf a_k\mathbf a_{k-2}\mathbf a_{k-1}\cdots \mathbf a_1\mathbf a_2\mathbf a_0\mathbf a_1
\end{align*}
for some words $\mathbf a_0,\mathbf a_1,\dots,\mathbf a_k$ and $\mathbf b_0,\mathbf b_1$, whence 
\begin{align*}
\mathbf u={}&\mathbf p\mathbf b_1\mathbf b_0\mathbf b_1\mathbf a_k\mathbf a_{k-1}\mathbf a_k\mathbf a_{k-2}\mathbf a_{k-1}\cdots \mathbf a_1\mathbf a_2\mathbf a_0\mathbf a_1\mathbf q,\\
\mathbf v={}&\mathbf p\mathbf b_1\mathbf b_0\mathbf a_k\mathbf b_1\mathbf a_{k-1}\mathbf a_k\mathbf a_{k-2}\mathbf a_{k-1}\cdots \mathbf a_1\mathbf a_2\mathbf a_0\mathbf a_1\mathbf q.
\end{align*}
By Lemma~\ref{depth and index}, $D(\mathbf a,x_k)=k$. Then Lemma~\ref{does not contain dividers} implies that the subword $\mathbf a_k$ of $\bf u$ located between $\mathbf b_1$ and $\mathbf a_{k-1}$ does not contain any \mbox{$(k-1)$}-divider. Also, obviously, the subword $\mathbf b_1$ of $\bf u$ located between $\mathbf b_0$ and $\mathbf a_k$ does not contain any $s$-divider for all $s$. Therefore, the subword $\mathbf b_1\mathbf a_k$ of $\bf u$ located between $\mathbf b_0$ and $\mathbf a_{k-1}$ lies in some \mbox{$(k-1)$}-block. It is evident that the subword $\mathbf b_1$ of $\bf u$ located between $\mathbf b_0$ and $\mathbf a_k$ does not contain first occurrence of any letter in $\bf u$. This implies that the claim~\eqref{identities in I_k} with $\ell=k$ is true.
 
\smallskip
 
\emph{Sufficiency}. As in the proof of Proposition~\ref{word problem H_k}, the outline of our arguments here is similar to one from the proof of sufficiency in Proposition~\ref{word problem F_k,K}(i). But the canonical form of the block here is even more complicated than in Proposition~\ref{word problem H_k}.
 
Suppose that the claims~\eqref{sim(u)=sim(v) & mul(u)=mul(v)},~\eqref{eq the same l-dividers} and~\eqref{identities in I_k} with $\ell=k$ are true. As in the proof of sufficiency in Proposition~\ref{word problem H_k}, we suppose that~\eqref{t_0u_0t_1u_1 ... t_mu_m} is the \mbox{$(k-1)$}-decomposition of $\bf u$, while~\eqref{presentation for t_iu_i} is the representation of $t_i\mathbf u_i $ as the product of alternating $k$-dividers $s_0,s_1,\dots, s_n$ and $k$-blocks $\mathbf a_0, \mathbf a_1,\dots,\mathbf a_n$.
 
For any $j=0,1,\dots,n$, we put
$$
X_j=\{x\in\con(\mathbf a_j)\mid\text{ first occurrence of }x\text{ in }\mathbf u\text{ lies in }\mathbf a_j\}.
$$
Note that the set $X_j$ may be defined by another (of course, equivalent) way. Namely, it is clear that a letter $x$ lies in $X_j$ if and only if it occurs in the $k$-block $\mathbf a_j$ and the \mbox{$(1,k)$}-restrictor of $x$ in \textbf u coincides with the $k$-divider of \textbf u that immediately precedes $\mathbf a_j$. In other words,
$$
X_j=\bigl\{x\in \con(\mathbf a_j)\mid s_j=h_1^k(\mathbf u,x)\bigr\}.
$$
Put $X=X_0\cup X_1\cup\cdots \cup X_n$, $\mathbf a_j'=(\mathbf a_j)_X$ and $\mathbf u_i^\ast=\mathbf a_0'\mathbf a_1'\cdots\mathbf a_n'$. Let $X_j=\{x_{j1},x_{j2},\dots$, $x_{jq_j}\}$, $\con(\mathbf u_i^\ast)=\{c_1,c_2,\dots,c_p\}$ and 
$$
\overline{\mathbf u_i}\!=\!(c_1c_2\cdots c_p)\cdot(x_{01}^2\cdots x_{0q_0}^2)\cdot (s_1x_{11}^2\cdots x_{1q_1}^2) \cdot (s_2x_{21}^2\cdots x_{2q_2}^2) \cdots (s_nx_{n1}^2\cdots x_{nq_n}^2).
$$
As we will see below, $\overline{\mathbf u_i}$ is nothing but the ``canonical form'' of the \mbox{$(k-1)$}-block $\mathbf u_i$ mentioned above.
 
Clearly, $\mathbf u=\mathbf w_1\mathbf u_i\mathbf w_2$ for some possibly empty words $\mathbf w_1$ and $\mathbf w_2$. Let $x\in X_j$. If $x$ is simple in $\mathbf u_i$ then either $x$ coincides with one of the $k$-dividers $s_1,s_2,\dots,s_n$ or $x\in\con(\mathbf w_1)$. But both these variants contradict the choice of $x$. Therefore, $x$ is multiple in $\mathbf u_i$. In view of the identy~\eqref{xyxzx=xyxz}, we can assume that $\occ_x(\mathbf u_i)=2$. Thus, $\mathbf u=\mathbf ax\mathbf b x\mathbf c$ for possibly empty words \textbf a, \textbf b and \textbf c such that $x\mathbf bx$ is a subword of $\mathbf u_i$. One can verify that the variety $\mathbf I_k$ satisfies the identity $\mathbf u\approx\mathbf ax^2\mathbf{bc}$. If $\mathbf b=\lambda$ then this claim is evident. Let now $\mathbf b\ne\lambda$. Then we apply Lemma~\ref{u'abu''=u'bau''}(ii) and successively transpose second occurrence of $x$ in \textbf u with all the letters of the word $\mathbf b$ from right to left. Thus, we conclude that $\mathbf I_k$ satisfies the identity $\mathbf u\approx\mathbf ax^2\mathbf{bc}$. We can assume without loss of generality that $\ell_1(\mathbf u,x_{j0})<\ell_1(\mathbf u,x_{j1})<\cdots<\ell_1(\mathbf u,x_{jq_j})$. Therefore, $\bf I_k$ satisfies the identity
\begin{equation}
\label{u = long word 12}
\mathbf u \approx \mathbf w_1\cdot(x_{01}^2\cdots x_{0q_0}^2\mathbf a_0')\cdot (s_1x_{11}^2\cdots x_{1q_1}^2\mathbf a_1') \cdots (s_nx_{n1}^2\cdots x_{nq_n}^2\mathbf a_n')\cdot\mathbf w_2.
\end{equation}
The definition of the set $X$ and words of the form $\mathbf a_j'$ imply that $x\in \con(\mathbf w_1)$ for any $x\in \con(\mathbf u_i^\ast)$. Now we can apply Lemma~\ref{u'abu''=u'bau''}(ii) and obtain that the identity
$$
\mathbf u \approx \mathbf w_1\cdot\mathbf u_i^\ast\cdot(x_{01}^2\cdots x_{0q_0}^2)\cdot (s_1x_{11}^2\cdots x_{1q_1}^2) \cdots (s_nx_{n1}^2\cdots x_{nq_n}^2)\cdot\mathbf w_2
$$
holds in $\mathbf I_k$. As we have seen above, $\con(\mathbf u_i^\ast)\subseteq \con(\mathbf w_1)$. Then we can apply the identity~\eqref{xyxzx=xyxz} and obtain the word $\mathbf u_i^\ast$ is linear. Then Lemma~\ref{identities in K}(i) applies and we conclude that $\mathbf I_k$ satisfies the identities 
\begin{align*}
\mathbf u \approx{}&\mathbf w_1\cdot(c_1c_2\cdots c_p)\cdot(x_{01}^2\cdots x_{0q_0}^2)\cdot (s_1x_{11}^2\cdots x_{1q_1}^2) \cdots (s_nx_{n1}^2\cdots x_{nq_n}^2)\cdot\mathbf w_2\\
={}&\mathbf w_1\overline{\mathbf u_i}\,\mathbf w_2.
\end{align*}
 
So, as in the proof of Proposition~\ref{word problem F_k,K}(i), using identities which hold in the variety $\mathbf I_k$, we can replace the \mbox{$(k-1)$}-blocks $\mathbf u_i$ of \textbf u successively, one after another, by the ``canonical form'' $\overline{\mathbf u_i}$ for $i=m,m-1,\dots,0$. Then the variety $\mathbf I_k$ satisfies the identities~\eqref{u = canonical form}. Put $\mathbf u^\sharp=t_0\,\overline{\mathbf u_0}\,t_1\,\overline{\mathbf u_1}\,\cdots t_m\,\overline{\mathbf u_m}$\,.
 
One can return to the word \textbf v. By Lemma~\ref{k-equivalent}, the \mbox{$(k-1)$}-decomposition of $\bf v$ has the form~\eqref{t_0v_0t_1v_1 ... t_mv_m}. Furthermore, the claim~\eqref{identities in I_k} with $\ell=k$ and Lemma~\ref{k-equivalent} imply that~\eqref{presentation for t_iv_i} is a representation of $t_i\mathbf v_i$ as the product of alternating $k$-dividers $s_0,s_1,\dots,s_n$ and $k$-blocks $\mathbf b_0, \mathbf b_1,\dots,\mathbf b_n$. The claim~\eqref{identities in I_k} implies that
$$
X_j=\bigl\{x\in \con(\mathbf b_j) \mid s_j=h_1^k(\mathbf v,x)\bigr\}
$$
for all $j=0,1,\dots,r_i$. Put $\mathbf b_j'=(\mathbf b_j)_X$. The claim~\eqref{eq the same l-dividers} with $\ell=k$ implies that $j$th occurrence of a letter $x$ in $\bf u$ lies in the \mbox{$(k-1)$}-block $\mathbf u_i$ if and only if $j$th occurrence of a letter $x$ in $\bf v$ lies in the \mbox{$(k-1)$}-block $\mathbf v_i$ for any $x$ and any $j=1,2$. Also, Lemma~\ref{identities in K}(ii) allows us to assume that if the first and the second occurrences of the letter $x$ in $\bf u$ do not lie in the \mbox{$(k-1)$}-block $\mathbf u_i$ then this letter does not occur in $\mathbf u_i$. Then $\con(\mathbf u_i^\ast)=\con(\mathbf b_0'\mathbf b_1'\cdots\mathbf b_n')$. This implies that the \mbox{$(k-1)$}-blocks $\mathbf u_i$ and $\mathbf v_i$ have the same ``canonical form''. Repeating literally arguments given above, we obtain the variety $\mathbf I_k$ satisfies the identities $\mathbf v\approx\mathbf u^\sharp\approx\mathbf u$.
\end{proof}
 
Now we are well prepared to quickly complete the proof of Lemma~\ref{H_k or over I_k}. Let $\mathbf H_k\subset\mathbf{X\subseteq F}_{k+1}$. We have to verify that $\mathbf{X \supseteq I}_k$. Arguing by contradiction, suppose that ${\bf I}_k \nsubseteq {\bf X}$. Then there exists an identity ${\bf u}\approx {\bf v}$ that holds in ${\bf X}$ but does not hold in ${\bf I}_k$. Then Propositions~\ref{word problem H_k} and~\ref{word problem I_k} and the inclusion ${\bf H}_k \subset {\bf X}$ together imply that the claims~\eqref{sim(u)=sim(v) & mul(u)=mul(v)},~\eqref{eq the same l-dividers} and~\eqref{h_1^l(u,x)= h_1^l(v,x)} with $\ell=k$ are true, while the claim~\eqref{identities in I_k} with $\ell=k$ is false. Let~\eqref{t_0u_0t_1u_1 ... t_mu_m} be the $k$-decomposition of the word \textbf u. The claim~\eqref{h_1^l(u,x)= h_1^l(v,x)} and Lemma~\ref{k-equivalent} imply that the $k$-decomposition of $\bf v$ has the form~\eqref{t_0v_0t_1v_1 ... t_mv_m}. Since the claim~\eqref{identities in I_k} is false, there is a letter $x$ such that $h_1^k(\mathbf u,x)\ne h_1^k(\mathbf v,x)$. Put $t_i=h_1^k(\mathbf u,x)$ and $t_j=h_1^k(\mathbf v,x)$. Then $i\ne j$. We can assume without loss of generality that $i<j$. Then $\ell_1(\mathbf u,x)<\ell_1(\mathbf u,t_j)$, while $\ell_1(\mathbf v,t_j)<\ell_1(\mathbf u,x)$. Lemma~\ref{k-divider and depth} implies that $D(\mathbf u,t_j)\le k$. Put $D(\mathbf u,t_j)=r$. If $r=0$ then $t_j$ is a $0$-divider of $\bf u$. The claim~\eqref{sim(u)=sim(v) & mul(u)=mul(v)} implies that $t_j$ is a $0$-divider of $\bf v$ too. Then $t_j=h_1^0(\mathbf v,x)$ but $t_j\ne h_1^0(\mathbf u,x)$. In view of Lemma~\ref{h_i^k(u,x)=h_i^k(v,x) to h_i^s(u,x)=h_i^s(v,x)}, the claim~\eqref{eq the same l-dividers} with $\ell=p$ is true for any $1\le p\le k$, a contradiction. Thus, $r\ge 1$. Now Lemma~\ref{V subseteq H_s or J_s^s}(i) with $s=r$ and $x_s=t_j$ applies and we conclude that $\mathbf X \subseteq {\bf H}_r \subseteq {\bf H}_k$, a contradiction. Lemma~\ref{H_k or over I_k} is proved.\qed
 
\subsubsection{If $\mathbf I_k\subset\mathbf X\subseteq\mathbf F_{k+1}$ then $\mathbf J_k^1\subseteq\mathbf X$}
\label{structure of [F_k,F_{k+1}] 3 step}
 
The third step in the verification of the claim~4) of Proposition~\ref{L(K)} is the following
 
\begin{lemma}
\label{I_k or over J_k^1}
If $\mathbf X$ is a monoid variety such that $\mathbf X \in [\mathbf I_k, \mathbf F_{k + 1}]$ then either $\mathbf{X = I}_k$ or $\mathbf{X \supseteq J}_k^1$.
\end{lemma}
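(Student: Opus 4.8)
The plan is to follow the two-part scheme of Lemmas~\ref{F_k or over H_k} and~\ref{H_k or over I_k}: first establish a solution of the word problem for $\mathbf J_k^1$, and then convert a failure of the inclusion $\mathbf J_k^1\subseteq\mathbf X$ into the reverse inclusion $\mathbf X\subseteq\mathbf I_k$. The word problem I would prove is the case $m=1$ of Proposition~\ref{word problem J_k^r}: a non-trivial identity $\mathbf u\approx\mathbf v$ holds in $\mathbf J_k^1$ if and only if it satisfies all the conditions characterizing $\mathbf I_k$ in Proposition~\ref{word problem I_k} --- namely~\eqref{sim(u)=sim(v) & mul(u)=mul(v)},~\eqref{eq the same l-dividers} with $\ell=k$, and~\eqref{identities in I_k} with $\ell=k$ (preservation of all first-occurrence restrictors $h_1^k$) --- together with one new claim that now also partially controls the second-occurrence restrictors $h_2^k$. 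The shape of this new claim is dictated by the $k$-decompositions in Table~\ref{k-decomposition of eight words}: $\gamma_k$ moves the second occurrence of a depth-$1$ letter across the $k$-divider $x_k$ onto a depth-$0$ divider, whereas $\delta_k^1$ moves the second occurrence of a depth-$2$ letter onto a depth-$1$ divider, and the new claim must be calibrated to forbid the former reconfiguration while permitting the latter.

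For necessity I would argue exactly as in Propositions~\ref{word problem H_k} and~\ref{word problem I_k}: reduce to a single application of an identity from $\{\Phi,\delta_k^1\}$, dispose of the $\Phi$-identities via Proposition~\ref{word problem F_k,K}(ii), and for $\delta_k^1$ read off from Lemmas~\ref{depth and index} and~\ref{does not contain dividers} that the rewriting stays inside one $(k-1)$-block, leaves all $h_1^k$ restrictors intact, and alters $h_2^k$ only in the manner permitted by the new claim. For sufficiency I would again reduce each $(k-1)$-block of $\mathbf u$ to a ``canonical form'' and check that $\mathbf u$ and $\mathbf v$ share the same canonical form block by block; the engine of this reduction is Lemma~\ref{u'abu''=u'bau''}, invoked here through its part~(i) with $m=1$, so that the admissible swaps are precisely those with $D(\mathbf u,a)>1$.

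With the word problem in hand the lemma itself is short. If $\mathbf X=\mathbf I_k$ we are done, so assume the strict inclusion $\mathbf I_k\subset\mathbf X$ and aim to prove $\mathbf J_k^1\subseteq\mathbf X$. Arguing by contradiction, suppose $\mathbf J_k^1\nsubseteq\mathbf X$ and fix an identity $\mathbf u\approx\mathbf v$ holding in $\mathbf X$ but failing in $\mathbf J_k^1$. Since $\mathbf I_k\subset\mathbf X$, Proposition~\ref{word problem I_k} gives that~\eqref{sim(u)=sim(v) & mul(u)=mul(v)},~\eqref{eq the same l-dividers} and~\eqref{identities in I_k} with $\ell=k$ all hold, so by Proposition~\ref{word problem J_k^r} the new $h_2^k$-claim must fail. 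This failure supplies a letter $x$ together with distinct $k$-dividers $t_p=h_2^k(\mathbf u,x)$ and $t_q=h_2^k(\mathbf v,x)$ witnessing a $\gamma$-type crossing of a second occurrence. I would then substitute and normalize by a method of the same type as in Lemma~\ref{V=F_k} --- collapsing the intervening blocks with~\eqref{xyxzx=xyxz}, substituting $1$ for the remaining letters, and straightening the residual word with $\sigma_2$ from Lemma~\ref{identities in K}(i) --- to conclude that $\mathbf X$ satisfies $\gamma_r$ for the appropriate depth $r\le k$. Hence $\mathbf X\subseteq\mathbf I_r\subseteq\mathbf I_k$, contradicting $\mathbf I_k\subset\mathbf X$.

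The hard part will be the sufficiency half of the word problem together with the precise formulation of the new $h_2^k$-claim, and one subtlety in the final step deserves emphasis. As the paper warns, the canonical form of a $(k-1)$-block grows more intricate at each stage, and here it must additionally record the arrangement of second occurrences inside the $k$-blocks; the entire reduction depends on having the depth hypothesis $D(\mathbf u,a)>1$ available at each application of Lemma~\ref{u'abu''=u'bau''}(i). Moreover, the crude Lemma~\ref{V subseteq H_s or J_s^s}(ii) cannot be reused verbatim, since it would only yield $\mathbf X\subseteq\mathbf J_s^s$, an inclusion that does \emph{not} contradict $\mathbf I_k\subset\mathbf X$; one genuinely needs a dedicated derivation, in the spirit of Lemma~\ref{V=F_k}, that extracts $\gamma_r$ from a depth-matched $h_2^k$-configuration. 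Calibrating the new claim so that it is simultaneously necessary (invariant under $\delta_k^1$) and sufficient (reachable by the canonical-form procedure) is the genuinely delicate step.
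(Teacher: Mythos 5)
Your plan is correct and follows essentially the same route as the paper: the paper proves the word problem for $\mathbf J_k^r$ (Proposition~\ref{word problem J_k^r}) with exactly the condition you describe --- preservation of $h_2^k$-restrictors for letters of depth at most $m$ (claim~\eqref{eq V subseteq J_k^r}) --- via the same necessity/sufficiency scheme and Lemma~\ref{u'abu''=u'bau''}(i), and then introduces the dedicated Lemma~\ref{V subseteq I_k or J_k^r}(i), which extracts $\gamma_\ell$ from a violating depth-$1$ configuration precisely because, as you observe, Lemma~\ref{V subseteq H_s or J_s^s}(ii) would only give $\mathbf X\subseteq\mathbf J_k^k$ and no contradiction. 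Your concluding contradiction argument coincides with the paper's proof of Lemma~\ref{I_k or over J_k^1}.
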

 
To check this fact, we need the following
 
\begin{lemma}
\label{V subseteq I_k or J_k^r}
Let ${\bf V}$ be a monoid variety with ${\bf V} \subseteq {\bf K}$ and $\ell$ a natural number. Suppose that ${\bf V}$ satisfies an identity ${\bf u}\approx {\bf v}$.
\begin{itemize}
\item[\textup{(i)}]If the claims~\eqref{sim(u)=sim(v) & mul(u)=mul(v)},~\eqref{eq the same l-dividers} and~\eqref{identities in I_k} are true but the claim
\begin{equation}
\label{eq V subseteq J_k^r}
\text{if }x\in \con({\bf u})\text{ and }D({\bf u},x)\le m\text{ then }h_2^\ell({\bf u},x)= h_2^\ell({\bf v},x)
\end{equation}
with $m=1$ is false then ${\bf V} \subseteq {\bf I}_\ell$.
\item[\textup{(ii)}]If the claims~\eqref{sim(u)=sim(v) & mul(u)=mul(v)},~\eqref{eq the same l-dividers},~\eqref{identities in I_k} and~\eqref{eq V subseteq J_k^r} with $m=r$ are true but the claim \eqref{eq V subseteq J_k^r} with $m=r+1$ is false then ${\bf V} \subseteq {\bf J}_\ell^r$.
\end{itemize}
\end{lemma}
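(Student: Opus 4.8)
The plan is to treat both parts by the single scheme already used in Lemmas~\ref{V subseteq H_s or J_s^s} and~\ref{V=F_k}: extract from the hypotheses a letter at which $\mathbf u$ and $\mathbf v$ genuinely differ, reconstruct around it the chain of letters $x_0,x_1,\dots,x_\ell$ of depths $0,1,\dots,\ell$, and then substitute $1$ for everything irrelevant to read off the target identity. Since $\mathbf{V\subseteq K}$, throughout I may use the identities $\sigma_2$ and~\eqref{xyxzx=xyxz} supplied by Lemma~\ref{identities in K}, and Lemma~\ref{identities in K}(iii) lets me assume from the outset that $\occ_y(\mathbf u),\occ_y(\mathbf v)\le2$ for every letter $y$. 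In part~(i) I aim to deduce $\gamma_\ell$, which together with $\Phi$ gives $\mathbf{V\subseteq I}_\ell$; in part~(ii) I aim to deduce $\delta_\ell^r$, giving $\mathbf{V\subseteq J}_\ell^r$.

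\emph{Locating the defect.} First I would identify the failing letter $x$. In part~(i) the falsity of~\eqref{eq V subseteq J_k^r} with $m=1$ yields $x$ with $D(\mathbf u,x)\le1$ and $h_2^\ell(\mathbf u,x)\ne h_2^\ell(\mathbf v,x)$; as a second occurrence is involved, $x\in\mul(\mathbf u)$ and hence $D(\mathbf u,x)=1$. In part~(ii) the truth of~\eqref{eq V subseteq J_k^r} with $m=r$ together with its falsity for $m=r+1$ forces $D(\mathbf u,x)=r+1$ for the witnessing letter. Writing $a=h_2^\ell(\mathbf u,x)$, $b=h_2^\ell(\mathbf v,x)$ and $t=h_2^{\ell-1}(\mathbf u,x)=h_2^{\ell-1}(\mathbf v,x)$ (the last equality by~\eqref{eq the same l-dividers}), the inequality $a\ne b$ shows that $a$ or $b$ differs from $t$; since every $(\ell-1)$-divider is an $\ell$-divider, such a divider is not an $(\ell-1)$-divider and so has depth exactly $\ell$. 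After possibly interchanging $\mathbf u$ and $\mathbf v$ (all hypotheses being symmetric, with Lemma~\ref{D(u,x)=k iff D(v,x)=k} preserving the depth condition of part~(ii)) I may assume $a\ne t$ and $D(\mathbf u,a)=\ell$.

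\emph{Reconstructing the chains.} Taking $a$ as a seed of depth $\ell$, Lemma~\ref{form of the identity} with $s=\ell$ puts $\mathbf u\approx\mathbf v$ in the canonical form~\eqref{form of u=v} with letters $x_0,\dots,x_{\ell-1},x_\ell=a$ of depths $0,\dots,\ell$. The companion letter is $y_r:=h_2^r(\mathbf u,x)$, which has depth $r$ by Lemma~\ref{h_2^{k-1}} (for $r=0$ this is just the rightmost simple letter before the second occurrence of $x$, playing the role of $y_0$ in $\gamma_\ell$). Using~\eqref{identities in I_k} and~\eqref{eq the same l-dividers} I would then pin down the positions of the two occurrences of $x$ and of $y_r$ relative to the $x_i$: in $\mathbf u$ the second occurrence of $x$ sits immediately past $x_\ell$, while in $\mathbf v$ it precedes $x_\ell$, and $y_r$ falls into the block dictated by its depth. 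As in Lemma~\ref{u'abu''=u'bau''}, this localisation splits into the cases~\eqref{2nd occurrence y_{r-1} in u 1}--\eqref{2nd occurrence y_{r-1} in u 3} according to where the relevant second occurrence lands.

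\emph{Collapsing to the target.} With the positions fixed, I would substitute $1$ for every letter outside $\{x,y_r,x_0,x_1,\dots,x_\ell\}$ and use~\eqref{xyxzx=xyxz} to delete all redundant occurrences, after which $\sigma_2$ transposes the adjacent second occurrences that remain out of order. The surviving identity is exactly $\gamma_\ell$ in part~(i) and $\delta_\ell^r$ in part~(ii), whence $\mathbf{V\subseteq I}_\ell$ and $\mathbf{V\subseteq J}_\ell^r$ respectively. The main obstacle is precisely the middle step: verifying, against all four restrictor hypotheses, that the occurrences of $x$ and $y_r$ occupy the blocks prescribed by the target word, handling the degenerate possibility that the companion $y_r$ coincides with the chain letter $x_r$, and then carrying out the long but routine reduction via~\eqref{xyxzx=xyxz} and $\sigma_2$. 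This positional bookkeeping is exactly the technically heavy ingredient that Lemmas~\ref{u'abu''=u'bau''} and~\ref{V=F_k} were designed to provide.
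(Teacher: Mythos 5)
Your plan does follow the paper's own route very closely: least failing $m$ produces a witness $x$ of depth exactly $1$ (resp.\ $r+1$); the two restrictors $a=h_2^\ell(\mathbf u,x)$ and $b=h_2^\ell(\mathbf v,x)$ are compared with the common $t=h_2^{\ell-1}$-restrictor; Lemma~\ref{form of the identity} is seeded with a depth-$\ell$ restrictor; the companion $h_2^{m-1}(\mathbf u,x)$ is adjoined; and the target identity is extracted by substituting~$1$ and applying $\sigma_2$. But your normalisation step is wrong as stated. You interchange $\mathbf u$ and $\mathbf v$ so as to have $a\ne t$ and $D(\mathbf u,a)=\ell$, and then assert that the second occurrence of $x$ follows the first occurrence of $a$ in $\mathbf u$ but precedes it in $\mathbf v$. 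This can fail: both $a$ and $b$ may differ from $t$, and since $\mathbf u$ and $\mathbf v$ are $\ell$-equivalent (Lemma~\ref{k-equivalent}), the first occurrences of $a$ and $b$ come in the same order in both words; if $a$ precedes $b$, then in $\mathbf v$ the second occurrence of $x$ follows $b$, hence follows $a$ --- the opposite of what you claim, even though your condition ``$a\ne t$'' holds without any interchange. The correct pivot is the relative order of $a$ and $b$, not the condition $a\ne t$: one must take whichever of the two restrictors occurs later and swap the words so that it plays the role of $a$ (this is the paper's assumption that the first occurrence of $z_\ell$ precedes that of $x_\ell$); the property $D(\mathbf u,a)=\ell$ then comes for free.

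The more serious gap is in part~(ii). Your collapsing recipe keeps the fixed letter set $\{x,y_r,x_0,\dots,x_\ell\}$, but in the case~\eqref{2nd occurrence y_{m-1} in u 1}, i.e.\ when $\ell_2(\mathbf u,y_r)<\ell_1(\mathbf u,x_{r-1})$, no substitution into this set can yield $\delta_\ell^r$: after deleting the other letters, the second occurrence of $y_r$ stands to the left of the subword formed by the first occurrence of $x_{r-1}$ and the second occurrence of $x_r$, whereas in $\delta_\ell^r$ it must stand to the right of it, and $\sigma_2$ only transposes two adjacent \emph{second} occurrences, so it cannot carry $y_r$ past a \emph{first} occurrence of $x_{r-1}$. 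This is exactly where the paper needs an idea absent from your sketch: Lemma~\ref{form of the identity} is applied a \emph{second} time, seeded at $y_{r-1}=h_2^{r-1}(\mathbf u,y_r)$, producing a second chain $y_0,\dots,y_{r-2}$; the lower portion $x_0,\dots,x_{r-1}$ of the original chain is discarded, and the $y$-chain is renamed into the $x$-chain before $\sigma_2$ is applied. Two smaller corrections: the degeneracy you worry about ($y_r=x_r$) cannot occur when $r<\ell$, because $\ell_1(\mathbf u,y_r)\le\ell_1(\mathbf u,x_\ell)<\ell_1(\mathbf u,x_r)$; and the positional bookkeeping is not supplied by Lemmas~\ref{u'abu''=u'bau''} and~\ref{V=F_k} (neither is used in this proof), but by Lemmas~\ref{h_2^{k-1}},~\ref{if first then second},~\ref{h_i^k(u,x)=h_i^k(v,x) to h_i^s(u,x)=h_i^s(v,x)},~\ref{D(u,x)=k iff D(v,x)=k} and~\ref{k-equivalent}.
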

 
\begin{proof}
Proofs of the assertions~(i) and~(ii) have the same initial part. Suppose that the variety \textbf V satisfies the hypothesis of one of these two claims. Then the claims~\eqref{sim(u)=sim(v) & mul(u)=mul(v)},~\eqref{eq the same l-dividers} and~\eqref{identities in I_k} are true. Let $m$ be the least natural number such that the claim~\eqref{eq V subseteq J_k^r} is false. Then there is a letter $y_m$ such that $D({\bf u},y_m)=m$ and $h_2^\ell({\bf u},y_m)\ne h_2^\ell({\bf v},y_m)$. Put $x_\ell= h_2^\ell({\bf u},y_m)$ and $z_\ell= h_2^\ell({\bf v},y_m)$. In view of Lemma~\ref{k-divider and depth}, $D({\bf u},x_\ell)$, $D({\bf u},z_\ell)\le \ell$. Note that either $D({\bf u},x_\ell)=\ell$ or $D({\bf u},z_\ell)=\ell$. Indeed, if $D({\bf u},x_\ell)$, $D({\bf u},z_\ell)<\ell$ then $D({\bf v},x_\ell),D({\bf v},z_\ell)<\ell$ by Lemma~\ref{D(u,x)=k iff D(v,x)=k}. Then $x_\ell$ and $z_\ell$ are \mbox{$(\ell-1)$}-dividers of $\bf u$ and $\bf v$, whence $x_\ell= h_2^{\ell-1}({\bf u},y_m)$ and $z_\ell= h_2^{\ell-1}({\bf v},y_m)$. But this contradicts the claim~\eqref{eq the same l-dividers}. Suppose without loss of generality that $D({\bf u},x_\ell)=\ell$. By symmetry, we may assume that first occurrence of $z_\ell$ in the word $\bf u$ precedes first occurrence of $x_\ell$ in this word. Since the claim~\eqref{identities in I_k} is true, $\ell_1(\mathbf v,z_\ell)<\ell_1(\mathbf v,x_\ell)$ by Lemma~\ref{k-equivalent}. This implies that $\ell_2(\mathbf v,y_m)<\ell_1(\mathbf v,x_\ell)$.
 
Now we apply Lemma~\ref{form of the identity} with $x_s=x_\ell$ and $s=\ell$ and conclude that there are letters $x_0,x_1,\dots, x_{\ell-1}$ such that, for any $p=0,1,\dots,\ell-1$ and $q=0,1,\dots,\ell-2$, $D(\mathbf u,x_p)=D(\mathbf v,x_p)=p$ and the inequalities
$$
\ell_1(\mathbf w,x_{p+1})<\ell_1(\mathbf w,x_p)<\ell_2(\mathbf w,x_{p+1})\text{ and }\ell_2(\mathbf w,x_{q+2})<\ell_1(\mathbf w,x_q)
$$
hold where \textbf w is any of the words \textbf u or \textbf v.
 
Put $y_{m-1}=h_2^{m-1}({\bf u},y_m)$. According to Lemma~\ref{h_2^{k-1}}, $D(\mathbf u, y_{m-1})=m-1$ and $\ell_1(\mathbf u,y_m)<\ell_1(\mathbf u,y_{m-1})$. Besides that, the claim~\eqref{eq the same l-dividers} and Lemma~\ref{h_i^k(u,x)=h_i^k(v,x) to h_i^s(u,x)=h_i^s(v,x)} imply that $h_2^{m-1}({\bf v},y_m)=h_2^{m-1}({\bf u},y_m)=y_{m-1}$. Now we apply Lemma~\ref{h_2^{k-1}} again and obtain $D(\mathbf v, y_{m-1})=m-1$ and $\ell_1(\mathbf v,y_m)<\ell_1(\mathbf v,y_{m-1})$. In view of Lemma~\ref{k-divider and depth}, the letter $x_{\ell-1}$ is an $\ell$-divider of $\bf u$, whence $\ell_2(\mathbf u,y_m)<\ell_1(\mathbf u,x_{\ell-1})$ because $x_\ell= h_2^\ell({\bf u},y_m)$ and $\ell_1(\mathbf u,x_\ell)<\ell_1(\mathbf u,x_{\ell-1})$.
 
Lemma~\ref{identities in K}(ii) allows us to assume that the letters $y_m$ and $x_p$ with $1\le p\le\ell$ occur twice in each of the words \textbf u and \textbf v. Further considerations are divided into two cases corresponding to the statements~(i) and~(ii).
 
\smallskip
 
\emph{Case} 1: $m=1$. In view of the above, we have the identity $\mathbf{u\approx v}$ has the form
\begin{align*}
& \mathbf u_{2\ell+4} y_1 \mathbf u_{2\ell+3} y_0 \mathbf u_{2\ell+2} x_\ell \mathbf u_{2\ell+1} y_1\mathbf u_{2\ell} x_{\ell-1} \mathbf u_{2\ell-1} x_\ell \mathbf u_{2\ell-2} x_{\ell-2} \mathbf u_{2\ell-3} x_{\ell-1} \cdots\\
&\cdot\,\mathbf u_4 x_1 \mathbf u_3 x_2 \mathbf u_2 x_0 \mathbf u_1 x_1 \mathbf u_0\\[-3pt]
\approx{}&\mathbf v_{2\ell+4} y_1 \mathbf v_{2\ell+3} y_0 \mathbf v_{2\ell+2} y_1 \mathbf v_{2\ell+1}x_\ell \mathbf v_{2\ell} x_{\ell-1}\mathbf v_{2\ell-1} x_\ell \mathbf v_{2\ell-2} x_{\ell-2} \mathbf v_{2\ell-3} x_{\ell-1}\cdots\\
&\cdot\,\mathbf v_4 x_1 \mathbf v_3 x_2 \mathbf v_2 x_0 \mathbf v_1 x_1 \mathbf v_0
\end{align*}
for some possibly empty words $\mathbf u_0, \mathbf u_1,\dots,\mathbf u_{2\ell+4}$ and $\mathbf v_0, \mathbf v_1,\dots,\mathbf v_{2\ell+4}$ such that $x_s,y_0$, $y_1\notin\con(\mathbf u_i\mathbf v_i)$ for $0\le s\le \ell$ and $0\le i\le 2\ell+4$. Now substitute~1 for all letters occurring in this identity except $x_0,x_1,\dots,x_\ell,y_0$ and $y_1$. We get the identity
$$
y_1y_0x_\ell y_1x_{\ell-1}x_\ell x_{\ell-2} x_{\ell-1} \cdots x_1 x_2 x_0 x_1\approx y_1y_0 y_1x_\ell x_{\ell-1}x_\ell x_{\ell-2} x_{\ell-1} \cdots x_1 x_2 x_0 x_1,
$$
i.e., the identity $\gamma_\ell$. The claim~(i) is proved.
 
\smallskip
 
\emph{Case} 2: $m>1$. Now we will prove that $\ell_2(\mathbf v, x_m)<\ell_2(\mathbf v, y_{m-1})$ and $\ell_2(\mathbf u, x_m)<\ell_2(\mathbf u, y_{m-1})$. Put $y_{m-2}=h_2^{m-2}(\mathbf v, y_{m-1})$. Since $D(\mathbf v, y_{m-1})=m-1$, Lemma~\ref{h_2^{k-1}} implies that $D(\mathbf v, y_{m-2})=m-2$ and $\ell_1({\bf v},y_{m-1})< \ell_1({\bf v},y_{m-2})$. Recall that $\ell_1(\mathbf v,y_m)<\ell_1({\bf v},y_{m-1})$, whence $\ell_1(\mathbf v,y_m)<\ell_1({\bf v},y_{m-2})$. Since $D(\mathbf v,y_m)=m$, we can apply Lemma~\ref{if first then second} and conclude that $\ell_2({\bf v},y_m)< \ell_1({\bf v},y_{m-2})$. First occurrence of $x_\ell$ in \textbf v precedes second occurrence of $y_m$, whence $\ell_1({\bf v},x_\ell)< \ell_1({\bf v},y_{m-2})$. Then Lemma~\ref{if first then second} implies that $\ell_2({\bf v},x_\ell)< \ell_1({\bf v},y_{m-2})$. This implies that $\ell_1(\mathbf v,x_{\ell-1})<\ell_2({\bf v},x_\ell)< \ell_1({\bf v},y_{m-2})$. If $\ell-1\ge m$ then Lemma~\ref{if first then second} applies with the conclusion that $\ell_2({\bf v},x_{\ell-1})< \ell_1({\bf v},y_{m-2})$. Continuing this process, we eventually obtain $\ell_2(\mathbf v, x_m)<\ell_1(\mathbf v, y_{m-2})$. In particular, $\ell_1(\mathbf v, x_m)<\ell_1(\mathbf v, y_{m-2})$. In view of Lemma~\ref{k-divider and depth}, the letters $x_m$ and $y_{m-2}$ are $\ell$-dividers of $\bf v$. Now Lemma~\ref{k-equivalent} applies with the conclusion that $\ell_1(\mathbf u, x_m)<\ell_1(\mathbf u, y_{m-2})$. Then Lemma~\ref{if first then second} shows that $\ell_2(\mathbf u, x_m)<\ell_1(\mathbf u, y_{m-2})$. The choice of $y_{m-2}$ implies that first occurrence of $y_{m-2}$ in \textbf v precedes second occurrence of $y_{m-1}$. Therefore, $\ell_2(\mathbf v, x_m)<\ell_2(\mathbf v, y_{m-1})$. In view of the claim~\eqref{eq the same l-dividers} and Lemma~\ref{h_i^k(u,x)=h_i^k(v,x) to h_i^s(u,x)=h_i^s(v,x)}, $h_2^{m-2}(\mathbf u, y_{m-1})=h_2^{m-2}(\mathbf v, y_{m-1})=y_{m-2}$, whence $\ell_2(\mathbf u, x_m)<\ell_2(\mathbf u, y_{m-1})$.
 
Let now $m>2$. Note that
$$
\ell_1(\mathbf u,y_{m-1})<\ell_2(\mathbf u,y_m)<\ell_1(\mathbf u,x_\ell)<\ell_1(\mathbf u,x_{\ell-1})<\cdots<\ell_1(\mathbf u,x_{m-3}).
$$
If $\ell_1({\bf u},x_{m-3})<\ell_2({\bf u},y_{m-1})$ then the letter $x_{m-3}$ lies between the first and the second occurrences of $y_{m-1}$ in \textbf u. Since $x_{m-3}$ is an \mbox{$(m-3)$}-divider of \textbf u, we obtain a contradiction with the equality $D(\mathbf u, y_{m-1})=m-1$. Thus, $\ell_2(\mathbf u, y_{m-1})<\ell_1(\mathbf u, x_{m-3})$ whenever $m>2$.
 
Further, there are three possibilities for second occurrence of the letter $y_{m-1}$ in \textbf u:
\begin{align}
\label{2nd occurrence y_{m-1} in u 1}&\ell_2(\mathbf u,y_{m-1})<\ell_1(\mathbf u, x_{m-2});\\
\label{2nd occurrence y_{m-1} in u 2}&\ell_1(\mathbf u, x_{m-2})<\ell_2(\mathbf u, y_{m-1})<\ell_2(\mathbf u, x_{m-1});\\
\label{2nd occurrence y_{m-1} in u 3}&\ell_2(\mathbf u, x_{m-1})<\ell_2(\mathbf u,y_{m-1}).
\end{align}
 
Suppose that the claim~\eqref{2nd occurrence y_{m-1} in u 1} is true. Then $\ell_1(\mathbf u,y_{m-2})<\ell_1(\mathbf u, x_{m-2})$. In view of Lemma~\ref{k-equivalent}, $\ell_1(\mathbf v,y_{m-2})<\ell_1(\mathbf v, x_{m-2})$. Since $y_{m-2}=h_2^{m-2}(\mathbf v, y_{m-1})$ by Lemma~\ref{h_i^k(u,x)=h_i^k(v,x) to h_i^s(u,x)=h_i^s(v,x)}, we have $\ell_2(\mathbf v,y_{m-1})<\ell_1(\mathbf v, x_{m-2})$. Now if $m>2$ then we apply Lemma~\ref{form of the identity} with $x_s=y_{m-2}$ and $s=m-2$ and conclude that there are letters $y_0,y_1,\dots, y_{m-3}$ such that $D(\mathbf u,y_p)=D(\mathbf v, y_p)=p$ and, for any $p=0,1,\dots,m-2$, the inequalities
$$
\ell_1(\mathbf w,y_{p+1})<\ell_1(\mathbf w,y_p)<\ell_2(\mathbf w,y_{p+1})\text{ and }\ell_2(\mathbf w,y_{p+2})<\ell_1(\mathbf w,y_p)
$$
hold where \textbf w is any of the words \textbf u or \textbf v. Lemma~\ref{identities in K}(ii) allows us to assume that the letters $y_p$ with $1\le p\le m$ occur twice in each of the words \textbf u and \textbf v. In view of the above, we have the identity $\mathbf{u\approx v}$ has the form
\begin{align*}
&\mathbf u_{2\ell+5} y_m \mathbf u_{2\ell+4} y_{m-1} \mathbf u_{2\ell+3} x_\ell \mathbf u_{2\ell+2} y_m\mathbf u_{2\ell+1} x_{\ell-1} \mathbf u_{2\ell} x_\ell \mathbf u_{2\ell-1} x_{\ell-2} \mathbf u_{2\ell-2} x_{\ell-1} \cdots\\
&\cdot\,\mathbf u_{2m+1}y_{m-2}\mathbf u_{2m}y_{m-1}\mathbf u_{2m-1}x_{m-1}\mathbf u_{2m-2}y_{m-3}\mathbf u_{2m-2}y_{m-2}\cdots\\
&\cdot\,\mathbf u_4 y_1 \mathbf u_3 y_2 \mathbf u_2 y_0 \mathbf u_1 y_1 \mathbf u_0\\[-3pt]
\approx{}&\mathbf v_{2\ell+5} y_m \mathbf v_{2\ell+4} y_{m-1} \mathbf v_{2\ell+3}y_m \mathbf v_{2\ell+2} x_\ell\mathbf v_{2\ell+1} x_{\ell-1}\mathbf v_{2\ell} x_\ell \mathbf v_{2\ell-1} x_{\ell-2}\mathbf v_{2\ell-2} x_{\ell-1} \cdots\\
&\cdot\,\mathbf v_{2m+1}y_{m-2}\mathbf v_{2m}y_{m-1}\mathbf v_{2m-1}x_{m-1}\mathbf v_{2m-2}y_{m-3}\mathbf v_{2m-2}y_{m-2}\cdots\\
&\cdot\,\mathbf v_4 y_1 \mathbf v_3 y_2\mathbf v_2 y_0 \mathbf v_1 y_1 \mathbf v_0
\end{align*}
for some possibly empty words $\mathbf u_0, \mathbf u_1,\dots,\mathbf u_{2\ell+5}$ and $\mathbf v_0, \mathbf v_1,\dots,\mathbf v_{2\ell+5}$ such that $x_s,y_t\notin\con(\mathbf u_i\mathbf v_i)$ for $m-1\le s\le \ell$, $0\le t\le m$ and $0\le i\le 2\ell+5$. Now substitute~1 for all letters occurring in this identity except $y_0,y_1,\dots,y_m$ and $x_{m-1},x_m,\dots,x_\ell$. We get the identity
\begin{align*}
&y_my_{m-1}x_\ell y_mx_{\ell-1}x_\ell x_{\ell-2} x_{\ell-1} \cdots y_{m-2}y_{m-1}x_{m-1}y_{m-3}y_{m-2}\cdots y_1 y_2 y_0 y_1\\
\approx{}&y_my_{m-1}y_mx_\ell x_{\ell-1}x_\ell x_{\ell-2} x_{\ell-1} \cdots y_{m-2}y_{m-1}x_{m-1}y_{m-3}y_{m-2}\cdots y_1 y_2 y_0 y_1.
\end{align*}
Now we rename in this identity $y_i$ in $x_i$ for $i=0,1,\dots,m-2$ and obtain the identity
\begin{equation}
\label{almost delta_k^{m-1}}
\begin{array}{rl}
&y_m\stackrel{(1)}{y_{m-1}}x_\ell y_mx_{\ell-1}x_\ell x_{\ell-2} x_{\ell-1} \cdots x_{m-2}\stackrel{(2)}{y_{m-1}}\stackrel{(2)}{x_{m-1}}x_{m-3}x_{m-2}\cdots\\
&\cdot\,x_1 x_2 x_0 x_1\\
\approx{}&y_m\stackrel{(1)}{y_{m-1}}y_mx_\ell x_{\ell-1}x_\ell x_{\ell-2} x_{\ell-1} \cdots x_{m-2}\stackrel{(2)}{y_{m-1}}\stackrel{(2)}{x_{m-1}}x_{m-3}x_{m-2}\cdots\\
&\cdot\,x_1 x_2 x_0 x_1.
\end{array}
\end{equation}
In view of Lemma~\ref{identities in K}(i), we may use the identity $\sigma_2$. This identity allows us to swap the second occurrences of the letters $x_{m-1}$ and $y_{m-1}$ in both the sides of the identity~\eqref{almost delta_k^{m-1}}. As a result, we get $\delta_\ell^{m-1}$.
 
Suppose now that the claim~\eqref{2nd occurrence y_{m-1} in u 2} is true. If $\ell_2(\mathbf v,y_{m-1})<\ell_1(\mathbf v, x_{m-2})$ then $\ell_1(\mathbf v,y_{m-2})<\ell_1(\mathbf v, x_{m-2})$. In view of Lemma~\ref{k-equivalent}, $\ell_1(\mathbf u,y_{m-2})<\ell_1(\mathbf u, x_{m-2})$. Since $y_{m-2}=h_2^{m-2}(\mathbf v, y_{m-1})$ by Lemma~\ref{h_i^k(u,x)=h_i^k(v,x) to h_i^s(u,x)=h_i^s(v,x)}, we have $\ell_2(\mathbf u,y_{m-1})<\ell_1(\mathbf u, x_{m-2})$. This contradicts the claim~\eqref{2nd occurrence y_{m-1} in u 2}. Thus, $\ell_1(\mathbf v,x_{m-2})<\ell_2(\mathbf v, y_{m-1})$.{\sloppy

}
 
Suppose that $\ell_2(\mathbf v,y_{m-1})<\ell_2(\mathbf v, x_{m-1})$. Then the identity $\mathbf{u\approx v}$ has the form
\begin{align*}
&\mathbf u_{2\ell+5} y_m \mathbf u_{2\ell+4} y_{m-1} \mathbf u_{2\ell+3} x_\ell \mathbf u_{2\ell+2} y_m\mathbf u_{2\ell+1} x_{\ell-1} \mathbf u_{2\ell} x_\ell \mathbf u_{2\ell-1} x_{\ell-2} \mathbf u_{2\ell-2} x_{\ell-1} \cdots\\
&\cdot\,\mathbf u_{2m+1}x_{m-2}\mathbf u_{2m}y_{m-1}\mathbf u_{2m-1}x_{m-1}\mathbf u_{2m-2}x_{m-3}\mathbf u_{2m-2}x_{m-2}\cdots\\
&\cdot\,\mathbf u_4 x_1 \mathbf u_3 x_2 \mathbf u_2 x_0 \mathbf u_1 x_1 \mathbf u_0\\
\approx{}&\mathbf v_{2\ell+5} y_m \mathbf v_{2\ell+4} y_{m-1} \mathbf v_{2\ell+3}y_m \mathbf v_{2\ell+2} x_\ell\mathbf v_{2\ell+1} x_{\ell-1} \mathbf v_{2\ell} x_\ell \mathbf v_{2\ell-1} x_{\ell-2}\mathbf v_{2\ell-2} x_{\ell-1} \cdots\\
&\cdot\,\mathbf v_{2m+1}x_{m-2}\mathbf v_{2m}y_{m-1}\mathbf v_{2m-1}x_{m-1}\mathbf v_{2m-2}x_{m-3}\mathbf v_{2m-2}x_{m-2}\cdots\\
&\cdot\,\mathbf v_4 x_1 \mathbf v_3 x_2\mathbf v_2 x_0 \mathbf v_1 x_1 \mathbf v_0
\end{align*}
for some possibly empty words $\mathbf u_0, \mathbf u_1,\dots,\mathbf u_{2\ell+5}$ and $\mathbf v_0, \mathbf v_1,\dots,\mathbf v_{2\ell+5}$ such that $x_s,y_{m-1}$, $y_m\notin\con(\mathbf u_i\mathbf v_i)$ for $0\le s\le \ell$ and $0\le i\le 2\ell+5$. Now substitute~1 for all letters occurring in this identity except $y_{m-1},y_m,x_0,x_1,\dots,x_\ell$. We get the identity~\eqref{almost delta_k^{m-1}}. As above, combining this identity with $\sigma_2$, we get $\delta_\ell^{m-1}$.
 
If $\ell_2(\mathbf v, x_{m-1})<\ell_2(\mathbf v,y_{m-1})$ then the same arguments as above show that the identity
\begin{align*}
&y_m\stackrel{(1)}{y_{m-1}}x_\ell y_mx_{\ell-1}x_\ell x_{\ell-2} x_{\ell-1} \cdots x_{m-2}\stackrel{(2)}{y_{m-1}}\stackrel{(2)}{x_{m-1}}x_{m-3}x_{m-2}\cdots x_1 x_2 x_0 x_1\\
\approx{}&y_my_{m-1}y_mx_\ell x_{\ell-1}x_\ell x_{\ell-2} x_{\ell-1} \cdots x_{m-2}x_{m-1}y_{m-1}x_{m-3}x_{m-2}\cdots x_1 x_2 x_0 x_1
\end{align*}
holds in \textbf V. Now we apply the identity $\sigma_2$ to the left-hand side of the last identity and get $\delta_\ell^{m-1}$.
 
Finally, suppose that the claim~\eqref{2nd occurrence y_{m-1} in u 3} is true. Suppose that $\ell_2(\mathbf v,y_{m-1})<\ell_2(\mathbf v, x_{m-1})$. Then the identity $\mathbf{u\approx v}$ has the form
\begin{align*}
&\mathbf u_{2\ell+5} y_m \mathbf u_{2\ell+4} y_{m-1} \mathbf u_{2\ell+3} x_\ell \mathbf u_{2\ell+2} y_m\mathbf u_{2\ell+1} x_{\ell-1} \mathbf u_{2\ell} x_\ell \mathbf u_{2\ell-1} x_{\ell-2} \mathbf u_{2\ell-2} x_{\ell-1} \cdots\\
&\cdot\,\mathbf u_{2m+1}x_{m-2}\mathbf u_{2m}x_{m-1}\mathbf u_{2m-1}y_{m-1}\mathbf u_{2m-2}x_{m-3}\mathbf u_{2m-2}x_{m-2}\cdots\\
&\cdot\,\mathbf u_4 x_1 \mathbf u_3 x_2 \mathbf u_2 x_0 \mathbf u_1 x_1 \mathbf u_0\\
\approx{}&\mathbf v_{2\ell+5} y_m \mathbf v_{2\ell+4} y_{m-1} \mathbf v_{2\ell+3}y_m \mathbf v_{2\ell+2} x_\ell\mathbf v_{2\ell+1} x_{\ell-1} \mathbf v_{2\ell} x_\ell \mathbf v_{2\ell-1} x_{\ell-2} \mathbf v_{2\ell-2} x_{\ell-1} \cdots\\
&\cdot\,\mathbf v_{2m+1}x_{m-2}\mathbf v_{2m}y_{m-1}\mathbf v_{2m-1}x_{m-1}\mathbf v_{2m-2}x_{m-3}\mathbf v_{2m-2}x_{m-2}\cdots\\
&\cdot\,\mathbf v_4 x_1 \mathbf v_3 x_2\mathbf v_2 x_0 \mathbf v_1 x_1 \mathbf v_0
\end{align*}
for some possibly empty words $\mathbf u_0, \mathbf u_1,\dots,\mathbf u_{2\ell+5}$ and $\mathbf v_0, \mathbf v_1,\dots,\mathbf v_{2\ell+5}$ such that $x_s,y_{m-1}$, $y_m\notin\con(\mathbf u_i\mathbf v_i)$ for $0\le s\le \ell$ and $0\le i\le 2\ell+5$. Now substitute~1 for all letters occurring in this identity except $y_{m-1},y_m,x_0,x_1,\dots,x_\ell$. We get the identity
\begin{align*}
&y_my_{m-1}x_\ell y_mx_{\ell-1}x_\ell x_{\ell-2} x_{\ell-1} \cdots x_{m-2}x_{m-1}y_{m-1}x_{m-3}x_{m-2}\cdots x_1 x_2 x_0 x_1\\[-3pt]
\approx{}&y_m\stackrel{(1)}{y_{m-1}}y_mx_\ell x_{\ell-1}x_\ell x_{\ell-2} x_{\ell-1} \cdots x_{m-2}\stackrel{(2)}{y_{m-1}}\stackrel{(2)}{x_{m-1}}x_{m-3}x_{m-2}\cdots x_1 x_2 x_0 x_1.
\end{align*}
Applying once again $\sigma_2$ to the right-hand side of this identity, we get $\delta_\ell^{m-1}$.
 
If $\ell_2(\mathbf v, x_{m-1})<\ell_2(\mathbf v,y_{m-1})$ then the same arguments as above show that the identity $\delta_\ell^{m-1}$ holds in \textbf V.
\end{proof}
 
\begin{proposition}
\label{word problem J_k^r}
A non-trivial identity ${\bf u}\approx {\bf v}$ holds in the variety ${\bf J}_k^r$ if and only if the claims~\eqref{sim(u)=sim(v) & mul(u)=mul(v)},~\eqref{eq the same l-dividers},~\eqref{identities in I_k} and~\eqref{eq V subseteq J_k^r} with $\ell=k$ and $m=r$ are true.
\end{proposition}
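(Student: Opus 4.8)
The plan is to follow the common scheme of Propositions~\ref{word problem F_k,K}(i), \ref{word problem H_k} and~\ref{word problem I_k}, refining the ``canonical form'' of a $(k-1)$-block so as to record the extra information tracked by the claim~\eqref{eq V subseteq J_k^r}. For the \emph{necessity}, since $\mathbf I_k\subseteq\mathbf J_k^r$ by Lemma~\ref{between F_k and F_{k+1}}, Proposition~\ref{word problem I_k} already gives the claims~\eqref{sim(u)=sim(v) & mul(u)=mul(v)}, \eqref{eq the same l-dividers} and~\eqref{identities in I_k} with $\ell=k$, so only~\eqref{eq V subseteq J_k^r} with $m=r$ must be verified. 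As in the earlier propositions, I would reduce to the case $\mathbf u=\mathbf p\xi(\mathbf a)\mathbf q$ and $\mathbf v=\mathbf p\xi(\mathbf b)\mathbf q$ for a single application of an identity $\mathbf a\approx\mathbf b\in\{\Phi,\delta_k^r\}$. If $\mathbf a\approx\mathbf b\in\Phi$, then $\mathbf u\approx\mathbf v$ holds in $\mathbf K$ and all claims follow from Proposition~\ref{word problem F_k,K}(ii). If $\mathbf a\approx\mathbf b$ is $\delta_k^r$, I would read off from Lemma~\ref{depth and index} and Table~\ref{k-decomposition of eight words} that the two sides of $\delta_k^r$ are $k$-equivalent and differ only in the position of the second occurrence of $y_{r+1}$ (which has depth $r+1$) relative to the $k$-divider coming from $x_k$; Lemma~\ref{does not contain dividers} then transfers this to $\mathbf u$ and $\mathbf v$, so that the only $(2,k)$-restrictor that changes belongs to a letter of depth $r+1>r$ and hence~\eqref{eq V subseteq J_k^r} with $m=r$ is left intact.

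For the \emph{sufficiency} I would again work with the $(k-1)$-decomposition~\eqref{t_0u_0t_1u_1 ... t_mu_m} of $\mathbf u$ and replace each $(k-1)$-block $\mathbf u_i$, split into $k$-blocks $s_0\mathbf a_0s_1\cdots s_n\mathbf a_n$ by its $k$-dividers, with a canonical word $\overline{\mathbf u_i}$. The decisive difference from Proposition~\ref{word problem I_k} is that $\mathbf J_k^r$ does \emph{not} satisfy $\gamma_k$, so Lemma~\ref{u'abu''=u'bau''}(ii) is no longer available; only claim~(i) of that lemma applies, and it permits a swap $ab\mapsto ba$ inside a $(k-1)$-block only when $a$ is a repeated letter with $D(\mathbf u,a)>r$. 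Thus a second occurrence of a letter of depth $\le r$ is trapped inside its $k$-block and can be permuted only with adjacent second occurrences through $\sigma_2$ (Lemma~\ref{identities in K}(i)), while second occurrences of letters of depth $>r$ can still be gathered to the front of the $(k-1)$-block exactly as in the $\mathbf I_k$ argument. The canonical form $\overline{\mathbf u_i}$ must therefore, $k$-block by $k$-block, display the squared first occurrences, the (reordered) second occurrences of depth-$\le r$ letters belonging to that $k$-block, and the second occurrences of depth-$>r$ letters collected at the head of the block.

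Having produced a word $\mathbf u^\sharp$ by canonicalising every block of $\mathbf u$, I would turn to $\mathbf v$: the claims~\eqref{eq the same l-dividers} and~\eqref{identities in I_k} make $\mathbf u$ and $\mathbf v$ $k$-equivalent by Lemma~\ref{k-equivalent}, so $t_i\mathbf v_i$ carries the same $k$-dividers $s_0,\dots,s_n$, and the new claim~\eqref{eq V subseteq J_k^r} with $m=r$ is precisely what forces every depth-$\le r$ letter to put its second occurrence into the same $k$-block of $\mathbf v$ as of $\mathbf u$. Hence $\mathbf v_i$ has the same canonical form $\overline{\mathbf u_i}$, giving $\mathbf v\approx\mathbf u^\sharp\approx\mathbf u$ in $\mathbf J_k^r$. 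The main obstacle I expect is the bookkeeping of the sufficiency argument: one must check that, using \emph{only} the depth-restricted swap of Lemma~\ref{u'abu''=u'bau''}(i) together with $\sigma_2$ and Lemma~\ref{identities in K}, each block can actually be driven to the proposed canonical form, and that the data distinguishing this form --- which $k$-block each depth-$\le r$ second occurrence occupies, and which letters sit at the head --- is captured by the four hypotheses and by nothing weaker. Choosing the canonical form so that it is simultaneously reachable from $\mathbf u$ and forced to coincide for $\mathbf v$ is the technical crux.
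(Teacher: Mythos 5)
Your proposal is correct and follows essentially the same route as the paper's proof: necessity by reducing to a single application of an identity from $\{\Phi,\delta_k^r\}$ and using Lemma~\ref{does not contain dividers} to see that only letters of depth $>r$ can have their $(2,k)$-restrictor disturbed (the paper phrases this as depth $>r$ rather than exactly $r+1$, since $\xi(y_{r+1})$ may contain several letters), and sufficiency by the block-canonicalization scheme in which depth-$>r$ second occurrences are gathered to the head of the $(k-1)$-block via Lemma~\ref{u'abu''=u'bau''}(i), first occurrences are kept squared in their $k$-blocks, and depth-$\le r$ second occurrences stay in their $k$-blocks and are reordered by $\sigma_2$. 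This is exactly the paper's canonical form $\overline{\mathbf u_i}$, and your transfer to $\mathbf v$ via $k$-equivalence together with claim~\eqref{eq V subseteq J_k^r} matches the paper's conclusion $\mathbf v\approx\mathbf u^\sharp\approx\mathbf u$.
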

 
\begin{proof}
\emph{Necessity}. Suppose that a non-trivial identity ${\bf u}\approx {\bf v}$ holds in ${\bf J}_k^r$. The claims~\eqref{sim(u)=sim(v) & mul(u)=mul(v)},~\eqref{eq the same l-dividers} and~\eqref{identities in I_k} with $\ell=k$ follow from Proposition~\ref{word problem I_k} and the inclusion ${\bf I}_k\subseteq{\bf J}_k^r$. It remains to verify that the claim~\eqref{eq V subseteq J_k^r} with $\ell=k$ and $m=r$ is true. As in the proof of Proposition~\ref{word problem F_k,K}(i), we can assume that $\mathbf u=\mathbf p\xi(\mathbf a)\mathbf q$ and $\mathbf v=\mathbf p\xi(\mathbf b)\mathbf q$ for some possibly empty words $\mathbf p$ and $\mathbf q$, an endomorphism $\xi$ of $F^1$ and an identity $\mathbf a\approx \mathbf b\in\{\Phi,\delta_k^r\}$.

If $\mathbf a\approx\mathbf b\in\Phi$ then the claim~\eqref{eq the same l-dividers} is true for any $\ell$ by Proposition~\ref{word problem F_k,K}(ii). Evidently, this implies the required conclusion. Suppose now that $\mathbf a\approx \mathbf b$ coincides with $\delta_k^r$. Then 
\begin{align*}
\xi(\mathbf a)={}&\mathbf b_{r+1}\mathbf b_r\mathbf b_{r+1}\mathbf a_k\mathbf a_{k-1}\mathbf a_k\mathbf a_{k-2}\mathbf a_{k-1}\cdots \mathbf a_{r-1}\mathbf a_r\mathbf b_r\mathbf a_{r-2}\mathbf a_{r-1}\cdots \mathbf a_1\mathbf a_2\mathbf a_0\mathbf a_1,\\
\xi(\mathbf b)={}&\mathbf b_{r+1}\mathbf b_r\mathbf a_k\mathbf b_{r+1}\mathbf a_{k-1}\mathbf a_k\mathbf a_{k-2}\mathbf a_{k-1}\cdots \mathbf a_{r-1}\mathbf a_r\mathbf b_r\mathbf a_{r-2}\mathbf a_{r-1}\cdots \mathbf a_1\mathbf a_2\mathbf a_0\mathbf a_1
\end{align*}
for some words $\mathbf a_0,\mathbf a_1,\dots,\mathbf a_k$ and $\mathbf b_r,\mathbf b_{r+1}$, whence 
\begin{align*}
\mathbf u={}&\mathbf p\mathbf b_{r+1}\mathbf b_r\mathbf b_{r+1}\mathbf a_k\mathbf a_{k-1}\mathbf a_k\mathbf a_{k-2}\mathbf a_{k-1}\cdots \mathbf a_{r-1}\mathbf a_r\mathbf b_r\mathbf a_{r-2}\mathbf a_{r-1}\cdots \mathbf a_1\mathbf a_2\mathbf a_0\mathbf a_1\mathbf q,\\
\mathbf v={}&\mathbf p\mathbf b_{r+1}\mathbf b_r\mathbf a_k\mathbf b_{r+1}\mathbf a_{k-1}\mathbf a_k\mathbf a_{k-2}\mathbf a_{k-1}\cdots \mathbf a_{r-1}\mathbf a_r\mathbf b_r\mathbf a_{r-2}\mathbf a_{r-1}\cdots \mathbf a_1\mathbf a_2\mathbf a_0\mathbf a_1\mathbf q.
\end{align*}
By Lemma~\ref{depth and index}, $D(\mathbf a,x_k)=k$. Then Lemma~\ref{does not contain dividers} implies that the subword $\mathbf a_k$ of $\bf u$ located between $\mathbf b_{r+1}$ and $\mathbf a_{k-1}$ does not contain any \mbox{$(k-1)$}-divider. Also, obviously, the subword $\mathbf b_{r+1}$ of $\bf u$ located between $\mathbf b_r$ and $\mathbf a_k$ does not contain any $s$-divider for all $s$. Therefore, the subword $\mathbf b_{r+1}\mathbf a_k$ of $\bf u$ located between $\mathbf b_r$ and $\mathbf a_{k-1}$ lies in some \mbox{$(k-1)$}-block. Now we apply Lemma~\ref{does not contain dividers} again and obtain the subword $\mathbf b_{r+1}$ located between $\bf p$ and $\mathbf b_r$ does not contain any $s$-divider for all $s\le r$. Hence if second occurrence in $\bf u$ of some letter lies in the subword $\mathbf b_{r+1}$ located between $\mathbf b_r$ and $\mathbf a_k$ then the depth of this letter is more than $r$. This implies that the claim~\eqref{eq V subseteq J_k^r} with $\ell=k$ and $m=r$ is true.
 
\smallskip
 
\emph{Sufficiency}. As in the proofs of Propositions~\ref{word problem H_k} and~\ref{word problem I_k}, the outline of our arguments here is similar to one from the proof of sufficiency in Proposition~\ref{word problem F_k,K}(i). But the canonical form of the block here is even more complicated than in Proposition~\ref{word problem I_k}.
 
Suppose that the claims~\eqref{sim(u)=sim(v) & mul(u)=mul(v)},~\eqref{eq the same l-dividers},~\eqref{identities in I_k} and~\eqref{eq V subseteq J_k^r} with $\ell=k$ and $m=r$ are true. As in the proof of sufficiency in Proposition~\ref{word problem H_k}, we suppose that~\eqref{t_0u_0t_1u_1 ... t_mu_m} is the \mbox{$(k-1)$}-decomposition of $\bf u$, while~\eqref{presentation for t_iu_i} is the representation of $t_i\mathbf u_i $ as the product of alternating $k$-dividers $s_0,s_1,\dots, s_n$ and $k$-blocks $\mathbf a_0, \mathbf a_1,\dots,\mathbf a_n$. 
 
Clearly, $\mathbf u=\mathbf w_1\mathbf u_i\mathbf w_2$ for some possibly empty words $\mathbf w_1$ and $\mathbf w_2$. For $j=0,1,\dots,n$, we put
$$
X_j=\{x\in\con(\mathbf a_j)\mid\text{ first occurrence of }x\text{ in }\mathbf u\text{ lies in }\mathbf a_j\}.
$$
Let $X_j=\{x_{j1},x_{j2},\dots,x_{jq_j}\}$, $X=X_0\cup X_1\cup\cdots\cup X_n$, $\mathbf a_j'=(\mathbf a_j)_X$. As in the proof of sufficiency in Proposition~\ref{word problem I_k}, we can verify that
$$
X_j=\bigl\{x\in \con(\mathbf a_j)\mid s_j=h_1^k(\mathbf u,x)\bigr\}.
$$
For any $j=0,1,\dots,n$, we put
$$
Z_j=\{z\in \con(\mathbf a_j')\mid D(\mathbf u,z)\le r\},
$$
$Z=Z_0\cup Z_1\cup\cdots\cup Z_n$, $\mathbf a_j''=(\mathbf a_j')_Z$ and $\mathbf u_i^\ast=\mathbf a_0''\mathbf a_1''\cdots\mathbf a_n''$. Let $Z_j=\{z_{j1},z_{j2},\dots$, $z_{jh_j}\}$, $\con(\mathbf u_i^\ast)=\{c_1,c_2,\dots,c_p\}$ and 
\begin{align*}
\overline{\mathbf u_i}\,={}&(c_1c_2\cdots c_p)\cdot(x_{01}^2\cdots x_{0q_0}^2z_{01}\cdots z_{0h_0})\cdot (s_1x_{11}^2\cdots x_{1q_1}^2z_{11}\cdots z_{1h_1})\cdots\\
&\cdot\,(s_nx_{n1}^2\cdots x_{nq_n}^2z_{n1}\cdots z_{nh_n}).
\end{align*}
As we will see below, $\overline{\mathbf u_i}$ is nothing but the mentioned above ``canonical form'' of the \mbox{$(k-1)$}-block $\mathbf u_i$.
 
As in the proof of sufficiency in Proposition~\ref{word problem I_k}, we can verify that $\mathbf J_k^r$ satisfies the identity~\eqref{u = long word 12}. The definition of the set $X$ and words of the form $\mathbf a_j'$ imply that $z\in \con(\mathbf w_1)$ for any $z\in \con(\mathbf a_0'\mathbf a_1'\cdots\mathbf a_n')$. This implies that if $z\in Z_j$ then we can assume that $\occ_z(\mathbf u_i)=1$ because $\mathbf J_k^r$ satisfies the identity~\eqref{xyxzx=xyxz} by Lemma~\ref{identities in K}(ii). Then we can assume without loss of generality that $\ell_1(\mathbf u,z_{j1})<\ell_1(\mathbf u,z_{j2})<\cdots<\ell_1(\mathbf u,z_{jh_j})$. Since $z\in\con(\mathbf w_1)$ and $D(\mathbf u, z)>r$ for any $z\in\con(\mathbf a_j'')$, we apply Lemma~\ref{u'abu''=u'bau''}(i) with $m=r$ and obtain the identity
\begin{align*}
\mathbf u \approx{}&\mathbf w_1\cdot \mathbf u_i^\ast\cdot (x_{01}^2\cdots x_{0q_0}^2z_{01}\cdots z_{0h_0})\cdot (s_1x_{11}^2\cdots x_{1q_1}^2z_{11}\cdots z_{1h_1})\cdots\\
&\cdot\,(s_nx_{n1}^2\cdots x_{nq_n}^2z_{n1}\cdots z_{nh_n})\cdot\mathbf w_2
\end{align*}
holds in $\mathbf J_k^r$. As we have seen above, $\con(\mathbf u_i^\ast)\subseteq \con(\mathbf w_1)$. Then we can apply the identity~\eqref{xyxzx=xyxz} and obtain the word $\mathbf u_i^\ast$ is linear. Then Lemma~\ref{identities in K}(i) applies and we conclude that $\mathbf J_k^r$ satisfies the identities 
\begin{align*}
\mathbf u \approx{}&\mathbf w_1\cdot(c_1c_2\cdots c_p)\cdot(x_{01}^2\cdots x_{0q_0}^2z_{01}\cdots z_{0h_0})\cdot (s_1x_{11}^2\cdots x_{1q_1}^2z_{11}\cdots z_{1h_1})\cdots\\
&\cdot\,(s_nx_{n1}^2\cdots x_{nq_n}^2z_{n1}\cdots z_{nh_n})\cdot\mathbf w_2\\
={}&\mathbf w_1\overline{\mathbf u_i}\,\mathbf w_2.
\end{align*}
 
So, as in the proof of Proposition~\ref{word problem F_k,K}(i), using identities which hold in the variety $\mathbf J_k^r$, we can replace the \mbox{$(k-1)$}-blocks $\mathbf u_i$ of \textbf u successively, one after another, by the ``canonical form'' $\overline{\mathbf u_i}$ for $i=m,m-1,\dots,0$. Then the variety $\mathbf J_k^r$ satisfies the identities~\eqref{u = canonical form}. Put $\mathbf u^\sharp=t_0\,\overline{\mathbf u_0}\,t_1\,\overline{\mathbf u_1}\,\cdots t_m\,\overline{\mathbf u_m}$\,.
 
One can return to the word \textbf v. By Lemma~\ref{k-equivalent}, the \mbox{$(k-1)$}-decomposition of $\bf v$ has the form~\eqref{t_0v_0t_1v_1 ... t_mv_m}. Furthermore, the claim~\eqref{identities in I_k} with $\ell=k$ and Lemma~\ref{k-equivalent} imply that~\eqref{presentation for t_iv_i} is a representation of $t_i\mathbf v_i$ as the product of alternating $k$-dividers $s_0,s_1,\dots,s_n$ and $k$-blocks $\mathbf b_0, \mathbf b_1,\dots,\mathbf b_n$. The claim~\eqref{identities in I_k} with $\ell=k$ implies that
$$
X_j=\bigl\{x\in \con(\mathbf b_j) \mid s_j=h_1^k(\mathbf v,x)\bigr\}
$$
for all $j=0,1,\dots,n$. Put $\mathbf b_j'=(\mathbf b_j)_X$. In view of the claim~\eqref{eq V subseteq J_k^r} with $\ell=k$ and $m=r$, we have
$$
Z_j=\{z\in \con(\mathbf b_j')\mid D(\mathbf v,z)\le r\}
$$
for all $j=0,1,\dots,n$. Put $\mathbf b_j''=(\mathbf b_j')_Z$. The claim~\eqref{eq the same l-dividers} with $\ell=k$ implies that $j$th occurrence of a letter $x$ in $\bf u$ lies in the \mbox{$(k-1)$}-block $\mathbf u_i$ if and only if $j$th occurrence of a letter $x$ in $\bf v$ lies in the \mbox{$(k-1)$}-block $\mathbf v_i$ for any $x$ and any $j=1,2$. Also, Lemma~\ref{identities in K}(ii) allows us to assume that if the first and the second occurrences of the letter $x$ in $\bf u$ do not lie in the \mbox{$(k-1)$}-block $\mathbf u_i$ then this letter does not occur in $\mathbf u_i$. Then $\con(\mathbf u_i^\ast)=\con(\mathbf b_0''\mathbf b_1''\cdots\mathbf b_n'')$. This implies that the \mbox{$(k-1)$}-blocks $\mathbf u_i$ and $\mathbf v_i$ have the same ``canonical form''. Repeating literally arguments given above, we obtain the variety $\mathbf J_k^r$ satisfies the identities $\mathbf v\approx\mathbf u^\sharp\approx\mathbf u$.
\end{proof}
 
Now we are well prepared to quickly complete the proof of Lemma~\ref{I_k or over J_k^1}. Let $\mathbf I_k\subset\mathbf{X\subseteq F}_{k+1}$. We have to verify that $\mathbf{X \supseteq J}_k^1$. Arguing by contradiction, suppose that ${\bf J}_k^1 \nsubseteq {\bf X}$. Then there exists an identity ${\bf u}\approx {\bf v}$ that holds in ${\bf X}$ but does not hold in ${\bf J}_k^1$. Then Propositions~\ref{word problem I_k} and~\ref{word problem J_k^r} and the inclusion ${\bf I}_k \subset {\bf X}$ together imply that the claims~\eqref{sim(u)=sim(v) & mul(u)=mul(v)},~\eqref{eq the same l-dividers} and~\eqref{identities in I_k} are true, while the claim~\eqref{eq V subseteq J_k^r} with $m=1$ is false. Then Lemma~\ref{V subseteq I_k or J_k^r}(i) implies that ${\bf X} \subseteq {\bf I}_k$, a contradiction. Lemma~\ref{I_k or over J_k^1} is proved.\qed
 
\subsubsection{If $\mathbf J_k^m\subset\mathbf X\subseteq\mathbf F_{k+1}$ with $1\le m<k$ then $\mathbf J_k^{m+1}\subseteq\mathbf X$}
\label{structure of [F_k,F_{k+1}] 4 step}
 
The fourth step in the verification of the claim~4) of Proposition~\ref{L(K)} is the following
 
\begin{lemma}
\label{J_k^m or over J_k^{m+1}}
If $\mathbf X$ is a monoid variety such that $\mathbf X \in [\mathbf J_k^m, \mathbf F_{k + 1}]$ for some $1\le m<k$ then either $\mathbf{X = J}_k^m$ or $\mathbf{X \supseteq J}_k^{m+1}$.
\end{lemma}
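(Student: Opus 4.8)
The plan is to follow the same contradiction scheme already used in the proofs of Lemmas~\ref{F_k or over H_k},~\ref{H_k or over I_k} and~\ref{I_k or over J_k^1}, now invoking the word problem for $\mathbf J_k^m$ and $\mathbf J_k^{m+1}$ (Proposition~\ref{word problem J_k^r}) together with the technical Lemma~\ref{V subseteq I_k or J_k^r}(ii). If $\mathbf X=\mathbf J_k^m$ the first alternative holds, so I may assume $\mathbf J_k^m\subset\mathbf X\subseteq\mathbf F_{k+1}$ and aim to prove $\mathbf X\supseteq\mathbf J_k^{m+1}$; note that $\mathbf J_k^{m+1}$ is a legitimate member of the series because $m+1\le k$. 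Arguing by contradiction, I would suppose $\mathbf J_k^{m+1}\nsubseteq\mathbf X$, so that some identity $\mathbf u\approx\mathbf v$ holds in $\mathbf X$ but fails in $\mathbf J_k^{m+1}$.

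The first step is to read off which of the defining claims hold. Since $\mathbf J_k^m\subset\mathbf X$, the identity $\mathbf u\approx\mathbf v$ holds in $\mathbf J_k^m$, so Proposition~\ref{word problem J_k^r} with $r=m$ guarantees that the claims~\eqref{sim(u)=sim(v) & mul(u)=mul(v)},~\eqref{eq the same l-dividers},~\eqref{identities in I_k} and~\eqref{eq V subseteq J_k^r} (with $\ell=k$ and parameter~$m$) are all true. On the other hand, $\mathbf u\approx\mathbf v$ fails in $\mathbf J_k^{m+1}$, so Proposition~\ref{word problem J_k^r} with $r=m+1$ forces at least one of the four analogous claims for $\mathbf J_k^{m+1}$ to be false. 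The first three of these coincide with claims already established, hence the only possibility is that~\eqref{eq V subseteq J_k^r} with $\ell=k$ and parameter~$m+1$ is false.

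This is precisely the hypothesis of Lemma~\ref{V subseteq I_k or J_k^r}(ii) with $\ell=k$ and $r=m$: the claims~\eqref{sim(u)=sim(v) & mul(u)=mul(v)},~\eqref{eq the same l-dividers},~\eqref{identities in I_k} and~\eqref{eq V subseteq J_k^r} with parameter~$m$ hold, while~\eqref{eq V subseteq J_k^r} with parameter~$m+1$ is false. Applying that lemma yields $\mathbf X\subseteq\mathbf J_k^m$, contradicting $\mathbf J_k^m\subset\mathbf X$. Therefore $\mathbf J_k^{m+1}\subseteq\mathbf X$, as required.

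I do not anticipate a genuine obstacle: all the substantive work has been delegated to Proposition~\ref{word problem J_k^r} and to Lemma~\ref{V subseteq I_k or J_k^r}(ii). The only point demanding care is the bookkeeping of the parameter in~\eqref{eq V subseteq J_k^r}---namely confirming that the failure of $\mathbf u\approx\mathbf v$ in $\mathbf J_k^{m+1}$ can come \emph{only} from the parameter-$(m+1)$ instance of~\eqref{eq V subseteq J_k^r}, because the three remaining defining claims are shared with $\mathbf J_k^m$ and are already known to hold. The assumption $m<k$ is used exactly to ensure that $\mathbf J_k^{m+1}$ exists in the series so that the contradiction is meaningful.
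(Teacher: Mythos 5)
Your proposal is correct and follows essentially the same route as the paper's own proof: both use Proposition~\ref{word problem J_k^r} to conclude that, since the first three defining claims are shared between $\mathbf J_k^m$ and $\mathbf J_k^{m+1}$, the failure of $\mathbf u\approx\mathbf v$ in $\mathbf J_k^{m+1}$ can only come from the parameter-$(m+1)$ instance of~\eqref{eq V subseteq J_k^r}, and then both invoke Lemma~\ref{V subseteq I_k or J_k^r}(ii) to get $\mathbf X\subseteq\mathbf J_k^m$, contradicting $\mathbf J_k^m\subset\mathbf X$. The only cosmetic difference is that you spell out the parameter bookkeeping that the paper leaves implicit.
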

 
\begin{proof}
Let $1\le m<k$, $\mathbf J_k^m\subset\mathbf{X\subseteq F}_{k+1}$ and ${\bf J}_k^{m+1} \nsubseteq {\bf X}$. Then there exists an identity ${\bf u}\approx {\bf v}$ that holds in ${\bf X}$ but does not hold in ${\bf J}_k^{m+1}$. Then Proposition~\ref{word problem J_k^r} and the inclusion ${\bf J}_k^m \subset {\bf X}$ imply that the claims~\eqref{sim(u)=sim(v) & mul(u)=mul(v)},~\eqref{eq the same l-dividers}, \eqref{identities in I_k} and~\eqref{eq V subseteq J_k^r} with $\ell=k$ are true, while the claim
$$
\text{if }x\in \con({\bf u})\text{ and }D({\bf u},x)\le m+1\text{ then }h_2^k({\bf u},x)= h_2^k({\bf v},x)
$$
is false. Then Lemma~\ref{V subseteq I_k or J_k^r}(ii) implies that ${\bf X} \subseteq {\bf J}_k^m$, a contradiction. We see that either $\mathbf{X=J}_k^m$ or $\mathbf J_k^{m+1}\subseteq\mathbf X$.
\end{proof}
 
\subsubsection{The interval $[\mathbf J_k^k, \mathbf F_{k+1}]$ consists of $\mathbf J_k^k$ and $\mathbf F_{k+1}$ only}
\label{structure of [F_k,F_{k+1}] 5 step}
 
The fifth step in the verification of the claim~4) of Proposition~\ref{L(K)} is the following
 
\begin{lemma}
\label{J_k^k or F_{k+1}}
If $\mathbf X$ is a monoid variety such that $\mathbf X \in [\mathbf J_k^k, \mathbf F_{k + 1}]$ then either $\mathbf{X = J}_k^k$ or $\mathbf{X = F}_{k+1}$.
\end{lemma}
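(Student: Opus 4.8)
The plan is to follow the same template used to close the proof of claim~3) of Proposition~\ref{L(K)} at the end of Subsection~\ref{sufficiency: K - red to [F_k,F_{k+1}]}, invoking the word problems proved above together with Lemma~\ref{V subseteq E or J_(ell-1)^(ell-1)}. By Lemma~\ref{between F_k and F_{k+1}} we have $\mathbf J_k^k\subseteq\mathbf F_{k+1}$, so the interval $[\mathbf J_k^k,\mathbf F_{k+1}]$ is well defined. Given $\mathbf X$ in this interval, if $\mathbf X=\mathbf J_k^k$ there is nothing to prove; thus I would assume $\mathbf J_k^k\subset\mathbf X$ and aim to deduce $\mathbf X=\mathbf F_{k+1}$.

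Arguing by contradiction, I would suppose in addition that $\mathbf X\ne\mathbf F_{k+1}$. Since $\mathbf X\subseteq\mathbf F_{k+1}$, this means $\mathbf F_{k+1}\nsubseteq\mathbf X$, so there is a non-trivial identity $\mathbf u\approx\mathbf v$ that holds in $\mathbf X$ but fails in $\mathbf F_{k+1}$. Because $\mathbf C_2\subseteq\mathbf F_k\subseteq\mathbf J_k^k\subseteq\mathbf X$, Proposition~\ref{word problem C_2} guarantees that the claim~\eqref{sim(u)=sim(v) & mul(u)=mul(v)} holds for $\mathbf u\approx\mathbf v$. On the other hand, applying Proposition~\ref{word problem F_k,K}(i) with $k$ replaced by $k+1$ to the fact that $\mathbf u\approx\mathbf v$ is violated in $\mathbf F_{k+1}$, I conclude that at least one of the claims~\eqref{sim(u)=sim(v) & mul(u)=mul(v)} and~\eqref{eq the same l-dividers} with $\ell=k+1$ must fail; as the former holds, the latter fails.

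Now the statement follows in a single move. Since $\mathbf X\subseteq\mathbf F_{k+1}\subseteq\mathbf K$ satisfies $\mathbf u\approx\mathbf v$, the claim~\eqref{sim(u)=sim(v) & mul(u)=mul(v)} holds and~\eqref{eq the same l-dividers} fails for $\ell=k+1>1$, Lemma~\ref{V subseteq E or J_(ell-1)^(ell-1)} applies with this value of $\ell$ and yields $\mathbf X\subseteq\mathbf J_{k}^{k}$. This contradicts the assumption $\mathbf J_k^k\subset\mathbf X$, so $\mathbf X=\mathbf F_{k+1}$, as required. I do not expect a genuine obstacle here: unlike the earlier steps of Subsection~\ref{sufficiency: K - structure of [F_k,F_{k+1}]}, this lemma needs no fresh word-problem analysis, precisely because the case $m=k$ has no successor variety $\mathbf J_k^{k+1}$ within $[\mathbf F_k,\mathbf F_{k+1}]$ and the relevant reduction is exactly the content of Lemma~\ref{V subseteq E or J_(ell-1)^(ell-1)}. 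The only points requiring care are verifying the hypotheses of that lemma --- namely that $\mathbf X\subseteq\mathbf K$ and that~\eqref{eq the same l-dividers} genuinely fails at level $k+1$ rather than merely at some lower level --- and both are immediate from the paragraph above.
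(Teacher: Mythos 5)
Your proposal is correct and follows essentially the same route as the paper's own proof: both take a non-trivial identity of $\mathbf X$ that fails in $\mathbf F_{k+1}$, deduce from Proposition~\ref{word problem F_k,K}(i) that the claim~\eqref{eq the same l-dividers} with $\ell=k+1$ fails while~\eqref{sim(u)=sim(v) & mul(u)=mul(v)} holds, and then apply Lemma~\ref{V subseteq E or J_(ell-1)^(ell-1)} to force $\mathbf X\subseteq\mathbf J_k^k$, a contradiction. The only (immaterial) difference is that the paper extracts the claim~\eqref{sim(u)=sim(v) & mul(u)=mul(v)} (together with further, unused, information) from Proposition~\ref{word problem J_k^r} applied to $\mathbf J_k^k\subset\mathbf X$, whereas you obtain it from Proposition~\ref{word problem C_2} via $\mathbf C_2\subseteq\mathbf X$.
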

 
\begin{proof}
Suppose that $\mathbf J_k^k\subset\mathbf{X\subset F}_{k+1}$. Since ${\bf F}_{k+1} \nsubseteq {\bf X}$, there exists an identity ${\bf u}\approx {\bf v}$ that holds in ${\bf X}$ but does not hold in ${\bf F}_{k+1}$. Propositions~\ref{word problem F_k,K}(i) and~\ref{word problem J_k^r} and the inclusion ${\bf J}_k^k \subset {\bf X}$ together imply that the claims~\eqref{sim(u)=sim(v) & mul(u)=mul(v)},~\eqref{eq the same l-dividers}, \eqref{identities in I_k} and the claim~\eqref{eq V subseteq J_k^r} with $\ell=m = k$ are true, while $h_2^k({\bf u},x)\ne h_2^k({\bf v},x)$ for some letter $x\in \con({\bf u})$ such that $D({\bf u},x)>k$. Then we apply Lemma~\ref{V subseteq E or J_(ell-1)^(ell-1)} for the variety ${\bf F}_{k+1}$ and obtain $\mathbf{X\subseteq J}_k^k$, a contradiction.
\end{proof}
 
\subsubsection{All inclusions are strict}
\label{structure of [F_k,F_{k+1}] 6 step}
 
Here we are going to verify the inclusions~\eqref{[F_k,F_{k+1}]}. To achieve this goal, we use Lemma~\ref{depth and index} and Table~\ref{k-decomposition of eight words} without explicit references. We note that non-strict inclusions~\eqref{non-strict inclusions} are true by Lemma~\ref{between F_k and F_{k+1}}. If $\mathbf u\approx\mathbf v$ is the identity $\alpha_k$ then $D(\mathbf u,x_k)=k$ but $h_1^k(\mathbf u,x_k)=\lambda$ and $h_1^k(\mathbf v,x_k)=y_k$. Then Proposition~\ref{word problem H_k} implies that $\mathbf F_k\subset \mathbf H_k$. Suppose that the identity $\mathbf u\approx\mathbf v$ coincides with the identity $\beta_k$. Then $h_1^k(\mathbf u,x)=\lambda$, while $h_1^k(\mathbf v,x)=x_k$. We apply Proposition~\ref{word problem I_k} and obtain $\mathbf H_k\subset \mathbf I_k$. Let now $\mathbf u\approx\mathbf v$ be equal $\gamma_k$. In this case $D(\mathbf u,y_1)=1$ but $h_2^k(\mathbf u,y_1)=y_0$ and $h_2^k(\mathbf v,y_1)=x_k$. In view of Proposition~\ref{word problem J_k^r}, $\mathbf I_k\subset \mathbf J_k^1$. Suppose now that $\mathbf u\approx\mathbf v$ coincides with the identity $\delta_k^m$ for some $1\le m<k$. Then $D(\mathbf u,y_{m+1})=m+1$ but $h_2^k(\mathbf u,y_{m+1})=y_m$ and $h_2^k(\mathbf v,y_{m+1})=x_k$. Now we apply Proposition~\ref{word problem J_k^r} again and obtain $\mathbf J_k^m\subset \mathbf J_k^{m+1}$. Finally, suppose that $\mathbf u\approx\mathbf v$ is the identity $\delta_k^k$. Since $h_2^k(\mathbf u,y_{k+1})=y_k$ and $h_2^k(\mathbf v,y_{k+1})=x_k$, Proposition~\ref{word problem F_k,K}(i) implies that $\mathbf J_k^k\subset \mathbf F_{k+1}$.
 
Thus, we have proved the inclusions~\eqref{[F_k,F_{k+1}]}. Therefore, the varieties $\mathbf F_k$, $\mathbf H_k$, $\mathbf I_k$, $\mathbf J_k^1$, $\mathbf J_k^2$, \dots, $\mathbf J_k^k$ and $\mathbf F_{k+1}$ are pairwise different. This fact and Lemmas~\ref{F_k or over H_k},~\ref{H_k or over I_k},~\ref{I_k or over J_k^1},~\ref{J_k^m or over J_k^{m+1}} (with $m=1,2,\dots,k-1$) and~\ref{J_k^k or F_{k+1}} together implies the claim~4) of Proposition~\ref{L(K)}. In view of Lemma~\ref{L(LRB+C_2)}(ii) and the results of Subsections~\ref{sufficiency: K - red to [E,K]} and~\ref{sufficiency: K - red to [F_k,F_{k+1}]}, we complete the proof of Proposition~\ref{L(K)}.\qed
 
\medskip
 
Lemmas~\ref{L(D)} and~\ref{L(BM)}(ii), Corollary~\ref{L(L)}, Propositions~\ref{L(C_n)},~\ref{L(N)} and~\ref{L(K)}, and the dual of Propositions \ref{L(N)} and~\ref{L(K)} together imply the ``if'' part of Theorem~\ref{main result}.
 
Recall that the ``only if'' part of Theorem~\ref{main result} was verified in Section~\ref{necessity}. Thus, Theorem~\ref{main result} is completely proved.\qed
 
\section{Corollaries}
\label{corollaries}
 
First of all, we indicate the exhaustive list of non-group chain varieties of monoids. Theorem~\ref{main result} together with Lemmas~\ref{L(D)} and~\ref{L(BM)}(ii), Corollary~\ref{L(L)}, Propositions~\ref{L(C_n)},~\ref{L(N)} and~\ref{L(K)}, and the dual of Propositions \ref{L(N)} and~\ref{L(K)} implies the following
 
\begin{corollary}
\label{list}
The varieties ${\bf C}_n$, ${\bf D}_k$, ${\bf D}$, ${\bf E}$, $\overleftarrow{{\bf E}}$, ${\bf F}_k$, $\overleftarrow{{\bf F}_k}$, ${\bf H}_k$, $\overleftarrow{{\bf H}_k}$, ${\bf I}_k$, $\overleftarrow{{\bf I}_k}$, ${\bf J}_k^m$, $\overleftarrow{{\bf J}_k^m}$, ${\bf K}$, $\overleftarrow{{\bf K}}$, ${\bf L}$, ${\bf LRB}$, ${\bf M}$, $\overleftarrow{{\bf M}}$, ${\bf N}$, $\overleftarrow{{\bf N}}$, ${\bf RRB}$, ${\bf SL}$ where $n\ge 2$, $k\in\mathbb N$ and $1\le m\le k$ and only they are non-group chain varieties of monoids.\qed
\end{corollary}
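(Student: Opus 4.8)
The plan is to read off the corollary from Theorem~\ref{main result} together with the explicit chain descriptions of the subvariety lattices of the nine ``covering'' varieties $\mathbf C_n$, $\mathbf D$, $\mathbf K$, $\overleftarrow{\mathbf K}$, $\mathbf L$, $\mathbf{LRB}$, $\mathbf N$, $\overleftarrow{\mathbf N}$ and $\mathbf{RRB}$. No new computation is needed; the work is entirely one of assembling and matching the members of these chains, after which the ``and only they'' clause amounts to two set inclusions.

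First I would treat the ``each listed variety is a non-group chain variety'' direction. Let $\mathbf V$ be any variety in the stated list. Each such $\mathbf V$ occurs as a member of one of the chains $L(\mathbf D)$, $L(\mathbf{LRB}\vee\mathbf{RRB})$, $L(\mathbf L)$, $L(\mathbf C_n)$, $L(\mathbf N)$, $L(\mathbf K)$, or of a chain dual to one of these; here I use that $\mathbf X\mapsto\overleftarrow{\mathbf X}$ is an automorphism of the lattice of monoid varieties, so $L(\overleftarrow{\mathbf X})$ is order-isomorphic to $L(\mathbf X)$ and consists precisely of the duals of the members of $L(\mathbf X)$. The principal ideal $L(\mathbf V)=\{\mathbf W\mid\mathbf W\subseteq\mathbf V\}$ is then an initial segment of a chain and hence is itself a chain, so $\mathbf V$ is a chain variety. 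Moreover every listed $\mathbf V$ lies above $\mathbf{SL}$ in its chain, so $\mathbf{SL}\subseteq\mathbf V$ and $\mathbf V$ is non-group by Lemma~\ref{group variety}.

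For the converse, let $\mathbf V$ be an arbitrary non-group chain variety. By Theorem~\ref{main result} it is contained in one of the nine covering varieties, hence belongs to the corresponding chain $L(\mathbf X)$ (or its dual) furnished by Lemmas~\ref{L(D)} and~\ref{L(BM)}(ii), Corollary~\ref{L(L)}, and Propositions~\ref{L(C_n)},~\ref{L(N)} and~\ref{L(K)} and the duals of the last two. Reading off the members of that chain and discarding the trivial variety $\mathbf T$ (the unique group variety occurring, since every other member contains $\mathbf{SL}$ and is therefore non-group by Lemma~\ref{group variety}), I conclude that $\mathbf V$ coincides with one of the varieties named in the corollary.

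Finally I would carry out the bookkeeping that the union, over all nine chains, of the non-trivial members is exactly the stated set; this is the only place where care is required. One collects $\mathbf{SL}$ and $\mathbf C_n$ ($n\ge2$) from $L(\mathbf C_n)$; $\mathbf D_k$ ($k\in\mathbb N$) and $\mathbf D$ from $L(\mathbf D)$; $\mathbf{LRB}$ and $\mathbf{RRB}$ from $L(\mathbf{LRB}\vee\mathbf{RRB})$; $\mathbf L$ from $L(\mathbf L)$; $\mathbf M$ and $\mathbf N$ from $L(\mathbf N)$; and $\mathbf E$, $\mathbf F_k$, $\mathbf H_k$, $\mathbf I_k$, $\mathbf J_k^m$ ($1\le m\le k$) and $\mathbf K$ from $L(\mathbf K)$, together with all their duals coming from the dual chains. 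Several varieties appear in more than one chain (for instance $\mathbf{SL}$, $\mathbf C_2$, $\mathbf D_1$, $\mathbf D_2$), but these cause no difficulty since the union is taken as a set. The slightly delicate point is to verify that $\mathbf{SL}$, $\mathbf C_n$, $\mathbf D_k$, $\mathbf D$ and $\mathbf L$ are self-dual, so that they contribute a single variety each rather than a dual pair: the commutative varieties $\mathbf{SL}$ and $\mathbf C_n$ are self-dual trivially; $\mathbf D_k=\var S(\mathbf w)$ and $\mathbf L=\var S(xzxyty)$ are self-dual because the defining words are invariant, up to renaming of letters, under reversal; and $\mathbf D=\bigvee_k\mathbf D_k$ is self-dual as a join of self-dual varieties. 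Matching the resulting set against the list then proves the corollary, and this matching, rather than any structural fact, is the main (albeit routine) obstacle.
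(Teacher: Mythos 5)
Your proposal is correct and takes essentially the same route as the paper, which obtains the corollary by simply combining Theorem~\ref{main result} with Lemmas~\ref{L(D)} and~\ref{L(BM)}(ii), Corollary~\ref{L(L)}, Propositions~\ref{L(C_n)},~\ref{L(N)} and~\ref{L(K)}, and the duals of the last two. The only details the paper leaves implicit are the assembling of the chain members and the self-duality of $\mathbf{SL}$, $\mathbf C_n$, $\mathbf D_k$, $\mathbf D$ and $\mathbf L$, which you spell out correctly.
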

 
The set of all non-group chain varieties of monoids ordered by inclusion together with the variety \textbf T is shown in Fig.~\ref{all chain mon}. It is interesting to compare this figure with the diagram of the partially ordered set of all non-group chain varieties of semigroups (as we have already mentioned in Section~\ref{introduction}, such varieties were completely determined in~\cite{Sukhanov-82}). This diagram is shown in Fig.~\ref{all chain sem} where $\mathbf{LZ}=\var\{xy\approx x\}$, $\mathbf{RZ}=\var\{xy\approx y\}$, $\mathbf{ZM}=\var\{xy\approx 0\}$, $\mathbf N_k=\var\{x^2\approx x_1x_2\cdots x_k\approx 0,\,xy\approx yx\}$ for all $k\ge3$, $\mathbf N_\omega=\var\{x^2\approx 0,\,xy\approx yx\}$, $\mathbf N_3^2=\var\{x^2\approx xyz\approx 0\}$ and $\mathbf N_3^c=\var\{xyz\approx 0,\,xy\approx yx\}$ (here $\var\Sigma$ means the semigroup variety given by $\Sigma$; as is usually done when considering semigroup varieties, we write the symbolic identity $\mathbf w\approx 0$ as a short form of the identity system $\mathbf wx\approx x\mathbf w\approx\mathbf w$ where $x\notin \con(\mathbf w)$).
 
\begin{figure}[htb]
\unitlength=1mm
\linethickness{0.4pt}
\begin{center}
\begin{picture}(81,161)
\put(5,45){\circle*{1.33}}
\put(5,55){\circle*{1.33}}
\put(5,65){\circle*{1.33}}
\put(5,75){\circle*{1.33}}
\put(5,85){\circle*{1.33}}
\put(5,95){\circle*{1.33}}
\put(5,105){\circle*{1.33}}
\put(5,115){\circle*{1.33}}
\put(5,125){\circle*{1.33}}
\put(5,135){\circle*{1.33}}
\put(5,145){\circle*{1.33}}
\put(5,155){\circle*{1.33}}
\put(15,25){\circle*{1.33}}
\put(15,55){\circle*{1.33}}
\put(25,55){\circle*{1.33}}
\put(25,65){\circle*{1.33}}
\put(35,35){\circle*{1.33}}
\put(35,45){\circle*{1.33}}
\put(35,55){\circle*{1.33}}
\put(35,65){\circle*{1.33}}
\put(35,75){\circle*{1.33}}
\put(35,85){\circle*{1.33}}
\put(35,95){\circle*{1.33}}
\put(35,105){\circle*{1.33}}
\put(35,115){\circle*{1.33}}
\put(35,125){\circle*{1.33}}
\put(35,135){\circle*{1.33}}
\put(35,145){\circle*{1.33}}
\put(35,155){\circle*{1.33}}
\put(45,55){\circle*{1.33}}
\put(45,65){\circle*{1.33}}
\put(55,25){\circle*{1.33}}
\put(65,45){\circle*{1.33}}
\put(65,55){\circle*{1.33}}
\put(65,65){\circle*{1.33}}
\put(65,75){\circle*{1.33}}
\put(65,85){\circle*{1.33}}
\put(65,95){\circle*{1.33}}
\put(65,105){\circle*{1.33}}
\put(65,115){\circle*{1.33}}
\put(65,125){\circle*{1.33}}
\put(65,135){\circle*{1.33}}
\put(65,145){\circle*{1.33}}
\put(65,155){\circle*{1.33}}
\put(75,5){\circle*{1.33}}
\put(75,15){\circle*{1.33}}
\put(75,25){\circle*{1.33}}
\put(75,35){\circle*{1.33}}
\put(75,45){\circle*{1.33}}
\put(75,55){\circle*{1.33}}
\put(75,65){\circle*{1.33}}
\put(75,75){\circle*{1.33}}
\put(75,85){\circle*{1.33}}
\put(75,95){\circle*{1.33}}
\put(75,105){\circle*{1.33}}
\put(75,115){\circle*{1.33}}
\put(75,125){\circle*{1.33}}
\put(75,135){\circle*{1.33}}
\put(75,145){\circle*{1.33}}
\put(5,45){\line(0,1){102}}
\put(5,45){\line(3,-1){30}}
\put(15,25){\line(6,-1){60}}
\put(15,55){\line(2,-1){20}}
\put(25,55){\line(0,1){10}}
\put(25,55){\line(1,-1){10}}
\put(35,35){\line(0,1){112}}
\put(35,35){\line(4,-1){40}}
\put(35,35){\line(3,1){30}}
\put(35,45){\line(1,1){10}}
\put(45,55){\line(0,1){10}}
\put(55,25){\line(2,-1){20}}
\put(65,45){\line(0,1){102}}
\put(75,5){\line(0,1){142}}
\put(5,152){\makebox(0,0)[cc]{$\vdots$}}
\put(35,152){\makebox(0,0)[cc]{$\vdots$}}
\put(65,152){\makebox(0,0)[cc]{$\vdots$}}
\put(75,152){\makebox(0,0)[cc]{$\vdots$}}
\put(76,25){\makebox(0,0)[lc]{${\bf C}_2$}}
\put(76,35){\makebox(0,0)[lc]{${\bf C}_3$}}
\put(76,45){\makebox(0,0)[lc]{${\bf C}_4$}}
\put(76,55){\makebox(0,0)[lc]{${\bf C}_5$}}
\put(76,65){\makebox(0,0)[lc]{${\bf C}_6$}}
\put(76,75){\makebox(0,0)[lc]{${\bf C}_7$}}
\put(76,85){\makebox(0,0)[lc]{${\bf C}_8$}}
\put(76,95){\makebox(0,0)[lc]{${\bf C}_9$}}
\put(76,105){\makebox(0,0)[lc]{${\bf C}_{10}$}}
\put(76,115){\makebox(0,0)[lc]{${\bf C}_{11}$}}
\put(76,125){\makebox(0,0)[lc]{${\bf C}_{12}$}}
\put(76,135){\makebox(0,0)[lc]{${\bf C}_{13}$}}
\put(76,145){\makebox(0,0)[lc]{${\bf C}_{14}$}}
\put(35,158){\makebox(0,0)[cc]{${\bf D}$}}
\put(35,32){\makebox(0,0)[cc]{${\bf D}_1$}}
\put(36,44){\makebox(0,0)[lc]{${\bf D}_2$}}
\put(36,55){\makebox(0,0)[lc]{${\bf D}_3$}}
\put(36,65){\makebox(0,0)[lc]{${\bf D}_4$}}
\put(36,75){\makebox(0,0)[lc]{${\bf D}_5$}}
\put(36,85){\makebox(0,0)[lc]{${\bf D}_6$}}
\put(36,95){\makebox(0,0)[lc]{${\bf D}_7$}}
\put(36,105){\makebox(0,0)[lc]{${\bf D}_8$}}
\put(36,115){\makebox(0,0)[lc]{${\bf D}_9$}}
\put(36,125){\makebox(0,0)[lc]{${\bf D}_{10}$}}
\put(36,135){\makebox(0,0)[lc]{${\bf D}_{11}$}}
\put(36,145){\makebox(0,0)[lc]{${\bf D}_{12}$}}
\put(5,42){\makebox(0,0)[cc]{${\bf E}$}}
\put(65,41.5){\makebox(0,0)[cc]{$\overleftarrow{{\bf E}}$}}
\put(4,55){\makebox(0,0)[rc]{${\bf F}_1$}}
\put(64,55){\makebox(0,0)[rc]{$\overleftarrow{{\bf F}_1}$}}
\put(4,95){\makebox(0,0)[rc]{${\bf F}_2$}}
\put(64,95){\makebox(0,0)[rc]{$\overleftarrow{{\bf F}_2}$}}
\put(4,145){\makebox(0,0)[rc]{${\bf F}_3$}}
\put(64,145){\makebox(0,0)[rc]{$\overleftarrow{{\bf F}_3}$}}
\put(4,65){\makebox(0,0)[rc]{${\bf H}_1$}}
\put(64,65){\makebox(0,0)[rc]{$\overleftarrow{{\bf H}_1}$}}
\put(4,105){\makebox(0,0)[rc]{${\bf H}_2$}}
\put(64,105){\makebox(0,0)[rc]{$\overleftarrow{{\bf H}_2}$}}
\put(4,75){\makebox(0,0)[rc]{${\bf I}_1$}}
\put(64,75){\makebox(0,0)[rc]{$\overleftarrow{{\bf I}_1}$}}
\put(4,115){\makebox(0,0)[rc]{${\bf I}_2$}}
\put(64,115){\makebox(0,0)[rc]{$\overleftarrow{{\bf I}_2}$}}
\put(4,85){\makebox(0,0)[rc]{${\bf J}_1^1$}}
\put(64,85){\makebox(0,0)[rc]{$\overleftarrow{{\bf J}_1^1}$}}
\put(4,125){\makebox(0,0)[rc]{${\bf J}_2^1$}}
\put(64,125){\makebox(0,0)[rc]{$\overleftarrow{{\bf J}_2^1}$}}
\put(4,135){\makebox(0,0)[rc]{${\bf J}_2^2$}}
\put(64,135){\makebox(0,0)[rc]{$\overleftarrow{{\bf J}_2^2}$}}
\put(5,158){\makebox(0,0)[cc]{${\bf K}$}}
\put(65,158.5){\makebox(0,0)[cc]{$\overleftarrow{{\bf K}}$}}
\put(15,58){\makebox(0,0)[cc]{${\bf L}$}}
\put(14,25){\makebox(0,0)[rc]{${\bf LRB}$}}
\put(22,55){\makebox(0,0)[cc]{${\bf M}$}}
\put(48,55.5){\makebox(0,0)[cc]{$\overleftarrow{{\bf M}}$}}
\put(25,68){\makebox(0,0)[cc]{${\bf N}$}}
\put(45,68.5){\makebox(0,0)[cc]{$\overleftarrow{{\bf N}}$}}
\put(54,25){\makebox(0,0)[rc]{${\bf RRB}$}}
\put(76,15){\makebox(0,0)[lc]{${\bf SL}$}}
\put(75,2){\makebox(0,0)[cc]{$\bf T$}}
\end{picture}
\end{center}
\caption{All non-group chain varieties of monoids}
\label{all chain mon}
\end{figure}
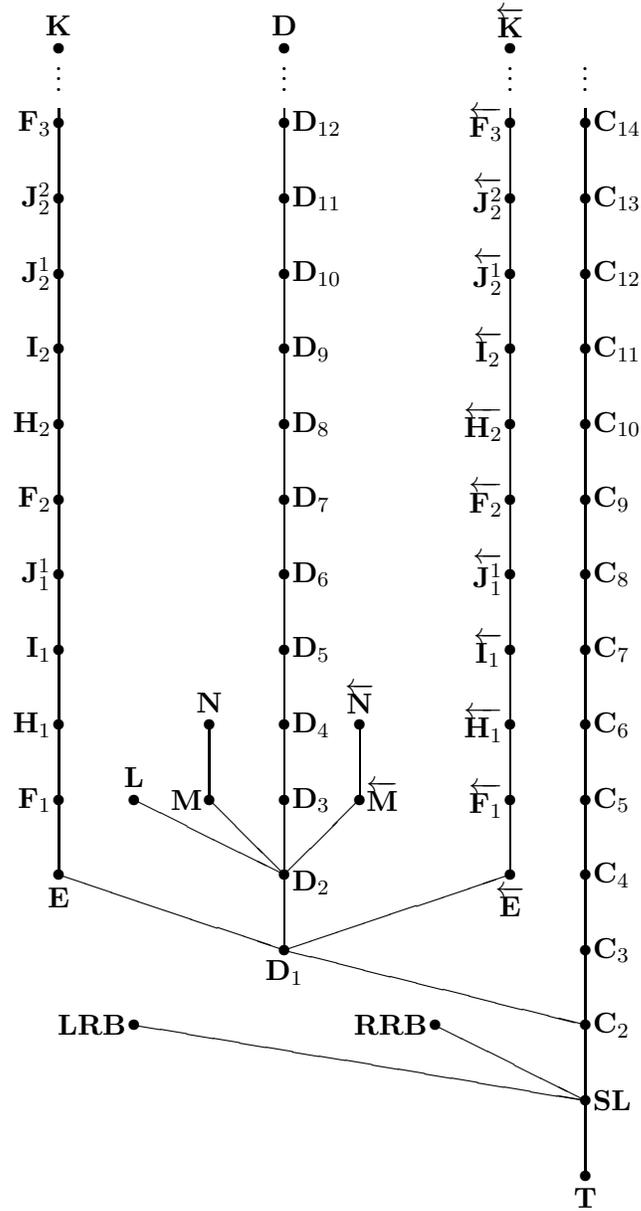
 
\begin{figure}[htb]
\unitlength=1mm
\linethickness{0.4pt}
\begin{center}
\begin{picture}(46,60)
\put(3,15){\circle*{1.33}}
\put(13,15){\circle*{1.33}}
\put(18,5){\circle*{1.33}}
\put(23,15){\circle*{1.33}}
\put(23,35){\circle*{1.33}}
\put(33,15){\circle*{1.33}}
\put(33,25){\circle*{1.33}}
\put(33,35){\circle*{1.33}}
\put(33,45){\circle*{1.33}}
\put(33,55){\circle*{1.33}}
\put(43,35){\circle*{1.33}}
\put(3,15){\line(3,-2){15}}
\put(13,15){\line(1,-2){5}}
\put(18,5){\line(1,2){5}}
\put(18,5){\line(3,2){15}}
\put(23,35){\line(1,-1){10}}
\put(33,15){\line(0,1){32}}
\put(33,25){\line(1,1){10}}
\put(33,52){\makebox(0,0)[cc]{$\vdots$}}
\put(3,18){\makebox(0,0)[cc]{\textbf{LZ}}}
\put(34,24){\makebox(0,0)[lc]{$\mathbf N_3$}}
\put(23,38){\makebox(0,0)[cc]{$\mathbf N_3^2$}}
\put(43,38){\makebox(0,0)[cc]{$\mathbf N_3^c$}}
\put(34,35){\makebox(0,0)[lc]{$\mathbf N_4$}}
\put(34,45){\makebox(0,0)[lc]{$\mathbf N_5$}}
\put(33,58){\makebox(0,0)[cc]{$\mathbf N_\omega$}}
\put(13,18){\makebox(0,0)[cc]{\textbf{RZ}}}
\put(23,18){\makebox(0,0)[cc]{\textbf{SL}}}
\put(18,2){\makebox(0,0)[cc]{\textbf T}}
\put(34,15){\makebox(0,0)[lc]{\textbf{ZM}}}
\end{picture}
\end{center}
\caption{All non-group chain varieties of semigroups}
\label{all chain sem}
\end{figure}
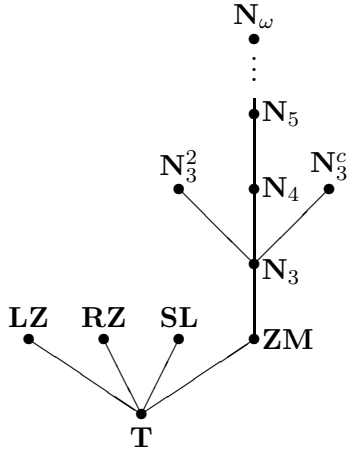
 
We see that, out of the group case, there are~1 countably infinite series and~6 ``sporadic'' chain semigroup varieties, but~10 countably infinite series and~12 ``sporadic'' chain monoid varieties. Namely, we have the countably infinite series $\mathbf N_k$ (including \textbf{ZM} as $\mathbf N_2$) and sporadic varieties \textbf{LZ}, \textbf{RZ}, \textbf{SL}, $\mathbf N_3^2$, $\mathbf N_3^c$, $\mathbf N_\omega$ in the semigroup case, and countably infinite series ${\bf C}_n$ (including \textbf{SL} as $\mathbf C_1$), ${\bf D}_k$, ${\bf F}_k$, $\overleftarrow{{\bf F}_k}$, ${\bf H}_k$, $\overleftarrow{{\bf H}_k}$, ${\bf I}_k$, $\overleftarrow{{\bf I}_k}$, ${\bf J}_k^m$, $\overleftarrow{{\bf J}_k^m}$ and sporadic varieties ${\bf D}$, ${\bf E}$, $\overleftarrow{{\bf E}}$, ${\bf K}$, $\overleftarrow{{\bf K}}$, ${\bf L}$, ${\bf LRB}$, ${\bf M}$, $\overleftarrow{{\bf M}}$, ${\bf N}$, $\overleftarrow{{\bf N}}$, ${\bf RRB}$ in the monoid case. One can say that the number of non-group chain varieties in the case of monoids is much larger (in some informal sense) than in the case of semigroups. Consequently, the partially ordered set of non-group chain varieties in the former case is much more complicated than in the latter one.
 
As we have already mentioned in Section~\ref{introduction}, a non-group chain variety of semigroups is contained in a maximal chain variety, while this is not the case for monoid varieties. The following two corollaries indicate cases when the analog of the semigroup statement is true. Fig.~\ref{all chain mon} shows that the following is true.
 
\begin{corollary}
\label{max chain iff}
A non-group chain variety of monoids $\mathbf V$ is contained in some maximal chain variety if and only if $\mathbf C_3\nsubseteq\mathbf V$.\qed
\end{corollary}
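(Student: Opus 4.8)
The plan is to read the statement off the complete description of all non-group chain varieties of monoids furnished by Corollary~\ref{list} and displayed in Fig.~\ref{all chain mon}. First I would pin down exactly which of the listed varieties are \emph{maximal} chain varieties, that is, chain varieties not properly contained in any other chain variety. Computing all inclusions among the listed varieties by means of Lemma~\ref{L(D)}, Lemma~\ref{L(BM)}(ii), Corollary~\ref{L(L)} and Propositions~\ref{L(C_n)}, \ref{L(N)}, \ref{L(K)} (together with the duals of the last two), I expect to find that the maximal chain varieties are precisely $\mathbf D$, $\mathbf K$, $\overleftarrow{\mathbf K}$, $\mathbf L$, $\mathbf N$, $\overleftarrow{\mathbf N}$, $\mathbf{LRB}$ and $\mathbf{RRB}$, namely the elements having no upward edge in Fig.~\ref{all chain mon}; by contrast, no variety $\mathbf C_n$ is maximal, since the $\mathbf C_n$ form the strictly ascending chain $\mathbf C_2\subset\mathbf C_3\subset\cdots$ whose join fails to be a chain variety.

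The key fact to record next is that none of these eight maximal chain varieties contains $\mathbf C_3$. Indeed, $\mathbf{LRB}$ and $\mathbf{RRB}$ are band varieties and hence do not even contain $\mathbf C_2$, while the subvariety lattices of $\mathbf D$, $\mathbf K$, $\overleftarrow{\mathbf K}$, $\mathbf L$, $\mathbf N$, $\overleftarrow{\mathbf N}$ are the chains given by Lemma~\ref{L(D)}, Proposition~\ref{L(K)}, Corollary~\ref{L(L)} and Proposition~\ref{L(N)}, and in each of these chains the only member of the series $\mathbf C_n$ that occurs is $\mathbf C_2$. Thus $\mathbf C_3$ is a subvariety of none of the eight maximal chain varieties.

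With this in hand both implications are short. For the "only if" part I argue contrapositively: if $\mathbf V\subseteq\mathbf W$ for one of the eight maximal chain varieties $\mathbf W$, then $\mathbf C_3\subseteq\mathbf V$ would yield $\mathbf C_3\subseteq\mathbf W$, contradicting the previous paragraph, so $\mathbf C_3\nsubseteq\mathbf V$. For the "if" part, assume $\mathbf C_3\nsubseteq\mathbf V$. By Theorem~\ref{main result} the variety $\mathbf V$ is contained in one of $\mathbf C_n$, $\mathbf D$, $\mathbf K$, $\overleftarrow{\mathbf K}$, $\mathbf L$, $\mathbf{LRB}$, $\mathbf N$, $\overleftarrow{\mathbf N}$, $\mathbf{RRB}$. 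If $\mathbf V$ is contained in one of the last eight, then it is contained in a maximal chain variety and we are done. Otherwise $\mathbf V\subseteq\mathbf C_n$ for some $n$, and then Proposition~\ref{L(C_n)} forces $\mathbf V\in\{\mathbf T,\mathbf{SL},\mathbf C_2,\dots,\mathbf C_n\}$; since $\mathbf C_3\nsubseteq\mathbf V$ we must have $\mathbf V\in\{\mathbf T,\mathbf{SL},\mathbf C_2\}$, and all three of these lie in $\mathbf D$ by Lemma~\ref{L(D)}, which is a maximal chain variety.

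The main obstacle is neither logical direction, as both are brief, but the bookkeeping behind the first step: confirming that the eight named varieties are exactly the maximal chain varieties. This requires checking that each is genuinely maximal — no chain variety lies strictly above it, which follows from the full poset because nothing in the list of Corollary~\ref{list} sits above them and the varieties one would have to adjoin from outside the list (for instance $\mathbf{LRB}\vee\mathbf{RRB}$ or $\mathbf{LRB}\vee\mathbf C_2$ above $\mathbf{LRB}$) are not chains by Lemma~\ref{L(BM)} and Lemma~\ref{L(LRB+C_2)} — and, crucially, that the ascending chain of the $\mathbf C_n$ admits no chain-variety upper bound. This last point is the genuine content of the corollary: it is exactly why the varieties $\mathbf C_3,\mathbf C_4,\dots$ escape every maximal chain variety, and it appears in Fig.~\ref{all chain mon} as the rightmost infinite ascending ray that never reaches a maximal element.
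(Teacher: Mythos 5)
Your proposal is correct and follows essentially the same route as the paper: the paper's entire proof is to read the statement off the complete classification (Corollary~\ref{list} and the poset in Fig.~\ref{all chain mon}), which is exactly what you do, just with the maximality bookkeeping and the two implications written out explicitly. Your identification of the eight maximal chain varieties and of the $\mathbf C_n$ ray as the unique part of the poset with no chain-variety upper bound matches the figure precisely.
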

 
Theorem~\ref{main result} shows that commutative non-group chain varieties of monoids are exhausted by the varieties \textbf{SL} and $\mathbf C_n$ with $n\ge 2$. This claim and Fig.~\ref{all chain mon} imply the following
 
\begin{corollary}
\label{max chain non-commut}
A non-commutative non-group chain variety of monoids is contained in some maximal chain variety.\qed
\end{corollary}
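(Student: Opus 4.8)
The plan is to derive this from Corollary~\ref{max chain iff}, which already reduces the assertion ``$\mathbf V$ lies in a maximal chain variety'' to the condition $\mathbf C_3\nsubseteq\mathbf V$. So it suffices to show that a non-commutative non-group chain variety $\mathbf V$ cannot contain $\mathbf C_3$, and I would establish this by contradiction: assume $\mathbf C_3\subseteq\mathbf V$ with $\mathbf V$ non-commutative. The underlying idea is that $\mathbf C_3$ is commutative and, in the poset of Fig.~\ref{all chain mon}, sits in the ``commutative column'' $\mathbf{SL}=\mathbf C_1\subset\mathbf C_2\subset\mathbf C_3\subset\cdots$, so nothing non-commutative can lie above it.

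First I would invoke Theorem~\ref{main result} to place $\mathbf V$ inside one of the maximal varieties on its list, i.e. inside one of $\mathbf C_n$ (some $n\ge2$), $\mathbf D$, $\mathbf K$, $\overleftarrow{\mathbf K}$, $\mathbf L$, $\mathbf{LRB}$, $\mathbf N$, $\overleftarrow{\mathbf N}$, $\mathbf{RRB}$; call it $\mathbf W$. Since $\mathbf C_3\subseteq\mathbf V\subseteq\mathbf W$, the key step is the finite inspection of which of these $\mathbf W$ actually contain $\mathbf C_3$, carried out by reading off the explicit subvariety chains proved earlier. By Lemma~\ref{L(D)} the lattice $L(\mathbf D)$ is the chain $\mathbf{T\subset SL\subset C}_2\subset\mathbf D_1\subset\mathbf D_2\subset\cdots\subset\mathbf D$; by Proposition~\ref{L(K)} the lattice $L(\mathbf K)$ is the chain $\mathbf{T\subset SL\subset C}_2\subset\mathbf D_1\subset\mathbf E\subset\mathbf F_1\subset\cdots$; by Corollary~\ref{L(L)} and Proposition~\ref{L(N)} the lattices $L(\mathbf L)$ and $L(\mathbf N)$ are likewise chains, and dually for $L(\overleftarrow{\mathbf K})$ and $L(\overleftarrow{\mathbf N})$. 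In each of these chains the only members of the family $\{\mathbf C_i\}$ are $\mathbf{SL}=\mathbf C_1$ and $\mathbf C_2$, so $\mathbf C_3$ is not a subvariety. Finally, by Lemma~\ref{L(BM)}(ii) the band varieties $\mathbf{LRB}$ and $\mathbf{RRB}$ (which, being monoids, satisfy $x\approx x^2$ upon substituting $1$ for $y$) contain $\mathbf{SL}$ but no $\mathbf C_i$ with $i\ge2$, and in particular not $\mathbf C_3$. Hence the inclusion $\mathbf C_3\subseteq\mathbf W$ forces $\mathbf W=\mathbf C_n$ for some $n$. But then Proposition~\ref{L(C_n)} shows that $L(\mathbf C_n)$ is the chain $\mathbf{T\subset SL\subset C}_2\subset\cdots\subset\mathbf C_n$ of commutative varieties, so $\mathbf V$ would be commutative, a contradiction. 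Therefore $\mathbf C_3\nsubseteq\mathbf V$, and Corollary~\ref{max chain iff} finishes the argument.

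I do not anticipate a genuine obstacle here, since the statement is essentially a bookkeeping consequence of Theorem~\ref{main result} and the subvariety-lattice descriptions, as summarised in Fig.~\ref{all chain mon}. The only point requiring care is the completeness of the case analysis: one must cover every maximal variety on the list of Theorem~\ref{main result}, including the dual varieties $\overleftarrow{\mathbf K}$ and $\overleftarrow{\mathbf N}$ (for which the needed fact comes from the duals of Propositions~\ref{L(K)} and~\ref{L(N)}), and to note that each of their subvariety chains meets $\{\mathbf C_i\}$ only in $\mathbf T$, $\mathbf{SL}$ and at most $\mathbf C_2$, never in $\mathbf C_3$.
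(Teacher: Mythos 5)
Your proposal is correct and takes essentially the same route as the paper: the paper deduces the corollary from the observation that the only commutative non-group chain varieties are $\mathbf{SL}$ and the $\mathbf C_n$, combined with the classification summarized in Fig.~\ref{all chain mon}, which is exactly the content of your reduction via Corollary~\ref{max chain iff} and your case analysis over the maximal varieties of Theorem~\ref{main result} using the subvariety-lattice descriptions. Your unpacking (that $\mathbf C_3$ lies below none of $\mathbf D$, $\mathbf K$, $\overleftarrow{\mathbf K}$, $\mathbf L$, $\mathbf{LRB}$, $\mathbf N$, $\overleftarrow{\mathbf N}$, $\mathbf{RRB}$, so $\mathbf C_3\subseteq\mathbf V$ would force $\mathbf V$ commutative) is just an explicit rendering of what the paper reads off the figure.
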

 
In the following corollary we mention the variety \textbf O introduced in Subsection~\ref{necessity: over L}.
 
\begin{corollary}
\label{does not contain just non-chain}
Let $\mathbf X$ be a monoid variety with $\mathbf{L\subset X\subseteq O}$. Then ${\bf X}$ is not a chain variety and does not contain a just non-chain subvariety.
\end{corollary}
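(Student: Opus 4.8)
The plan is to isolate from the necessity argument of Section~\ref{necessity: over L} one reusable implication, and to combine it with a classification of the non-chain subvarieties of $\mathbf O$. The crucial point is that, in the concluding computation of Section~\ref{necessity: over L}, chainness of $\mathbf V$ is used \emph{only} to guarantee that $S\bigl(\mathbf w_n(\pi,\tau)\bigr)\notin\mathbf V$ for all $n$ and all $\pi,\tau\in S_n$; the remaining steps (through Lemmas~\ref{S(w_n(pi,tau)) notin X} and~\ref{w_q=w_q' implies w_m^{k,l}=w_m^{k,k}}) show that \emph{any} monoid variety $\mathbf V$ with $\mathbf{L\subseteq V\subseteq O}$ and $S\bigl(\mathbf w_n(\pi,\tau)\bigr)\notin\mathbf V$ for all $n,\pi,\tau$ satisfies every identity $\mathbf w_n(\pi,\tau)\approx\mathbf w'_n(\pi,\tau)$, whence $\mathbf{V=L}$. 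Contraposing gives the fact I will label~(F1): \emph{if $\mathbf{L\subseteq V\subseteq O}$ and $\mathbf{V\ne L}$, then $S\bigl(\mathbf w_n(\pi,\tau)\bigr)\in\mathbf V$ for some $n$ and $\pi,\tau\in S_n$}. I will also use~(F2), established in the same subsection: every $\mathbf W=\var S\bigl(\mathbf w_n(\pi,\tau)\bigr)\in\mathcal K$ contains two incomparable, hence proper, subvarieties lying in $\mathcal K$, so $\mathbf W$ is non-chain and itself contains a proper non-chain subvariety.

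Granting these, the first assertion is immediate: since $\mathbf{L\subset X}$ we have $\mathbf{X\ne L}$, so by~(F1) some $\mathbf W\in\mathcal K$ satisfies $\mathbf{W\subseteq X}$; by~(F2) this $\mathbf W$ is non-chain, and a variety possessing a non-chain subvariety is itself non-chain, so $\mathbf X$ is not a chain variety.

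The heart of the proof is the lemma that \emph{every non-chain subvariety $\mathbf Y$ of $\mathbf O$ contains $\mathbf L$}. I would prove the contrapositive: if $\mathbf{Y\subseteq O}$ and $\mathbf{L\nsubseteq Y}$, then $\mathbf Y$ is a chain variety. First note that $\mathbf O$ satisfies $x^2\approx x^3$ (put $y=z=1$ in $xyxzx\approx x^2yz$). If $\mathbf D_2\subseteq\mathbf Y$, then Lemma~\ref{X contains D_2 but L or M}(i) gives that $\mathbf Y$ satisfies $\gamma_1$; together with $x^2\approx x^3$, $x^2y\approx yx^2$, $\sigma_1$, $\sigma_2$ inherited from $\mathbf O$, this forces $\mathbf{Y\subseteq D}$, which is a chain variety by Lemma~\ref{L(D)}. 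If instead $\mathbf D_2\nsubseteq\mathbf Y$, I distinguish cases. Since $\mathbf O$ turns every band into a commutative one (a completely regular variety satisfying $x^2\approx x^3$ is a band, and then $x^2y\approx yx^2$ yields $xy\approx yx$), a completely regular $\mathbf Y$ lies in $\mathbf{SL}$; a commutative $\mathbf Y$ lies in $\mathbf C_2$; and a non-commutative $\mathbf Y$ is necessarily non-completely-regular, so $\mathbf D_1\subseteq\mathbf Y$ by Lemma~\ref{non-cr and non-commut}, while Lemma~\ref{D_{n+1} nsubseteq X} (with $n=1$) supplies an identity $xyx\approx x^{q}yx^{r}$ with $q>1$ or $r>1$; reducing exponents by $x^2\approx x^3$ and commuting squares by $x^2y\approx yx^2$ collapses it to the defining relation $xyx\approx x^2y$ of $\mathbf D_1$, so $\mathbf{Y=D_1}$. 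In every case $\mathbf Y$ is contained in a chain variety (Lemma~\ref{L(D)}, Corollary~\ref{L(L)}), hence is a chain variety.

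Finally, to show $\mathbf X$ contains no just non-chain subvariety, let $\mathbf Y\subseteq\mathbf X$ be any non-chain subvariety. By the lemma just described $\mathbf{L\subseteq Y}$, so $\mathbf{L\subseteq Y\subseteq O}$, and $\mathbf{Y\ne L}$ because $\mathbf L$ is a chain variety (Corollary~\ref{L(L)}). Then~(F1) yields $\mathbf W\in\mathcal K$ with $\mathbf{W\subseteq Y}$. If $\mathbf{W\subsetneq Y}$, then $\mathbf W$ is a proper non-chain subvariety of $\mathbf Y$; if $\mathbf{W=Y}$, then~(F2) furnishes a proper non-chain subvariety of $\mathbf W=\mathbf Y$. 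Either way $\mathbf Y$ properly contains a non-chain variety, so $\mathbf Y$ is not minimal among non-chain varieties, i.e.\ $\mathbf Y$ is not just non-chain; as this holds for every non-chain $\mathbf Y\subseteq\mathbf X$, no subvariety of $\mathbf X$ is just non-chain. I expect the main obstacle to be the non-commutative, non-completely-regular sub-case of the classification lemma, where the identity produced by Lemma~\ref{D_{n+1} nsubseteq X} must be carefully reduced, using exactly the relations available in $\mathbf O$, down to the $\mathbf D_1$ identity.
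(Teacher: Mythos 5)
Your proposal is correct, but it follows a genuinely different route from the paper's own proof, which is a short argument leaning on the already-proved Theorem~\ref{main result}: for the first claim, the paper observes that the theorem immediately implies no chain monoid variety properly contains $\mathbf L$; for the second, it supposes $\mathbf Y\subseteq\mathbf X$ is just non-chain, notes that by the theorem every chain subvariety of $\mathbf O$ lies in $\mathbf L$ (so $\mathbf O$ has no incomparable chain subvarieties, the subvarieties of $\mathbf L$ forming a chain), and derives a contradiction because a non-chain $\mathbf Y$ has two incomparable subvarieties, which are proper in $\mathbf Y$ and hence chain by minimality. You instead re-open the machinery of Subsection~\ref{necessity: over L}: your facts (F1) and (F2) are legitimate extractions, since in that subsection chainness of $\mathbf V$ enters only to exclude the monoids $S\bigl(\mathbf w_n(\pi,\tau)\bigr)$, and the incomparable varieties $\var T_1,\var T_2\in\mathcal K$ are produced with no chainness hypothesis; moreover your classification lemma (every non-chain subvariety of $\mathbf O$ contains $\mathbf L$) is correctly assembled from Lemmas~\ref{X contains D_2 but L or M}, \ref{non-cr and non-commut} and~\ref{D_{n+1} nsubseteq X} together with the identities of $\mathbf O$ --- in particular, your reduction of $xyx\approx x^qyx^r$ to $xyx\approx x^2y$ modulo $x^2\approx x^3$ and $x^2y\approx yx^2$, and thence to $\mathbf Y\subseteq\mathbf D_1$ via the basis $\{x^2\approx x^3,\ x^2y\approx xyx\approx yx^2\}$ implicit in the proof of Proposition~\ref{word problem D_1}, works in every case. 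The paper's route buys brevity, since Theorem~\ref{main result} is fully available at that point; yours buys independence from the full classification and a more explicit conclusion, exhibiting inside every non-chain subvariety of $\mathbf O$ a concrete proper non-chain subvariety from $\mathcal K$ rather than refuting minimality by contradiction. Note finally that your lemma and the paper's key claim are different (both true) statements: yours says a non-chain subvariety of $\mathbf O$ must contain $\mathbf L$, while the paper's says a chain subvariety of $\mathbf O$ must be contained in $\mathbf L$.
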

 
\begin{proof}
Theorem~\ref{main result} immediately implies that there are no chain monoid varieties that properly contain \textbf L, whence ${\bf X}$ is not a chain variety. It remains to check that \textbf X does not contain a just non-chain subvariety. Arguing by contradiction, suppose that ${\bf X}$ contains such a subvariety \textbf Y. In view of Theorem~\ref{main result}, any chain subvariety of the variety ${\bf O}$ is contained in ${\bf L}$. In particular, ${\bf O}$ (and therefore, \textbf Y) does not contain incomparable chain subvarieties. On the other hand, being a non-chain variety, \textbf Y contains at least two incomparable subvarieties. These two varieties are proper subvarieties of \textbf Y, whence they are chain varieties. We have a contradiction.
\end{proof}
 
The following question seems to be very interesting.
 
\begin{question}
\label{is O largest?}
Is it true that a non-chain non-group monoid variety \textbf X with $\mathbf{X\nsubseteq O}$ contains a just non-chain subvariety?
\end{question}
 
Recall that a variety of universal algebras is called \emph{locally finite} if all its finitely generated members are finite. A variety is called \emph{finitely generated} if it is generated by a finite algebra. Clearly, if a variety is contained in some finitely generated variety then it is locally finite.
 
\begin{corollary}
\label{locally finite}
An arbitrary non-group chain monoid variety is contained in some finitely generated variety; in particular, it is locally finite.
\end{corollary}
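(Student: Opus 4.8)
The plan is to read off the list of candidate varieties from Theorem~\ref{main result} and reduce the whole statement to a single, uniform principle. Since being contained in a finitely generated variety is obviously inherited by subvarieties, Theorem~\ref{main result} lets me assume that the given variety is one of ${\bf C}_n$, ${\bf D}$, ${\bf K}$, $\overleftarrow{{\bf K}}$, ${\bf L}$, ${\bf LRB}$, ${\bf N}$, $\overleftarrow{{\bf N}}$, ${\bf RRB}$, and by duality it suffices to treat ${\bf C}_n$, ${\bf D}$, ${\bf K}$, ${\bf L}$, ${\bf LRB}$, ${\bf N}$, ${\bf RRB}$. Two of these need no work: ${\bf C}_n=\var S(x^{n-1})$ by Lemma~\ref{C_{n+1}=var S(x^n)} and ${\bf L}=\var S(xzxyty)$ by Lemma~\ref{L = var S(xzxyty)}, so each is already generated by a finite monoid.

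The principle I would use is the following standard fact: if $\mathbf V=\var\Sigma$ where every identity of $\Sigma$ involves at most $n$ letters and $\mathbf V$ is locally finite, then $\mathbf V$ is generated by its relatively free monoid $F_n(\mathbf V)$ on $n$ generators, which is finite; hence $\mathbf V$ is finitely generated. (Indeed $\var F_n(\mathbf V)$ is axiomatised by the identities of $\mathbf V$ in at most $n$ letters, and these already imply $\Sigma$, so $\mathbf V=\var F_n(\mathbf V)$.) Each remaining variety is finitely based by a system in boundedly many letters: ${\bf K}=\var\Phi$ in two letters, ${\bf N}$ and ${\bf D}$ by their defining systems in at most four letters, and ${\bf LRB}=\var\{xy\approx xyx\}$, ${\bf RRB}=\var\{yx\approx xyx\}$ in two letters. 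Thus the entire problem collapses to proving \emph{local finiteness} of ${\bf K}$, ${\bf N}$, ${\bf D}$, ${\bf LRB}$, ${\bf RRB}$.

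Three of these are routine. The varieties ${\bf LRB}$ and ${\bf RRB}$ consist of bands (put $y=1$ in $xy\approx xyx$ to obtain $x\approx x^2$), and finitely generated bands are finite, so they are locally finite. For ${\bf K}$, the identity $x^2\approx x^3$ (got by putting $y=1$ in $xyx\approx xyx^2$) together with \eqref{xyxzx=xyxz} from Lemma~\ref{identities in K}(ii) lets me delete every occurrence of a letter past its second, so every word is equal in ${\bf K}$ to one in which each letter occurs at most twice; on a fixed finite alphabet there are only finitely many such words, giving local finiteness. For ${\bf N}$, it satisfies the hypotheses of Lemma~\ref{u=xxu_x holds in X} (exactly as verified at the start of the proof of Proposition~\ref{L(N)}), so any letter occurring at least three times can be pulled to the front via $\mathbf u\approx x^2\mathbf u_x$; iterating again caps occurrences at two and yields local finiteness.

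The genuinely delicate case, and the main obstacle, is ${\bf D}$. Here occurrences \emph{cannot} be bounded by two: by Lemmas~\ref{S(W) in V} and~\ref{D_n=var S(w)} the words $xy_1x\cdots xy_{k-1}x$ are isoterms for ${\bf D}$, so ${\bf D}$ does not satisfy $x^2yz\approx xyxzx$. Instead I would exploit Lemma~\ref{L(D)}, which gives ${\bf D}=\bigvee_{k}{\bf D}_k$ with each ${\bf D}_k$ finitely generated (Lemma~\ref{D_n=var S(w)}) and hence locally finite. The identity carving ${\bf D}_k$ out of ${\bf D}$ involves $k+1$ letters, so ${\bf D}_k$ and ${\bf D}_{k+1}$ ought to satisfy the same identities in at most $k$ letters; this would force $F_m({\bf D}_k)=F_m({\bf D}_{k+1})$ whenever $m\le k$, so that the relatively free monoids stabilise and $F_m({\bf D})=F_m({\bf D}_m)$ is finite for every $m$, proving ${\bf D}$ locally finite and therefore (being finitely based in at most four letters) generated by the finite monoid $F_4({\bf D})$. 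The hard part is precisely the justification that identities in at most $k$ letters cannot separate ${\bf D}_k$ from ${\bf D}_{k+1}$; establishing this requires analysing substitutions into the generating monoids $S(xy_1x\cdots xy_{k-1}x)$ (equivalently, the word problem in the ${\bf D}_k$), and it may alternatively be quoted from~\cite{Lee-14a}. Once local finiteness of all seven varieties is secured, the principle of the second paragraph shows each is finitely generated; every non-group chain variety is contained in one of them, and its local finiteness follows at once as the stated consequence.
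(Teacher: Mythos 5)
Your argument has a fatal gap: the ``standard fact'' in your second paragraph is false, and the entire reduction rests on it. It is true that the identities of $F_n(\mathbf V)$ in at most $n$ letters are exactly those of $\mathbf V$, but $F_n(\mathbf V)$ generally satisfies further identities in \emph{more} than $n$ letters that are not consequences of the $n$-letter identities of $\mathbf V$, so $\var F_n(\mathbf V)$ can be a proper subvariety of $\mathbf V$ even when $\mathbf V$ is locally finite and finitely based in $n$ letters. The variety $\mathbf K$ itself refutes your principle: it is defined by the two-letter system $\Phi$, and it is locally finite (your deletion argument via $x^2\approx x^3$ and~\eqref{xyxzx=xyxz} is correct), so $F_2(\mathbf K)$ is a finite monoid, say of order $N$; but every $N$-element monoid in $\mathbf K$ satisfies $\alpha_N$ (a pigeonhole observation made in the paper's own proof of this corollary), whence $\var F_2(\mathbf K)\subseteq\mathbf F_N\subsetneq\mathbf K$. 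For the same reason your conclusions that $\mathbf K$ and $\mathbf D$ are finitely generated (by $F_2(\mathbf K)$ and $F_4(\mathbf D)$ respectively) are not merely unproved but false: the paper shows $\mathbf K$ is not finitely generated, and $\mathbf D$ is not finitely generated by~\cite[Theorem~2]{Lee-13}. This is exactly why the corollary claims only \emph{containment} in a finitely generated variety. Note also that your reduction ``the entire problem collapses to proving local finiteness'' is wrong in principle: local finiteness is strictly weaker than containment in a finitely generated variety, since there exist inherently non-finitely generated locally finite varieties (cf.~\cite{Lee-14a}).

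What is missing is any mechanism for producing, for $\mathbf K$ and $\mathbf D$, a strictly \emph{larger} finitely generated variety containing them; this is the actual content of the proof. The paper quotes two external results: $\mathbf D$ is contained in the variety generated by the $6$-element Brandt monoid $B_2^1$~\cite[Example~5.3]{Lee-14a}, and $\mathbf K\subseteq\var\{xyxzx\approx xyxz,\,\sigma_2\}$ by Lemma~\ref{identities in K}, the latter variety being generated by a $5$-element monoid by~\cite[Corollary~6.6]{Lee-Li-11}. Your treatment of $\mathbf C_n$, $\mathbf L$, $\mathbf{LRB}$ and $\mathbf{RRB}$ is essentially the paper's (for the band varieties the paper cites~\cite{Gerhard-72}), and $\mathbf N$, $\overleftarrow{\mathbf N}$ are handled in the paper by citing Example~1 of the Erratum to~\cite{Jackson-05}; those cases were never the obstacle. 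Your sketch for $\mathbf D$ (stabilisation of the relatively free monoids along the chain $\mathbf D_k$) is in any case only heuristic, but even if completed it would prove only local finiteness of $\mathbf D$, which, as explained above, does not suffice.
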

 
\begin{proof}
Clearly, it suffices to verify that each of the varieties listed in Theorem~\ref{main result} is contained in a finitely generated variety. It is well known that a proper variety of band monoids is finitely generated~\cite{Gerhard-72}. In particular, the varieties \textbf{LRB} and \textbf{RRB} have this property. It is evident that the monoid $S(\mathbf w)$ is finite for any word \textbf w. Then Lemmas~\ref{C_{n+1}=var S(x^n)} and~\ref{L = var S(xzxyty)} provide the required conclusion for the varieties $\mathbf C_n$ and \textbf L respectively. The fact that the variety $\overleftarrow{\mathbf N}$ is finitely generated follows from Example~1 in Erratum to~\cite{Jackson-05}. By symmetry, it remains to consider the varieties $\mathbf D$ and \textbf K.
 
The variety $\mathbf D$ is not finitely generated by~\cite[Theorem~2]{Lee-13}, but it is shown in~\cite[Example~5.3]{Lee-14a} that $\mathbf D$ is a subvariety of the variety generated by the well-known 6-element Brandt monoid $B_2^1=B_2\cup\{1\}$ where
$$
B_2=\langle a,b\mid a^2=b^2=0,\,aba=a,\,bab=b\rangle=\{a,b,ab,ba,0\}.
$$
 
Finally, it is easy to see that if a monoid $M$ belongs to \textbf K and consists of $k$ elements then $M$ satisfies the identity $\alpha_k$. Therefore, any finitely generated subvariety of \textbf K is contained in $\mathbf F_k$ for some $k$. In particular, the variety \textbf K is not finitely generated. But Lemma~\ref{identities in K} implies that $\mathbf K\subseteq\var\{xyxzx\approx xyxz,\,\sigma_2\}$. To complete our considerations, it remains to note that the variety $\var\{xyxzx\approx xyxz,\,\sigma_2\}$ is generated by the 5-element monoid
$$
\langle a,b\mid a^2=ab=a,\,b^2a=b^2\rangle\cup\{1\}=\{a,b,ba,b^2,1\}.
$$
This fact is proved in~\cite[Corollary~6.6]{Lee-Li-11}.
\end{proof}
 
Analog of Corollary~\ref{locally finite} for arbitrary chain varieties of monoids (including group ones) does not hold. Indeed, as we have already mentioned in Section~\ref{introduction}, it is verified in~\cite{Kozhevnikov-12} that there are uncountably many non-locally finite chain varieties of groups. But explicit examples of such varieties have not yet been specified anywhere.

\subsection*{Acknowledgments}
\label{acknowledg}

The authors express their deep appreciation to Dr. Edmond W.H.~Lee for his careful reading of the manuscript and numerous suggestions for its modification, which led to a radical processing and a significant improvement of the initial version of the manuscript. The authors are also pleased to express their gratitude to Dr. Olga Sapir, who also read the manuscript and gave a number of valuable advices for its improvement.

\end{document}